\providecommand{\tabularnewline}{\\}
\numberwithin{equation}{section}
\numberwithin{figure}{section}
\theoremstyle{plain}
\newtheorem{thm}{\protect\theoremname}[section]
  \theoremstyle{definition}
  \newtheorem{problem}[thm]{\protect\problemname}
  \theoremstyle{remark}
  \newtheorem{rem}[thm]{\protect\remarkname}
  \theoremstyle{definition}
  \newtheorem{defn}[thm]{\protect\definitionname}
  \theoremstyle{plain}
  \newtheorem{cor}[thm]{\protect\corollaryname}
  \theoremstyle{plain}
  \newtheorem{prop}[thm]{\protect\propositionname}
  \theoremstyle{plain}
  \newtheorem{lem}[thm]{\protect\lemmaname}
  \providecommand{\corollaryname}{Corollary}
  \providecommand{\definitionname}{Definition}
  \providecommand{\lemmaname}{Lemma}
  \providecommand{\problemname}{Problem}
  \providecommand{\propositionname}{Proposition}
  \providecommand{\remarkname}{Remark}
\providecommand{\theoremname}{Theorem}
\begin{document}

\title{The IA-congruence kernel of high rank free Metabelian groups}

\author{David El-Chai Ben-Ezra}
\maketitle
\begin{abstract}
The congruence subgroup problem for a finitely generated group $\Gamma$
and $G\leq Aut(\Gamma)$ asks whether the map $\hat{G}\to Aut(\hat{\Gamma})$
is injective, or more generally, what is its kernel $C\left(G,\Gamma\right)$?
Here $\hat{X}$ denotes the profinite completion of $X$. In this
paper we investigate $C\left(IA(\Phi_{n}),\Phi_{n}\right)$, where
$\Phi_{n}$ is a free metabelian group on $n\geq4$ generators, and
$IA(\Phi_{n})=\ker(Aut(\Phi_{n})\to GL_{n}(\mathbb{Z}))$. 

We show that in this case $C(IA(\Phi_{n}),\Phi_{n})$ is abelian,
but not trivial, and not even finitely generated. This behavior is
very different from what happens for free metabelian group on $n=2,3$
generators, or for finitely generated nilpotent groups.
\end{abstract}
\textbf{Mathematics Subject Classification (2010):} Primary: 19B37,
20H05, Secondary: 20E36, 20E18.\textbf{}\\
\textbf{}\\
\textbf{Key words and phrases:} congruence subgroup problem, automorphism
groups, profinite groups, free metabelian groups.

\tableofcontents{}

\section{Introduction}

The classical congruence subgroup problem (CSP) asks for, say, $G=SL_{n}\left(\mathbb{Z}\right)$
or $G=GL_{n}\left(\mathbb{Z}\right)$, whether every finite index
subgroup of $G$ contains a principal congruence subgroup, i.e. a
subgroup of the form $G\left(m\right)=\ker\left(G\to GL_{n}\left(\mathbb{Z}/m\mathbb{Z}\right)\right)$
for some $0\neq m\in\mathbb{Z}$. It is a classical $19^{\underline{th}}$
century result that the answer is negative for $n=2$. On the other
hand, quite surprisingly, it was proved in the sixties by Mennicke
\cite{key-22} and by Bass-Lazard-Serre \cite{key-23} that for $n\geq3$
the answer to the CSP is affirmative. A rich theory of the CSP for
more general arithmetic groups has been developed since then.

By the observation $GL_{n}\left(\mathbb{Z}\right)\cong Aut\left(\mathbb{Z}^{n}\right)$,
the CSP can be generalized to automorphism groups as follows: Let
$\Gamma$ be a group and $G\leq Aut\left(\Gamma\right)$. For a finite
index characteristic subgroup $M\leq\Gamma$ denote 
\[
G\left(M\right)=\ker\left(G\to Aut\left(\Gamma/M\right)\right).
\]
A finite index subgroup of $G$ which contains $G\left(M\right)$
for some $M$ is called a ``congruence subgroup''. The CSP for the
pair $\left(G,\Gamma\right)$ asks whether every finite index subgroup
of $G$ is a congruence subgroup. 

One can easily see that the CSP is equivalent to the question: Is
the congruence map $\hat{G}=\underleftarrow{\lim}G/U\to\underleftarrow{\lim}G/G\left(M\right)$
injective? Here, $U$ ranges over all finite index normal subgroups
of $G$, and $M$ ranges over all finite index characteristic subgroups
of $\Gamma$. When $\Gamma$ is finitely generated, it has only finitely
many subgroups of given index $m$, and thus, the charateristic subgroups:
$M_{m}=\cap\left\{ \Delta\vartriangleleft\Gamma\,|\,\left[\Gamma:\Delta\right]|m\right\} $
are of finite index in $\Gamma$. Hence, one can write $\hat{\Gamma}=\underleftarrow{\lim}_{m\in\mathbb{N}}\Gamma/M_{m}$
and have\footnote{By the celebrated theorem of Nikolov and Segal which asserts that
every finite index subgroup of a finitely generated profinite group
is open \cite{key-17-1}, the second inequality is actually an equality.
However, we do not need it. } 
\begin{eqnarray*}
\underleftarrow{\lim}G/G\left(M\right) & = & \underleftarrow{\lim}_{m\in\mathbb{N}}G/G\left(M_{m}\right)\leq\underleftarrow{\lim}_{m\in\mathbb{N}}Aut(\Gamma/M_{m})\\
 & \leq & Aut(\underleftarrow{\lim}_{m\in\mathbb{N}}(\Gamma/M_{m}))=Aut(\hat{\Gamma}).
\end{eqnarray*}
Therefore, when $\Gamma$ is finitely generated, the CSP is equivalent
to the question: Is the congruence map: $\hat{G}\to Aut(\hat{\Gamma})$
injective? More generally, the CSP asks what is the kernel $C\left(G,\Gamma\right)$
of this map. For $G=Aut\left(\Gamma\right)$ we will also use the
simpler notation $C\left(\Gamma\right)=C\left(G,\Gamma\right)$. The
classical congruence subgroup result mentioned above can therefore
be reformulated as $C(\mathbb{Z}^{n})=\left\{ e\right\} $ for $n\geq3$,
and it is also known that $C(\mathbb{Z}^{2})=\hat{F}_{\omega}$, where
$\hat{F}_{\omega}$ is the free non-abelian profinite group on a countable
number of generators (cf. \cite{key-17}, \cite{key-4}).

Very few results are known when $\Gamma$ is non-abelian. Most of
the results are related to $\Gamma=\pi(S_{g,n})$, the fundamental
group of the closed surface of genus $g$ with $n$ punctures (see
\cite{key-16-1}, \cite{key-18}, \cite{key-3}, \cite{key-19}, \cite{key-6-1}).
As observed in \cite{key-5}, the result of Asada in \cite{key-3}
actually gives an affirmative solution to the case $\Gamma=F_{2}$,
$G=Aut(F_{2})$ (see also \cite{key-7}). Note that for every $n>0$,
one has $\pi(S_{g,n})\cong F_{2g+n-1}$ = the free group on $2g+n-1$
generators. Hence, the aforementioned results relate to various subgroups
of the automorphism group of finitely generated free groups. However,
the CSP for the full $Aut(F_{n})$ when $n\geq3$ is still unsettled. 

Denote now the free metabelian group on $n$ generators by $\Phi_{n}=F_{n}/F_{n}''$.
Considering the metabelian case, it was shown in \cite{key-7} (see
also \cite{key-6}) that $C\left(\Phi_{2}\right)=\hat{F}_{\omega}$.
In addition, it was proven there that $C\left(\Phi_{3}\right)\supseteq\hat{F}_{\omega}$.
The basic motivation which led to this paper was to complete the picture
in the free metabelian case and investigate $C\left(\Phi_{n}\right)$
for $n\geq4$. Now, denote $IA(\Phi_{n})=\ker(Aut(\Phi_{n})\to GL_{n}(\mathbb{Z}))$.
Then, the commutative exact diagram
\[
\begin{array}{ccccccccc}
1 & \to & IA\left(\Phi_{n}\right) & \to & Aut\left(\Phi_{n}\right) & \to & GL_{n}\left(\mathbb{Z}\right) & \to & 1\\
 &  &  & \searrow & \downarrow &  & \downarrow\\
 &  &  &  & Aut(\hat{\Phi}_{n}) & \to & GL_{n}(\hat{\mathbb{Z}})
\end{array}
\]
gives rise to the commutative exact diagram (see Lemma 2.1 in \cite{key-5})
\[
\begin{array}{ccccccc}
\widehat{IA\left(\Phi_{n}\right)} & \to & \widehat{Aut\left(\Phi_{n}\right)} & \to & \widehat{GL_{n}\left(\mathbb{Z}\right)} & \to & 1\\
 & \searrow & \downarrow &  & \downarrow\\
 &  & Aut(\hat{\Phi}_{n}) & \to & GL_{n}(\hat{\mathbb{Z}}) & .
\end{array}
\]
Hence, by using the fact that $\widehat{GL_{n}\left(\mathbb{Z}\right)}\to GL_{n}(\hat{\mathbb{Z}})$
is injective for $n\geq3$, one can obtain that $C\left(\Phi_{n}\right)$
is an image of $C\left(IA\left(\Phi_{n}\right),\Phi_{n}\right)$.
Thus, for investigating $C\left(\Phi_{n}\right)$ it seems to be worthwhile
to investigate $C\left(IA\left(\Phi_{n}\right),\Phi_{n}\right)$. 

The first goal of the present paper is to prove the following theorem:
\begin{thm}
\label{thm:main}For every $n\geq4$, the group $C\left(IA\left(\Phi_{n}\right),\Phi_{n}\right)$
contains a subgroup $C$ which satisfies the following properties: 
\begin{itemize}
\item $C$ is isomorphic to a product $C=\prod_{i=1}^{n}C_{i}$ of $n$
copies of
\[
C_{i}\cong\ker(\widehat{SL_{n-1}\left(\mathbb{Z}[x^{\pm1}]\right)}\to SL_{n-1}(\widehat{\mathbb{Z}[x^{\pm1}]})).
\]
\item $C$ is a direct factor of $C\left(IA\left(\Phi_{n}\right),\Phi_{n}\right)$,
i.e. there is a normal subgroup $N\vartriangleleft C\left(IA\left(\Phi_{n}\right),\Phi_{n}\right)$
such that $C\left(IA\left(\Phi_{n}\right),\Phi_{n}\right)=N\times C$.
\end{itemize}
\end{thm}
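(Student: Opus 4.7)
The plan is to exploit the Bachmuth--Magnus embedding of $IA(\Phi_n)$ into a matrix group over $R_n=\mathbb{Z}[t_1^{\pm1},\dots,t_n^{\pm1}]$ in order to locate $n$ natural copies of $SL_{n-1}(\mathbb{Z}[x^{\pm1}])$ inside $IA(\Phi_n)$. For each $i$ I would take $\Gamma_i\le IA(\Phi_n)$ to be the subgroup of IA-automorphisms that fix $x_i$ and induce a determinant-$1$ transformation on the abelianization of the normal closure of $\{x_j:j\ne i\}$, which, by Fox calculus, is a free $\mathbb{Z}[t_i^{\pm1}]$-module of rank $n-1$. Bachmuth's faithful representation then makes the isomorphism $\Gamma_i\cong SL_{n-1}(\mathbb{Z}[x^{\pm1}])$ explicit, with $x\leftrightarrow t_i$.

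The second step is to realise the congruence kernel $C_i$ of $\Gamma_i$ as a subgroup of $C(IA(\Phi_n),\Phi_n)$. The inclusion $\Gamma_i\hookrightarrow IA(\Phi_n)$ yields a map $\hat\Gamma_i\to\widehat{IA(\Phi_n)}$ which, combined with the action on $\hat\Phi_n$, fits into the diagram
\[
\hat\Gamma_i\;=\;\widehat{SL_{n-1}(\mathbb{Z}[x^{\pm1}])}\;\longrightarrow\;\widehat{IA(\Phi_n)}\;\longrightarrow\;Aut(\hat\Phi_n)
\]
whose second arrow factors through $SL_{n-1}(\widehat{\mathbb{Z}[x^{\pm1}]})$. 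Hence $C_i$ is carried into the congruence kernel of $IA(\Phi_n)$, and an explicit Magnus-matrix computation verifies that the map $C_i\to C(IA(\Phi_n),\Phi_n)$ is injective.

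Next I would show that the images of $C_1,\dots,C_n$ generate an internal direct product $C=\prod_{i=1}^{n}C_i$ inside the congruence kernel. Pairwise commutation should come from a careful analysis of the Magnus matrices: commutators of elements of $\Gamma_i$ and $\Gamma_j$ for $i\ne j$ that actually lie in $C_i$ and $C_j$ are forced to be trivial. To separate the factors I would use, for each $i$, the retraction $\rho_i:\Phi_n\twoheadrightarrow\langle x_j:j\ne i\rangle\cong\Phi_{n-1}$; the induced map on automorphism groups kills every $C_j$ with $j\ne i$ and restricts to the identity on $C_i$. Assembling these retractions into a continuous homomorphism $C(IA(\Phi_n),\Phi_n)\twoheadrightarrow C$ produces the required normal complement $N=\ker(\text{assembly})$.

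The main obstacle will be the coherent splitting in the last step: checking both that $C_i\cap\prod_{j\ne i}C_j=\{1\}$ inside a group as inaccessible as the congruence kernel, and that the $n$ retractions assemble into a well-defined continuous homomorphism, requires delicate bookkeeping of the Magnus representation through profinite completion. By comparison, the identification of each $\Gamma_i$ and the computation of its congruence kernel are relatively mechanical once the Bachmuth embedding is in place and the standard congruence-subgroup theory of $SL_{n-1}(\mathbb{Z}[x^{\pm1}])$ for $n-1\ge 3$ is invoked.
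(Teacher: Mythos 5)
Your outline has the right general shape (Magnus embedding, copies of linear groups over $\mathbb{Z}[x^{\pm1}]$ indexed by the generators, retractions to split off each factor), but it contains a misidentification at the very first step that hides the main difficulty. No copy of the full group $SL_{n-1}(\mathbb{Z}[x^{\pm1}])$ sits inside $IA(\Phi_n)$: under the Magnus embedding an element of $IA(\Phi_n)$ is $I_n+A$ with $A\vec{\sigma}=\vec{0}$ and all entries of $A$ in the augmentation ideal, so the subgroup you can actually locate is the principal congruence subgroup $GL_{n-1}\left(\mathbb{Z}[x_i^{\pm1}],(x_i-1)\mathbb{Z}[x_i^{\pm1}]\right)$ (the paper's $IGL_{n-1,i}$ restricted to one variable), not $SL_{n-1}(\mathbb{Z}[x^{\pm1}])$ itself. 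Consequently the isomorphism $C_i\cong\ker(\widehat{SL_{n-1}(\mathbb{Z}[x^{\pm1}])}\to SL_{n-1}(\widehat{\mathbb{Z}[x^{\pm1}]}))$ is not a definition but a theorem: one must show (i) that the congruence topology that $IA(\Phi_n)$ induces on this subgroup (via the subgroups $IG_m$) agrees with its intrinsic profinite topology, and (ii) that $\widehat{GL_{n-1}(S,\sigma S)}\to\widehat{GL_{n-1}(S)}$ is injective, plus a $GL$-versus-$SL$ comparison through $SK_1$. Step (ii) is exactly where the paper's main lemma enters — the statement that $GL_{n-1}(R_n,\sigma_i R_n)\cap E_{n-1}(R_n,H_{n,m^2})\subseteq\langle IA(\Phi_n)^m\rangle$ — and its proof occupies the last two sections. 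Your proposal has no substitute for it; "the standard congruence-subgroup theory of $SL_{n-1}(\mathbb{Z}[x^{\pm1}])$" does not supply it, since the issue is precisely to compare two a priori different topologies on a congruence subgroup.

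Two further points would break as written. First, your separating retractions go the wrong way: $C_i$ lives over the ring $\mathbb{Z}[x_i^{\pm1}]$, so the retraction that restricts to the identity on it must \emph{retain} $x_i$ and kill the other variables (the paper's $\rho_i$ is induced by $x_j\mapsto1$ for $j\neq i$ on the Magnus matrices); the map $\Phi_n\twoheadrightarrow\langle x_j:j\neq i\rangle$ that you propose kills $x_i$ and would annihilate $C_i$ rather than fix it, and in any case a retraction of $\Phi_n$ does not by itself induce a homomorphism of $IA(\Phi_n)$. Second, the directness of the product. Your plan produces at best iterated semidirect products $C(IA(\Phi_n),\Phi_n)=(\ker\hat{\rho}_i\cap C(\cdots))\rtimes C_i$; the claim that the $C_i$ pairwise commute and that the complement is a \emph{direct} factor is obtained in the paper only by proving that each $C_i$ is central in $\widehat{IA(\Phi_n)}$, which is another substantial argument resting on the centrality of $SK_1(R_n,H_{m^2};n-1)$ and again on the main lemma. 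Asserting that commutators "are forced to be trivial" by inspecting Magnus matrices is not a proof, since the elements of $C_i$ are profinite limits, not literal matrices.
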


Using techniques of Kassabov and Nikolov in \cite{key-24-1} one can
show that the subgroups $C_{i}$ are not finitely generated. So as
an immediate corollary, we obtain the following theorem:
\begin{thm}
\label{cor:not finitely}For every $n\geq4$, the group $C\left(IA\left(\Phi_{n}\right),\Phi_{n}\right)$
is not finitely generated.
\end{thm}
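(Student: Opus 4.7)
By Theorem \ref{thm:main}, there is a decomposition $C(IA(\Phi_n),\Phi_n) = N \times C$ with $C \cong \prod_{i=1}^n C_i$ and
\[
C_i \cong K := \ker\bigl(\widehat{SL_{n-1}(\mathbb{Z}[x^{\pm1}])}\to SL_{n-1}(\widehat{\mathbb{Z}[x^{\pm1}]})\bigr).
\]
Every continuous image of a topologically finitely generated profinite group is itself topologically finitely generated, so by projecting first onto $C$ and then onto a single coordinate, the theorem follows as soon as one proves that the single profinite group $K$ is not topologically finitely generated. This is the only genuine step in the proof; everything else is a bookkeeping consequence of Theorem \ref{thm:main}.

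The standard criterion is: a profinite group $H$ fails to be topologically finitely generated if and only if, for every integer $N$, there is a continuous finite quotient of $H$ whose minimum number of generators exceeds $N$. Writing $R = \mathbb{Z}[x^{\pm1}]$ and $m = n-1\ge 3$, the techniques of Kassabov and Nikolov in \cite{key-24-1} apply to the arithmetic-like group $SL_m(R)$: they produce an infinite family of continuous finite quotients of $\widehat{SL_m(R)}$ of unbounded minimum generator-rank, each of which is trivial on the image of $SL_m(R)$ inside $SL_m(\widehat{R})$. Any such quotient factors through $K$; consequently $d(K) = \infty$, and hence $d\bigl(C(IA(\Phi_n),\Phi_n)\bigr) = \infty$ as well.

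The main obstacle is the construction of those continuous quotients of $\widehat{SL_m(R)}$, i.e.\ the exhibition of finite \emph{non-congruence} quotients of $SL_m(R)$ that do not factor through $SL_m(R/I)$ for any ideal $I$ of finite index, and which moreover have unbounded generator-rank. This is genuinely delicate because for $m\ge 3$ Suslin-type results force many of the ``obvious'' finite quotients of $SL_m(R)$ to be congruence. The Kassabov--Nikolov methods circumvent this by combining bounded-generation estimates in elementary subgroups $E_m(R)$ with carefully chosen central/virtual-abelian extensions coming from the ring structure of $R$. Once one verifies that their argument, originally framed for closely related settings, transfers to the Laurent-polynomial ring $\mathbb{Z}[x^{\pm1}]$, the conclusion is immediate via the counting criterion above.
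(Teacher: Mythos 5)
Your reduction coincides with the paper's: by Proposition \ref{cor:C semi-direct} the congruence kernel surjects onto $C_{i}$, so everything rests on showing that $K=\ker(\widehat{SL_{n-1}(\mathbb{Z}[x^{\pm1}])}\to SL_{n-1}(\widehat{\mathbb{Z}[x^{\pm1}]}))$ is not topologically finitely generated, and the criterion via finite continuous quotients of unbounded generator rank is also what the paper uses. The gap is that you never produce those quotients: the entire substance of the theorem is deferred to the assertion that the Kassabov--Nikolov construction ``transfers'' to $\mathbb{Z}[x^{\pm1}]$, and the mechanism you sketch for it (finite non-congruence quotients of $SL_{n-1}(R)$ built from bounded-generation estimates and central extensions) is not the one that is actually needed. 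The argument the paper runs is K-theoretic. Writing $S=\mathbb{Z}[x^{\pm1}]$, one first uses Suslin's surjective stability (valid because $\dim S=2$ and $n-1\geq3$) to map $K=\underleftarrow{\lim}\,SK_1(S,J;n-1)$ onto $\underleftarrow{\lim}\,SK_1(S,J)$; then Bass's theorem that $SK_1(S,J)\to SK_1(S,J')$ is surjective for $J\subseteq J'$ of finite index, which reduces the problem to exhibiting single ideals $J$ for which the minimal number of generators of $SK_1(S,J)$ is arbitrarily large; then the exact sequence $K_2(S)\to K_2(S/J)\to SK_1(S,J)\to SK_1(S)=1$ together with Quillen's computation $K_2(S)=K_2(\mathbb{Z})\oplus K_1(\mathbb{Z})$ (a group of order $4$), which reduces further to $K_2(S/J)$; and finally the Dennis--Stein computation that $K_2(\mathbb{Z}[y]/(p^2,y^{p^l}))$ is elementary abelian of rank at least $l$, transported to a quotient of $S$ via $x\mapsto y+1$ after checking that $y+1$ is a unit modulo $(p^2,y^{p^l})$. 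None of these steps is routine bookkeeping, and several of them (the stability bound through $\dim S$, the finiteness of $K_2(S)$, the unit check) are specific to the Laurent ring, so ``verifying the transfer'' is precisely the proof.

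There is also a logical slip worth flagging: a nontrivial finite continuous quotient of $\widehat{SL_{n-1}(S)}$ cannot ``factor through $K$'', since $K$ is a closed (indeed central) subgroup rather than a quotient. The finite groups of unbounded rank you need are quotients of $K$ itself, namely the relative groups $SK_1(S,J;n-1)=SL_{n-1}(S,J)/E_{n-1}(S,J)$, and one must know that $K$ surjects onto each of them (this is where the surjectivity statements of Suslin and Bass enter). As written, your counting criterion is applied to objects that do not exist.
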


It will be shown in an upcoming paper that when $\Gamma$ is a finitely
generated nilpotent group (of any class), then $C\left(IA\left(\Gamma\right),\Gamma\right)=\left\{ e\right\} $
is always trivial. So the free metabelian cases behave completely
different from nilpotent cases. This result gives the impression that
$C\left(IA\left(\Phi_{n}\right),\Phi_{n}\right)$ is ``big''. On
the other hand, we have the following theorem (see \cite{key-6-2}):
\begin{thm}
\label{thm:central}For every $n\geq4$, the group $C\left(IA\left(\Phi_{n}\right),\Phi_{n}\right)$
is central in $\widehat{IA\left(\Phi_{n}\right)}$.
\end{thm}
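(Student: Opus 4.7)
The plan is to use the Fox-derivative/Bachmuth embedding of $IA(\Phi_{n})$ into a linear group over a commutative ring, and to reduce the centrality statement to the classical centrality of the congruence kernel of $SL_{m}$ for $m\geq3$ over appropriate rings. First I would reduce to the dense subgroup: since $IA(\Phi_{n})$ is dense in $\widehat{IA(\Phi_{n})}$ and commutation is a closed condition, it suffices to prove that for every $\alpha\in IA(\Phi_{n})$ and every $c\in C\bigl(IA(\Phi_{n}),\Phi_{n}\bigr)$ one has $[\alpha,c]=1$ in $\widehat{IA(\Phi_{n})}$. Then I would invoke the Fox-derivative embedding $\rho:IA(\Phi_{n})\hookrightarrow GL_{n}(\mathbb{Z}[\mathbb{Z}^{n}])$ given by the Jacobian $\bigl(\partial\alpha(x_{i})/\partial x_{j}\bigr)$. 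Because an IA-automorphism is determined by its action on $\Phi_{n}'$ as a $\mathbb{Z}[\mathbb{Z}^{n}]$-module, the induced action on any finite characteristic quotient $\Phi_{n}/M_{m}$ factors through a matrix group over $\mathbb{Z}[\mathbb{Z}^{n}]$ modulo a finite-index ideal; this should identify $C\bigl(IA(\Phi_{n}),\Phi_{n}\bigr)$ with a subgroup of the ring-theoretic congruence kernel of $\rho(IA(\Phi_{n}))$ sitting inside its profinite completion.

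The structural Theorem~\ref{thm:main} then takes over. It writes $C\bigl(IA(\Phi_{n}),\Phi_{n}\bigr)=N\times\prod_{i=1}^{n}C_{i}$ with each factor $C_{i}\cong\ker\bigl(\widehat{SL_{n-1}(\mathbb{Z}[x^{\pm1}])}\to SL_{n-1}(\widehat{\mathbb{Z}[x^{\pm1}]})\bigr)$. Since $n-1\geq3$, the Bass--Milnor--Serre/Matsumoto theorem, together with Suslin's $K_{2}$-computation for Laurent polynomial rings, asserts that each such $SL_{n-1}$-congruence kernel is central in its ambient $\widehat{SL_{n-1}(\mathbb{Z}[x^{\pm1}])}$. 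Pulling this back through $\rho$ shows that every $\alpha\in IA(\Phi_{n})$ whose $\rho$-image lies in the $i$-th $SL_{n-1}$-block centralises $C_{i}$; a direct check on the remaining Magnus-type generators of $IA(\Phi_{n})$, together with an analogous argument handling the complementary direct factor $N$, would extend this commutation to all of $\widehat{IA(\Phi_{n})}$.

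The hardest step will be the precise compatibility between the two notions of congruence: the abstract one on $IA(\Phi_{n})$, defined via characteristic finite-index subgroups of $\Phi_{n}$, and the ring-theoretic one on $GL_{n}(\mathbb{Z}[\mathbb{Z}^{n}])$, defined via finite-index ideals. One must show that every finite characteristic quotient of $\Phi_{n}$ is detected by some finite-index ideal of $\mathbb{Z}[\mathbb{Z}^{n}]$ in a way that translates automorphism congruence into matrix congruence, and that no extra elements appear when passing to the profinite completion of $IA(\Phi_{n})$. Verifying that the complementary direct factor $N$ from Theorem~\ref{thm:main} also sits in a controlled abelian quotient of $\widehat{IA(\Phi_{n})}$ on which every Magnus-type generator acts trivially is a parallel subtlety that would be needed to complete the proof.
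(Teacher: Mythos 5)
Your overall strategy --- Magnus embedding, reduction to commutation of $C$ with the dense copy of $IA(\Phi_n)$, and an appeal to the $K$-theoretic centrality of $SK_1$ for $SL_{n-1}$ over Laurent rings --- points in the right direction, but the two steps you treat as routine are where all the work lies, and one of them is missing an essential ingredient. Knowing that $\ker(\widehat{SL_{n-1}(\mathbb{Z}[x^{\pm1}])}\to SL_{n-1}(\widehat{\mathbb{Z}[x^{\pm1}]}))$ is central in $\widehat{SL_{n-1}(\mathbb{Z}[x^{\pm1}])}$ only gives commutation of $C_i$ with the image of $\widehat{GL_{n-1}(S,\sigma_i S)}$, which is a small retract of $\widehat{IA(\Phi_n)}$. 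To get commutation with all of $\widehat{IA(\Phi_n)}$ one must work with $(n-1)\times(n-1)$ matrices over the full ring $R_n=\mathbb{Z}[x_1^{\pm1},\ldots,x_n^{\pm1}]$, use the centrality of $SK_1(R_n,H_{m^2};n-1)$ in $GL_{n-1}(R_n)/E_{n-1}(R_n,H_{m^2})$ to place each commutator $[E_{r,s,t},\lambda]$ with a Magnus generator inside $E_{n-1}(R_n,H_{m^2})\cap GL_{n-1}(R_n,\sigma_i R_n)$, and then --- this is the essential missing ingredient --- prove that this intersection lies in $\left\langle IA(\Phi_n)^m\right\rangle$ (Lemma \ref{lem:main}), so that the commutators vanish in the profinite completion. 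Without that lemma your ``direct check on the remaining Magnus-type generators'' cannot close. Moreover, for the generators $E_{n,s,t}$, which do not fix the $n$-th row, even setting up the commutator computation requires replacing $\lambda$ by a representative supported on a $2\times2$ block (Lemma \ref{prop:rep}), which again rests on the Steinberg-symbol analysis of Corollary \ref{cor:important-cor} and on Lemma \ref{lem:main}.

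Second, your argument is at best a proof that $\prod_i C_i$ is central, which is all this paper establishes; the full statement of Theorem \ref{thm:central} also requires the complementary factor to be central, and that is proved in \cite{key-6-2} by further arguments. You cannot cite Theorem \ref{thm:main} for the decomposition $C(IA(\Phi_n),\Phi_n)=N\times\prod_i C_i$ and then ``handle $N$ analogously'': the directness of that product is itself deduced from the centrality of the $C_i$, so this is circular, and no mechanism is offered for why elements of $N$ --- which live in $\bigcap_i\ker\hat{\rho}_i$ and admit no $SL_{n-1}$-block description --- should commute with the Magnus generators.
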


We remark that in the case of arithmetic groups, the congruence kernel
is known to have a dichotomous behavior: it is central if and only
if it is finite (see \cite{key-50}, Theorem 2). So in some sense,
the congruence kernel $C\left(IA\left(\Phi_{n}\right),\Phi_{n}\right)$
for $n\geq4$ has an intermediate behavior: central, but not finite.
The latter is similar to the behavior of the congruence kernel
\[
\ker(\widehat{SL_{d}(\mathbb{Z}[x])}\to SL_{d}(\widehat{\mathbb{Z}[x]}))\,\,\,\,\textrm{for}\,\,\,\,d\geq3
\]
that was investigated in \cite{key-24-1} (see Theorem 4.1). 

Theorem \ref{thm:central} has already been stated in \cite{key-6-2}.
However, a substantial portion of the proof of Theorem \ref{thm:central}
appears in this paper - this is the second goal of this present paper.
To be more precise, all the steps of the proof of Theorem \ref{thm:central}
that involve arguments in Algebraic K-theory are given in this paper,
and in \cite{key-6-2} we describe the structure of the proof, and
present all the other steps. As will be presented in $\varoint$\ref{sec:Finishing},
the steps that are given in this present paper by themselves, will
be sufficient for showing that the subgroup $C\leq C(IA(\Phi_{n}),\Phi_{n})$
that presented in Theorem \ref{thm:main} is contained in the center
of $\widehat{IA(\Phi_{n})}$. We remark that the main results in this
paper that are used in \cite{key-6-2} in order to prove Theorem \ref{thm:central}
are Lemma \ref{lem:main} and our work in $\varoint$\ref{sec:Finishing}
(see Remark \ref{rem:IA_m-IA^m} for a more precise description).
The following problem is still open:
\begin{problem}
\label{prob:Is2-1}Is $C\left(IA\left(\Phi_{n}\right),\Phi_{n}\right)=\prod_{i=1}^{n}C_{i}$
or does it contain more elements?
\end{problem}

\begin{rem}
\label{rem:action}Considering the action of $Aut\left(\Phi_{n}\right)$
on $IA\left(\Phi_{n}\right)$ by conjugation, we have a natural map
$Aut\left(\Phi_{n}\right)\to Aut(IA\left(\Phi_{n}\right))$ in which
the copy of $IA\left(\Phi_{n}\right)$ in $Aut\left(\Phi_{n}\right)$
is mapped onto $IA\left(\Phi_{n}\right)\to Inn(IA\left(\Phi_{n}\right))$.
Denote now $IA_{n,m}=\cap\left\{ N\vartriangleleft IA\left(\Phi_{n}\right)\,|\,[IA\left(\Phi_{n}\right):N]\,|\,m\right\} $.
Then as for every $n\geq4$, the group $IA\left(\Phi_{n}\right)$
is finitely generated \cite{key-24}, the characteristic subgroups
$IA_{n,m}\leq IA\left(\Phi_{n}\right)$ are of finite index. Hence
$\widehat{IA\left(\Phi_{n}\right)}=\underleftarrow{\lim}_{m\in\mathbb{N}}(IA\left(\Phi_{n}\right)/IA_{n,m})$
and therefore the action of $Aut\left(\Phi_{n}\right)$ on $IA\left(\Phi_{n}\right)$
induces an action of $Aut\left(\Phi_{n}\right)$ on $\widehat{IA\left(\Phi_{n}\right)}$
so we have a map $Aut\left(\Phi_{n}\right)\to\underleftarrow{\lim}_{m\in\mathbb{N}}Aut(IA\left(\Phi_{n}\right)/IA_{n,m}))\leq Aut(\widehat{IA\left(\Phi_{n}\right)})$.
The latter gives rise to a map
\[
\widehat{Aut\left(\Phi_{n}\right)}\to\underleftarrow{\lim}_{m\in\mathbb{N}}Aut(IA\left(\Phi_{n}\right)/IA_{n,m}))\leq Aut(\widehat{IA\left(\Phi_{n}\right)})
\]
that actually gives an action of $\widehat{Aut\left(\Phi_{n}\right)}$
on $\widehat{IA\left(\Phi_{n}\right)}$, such that the closure $\overline{IA\left(\Phi_{n}\right)}$
of $IA\left(\Phi_{n}\right)$ in $\widehat{Aut\left(\Phi_{n}\right)}$
acts trivially on $Z(\widehat{IA\left(\Phi_{n}\right)})$, the center
of $\widehat{IA\left(\Phi_{n}\right)}$. Thus, as we have $\widehat{Aut\left(\Phi_{n}\right)}/\overline{IA\left(\Phi_{n}\right)}=\widehat{GL_{n}\left(\mathbb{Z}\right)}$
we obtain a natural action of $\widehat{GL_{n}\left(\mathbb{Z}\right)}$
on $Z(\widehat{IA\left(\Phi_{n}\right)})$. It will be clear from
the description in the paper that the permutation matrices permute
the copies $C_{i}$ through this natural action. 
\end{rem}

The aforementioned behavior of $C\left(IA\left(\Phi_{n}\right),\Phi_{n}\right)$
for $n\geq4$ is also different from the behavior of $C\left(IA\left(\Phi_{n}\right),\Phi_{n}\right)$
for $n=2,3$. More precisly, as $C(\mathbb{Z}^{3})=\{e\}$, similar
arguments show that when $n=3$ the group $C\left(\Phi_{3}\right)$
is an image of $C\left(IA\left(\Phi_{3}\right),\Phi_{3}\right)$.
So as $C\left(\Phi_{3}\right)\supseteq\hat{F}_{\omega}$ \cite{key-7},
we obtain that $C\left(IA\left(\Phi_{3}\right),\Phi_{3}\right)$ is
infinite non-abelian. On the other hand, regarding the case $n=2$,
it is known that $IA\left(\Phi_{2}\right)=Inn(\Phi_{2})$ (see \cite{key-13})
and it is known that the center of $\Phi_{2}$ and $\hat{\Phi}_{2}$
is trivial (see \cite{key-6}). It follows that we have a canonical
isomorphism 
\[
\widehat{IA\left(\Phi_{2}\right)}=\widehat{Inn(\Phi_{2})}\cong\hat{\Phi}_{2}\cong Inn(\hat{\Phi}_{2})\leq Aut(\hat{\Phi}_{2})
\]
so $C\left(IA\left(\Phi_{2}\right),\Phi_{2}\right)=\left\{ e\right\} $
is trivial. Our results show that when $n\geq4$, the behavior of
$C\left(IA\left(\Phi_{n}\right),\Phi_{n}\right)$ stabilizes and it
is abelian, but not trivial.

We also note that considering our basic motivation, as $C\left(\Phi_{n}\right)$
is an image of $C\left(IA\left(\Phi_{n}\right),\Phi_{n}\right)$ we
actually obtain from Theorem \ref{thm:central} that when $n\geq4$,
the situation is dramatically different from the cases of $n=2,3$
described above, and:
\begin{thm}
\label{thm:full}For every $n\geq4$, the group $C\left(\Phi_{n}\right)$
is abelian.
\end{thm}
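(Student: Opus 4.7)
The plan is to deduce Theorem \ref{thm:full} directly from Theorem \ref{thm:central} together with the reduction already sketched in the introduction. The strategy has two steps: first realize $C\left(\Phi_{n}\right)$ as a homomorphic image of $C\left(IA\left(\Phi_{n}\right),\Phi_{n}\right)$, and then invoke the elementary fact that being abelian is preserved under taking quotients.

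The first step is a diagram chase. From the commutative diagram with exact top row
\[
\begin{array}{ccccccc}
\widehat{IA\left(\Phi_{n}\right)} & \to & \widehat{Aut\left(\Phi_{n}\right)} & \to & \widehat{GL_{n}\left(\mathbb{Z}\right)} & \to & 1\\
 & \searrow & \downarrow &  & \downarrow &  & \\
 &  & Aut(\hat{\Phi}_{n}) & \to & GL_{n}(\hat{\mathbb{Z}}) &  &
\end{array}
\]
take an arbitrary $x\in C\left(\Phi_{n}\right)=\ker\bigl(\widehat{Aut\left(\Phi_{n}\right)}\to Aut(\hat{\Phi}_{n})\bigr)$. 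Its image in $GL_{n}(\hat{\mathbb{Z}})$ is trivial by commutativity, so its projection to $\widehat{GL_{n}\left(\mathbb{Z}\right)}$ lies in the kernel of $\widehat{GL_{n}\left(\mathbb{Z}\right)}\to GL_{n}(\hat{\mathbb{Z}})$. For $n\geq 3$ this kernel is trivial by the classical theorem of Mennicke and Bass--Lazard--Serre. Exactness of the top row then lifts $x$ to some $y\in\widehat{IA\left(\Phi_{n}\right)}$, and commutativity of the left triangle forces $y\in C\left(IA\left(\Phi_{n}\right),\Phi_{n}\right)$. Thus the map induced by the inclusion $IA\left(\Phi_{n}\right)\hookrightarrow Aut\left(\Phi_{n}\right)$ restricts to a surjection
\[
C\left(IA\left(\Phi_{n}\right),\Phi_{n}\right)\twoheadrightarrow C\left(\Phi_{n}\right).
\]

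The second and final step is to apply Theorem \ref{thm:central}: for $n\geq 4$ the group $C\left(IA\left(\Phi_{n}\right),\Phi_{n}\right)$ is central, and therefore abelian, in $\widehat{IA\left(\Phi_{n}\right)}$. Since the homomorphic image of an abelian group is abelian, the surjection above gives that $C\left(\Phi_{n}\right)$ is abelian.

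The real difficulty is not in the present deduction but is entirely absorbed into Theorem \ref{thm:central}, whose proof is split between the algebraic $K$-theoretic analysis carried out in this paper and the companion paper \cite{key-6-2}. Once centrality of $C\left(IA\left(\Phi_{n}\right),\Phi_{n}\right)$ is in hand, the only work required for Theorem \ref{thm:full} is the short packaging above: classical CSP for $GL_{n}\left(\mathbb{Z}\right)$ eliminates the $GL_{n}$-quotient, and abelianness descends to the image.
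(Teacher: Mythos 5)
Your proposal is correct and follows exactly the paper's own route: the introduction establishes that $C\left(\Phi_{n}\right)$ is an image of $C\left(IA\left(\Phi_{n}\right),\Phi_{n}\right)$ via the same diagram chase using injectivity of $\widehat{GL_{n}\left(\mathbb{Z}\right)}\to GL_{n}(\hat{\mathbb{Z}})$ for $n\geq3$, and then Theorem \ref{thm:central} gives centrality, hence abelianness, which passes to the quotient.
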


We remark that despite the result of the latter theorem, we do not
know whether $C\left(\Phi_{n}\right)$ is also not finitely generated.
In fact we cannot even prove at this point that it is not trivial. 

The paper is organized as follows. For a ring $R$, ideal $H\vartriangleleft R$
and $d\in\mathbb{N}$ denote 
\[
GL_{d}(R,H)=\ker(GL_{d}(R)\to GL_{d}(R/H)).
\]
For $n\in\mathbb{N}$ denote also the ring $R_{n}=\mathbb{Z}[x_{1}^{\pm1},\ldots,x_{n}^{\pm1}]=\mathbb{Z}[\mathbb{Z}^{n}]$.
Using the Magnus embedding of $IA\left(\Phi_{n}\right)$, in which
$IA\left(\Phi_{n}\right)$ can be viewed as
\[
IA\left(\Phi_{n}\right)=\left\{ A\in GL_{n}(R_{n})\,|\,A\left(\begin{array}{c}
x_{1}-1\\
\vdots\\
x_{n}-1
\end{array}\right)=\left(\begin{array}{c}
x_{1}-1\\
\vdots\\
x_{n}-1
\end{array}\right)\right\} 
\]
we obtain in $\varoint$\ref{sec:some-properties}, for every $1\leq i\leq n$,
a natural embedding 
\[
GL_{n-1}(R_{n},(x_{i}-1)R_{n})\hookrightarrow IA\left(\Phi_{n}\right)
\]
and a surjective natural homomorphism
\[
IA\left(\Phi_{n}\right)\overset{\rho_{i}}{\twoheadrightarrow}GL_{n-1}(\mathbb{Z}[x_{i}^{\pm1}],(x_{i}-1)\mathbb{Z}[x_{i}^{\pm1}])
\]
in which the obvious copy of the subgroup $GL_{n-1}(\mathbb{Z}[x_{i}^{\pm1}],(x_{i}-1)\mathbb{Z}[x_{i}^{\pm1}])$
in $GL_{n-1}(R_{n},(x_{i}-1)R_{n})$ is mapped onto itself via the
composition map (Proposition \ref{prop:semi-direct}). This description,
combined with some classical notions and results from Algebraic K-theory
presented in $\varoint$\ref{sec:K-theory}, enables us in $\varoint$\ref{sec: not f.g.}
to show that for every $n\geq4$ and $1\leq i\leq n$, the group $C\left(IA\left(\Phi_{n}\right),\Phi_{n}\right)$
contains a copy of
\begin{eqnarray}
C_{i} & \cong & \ker(\widehat{GL_{n-1}(\mathbb{Z}[x_{i}^{\pm1}],(x_{i}-1)\mathbb{Z}[x_{i}^{\pm1}])}\to GL_{n-1}(\widehat{\mathbb{Z}[x_{i}^{\pm1}]}))\nonumber \\
 & \cong & \ker(\widehat{SL_{n-1}(\mathbb{Z}[x_{i}^{\pm1}])}\to SL_{n-1}(\widehat{\mathbb{Z}[x_{i}^{\pm1}]}))\label{eq: iso}
\end{eqnarray}
such that $C\left(IA\left(\Phi_{n}\right),\Phi_{n}\right)$ is maped
onto $C_{i}$ through the map $\hat{\rho}_{i}$ which is induced by
$\rho_{i}$. The second isomorphism in Equation (\ref{eq: iso}) is
obtained by using a main lemma, Lemma \ref{lem:main}, combined with
some classical results from Algebraic K-theory (Propositions \ref{prop:injective}
and \ref{prop:S-G}). The proof of Lemma \ref{lem:main} will be postponed
until the end of the paper. In particular, we get that for every $1\leq i\leq n$
one has
\[
C\left(IA\left(\Phi_{n}\right),\Phi_{n}\right)=(C\left(IA\left(\Phi_{n}\right),\Phi_{n}\right)\cap\ker\hat{\rho}_{i})\rtimes C_{i}.
\]
(see Proposition \ref{cor:C semi-direct}). In $\varoint$\ref{sec: not f.g.}
we aslo show that the copies $C_{i}$ lie in $\ker\hat{\rho}_{j}$
whenever $j\neq i$ (Proposition \ref{prop:ker-contain}). In particular
we get that the copies $C_{i}$ intersect each other trivially. Then,
following the techniques of Kassabov and Nikolov in \cite{key-24-1}
we show that $C_{i}$ is not finitely generated, and therefore deduce
that $C\left(IA\left(\Phi_{n}\right),\Phi_{n}\right)$ is not finitely
generated either, i.e. we prove Theorem \ref{cor:not finitely} (see
the end of $\varoint$\ref{sec: not f.g.}). Then, in $\varoint$\ref{sec:Finishing}
we show that the copies $C_{i}$ lie in the center of $\widehat{IA\left(\Phi_{n}\right)}$,
using classical results from Algebraic K-theory and the main lemma,
Lemma \ref{lem:main}. In particular, using the aforementioned results,
we obtain that
\[
C\left(IA\left(\Phi_{n}\right),\Phi_{n}\right)=(C\left(IA\left(\Phi_{n}\right),\Phi_{n}\right)\cap_{i=1}^{n}\ker\hat{\rho}_{i})\times\prod_{i=1}^{n}C_{i}.
\]
This completes the proof of Theorem \ref{thm:main}. 

After that we turn to prove the main lemma, Lemma \ref{lem:main}.
In $\varoint$\ref{sec:elementary} we introduce some elements in
$\left\langle IA\left(\Phi_{n}\right)^{m}\right\rangle $ which are
needed for the proof of Lemma \ref{lem:main}. In $\varoint$\ref{sec:Second},
using classical results from algebraic K-theory, we end the paper
by proving Lemma \ref{lem:main} which asserts that for every $1\leq i\leq n$,
we have
\begin{equation}
GL_{n-1}(R_{n},(x_{i}-1)R_{n})\cap E{}_{n-1}\left(R_{n},H_{n,m^{2}}\right)\subseteq\left\langle IA\left(\Phi_{n}\right)^{m}\right\rangle \label{eq:intersect}
\end{equation}
where:
\begin{itemize}
\item $GL_{n-1}(R_{n},(x_{i}-1)R_{n})$ denotes its appropriate copy in
$IA\left(\Phi_{n}\right)$ described above.
\item $E_{n-1}\left(R_{n},H_{n,m^{2}}\right)$ is the subgroup of $E_{n-1}(R_{n})=\left\langle I_{n-1}+rE_{i,j}\,|\,r\in R_{n}\right\rangle $
which is generated as a normal subgroup by the elementary matrices
of the form $I_{n-1}+hE_{i,j}$ for $h\in H_{n,m^{2}}=\ker\left(R_{n}\to\mathbb{Z}_{m^{2}}[\mathbb{Z}_{m^{2}}^{n}]\right)$,
$1\leq i\neq j\leq n$. Here, $I_{n-1}$ is the $(n-1)\times(n-1)$
unit matrix and $E_{i,j}$ is the matrix which has $1$ in the $\left(i,j\right)$-th
entry and $0$ elsewhere.
\item The intersection in Inclusion (\ref{eq:intersect}) is obtained by
viewing the copy of $GL_{n-1}(R_{n},(x_{i}-1)R_{n})$ in $IA(\Phi_{n})$
as a subgroup of $GL_{n-1}(R_{n})$.
\end{itemize}
We note that as described above, Lemma \ref{lem:main} is used in
two places along the paper. It is used once to prove the second isomorphism
in Equation (\ref{eq: iso}). The second place is in the proof that
the group $C$ lies in the center of $\widehat{IA\left(\Phi_{n}\right)}$.
We also note that almost all the work that we do in order to show
that $C$ lies in the center of $\widehat{IA\left(\Phi_{n}\right)}$,
including Lemma \ref{lem:main} (but also most of $\varoint$\ref{sec:Finishing}),
is used in \cite{key-6-2} to prove Theorem \ref{thm:central} (see
Remark \ref{rem:IA_m-IA^m}). 

\textbf{Acknowledgements:} I wish to offer my thanks to my supervisor
during the research, Prof. Alexander Lubotzky, for his sensitive and
devoted guidance. During the period of the research, I was supported
by the Rudin foundation and, not concurrently, by NSF research training
grant (RTG). 

\section{\label{sec:K-theory}Some background in algebraic K-theory}

In this section we fix some notations and recall some definitions
and background in algebraic K-theory which will be used throughout
the paper. One can find more general information in the references
(\cite{key-31-2}, \cite{key-32}, \cite{key-31-1}). In this section
$R$ will always denote a commutative ring with identity. We start
with recalling the following notations. Let $R$ be a commutative
ring, $H\vartriangleleft R$ an ideal, and $d\in\mathbb{N}$. Then:
\begin{itemize}
\item $GL_{d}\left(R\right)=\left\{ A\in M_{n}\left(R\right)\,|\,\det\left(A\right)\in R^{*}\right\} $.
\item $SL_{d}\left(R\right)=\left\{ A\in GL_{d}\left(R\right)\,|\,\det\left(A\right)=1\right\} $.
\item $E_{d}\left(R\right)=\left\langle I_{d}+rE_{i,j}\,|\,r\in R,\,1\leq i\neq j\leq d\right\rangle $.
\item $GL_{d}\left(R,H\right)=\ker\left(GL_{d}\left(R\right)\to GL_{d}\left(R/H\right)\right)$.
\item $SL_{d}\left(R,H\right)=\ker\left(SL_{d}\left(R\right)\to SL_{d}\left(R/H\right)\right)$.
\item $E_{d}\left(R,H\right)$ = the normal subgroup of $E_{d}\left(R\right)$,
which is generated as a normal subgroup by the elementary matrices
of the form $I_{d}+hE_{i,j}$ for $h\in H$.
\end{itemize}
For every $d\geq3$, the subgroup $E_{d}\left(R,H\right)$ is normal
in $GL_{d}\left(R\right)$ (see Corollary 1.4 in \cite{key-33}).
Hence, we can consider the following groups:
\[
K_{1}\left(R;d\right)=GL_{d}\left(R\right)/E_{d}\left(R\right)\,\,\,\,\,\,\,\,K_{1}\left(R,H;d\right)=GL_{d}\left(R,H\right)/E_{d}\left(R,H\right)
\]
\[
SK_{1}\left(R;d\right)=SL_{d}\left(R\right)/E_{d}\left(R\right)\,\,\,\,\,\,\,\,SK_{1}\left(R,H;d\right)=SL_{d}\left(R,H\right)/E_{d}\left(R,H\right).
\]

We now go ahead with the following definition:
\begin{defn}
Let $R$ be a commutative ring, and $3\leq d\in\mathbb{N}$. We define
the ``Steinberg group'' $St_{d}\left(R\right)$ to be the group
which generated by the elements $x_{i,j}\left(r\right)$ for $r\in R$
and $1\leq i\neq j\leq d$, under the relations:
\begin{itemize}
\item $x_{i,j}\left(r_{1}\right)\cdot x_{i,j}\left(r_{2}\right)=x_{i,j}\left(r_{1}+r_{2}\right)$.
\item $\left[x_{i,j}\left(r_{1}\right),x_{j,k}\left(r_{2}\right)\right]=x_{i,k}\left(r_{1}\cdot r_{2}\right)$.
\item $\left[x_{i,j}\left(r_{1}\right),x_{k,l}\left(r_{2}\right)\right]=1$.
\end{itemize}
for every different $1\leq i,j,k,l\leq d$ and every $r_{1},r_{2}\in R$.
\end{defn}

As the elementary matrices $I_{d}+rE_{i,j}$ satisfy the relations
which define $St_{d}\left(R\right)$, the map $x_{i,j}\left(r\right)\mapsto I_{d}+rE_{i,j}$
defines a natural homomorphism $\phi_{d}:St_{d}\left(R\right)\to E{}_{d}\left(R\right)$.
The kernel of this map is denoted by $K_{2}\left(R;d\right)=\ker\left(\phi_{d}\right)$.
Now, for two invertible elements $u,v\in R^{*}$ and $1\leq i\neq j\leq d$
define the ``Steinberg symbol'' by
\begin{eqnarray*}
 &  & \left\{ u,v\right\} _{i,j}=h_{i,j}\left(uv\right)h_{i,j}\left(u\right)^{-1}h_{i,j}\left(v\right)^{-1}\in St_{d}\left(R\right)\\
\\
 &  & \textrm{where}\,\,\,\,h_{i,j}\left(u\right)=w_{i,j}\left(u\right)w_{i,j}\left(-1\right)\\
\textrm{} &  & \textrm{and\,\,\,}\,\,\,\,w_{i,j}\left(u\right)=x_{i,j}\left(u\right)x_{j,i}\left(-u^{-1}\right)x_{i,j}\left(u\right).
\end{eqnarray*}
One can show that $\left\{ u,v\right\} _{i,j}\in K_{2}\left(R;d\right)$
and lie in the center of $St_{d}\left(R\right)$. In addition, for
every $3\leq d\in\mathbb{N}$, the Steinberg symbols $\left\{ u,v\right\} _{i,j}$
do not depend on the indices $i,j$, so they can be denoted simply
by $\left\{ u,v\right\} $ (see \cite{key-34}). The Steinberg symbols
satisfy many identities. For example
\begin{eqnarray}
\left\{ uv,w\right\} =\left\{ u,w\right\} \left\{ v,w\right\}  & , & \left\{ u,vw\right\} =\left\{ u,v\right\} \left\{ u,w\right\} .\label{eq:DS}
\end{eqnarray}
In the semi-local case we have the following (\cite{key-35}, Theorem
2.7):
\begin{thm}
\label{thm:dennis-stein-1}Let $R$ be a semi-local commutative ring
and $d\geq3$. Then, $K_{2}\left(R;d\right)$ is generated by the
Steinberg symbols $\left\{ u,v\right\} $ for $u,v\in R^{*}$. In
particular, $K_{2}\left(R;d\right)$ is central in $St_{d}\left(R\right)$.
\end{thm}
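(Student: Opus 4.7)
The plan is to establish the generation statement by reducing to Matsumoto's theorem for fields and then lifting through the Jacobson radical; the centrality of $K_{2}(R;d)$ will follow as a corollary once generation is established, since each individual Steinberg symbol can be shown to be central in $St_{d}(R)$.

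First I would verify the individual centrality of $\{u,v\}_{i,j}$ by a direct calculation in $St_{d}(R)$ from the defining relations. The key observation is that conjugation by $h_{i,j}(u)$ acts on each generator $x_{k,l}(r)$ by multiplicative rescaling of the ring argument: one gets formulas of the shape $h_{i,j}(u)\,x_{i,k}(r)\,h_{i,j}(u)^{-1} = x_{i,k}(ur)$ (and analogous formulas for the other index positions, with trivial rescaling when $\{k,l\}$ is disjoint from $\{i,j\}$). Since rescaling by $uv$ factors as rescaling by $u$ followed by rescaling by $v$, the product $\{u,v\}_{i,j} = h_{i,j}(uv)\,h_{i,j}(u)^{-1}\,h_{i,j}(v)^{-1}$ acts as the identity under conjugation on every generator of $St_{d}(R)$ and therefore lies in the center.

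For the generation statement I would invoke Matsumoto's theorem as the base case: for a field $F$, $K_{2}(F)$ is generated by Steinberg symbols $\{u,v\}$ with $u,v\in F^{*}$. Now let $J = \operatorname{rad}(R)$. Since $R$ is semi-local, the quotient $R/J$ is a finite direct product of fields $F_{1}\times\cdots\times F_{s}$, so Matsumoto (applied factorwise, together with a Mayer--Vietoris style compatibility for products) shows that $K_{2}(R/J;d)$ is generated by Steinberg symbols. The natural map $\pi\colon K_{2}(R;d) \twoheadrightarrow K_{2}(R/J;d)$ is surjective in the stable range $d\geq 3$, and symbols lift to symbols since units of $R/J$ lift to units of $R$ by the standard Nakayama-type fact $1+J \subseteq R^{*}$. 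It therefore remains to prove that $\ker(\pi) = K_{2}(R,J;d)$ is generated by symbols $\{u, 1+j\}$ with $u\in R^{*}$ and $j\in J$.

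The main obstacle, as expected, is this last step. The plan is to follow the Dennis--Stein strategy: introduce auxiliary ``Dennis--Stein symbols'' $\langle a,b\rangle$ defined for pairs with $1-ab \in R^{*}$, show by a careful normal-form manipulation in $St_{d}(R)$ (exploiting the stability $d\geq 3$ to move commutators around) that every element of $K_{2}(R,J;d)$ is a product of such symbols, and finally reduce each $\langle a,b\rangle$ to an ordinary Steinberg symbol using the identities \eqref{eq:DS} in the presence of sufficiently many units. The semi-local hypothesis is used crucially here, because $1+J\subseteq R^{*}$ guarantees that the required invertibility conditions are met. The combinatorial bookkeeping of these commutator identities is the delicate part, and a small extra argument, typically via adjoining a variable to enlarge the unit group, is needed when some residue field is $\mathbb{F}_{2}$, where the standard Steinberg relation $\{u,1-u\}=1$ degenerates.
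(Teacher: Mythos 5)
The paper does not actually prove this statement: it is imported verbatim from Stein--Dennis (\cite{key-35}, Theorem 2.7), so there is no internal proof to compare against. Your outline reproduces the strategy of that source. The centrality of each individual symbol via the conjugation formulas $h_{i,j}(u)x_{k,l}(r)h_{i,j}(u)^{-1}=x_{k,l}(u^{c}r)$ is correct (and is already asserted in the paper just before the theorem), and the overall architecture --- reduce modulo the Jacobson radical $J$ to a finite product of fields, apply Matsumoto there, lift symbols using $1+J\subseteq R^{*}$, and handle $\ker\pi$ by the relative exact sequence --- is the right one.

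As a proof, however, the proposal stops exactly where the theorem begins. The entire content of the semi-local case is the step you defer to ``careful normal-form manipulation'': that the image of the relative group $K_{2}(R,J;d)$ in $K_{2}(R;d)$ is generated by Dennis--Stein symbols $\left\langle a,b\right\rangle$ with one argument in $J$, and that each such symbol converts to a Steinberg symbol because the semi-local hypothesis supplies the needed units. These commutator computations occupy the bulk of Stein--Dennis's argument, and without them nothing beyond the (easy, already-known) centrality of symbols has been established; the $\mathbb{F}_{2}$-residue-field caveat you mention is a symptom of this missing core, not a separate footnote. Two smaller points would also need attention in an actual write-up: Matsumoto's theorem concerns the stable $K_{2}(F)$ (or the unstable Chevalley-group version for $St_{d}(F)$, $d\geq3$, which is what you really need), and the exactness of $K_{2}(R,J;d)\to K_{2}(R;d)\to K_{2}(R/J;d)$ for unstable relative groups must be set up rather than assumed --- once symbols generate $K_{2}(R/J;d)$ and lift, the surjectivity of $\pi$ you invoke is automatic and needs no separate stability argument.
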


Let now $R$ be a commutative ring, $H\vartriangleleft R$ an ideal
and $d\geq3$. Denote $\bar{R}=R/H$. Clearly, there is a natural
map $E_{d}\left(R\right)\to E{}_{d}\left(\bar{R}\right)$. It is clear
that $E_{d}\left(R,H\right)$ lies in the kernel of the latter map,
so we have a map
\[
\pi_{d}:E_{d}\left(R\right)/E_{d}\left(R,H\right)\to E_{d}\left(\bar{R}\right).
\]

In addition, it is easy to see that we have a surjective map
\[
\psi_{d}:St_{d}\left(\bar{R}\right)\twoheadrightarrow E_{d}\left(R\right)/E_{d}\left(R,H\right)
\]
defined by: $x_{i,j}\left(\bar{r}\right)\to I_{d}+rE_{i,j}$, such
that $\phi_{d}:St_{d}\left(\bar{R}\right)\to E_{d}\left(\bar{R}\right)$
satisfies: $\phi_{d}=\pi_{d}\circ\psi_{d}$. Therefore, we obtain
the surjective map
\begin{eqnarray*}
K_{2}\left(\bar{R};d\right)=\ker\left(\phi_{d}\right)\overset{\psi_{d}}{\twoheadrightarrow}\ker\left(\pi_{d}\right) & = & \left(E_{d}\left(R\right)\cap SL_{d}\left(R,H\right)\right)/E_{d}\left(R,H\right)\\
 & \leq & SK_{1}\left(R,H;d\right).
\end{eqnarray*}
In particular, it implies that if $E_{d}\left(R\right)=SL_{d}\left(R\right)$,
then we have a natural surjective map
\[
K_{2}\left(R/H;d\right)\twoheadrightarrow SK_{1}\left(R,H;d\right).
\]
From this one can easily deduce the following corollary, which will
be needed later in the paper.
\begin{cor}
\label{cor:important-cor}Let $R$ be a commutative ring, $H\vartriangleleft R$
ideal of finite index and $d\geq3$. Assume also that $E_{d}\left(R\right)=SL_{d}\left(R\right)$.
Then:
\begin{enumerate}
\item $SK_{1}\left(R,H;d\right)$ is a finite group.
\item $SK_{1}\left(R,H;d\right)$ is central in $GL_{d}\left(R\right)/E_{d}\left(R,H\right)$.
\item Every element of $SK_{1}\left(R,H;d\right)$ has a representative
in $SL_{d}\left(R,H\right)$ of the form $\left(\begin{array}{cc}
A & 0\\
0 & I_{d-2}
\end{array}\right)$ such that $A\in SL_{2}\left(R,H\right)$. 
\end{enumerate}
\end{cor}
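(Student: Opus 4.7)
The approach relies on the surjection $K_{2}(\bar{R};d)\twoheadrightarrow SK_{1}(R,H;d)$ (with $\bar{R}=R/H$) derived just above the corollary, which is available precisely under the hypothesis $E_{d}(R)=SL_{d}(R)$. Since $H$ has finite index, $\bar{R}$ is a finite commutative ring, hence semi-local with finite unit group $\bar{R}^{*}$. Theorem~\ref{thm:dennis-stein-1} then says $K_{2}(\bar{R};d)$ is generated by the finitely many Steinberg symbols $\{u,v\}$ with $u,v\in\bar{R}^{*}$ and is abelian (in fact central in $St_{d}(\bar{R})$). The bimultiplicativity (\ref{eq:DS}) gives $\{1,v\}=\{1\cdot 1,v\}=\{1,v\}^{2}$, hence $\{1,v\}=1$, and consequently $\{u,v\}^{|\bar{R}^{*}|}=\{u^{|\bar{R}^{*}|},v\}=1$. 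Thus $K_{2}(\bar{R};d)$ is finite, and its quotient $SK_{1}(R,H;d)$ is finite too, which proves~(1).

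For~(2), the centrality of $K_{2}(\bar{R};d)$ in $St_{d}(\bar{R})$ pushes forward under $\psi_{d}$ to make $SK_{1}(R,H;d)=\psi_{d}(K_{2}(\bar{R};d))$ central in $\psi_{d}(St_{d}(\bar{R}))=E_{d}(R)/E_{d}(R,H)=SL_{d}(R)/E_{d}(R,H)$. The main obstacle --- and the technical heart of this corollary --- is upgrading centrality to all of $GL_{d}(R)/E_{d}(R,H)$. My plan is to use the decomposition $GL_{d}(R)=SL_{d}(R)\cdot\{\mathrm{diag}(r,1,\ldots,1):r\in R^{*}\}$ and show that conjugation by a diagonal $g=\mathrm{diag}(r,1,\ldots,1)$ acts trivially on $SK_{1}(R,H;d)$. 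A direct matrix calculation shows that such conjugation sends $I_{d}+aE_{ij}$ to $I_{d}+\beta(a)E_{ij}$, where $\beta(a)$ equals $ra$ if only $i=1$, equals $r^{-1}a$ if only $j=1$, and equals $a$ otherwise. These formulas descend to a well-defined automorphism $\tilde{\alpha}$ of $St_{d}(\bar{R})$ that is compatible with $\psi_{d}$ and the conjugation action of $g$ modulo $E_{d}(R,H)$. The decisive observation --- which is exactly where the hypothesis $d\geq 3$ is used --- is that every generator of $K_{2}(\bar{R};d)$ can be expressed as a Steinberg symbol $\{u,v\}_{2,3}$ in indices disjoint from $1$, and $\tilde{\alpha}$ fixes every such symbol by construction. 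Hence $\tilde{\alpha}$ fixes $K_{2}(\bar{R};d)$ pointwise, and conjugation by $g$ is trivial on $SK_{1}(R,H;d)$.

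For~(3), I will trace the image of a Steinberg-symbol generator $\{u,v\}_{1,2}=h_{12}(uv)h_{12}(u)^{-1}h_{12}(v)^{-1}$ through $\psi_{d}$. Since this element is written entirely in terms of $x_{12}(\cdot)$ and $x_{21}(\cdot)$, its image is a product of matrices of the form $I_{d}+rE_{12}$ and $I_{d}+sE_{21}$, each of which has the block structure $\bigl(\begin{smallmatrix}A' & 0\\0 & I_{d-2}\end{smallmatrix}\bigr)$ with $A'\in GL_{2}(R)$. Because this image lies in $\ker(\pi_{d})=(E_{d}(R)\cap SL_{d}(R,H))/E_{d}(R,H)$, the $2\times 2$ block automatically satisfies $A'\in SL_{2}(R,H)$. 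The set of such block-diagonal matrices is a subgroup of $SL_{d}(R,H)$, so any product of symbol representatives --- and hence, by the surjectivity from~(1), any element of $SK_{1}(R,H;d)$ --- admits a representative of the required form.
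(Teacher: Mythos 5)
Your proposal is correct. Parts (1) and (3) follow essentially the same route as the paper: finiteness of $K_{2}\left(\bar{R};d\right)$ from the Dennis--Stein generation by finitely many Steinberg symbols together with bimultiplicativity, and the block-diagonal representatives obtained by pushing $\left\{ u,v\right\} _{1,2}$ through $\psi_{d}$. The one genuine divergence is in Part (2). The paper proves Part (3) first and then observes directly that the representatives $\left(\begin{smallmatrix}A & 0\\
0 & I_{d-2}
\end{smallmatrix}\right)$, supported on rows and columns $1,2$, literally commute with the diagonal matrices $I_{d}+\left(r-1\right)E_{3,3}$ used to generate $GL_{d}\left(R\right)$ over $SL_{d}\left(R\right)$; no Steinberg-group machinery is needed beyond what was already established. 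You instead place the diagonal unit at position $\left(1,1\right)$, lift conjugation to the standard diagonal automorphism of $St_{d}\left(\bar{R}\right)$, and use index-independence of the symbols to rewrite every generator of $K_{2}\left(\bar{R};d\right)$ as $\left\{ u,v\right\} _{2,3}$, which that automorphism fixes. Both arguments rest on the same underlying trick --- for $d\geq3$ one can separate the supports of the symbol representative and the determinant-carrying diagonal entry --- but yours makes Part (2) independent of Part (3) at the cost of verifying that the diagonal twist really is a well-defined automorphism of $St_{d}\left(\bar{R}\right)$ compatible with $\psi_{d}$ (which it is, using normality of $E_{d}\left(R,H\right)$ in $GL_{d}\left(R\right)$ for $d\geq3$), while the paper's version is a one-line matrix computation once Part (3) is in hand.
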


\begin{proof}
The ring $\bar{R}=R/H$ is finite. In particular, $\bar{R}$ is Artinian
and hence semi-local. Thus, by Theorem \ref{thm:dennis-stein-1},
$K_{2}\left(\bar{R};d\right)$ is an abelian group which is generated
by the Steinberg symbols $\left\{ u,v\right\} $ for $u,v\in\bar{R}^{*}$.
As $\bar{R}$ is finite, so is the number of the Steinberg symbols.
From Equation (\ref{eq:DS}) we obtain that the order of any Steinberg
symbol is finite. So $K_{2}\left(\bar{R};d\right)$ is a finitely
generated abelian group whose generators are of finite order. Thus,
$K_{2}\left(\bar{R};d\right)$ is finite. Moreover, as $\bar{R}$
is semi-local, Theorem \ref{thm:dennis-stein-1} implies that $K_{2}\left(\bar{R};d\right)$
is central $St_{d}\left(\bar{R}\right)$. Now, as we assume that $E_{d}\left(R\right)=SL_{d}\left(R\right)$,
we obtain that $SK_{1}\left(R,H;d\right)$ is the image of $K_{2}\left(\bar{R};d\right)$
under the surjective map 
\[
St_{d}\left(\bar{R}\right)\twoheadrightarrow E_{d}\left(R\right)/E_{d}\left(R,H\right)=SL_{d}\left(R\right)/E_{d}\left(R,H\right).
\]
This implies Part 1 and that $SK_{1}\left(R,H;d\right)$ is central
in $SL_{d}\left(R\right)/E_{d}\left(R,H\right)$. 

Moreover, as $d\geq3$, we have $\left\{ u,v\right\} =\left\{ u,v\right\} _{1,2}$
for every $u,v\in\bar{R}^{*}$. Now, it is easy to check from the
definition of the Steinberg symbols that the image of $\left\{ u,v\right\} _{1,2}$
under the map $St_{d}\left(\bar{R}\right)\twoheadrightarrow SL_{d}\left(R\right)/E_{d}\left(R,H\right)$
is of the form 
\begin{equation}
\left(\begin{array}{cc}
A & 0\\
0 & I_{d-2}
\end{array}\right)\cdot E_{d}\left(R,H\right)\label{eq:2X2}
\end{equation}
for some $A\in SL_{2}\left(R,H\right)$. So as $SK_{1}\left(R,H;d\right)$
is generated by the images of the Steinberg symbols, the same holds
for every element in $SK_{1}\left(R,H;d\right)$. So we obtain Part
3. Now, as $d\geq3$ we can write
\[
GL_{d}\left(R\right)=SL_{d}\left(R\right)\cdot\left\{ I_{d}+\left(r-1\right)E_{3,3}\,|\,r\in R^{*}\right\} .
\]
Observe also that mod $E_{d}\left(R,H\right)$, all the elements of
the form $I_{d}+\left(r-1\right)E_{3,3}$ for $r\in R^{*}$ commute
with all the elements of the form (\ref{eq:2X2}). Hence, the centrality
of $SK_{1}\left(R,H;d\right)$ in $SL_{d}\left(R\right)/E_{d}\left(R,H\right)$
shows that actually $SK_{1}\left(R,H;d\right)$ is central in $GL_{d}\left(R\right)/E_{d}\left(R,H\right)$,
as required in Part 2.
\end{proof}

\section{\label{sec:some-properties}$IA\left(\Phi_{n}\right)$ and its subgroups}

We start our discussion of the IA-automorphism group of the free metabelian
group, $G=IA\left(\Phi_{n}\right)=\ker\left(Aut\left(\Phi_{n}\right)\to Aut\left(\Phi_{n}/\Phi'_{n}\right)=GL_{n}\left(\mathbb{Z}\right)\right)$,
by presenting some of its properties and subgroups. We begin with
the following notations:
\begin{itemize}
\item $\Phi=\Phi_{n}=F_{n}/F''_{n}$= the free metabelian group on $n$
elements. Here $F''_{n}$ denotes the second derivative of $F_{n}$,
the free group on $n$ elements.
\item $\Psi_{m}=\Phi/M_{m}$, where $M_{m}=\left(\Phi'\Phi^{m}\right)'\left(\Phi'\Phi^{m}\right)^{m}$.
\item $IG_{m}=G(M_{m})=\ker\left(IA\left(\Phi\right)\to Aut\left(\Psi_{m}\right)\right).$
\item $IA_{m}=\cap\left\{ N\vartriangleleft IA\left(\Phi\right)\,|\,[IA\left(\Phi\right):N]\,|\,m\right\} $
\item $R_{n}=\mathbb{Z}[\mathbb{Z}^{n}]=\mathbb{Z}[x_{1}^{\pm1},\ldots,x_{n}^{\pm1}]$
where $x_{1},\ldots,x_{n}$ are the generators of $\mathbb{Z}^{n}$.
\item $\mathbb{Z}_{m}=\mathbb{Z}/m\mathbb{Z}$.
\item $\sigma_{i}=x_{i}-1$ for $1\leq i\leq n$. 
\item $\vec{\sigma}=$ the column vector which has $\sigma_{i}$ in its
$i$-th entry.
\item $\mathfrak{A}=\sum_{i=1}^{n}\sigma_{i}R_{n}$ = the augmentation ideal
of $R_{n}$.
\item $H_{m}=\ker\left(R_{n}\to\mathbb{Z}_{m}[\mathbb{Z}_{m}^{n}]\right)=\sum_{i=1}^{n}\left(x_{i}^{m}-1\right)R_{n}+mR_{n}$.
\end{itemize}
By the well known Magnus embedding (see \cite{key-36}, \cite{key-37},
\cite{key-35-1}), one can identify $\Phi$ with the matrix group
\[
\Phi=\left\{ \left(\begin{array}{cc}
g & a_{1}t_{1}+\ldots+a_{n}t_{n}\\
0 & 1
\end{array}\right)\,|\,g\in\mathbb{Z}^{n},\,a_{i}\in R_{n},\,g-1=\sum_{i=1}^{n}a_{i}\sigma_{i}\right\} 
\]
where $t_{i}$ is a free basis for an $R_{n}$-module, under the identification
of the generators of $\Phi$ with the matrices
\[
\left(\begin{array}{cc}
x_{i} & t_{i}\\
0 & 1
\end{array}\right)\,\,\,\,\textrm{for}\,\,\,\,1\leq i\leq n.
\]

Moreover, for every $\alpha\in IA\left(\Phi\right)$, one can describe
$\alpha$ by its action on the generators of $\Phi$, by
\[
\alpha:\left(\begin{array}{cc}
x_{i} & t_{i}\\
0 & 1
\end{array}\right)\mapsto\left(\begin{array}{cc}
x_{i} & a_{i,1}t_{1}+\ldots+a_{i,n}t_{n}\\
0 & 1
\end{array}\right).
\]

This description gives an injective homomorphism (see \cite{key-13},
\cite{key-36})
\begin{eqnarray*}
IA\left(\Phi\right) & \hookrightarrow & GL_{n}\left(R_{n}\right)\\
\textrm{defined by}\,\,\,\,\alpha & \mapsto & \left(\begin{array}{ccc}
a_{1,1} & \cdots & a_{1,n}\\
\vdots &  & \vdots\\
a_{n,1} & \cdots & a_{n,n}
\end{array}\right)
\end{eqnarray*}
which gives an identification of $IA\left(\Phi\right)$ with the group
\begin{eqnarray*}
IA\left(\Phi\right) & = & \left\{ A\in GL_{n}\left(R_{n}\right)\,|\,A\vec{\sigma}=\vec{\sigma}\right\} \\
 & = & \left\{ I_{n}+A\in GL_{n}\left(R_{n}\right)\,|\,A\vec{\sigma}=\vec{0}\right\} .
\end{eqnarray*}

Consider now the map
\[
\begin{array}{c}
\Phi=\left\{ \left(\begin{array}{cc}
g & a_{1}t_{1}+\ldots+a_{n}t_{n}\\
0 & 1
\end{array}\right)\,|\,g\in\mathbb{Z}^{n},\,a_{i}\in R_{n},\,g-1=\sum_{i=1}^{n}a_{i}\sigma_{i}\right\} \\
\downarrow\\
\left\{ \left(\begin{array}{cc}
g & a_{1}t_{1}+\ldots+a_{n}t_{n}\\
0 & 1
\end{array}\right)\,|\,g\in\mathbb{Z}_{m}^{n},\,a_{i}\in\mathbb{Z}_{m}[\mathbb{Z}_{m}^{n}],\,g-1=\sum_{i=1}^{n}a_{i}\sigma_{i}\right\} 
\end{array}
\]
which is induced by the projections $\mathbb{Z}^{n}\to\mathbb{Z}_{m}^{n}$,
$R_{n}=\mathbb{Z}[\mathbb{Z}^{n}]\to\mathbb{Z}_{m}[\mathbb{Z}_{m}^{n}]$.
Using result of Romanovski\u{\i} \cite{key-40}, it is shown in \cite{key-6}
that this map is surjective and that $\Psi_{m}$ is canonically isomorphic
to its image. Therefore, we can identify $IG_{m}$, the principal
congruence subgroup of $IA\left(\Phi\right)$, with
\begin{eqnarray*}
IG_{m} & = & \left\{ A\in\ker\left(GL_{n}\left(R_{n}\right)\to GL_{n}\left(\mathbb{Z}_{m}[\mathbb{Z}_{m}^{n}]\right)\right)\,|\,A\vec{\sigma}=\vec{\sigma}\right\} \\
 & = & \left\{ I_{n}+A\in GL_{n}\left(R_{n},H_{m}\right)\,|\,A\vec{\sigma}=\vec{0}\right\} .
\end{eqnarray*}
\begin{prop}
\label{prop:augmentation}Let $I_{n}+A\in IA\left(\Phi\right)$ and
denote the entries of $A$ by $a_{k,l}$ for $1\leq k,l\leq n$. Then,
for every $1\leq k,l\leq n$, we have: $a_{k,l}\in\sum_{l\neq i=1}^{n}\sigma_{i}R_{n}\subseteq\mathfrak{A}$.
\end{prop}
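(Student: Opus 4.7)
The plan is to exploit the defining relation $A\vec{\sigma}=\vec{0}$, which, read row by row, says that for each fixed $k$ one has the single equation
\[
\sum_{i=1}^{n} a_{k,i}\,\sigma_{i}=0 \qquad \text{in } R_{n}.
\]
To show $a_{k,l}\in\sum_{i\neq l}\sigma_{i}R_{n}$, the natural move is to kill all the $\sigma_{i}$ with $i\neq l$ and use that in the resulting quotient ring $\sigma_{l}$ survives as a nonzerodivisor.

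Concretely, I would fix $k$ and $l$ and consider the ring homomorphism
\[
\pi_{l}\colon R_{n}=\mathbb{Z}[x_{1}^{\pm1},\ldots,x_{n}^{\pm1}]\;\longrightarrow\;\mathbb{Z}[x_{l}^{\pm1}]
\]
defined by $x_{i}\mapsto 1$ for $i\neq l$ and $x_{l}\mapsto x_{l}$. The first step is the routine verification that $\ker\pi_{l}=\sum_{i\neq l}\sigma_{i}R_{n}$: the inclusion $\supseteq$ is obvious from $\pi_{l}(\sigma_{i})=0$ for $i\neq l$, and the reverse inclusion follows because modding $R_{n}$ by the $\sigma_{i}$ ($i\neq l$) substitutes $x_{i}=1$ (and hence $x_{i}^{-1}=1$), producing the Laurent ring $\mathbb{Z}[x_{l}^{\pm1}]$ and realizing $\pi_{l}$ as the quotient map.

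Applying $\pi_{l}$ to the row-relation above collapses every term with $i\neq l$, leaving
\[
\pi_{l}(a_{k,l})\cdot(x_{l}-1)=0 \qquad \text{in } \mathbb{Z}[x_{l}^{\pm1}].
\]
Since $\mathbb{Z}[x_{l}^{\pm1}]$ is an integral domain and $x_{l}-1\neq 0$, this forces $\pi_{l}(a_{k,l})=0$, so $a_{k,l}\in\ker\pi_{l}=\sum_{i\neq l}\sigma_{i}R_{n}$, as claimed. There is essentially no obstacle here beyond the identification of $\ker\pi_{l}$; the whole argument is driven by the observation that the Magnus relation $A\vec{\sigma}=\vec{0}$ is a single syzygy in each row, and the elementary (Koszul-type) syzygies on the regular sequence $(\sigma_{1},\ldots,\sigma_{n})$ automatically have the required support pattern — a fact we do not need in that generality, since the one-variable quotient argument above suffices.
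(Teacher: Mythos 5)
Your proposal is correct and follows essentially the same route as the paper: apply the specialization $x_{i}\mapsto 1$ for $i\neq l$ to the row relation $\sum_{i}a_{k,i}\sigma_{i}=0$ and use that $\mathbb{Z}[x_{l}^{\pm1}]$ is a domain. The only difference is that you spell out the identification $\ker\pi_{l}=\sum_{i\neq l}\sigma_{i}R_{n}$, which the paper leaves implicit.
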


\begin{proof}
For a given $1\leq k\leq n$, the condition $A\vec{\sigma}=\vec{0}$
gives the equality
\[
0=a_{k,1}\sigma_{1}+a_{k,2}\sigma_{2}+\ldots+a_{k,n}\sigma_{n}.
\]
Thus, for a given $1\leq l\leq n$, the map $R_{n}\to S_{l}=\mathbb{Z}[x_{l}^{\pm1}]$
which defined by $x_{i}\mapsto1$ for every $i\neq l$, maps 
\[
0=a_{k,1}\sigma_{1}+a_{k,2}\sigma_{2}+\ldots+a_{k,n}\sigma_{n}\mapsto\bar{a}_{k,l}\sigma_{l}\in\mathbb{Z}[x_{l}^{\pm1}].
\]
Hence, as $\mathbb{Z}[x_{l}^{\pm1}]$ is a domain, $\bar{a}_{k,l}=0\in\mathbb{Z}[x_{l}^{\pm1}]$.
Thus: $a_{k,l}\in\sum_{l\neq i=1}^{n}\sigma_{i}R_{n}\subseteq\mathfrak{A}$,
as required.
\end{proof}
\begin{prop}
\label{lem:det}Let $I_{n}+A\in IA\left(\Phi\right)$. Then $\det\left(I_{n}+A\right)$
is of the form: $\det\left(I_{n}+A\right)=\prod_{r=1}^{n}x_{r}^{s_{r}}$
for some $s_{r}\in\mathbb{Z}$.
\end{prop}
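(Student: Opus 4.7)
The plan is to combine two ingredients: a structural fact about the units of the Laurent polynomial ring $R_n = \mathbb{Z}[x_1^{\pm 1}, \ldots, x_n^{\pm 1}]$, and the augmentation computation enabled by Proposition \ref{prop:augmentation}.

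First I would note that since $I_n + A \in IA(\Phi) \subseteq GL_n(R_n)$, its determinant lies in $R_n^{*}$. The group ring $R_n = \mathbb{Z}[\mathbb{Z}^n]$ is a Laurent polynomial ring over $\mathbb{Z}$, and its unit group is well known to consist precisely of the trivial units $\pm \prod_{r=1}^n x_r^{s_r}$ with $s_r \in \mathbb{Z}$ (this is Higman's theorem on units in group rings of torsion-free abelian groups, or can be seen directly by comparing degrees of leading and trailing monomials). So already $\det(I_n + A) = \pm \prod_r x_r^{s_r}$, and the only remaining task is to pin down the sign as $+$.

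To fix the sign I would apply the augmentation homomorphism $\epsilon \colon R_n \to \mathbb{Z}$ defined by $x_i \mapsto 1$ for every $i$. By Proposition \ref{prop:augmentation}, every entry $a_{k,l}$ of $A$ belongs to the augmentation ideal $\mathfrak{A} = \ker \epsilon$, so $\epsilon$ sends $I_n + A$ entrywise to $I_n$. Applying $\epsilon$ to the determinant and using naturality,
\[
\epsilon\bigl(\det(I_n + A)\bigr) = \det\bigl(\epsilon(I_n + A)\bigr) = \det(I_n) = 1.
\]
On the other hand $\epsilon(\pm \prod_r x_r^{s_r}) = \pm 1$, forcing the sign to be positive. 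Hence $\det(I_n + A) = \prod_{r=1}^n x_r^{s_r}$ for some $s_r \in \mathbb{Z}$, as required.

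There is no real obstacle here; the argument is essentially a two-line consequence of the shape of units in $R_n$ together with the previous proposition. The only slightly nontrivial point one has to be comfortable invoking is the description of $R_n^{*}$, for which one can either cite Higman's theorem or give the direct argument: if $uv = 1$ in $R_n$, then comparing the support of $u$ and $v$ in $\mathbb{Z}^n$ shows both must be singletons, whence $u$ is $\pm$ a Laurent monomial.
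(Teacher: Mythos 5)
Your proposal is correct and follows essentially the same route as the paper: both identify $\det(I_n+A)$ as a unit $\pm\prod_r x_r^{s_r}$ of $R_n$ using the known description of $R_n^*$, and both rule out the minus sign by applying the augmentation $x_i\mapsto 1$ together with Proposition \ref{prop:augmentation}. No gaps.
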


\begin{proof}
The invertible elements in $R_{n}$ are the elements of the form $\pm\prod_{i=1}^{n}x_{r}^{s_{r}}$
(see \cite{key-38}, chapter 8). Thus, as $I_{n}+A\in GL_{n}\left(R_{n}\right)$
we have: $\det\left(I_{n}+A\right)=\pm\prod_{i=1}^{n}x_{r}^{s_{r}}$.
However, according to Proposition \ref{prop:augmentation}, for every
entry $a_{k,l}$ of $A$ we have: $a_{k,l}\in\mathfrak{A}$. Hence,
under the projection $x_{i}\mapsto1$ for every $1\leq i\leq n$,
one has $I_{n}+A\mapsto I_{n}$ and thus, $\pm\prod_{i=1}^{n}x_{r}^{s_{r}}=\det\left(I_{n}+A\right)\mapsto\det\left(I_{n}\right)=1$.
Therefore, the option $\det\left(I_{n}+A\right)=-\prod_{i=1}^{n}x_{r}^{s_{r}}$
is impossible, as required.
\end{proof}
Let us step forward with the following definition:
\begin{defn}
Let $A\in GL_{n}\left(R_{n}\right)$, and for $1\leq i\leq n$ denote
by $A_{i,i}$ the minor which obtained from $A$ by erasing its $i$-th
row and $i$-th column. Now, for every $1\leq i\leq n$, define the
subgroup $IGL_{n-1,i}\leq IA\left(\Phi\right)$, by
\[
IGL_{n-1,i}=\left\{ I_{n}+A\in IA\left(\Phi\right)\,|\,\begin{array}{c}
\textrm{The\,\,}i\textrm{-th\,\, row\,\, of\,\,}A\textrm{\,\, is\,\,0,}\\
I_{n-1}+A_{i,i}\in GL_{n-1}\left(R_{n},\sigma_{i}R_{n}\right)
\end{array}\right\} .
\]
\end{defn}

\begin{prop}
\label{prop:iso}For every $1\leq i\leq n$ we have: $IGL_{n-1,i}\cong GL_{n-1}\left(R_{n},\sigma_{i}R_{n}\right)$.
\end{prop}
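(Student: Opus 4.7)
The plan is to exhibit an explicit inverse pair between $IGL_{n-1,i}$ and $GL_{n-1}(R_n,\sigma_i R_n)$ by the natural ``erase the $i$-th row and column'' map. Define $\phi:IGL_{n-1,i}\to GL_{n-1}(R_n,\sigma_i R_n)$ by $\phi(I_n+A)=I_{n-1}+A_{i,i}$. The definition of $IGL_{n-1,i}$ already guarantees that the image lies in $GL_{n-1}(R_n,\sigma_i R_n)$, and the fact that the $i$-th row of $I_n+A$ equals $e_i^{T}$ means $\phi$ is a homomorphism: if $I_n+A,I_n+A'\in IGL_{n-1,i}$, then the $i$-th row of $A+A'+AA'$ is again zero (using $a_{i,*}=0$ and $a'_{i,*}=0$), and for $k,l\neq i$ the $(k,l)$-entry of $(I_n+A)(I_n+A')-I_n$ is $a_{k,l}+a'_{k,l}+\sum_{m}a_{k,m}a'_{m,l}$, where the term $m=i$ vanishes because $a'_{i,l}=0$; thus the minor multiplies correctly and $\phi((I_n+A)(I_n+A'))=\phi(I_n+A)\phi(I_n+A')$.

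For injectivity, observe that the $i$-th row of an element in $IGL_{n-1,i}$ is forced to be $e_i^{T}$, and the $i$-th column is pinned down by the Magnus relations. Indeed, for any $I_n+A\in IGL_{n-1,i}$ and any $k\neq i$, the equation $(A\vec\sigma)_k=0$ reads
\[
a_{k,i}\sigma_i=-\sum_{l\neq i}a_{k,l}\sigma_l,
\]
and since each $a_{k,l}$ with $l\neq i$ already lies in $\sigma_i R_n$ (the minor is in $GL_{n-1}(R_n,\sigma_i R_n)$), we may cancel $\sigma_i$ in the integral domain $R_n$ to recover $a_{k,i}$ uniquely from the minor. Hence the minor $A_{i,i}$, together with the prescribed $i$-th row, reconstructs $A$ completely.

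For surjectivity, given $I_{n-1}+C\in GL_{n-1}(R_n,\sigma_i R_n)$, write $C=\sigma_i D$ with $D\in M_{n-1}(R_n)$ and build $I_n+A$ by inserting $e_i^{T}$ as the $i$-th row, placing $C$ in the $(k,l)$-positions with $k,l\neq i$, and defining the $i$-th column by $a_{k,i}=-\sum_{l\neq i}d_{k,l}\sigma_l$ (and $a_{i,i}=0$). A direct check shows $A\vec\sigma=\vec 0$, and cofactor expansion along the $i$-th row gives $\det(I_n+A)=\det(I_{n-1}+C)\in R_n^{*}$, so $I_n+A\in GL_n(R_n)$ and therefore in $IGL_{n-1,i}$; clearly $\phi(I_n+A)=I_{n-1}+C$. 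The only subtlety is the cancellation step in the injectivity argument, which relies on $\sigma_i$ being a non-zero-divisor—this is automatic because $R_n$ is an integral domain—so no real obstacle arises, and $\phi$ is a group isomorphism.
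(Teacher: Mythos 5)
Your proposal is correct and follows essentially the same route as the paper: the "erase the $i$-th row and column" map, injectivity via the relation $a_{k,i}\sigma_i=-\sum_{l\neq i}a_{k,l}\sigma_l$ and the fact that $\sigma_i$ is a non-zero-divisor in the domain $R_n$, and surjectivity via the explicit preimage with $i$-th column $-\sum_{l\neq i}\sigma_l\vec{b}_l$. The extra checks you include (the homomorphism property and the cofactor computation of the determinant) are details the paper leaves implicit, but the argument is the same.
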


\begin{proof}
The definition of $IGL_{n-1,i}$ gives us a natural projection from
$IGL_{n-1,i}\to GL_{n-1}\left(R_{n},\sigma_{i}R_{n}\right)$ which
maps an element $I_{n}+A\in IGL_{n-1,i}$ to $I_{n-1}+A_{i,i}\in GL_{n-1}\left(R_{n},\sigma_{i}R_{n}\right)$.
Thus, all we need is to explain why this map is injective and surjective.

\uline{Injectivity:} Here, It is enough to show that given an element
$I_{n}+A\in IA\left(\Phi\right)$, every entry in the $i$-th column
is determined uniquely by the other entries in its row. Indeed, as
$A$ satisfies the condition $A\vec{\sigma}=\vec{0}$, for every $1\leq k\leq n$
we have
\begin{equation}
a_{k,1}\sigma_{1}+a_{k,2}\sigma_{2}+\ldots+a_{k,n}\sigma_{n}=0\,\,\Rightarrow\,\,a_{k,i}=\frac{-\sum_{i\neq l=1}^{n}a_{k,l}\sigma_{l}}{\sigma_{i}}\label{eq:terms of others}
\end{equation}
i.e. we have a formula for $a_{k,i}$ in terms of the other entries
in its row.

\uline{Surjectivity:} Without loss of generality we assume $i=n$.
Let $I_{n-1}+\sigma_{n}B\in GL_{n-1}\left(R_{n},\sigma_{n}R_{n}\right)$,
and denote by $\vec{b}_{l}$ the column vectors of $B$. Define
\[
\left(\begin{array}{cc}
I_{n-1}+\sigma_{n}B & -\sum_{l=1}^{n-1}\sigma_{l}\vec{b}_{l}\\
0 & 1
\end{array}\right)\in IGL_{n-1,n}
\]
and this is clearly a preimage of $I_{n-1}+\sigma_{n}B$.
\end{proof}
Under the above identification of $IGL_{n-1,i}$ with $GL_{n-1}\left(R_{n},\sigma_{i}R_{n}\right)$,
we will use throughout the paper the following notations:
\begin{defn}
\label{def:IS-IE}Let $H\vartriangleleft R_{n}$. We define
\begin{eqnarray*}
ISL_{n-1,i}\left(H\right) & = & IGL_{n-1,i}\cap SL_{n-1}\left(R_{n},H\right)\\
IE_{n-1,i}\left(H\right) & = & IGL_{n-1,i}\cap E{}_{n-1}\left(R_{n},H\right)\leq ISL_{n-1,i}\left(H\right).
\end{eqnarray*}
\end{defn}

Observe that as for every $1\leq i\leq n$, we have 
\[
GL_{n-1}(\mathbb{Z}[x_{i}^{\pm1}],\sigma_{i}\mathbb{Z}[x_{i}^{\pm1}])\leq GL_{n-1}(R_{n},\sigma_{i}R_{n})
\]
the isomorphism $GL_{n-1}(R_{n},\sigma_{i}R_{n})\cong IGL_{n-1,i}\leq IA\left(\Phi\right)$
gives also a natural embedding of $GL_{n-1}(\mathbb{Z}[x_{i}^{\pm1}],\sigma_{i}\mathbb{Z}[x_{i}^{\pm1}])$
as a subgroup of $IA\left(\Phi\right)$. Actually:
\begin{prop}
\label{prop:semi-direct}For every $1\leq i\leq n$, there is a canonical
surjective homomorphism
\[
\rho_{i}:IA\left(\Phi\right)\twoheadrightarrow GL_{n-1}(\mathbb{Z}[x_{i}^{\pm1}],\sigma_{i}\mathbb{Z}[x_{i}^{\pm1}])
\]
such that the following composition map is the identity:
\[
GL_{n-1}(\mathbb{Z}[x_{i}^{\pm1}],\sigma_{i}\mathbb{Z}[x_{i}^{\pm1}])\hookrightarrow IA\left(\Phi\right)\overset{\rho_{i}}{\twoheadrightarrow}GL_{n-1}(\mathbb{Z}[x_{i}^{\pm1}],\sigma_{i}\mathbb{Z}[x_{i}^{\pm1}]).
\]
Hence $IA\left(\Phi\right)=\ker\rho_{i}\rtimes GL_{n-1}(\mathbb{Z}[x_{i}^{\pm1}],\sigma_{i}\mathbb{Z}[x_{i}^{\pm1}])$.
\end{prop}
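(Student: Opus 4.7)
The plan is to construct $\rho_{i}$ as the composition of a ring specialization with a minor-extraction, and then to read off the semi-direct product structure from it. Let $\pi_{i}:R_{n}\to \mathbb{Z}[x_{i}^{\pm 1}]$ be the ring homomorphism sending $x_{j}\mapsto 1$ for $j\neq i$ and $x_{i}\mapsto x_{i}$; it induces a group map $\pi_{i}:GL_{n}(R_{n})\to GL_{n}(\mathbb{Z}[x_{i}^{\pm 1}])$. For $I_{n}+A\in IA(\Phi)$, Proposition~\ref{prop:augmentation} gives $a_{k,i}\in\sum_{j\neq i}\sigma_{j}R_{n}$, so $\pi_{i}$ annihilates the $i$-th column of $A$; combined with the fact that $a_{i,i}\in\sum_{j\neq i}\sigma_{j}R_{n}$ is also killed, the $i$-th column of $\pi_{i}(I_{n}+A)$ equals $e_{i}$ and the $(i,i)$-entry equals $1$. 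I define $\rho_{i}(I_{n}+A)$ to be the $(n-1)\times(n-1)$ minor $M_{i,i}$ obtained by deleting the $i$-th row and $i$-th column of $\pi_{i}(I_{n}+A)$.

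Next I check that $\rho_{i}$ really lands in $GL_{n-1}(\mathbb{Z}[x_{i}^{\pm 1}],\sigma_{i}\mathbb{Z}[x_{i}^{\pm 1}])$. For indices $k,l\neq i$, Proposition~\ref{prop:augmentation} again gives $a_{k,l}\in\sum_{j\neq l}\sigma_{j}R_{n}$, and $\pi_{i}$ sends every $\sigma_{j}$ with $j\neq i$ to $0$, so $\pi_{i}(a_{k,l})\in\sigma_{i}\mathbb{Z}[x_{i}^{\pm 1}]$. This proves $M_{i,i}\equiv I_{n-1}\pmod{\sigma_{i}}$. Invertibility is automatic: expanding the determinant of $\pi_{i}(I_{n}+A)$ along its $i$-th column (which is $e_{i}$) yields $\det(M_{i,i})=\pi_{i}(\det(I_{n}+A))$, and by Proposition~\ref{lem:det} this equals $\pi_{i}(\prod_{r}x_{r}^{s_{r}})=x_{i}^{s_{i}}$; the congruence $\det(M_{i,i})\equiv 1\pmod{\sigma_{i}}$ forces $s_{i}=0$, so $\det(M_{i,i})=1$ and in fact $M_{i,i}\in SL_{n-1}(\mathbb{Z}[x_{i}^{\pm 1}],\sigma_{i}\mathbb{Z}[x_{i}^{\pm 1}])$.

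To see $\rho_{i}$ is a group homomorphism I conjugate by the appropriate permutation to assume $i=n$. Then the stabilizer of $e_{n}$ in $GL_{n}(\mathbb{Z}[x_{n}^{\pm 1}])$ consists of matrices of block form $\begin{pmatrix}M & 0\\ R & 1\end{pmatrix}$, and such matrices multiply as
\[
\begin{pmatrix}M_{1} & 0\\ R_{1} & 1\end{pmatrix}\begin{pmatrix}M_{2} & 0\\ R_{2} & 1\end{pmatrix}=\begin{pmatrix}M_{1}M_{2} & 0\\ R_{1}M_{2}+R_{2} & 1\end{pmatrix},
\]
so the upper-left block is multiplicative. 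Since $\pi_{i}$ is itself multiplicative, $\rho_{i}$ is a homomorphism.

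Finally, I verify the retraction property, from which surjectivity and the semi-direct product decomposition follow formally. Take $I_{n-1}+\sigma_{n}B\in GL_{n-1}(\mathbb{Z}[x_{n}^{\pm 1}],\sigma_{n}\mathbb{Z}[x_{n}^{\pm 1}])$ (again taking $i=n$); by Proposition~\ref{prop:iso} its image in $IA(\Phi)$ is $\begin{pmatrix}I_{n-1}+\sigma_{n}B & -\sum_{l=1}^{n-1}\sigma_{l}\vec{b}_{l}\\ 0 & 1\end{pmatrix}$. Applying $\pi_{n}$, every $\sigma_{l}$ with $l\neq n$ is killed, so the top-right column vanishes while $I_{n-1}+\sigma_{n}B$ is preserved (its entries already live in $\mathbb{Z}[x_{n}^{\pm 1}]$). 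The $(n,n)$-minor of the resulting matrix is exactly $I_{n-1}+\sigma_{n}B$, so $\rho_{i}$ is the identity on the embedded copy. The main bookkeeping obstacle I expect is keeping the two roles of $\sigma_{i}$ straight throughout: it is killed in the codomain of $\pi_{i}$ when it appears as a coefficient of an off-$i$ column entry, but survives as the congruence modulus in the target $GL_{n-1}(\mathbb{Z}[x_{i}^{\pm 1}],\sigma_{i}\mathbb{Z}[x_{i}^{\pm 1}])$; the split comes precisely from Proposition~\ref{prop:augmentation}, which distinguishes the two.
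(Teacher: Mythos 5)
Your construction is exactly the paper's: specialize $x_{j}\mapsto1$ for $j\neq i$, observe via Proposition \ref{prop:augmentation} that the $i$-th column dies, delete the $i$-th row and column, and check the retraction property on the explicit lift from Proposition \ref{prop:iso}. That part is correct and complete (you even verify the homomorphism property of the minor extraction, which the paper leaves implicit).

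There is, however, one false assertion in your second paragraph: the claim that $\det(M_{i,i})\equiv1\pmod{\sigma_{i}}$ forces $s_{i}=0$, hence $M_{i,i}\in SL_{n-1}(\mathbb{Z}[x_{i}^{\pm1}],\sigma_{i}\mathbb{Z}[x_{i}^{\pm1}])$. Reduction modulo $\sigma_{i}$ sends $x_{i}\mapsto1$, so $x_{i}^{s_{i}}\equiv1\pmod{\sigma_{i}}$ for \emph{every} $s_{i}$, and the congruence gives no information about $s_{i}$. Indeed the claim contradicts the very proposition you are proving: $I_{n-1}+\sigma_{i}E_{1,1}$ lies in $GL_{n-1}(\mathbb{Z}[x_{i}^{\pm1}],\sigma_{i}\mathbb{Z}[x_{i}^{\pm1}])$ with determinant $x_{i}$, and by your retraction property it is in the image of $\rho_{i}$, so $\rho_{i}$ does not land in $SL_{n-1}$. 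Fortunately this remark is excisable: invertibility of $M_{i,i}$ already follows from $\det(M_{i,i})=\pi_{i}(\det(I_{n}+A))$ being a unit (or simply from the block-triangular shape of $\pi_{i}(I_{n}+A)$), and nothing downstream uses the $SL$ claim. Delete that sentence and the proof stands, matching the paper's argument.
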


\begin{proof}
Without loss of generality we assume $i=n$. First, consider the homomorphism
$IA\left(\Phi\right)\to GL_{n}(\mathbb{Z}[x_{n}^{\pm1}])$, which
is induced by the projection $R_{n}\to\mathbb{Z}[x_{n}^{\pm1}]$ that
is defined by $x_{j}\mapsto1$ for every $j\neq n$. By Proposition
\ref{prop:augmentation}, given $I_{n}+A\in IA\left(\Phi\right)$,
all the entries of the $n$-th column of $A$ are in $\sum_{j=1}^{n-1}\sigma_{j}R_{n}$.
Hence, the above map $IA\left(\Phi\right)\to GL_{n}(\mathbb{Z}[x_{n}^{\pm1}])$
is actually a map
\[
IA\left(\Phi\right)\to\left\{ I_{n}+\bar{A}\in GL_{n}(\mathbb{Z}[x_{n}^{\pm1}])\,|\,\textrm{the\,\,}n\textrm{-th\,\, column\,\, of\,\,}\bar{A}\textrm{\,\, is\,\,}\vec{0}\right\} .
\]

Observe now, that the right side of the above map is mapped naturally
to $GL_{n-1}(\mathbb{Z}[x_{n}^{\pm1}])$ by erasing the $n$-th column
and the $n$-th row from every element. Hence we obtain a map 
\[
IA\left(\Phi\right)\to GL_{n-1}(\mathbb{Z}[x_{n}^{\pm1}]).
\]

Now, by Proposition \ref{prop:augmentation}, every entry of $A$
such that $I_{n}+A\in IA\left(\Phi\right)$, is in $\mathfrak{A}$.
Thus, the entries of every $\bar{A}$ such that $I_{n-1}+\bar{A}\in GL_{n-1}(\mathbb{Z}[x_{n}^{\pm1}])$
is an image of $I_{n}+A\in IA\left(\Phi\right)$, are all in $\sigma_{n}\mathbb{Z}[x_{n}^{\pm1}]$.
Hence, we actually obtain a homomorphism
\[
\rho_{n}:IA\left(\Phi\right)\to GL_{n-1}(\mathbb{Z}[x_{n}^{\pm1}],\sigma_{n}\mathbb{Z}[x_{n}^{\pm1}]).
\]

Observing that the copy of $GL_{n-1}(\mathbb{Z}[x_{n}^{\pm1}],\sigma_{n}\mathbb{Z}[x_{n}^{\pm1}])$
in $IGL_{n-1,n}$ is mapped isomorphically to itself by $\rho_{n}$,
finishes the proof.
\end{proof}
\begin{prop}
\label{prop:intersection}Denote $S_{i}=\mathbb{Z}[x_{i}^{\pm1}]\subseteq R_{n}$,
$J_{i,m}=(x_{i}^{m}-1)S_{i}+mS_{i}\subseteq H_{m}$ for $1\leq i\leq n$.
Then, by identifying $\textrm{Im}(\rho_{i})\cong GL_{n-1}(\mathbb{Z}[x_{i}^{\pm1}],\sigma_{i}\mathbb{Z}[x_{i}^{\pm1}])=GL_{n-1}(S_{i},\sigma_{i}S_{i})$,
for every $m\in\mathbb{N}$ one has
\[
\textrm{Im}(\rho_{i})\cap IG_{m}=GL_{n-1}(S_{i},\sigma_{i}J_{i,m}).
\]
\end{prop}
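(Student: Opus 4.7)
I will take $i=n$ (the general case follows by relabelling the generators), and use the shorthand $S=S_n$, $J=J_{n,m}$, $T=1+x_n+\ldots+x_n^{m-1}$, and $\bar R = R_n/H_m = \mathbb{Z}_m[\mathbb{Z}_m^n]$. The first step is to unpack both sides of the claimed equality using the explicit embedding constructed in the proof of Proposition~\ref{prop:iso}, which sends $M = I_{n-1}+\sigma_n B \in GL_{n-1}(S,\sigma_n S)$ (with $B \in M_{n-1}(S)$ having columns $\vec b_1,\ldots,\vec b_{n-1}$) to
\[
\iota(M) = \left(\begin{array}{cc} I_{n-1}+\sigma_n B & -\sum_{l=1}^{n-1}\sigma_l \vec b_l \\ 0 & 1 \end{array}\right) \in IGL_{n-1,n}.
\]
Then $\iota(M)\in IG_m$ translates to every entry of $\iota(M)-I_n$ lying in $H_m$, while $M \in GL_{n-1}(S,\sigma_n J)$ translates to every entry $b_{k,l}$ of $B$ lying in $J$ (using that $S$ is a domain, so multiplication by $\sigma_n$ is injective on $S$).

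The $\supseteq$ inclusion is immediate: if every $b_{k,l}\in J\subseteq H_m$, then $\sigma_n b_{k,l}\in H_m$ and $\sum_l\sigma_l b_{k,l}\in H_m$ since $H_m$ is an ideal, so $\iota(M)\in IG_m$.

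For the $\subseteq$ inclusion I would argue modulo $H_m$. Assuming $\iota(M)\in IG_m$, each $b_{k,l}\in S$ reduces to an element $\bar b_{k,l}$ of the subring $\mathbb{Z}_m[\mathbb{Z}_m]\subset\bar R$ generated by $x_n$, and $J = H_m\cap S$. The condition $\sigma_n b_{k,l}\in H_m$ gives $\sigma_n\bar b_{k,l}=0$ in $\mathbb{Z}_m[\mathbb{Z}_m]=\mathbb{Z}_m[x_n]/(x_n^m-1)$; since $(x_n-1)T = x_n^m-1\equiv 0$ and a short direct computation shows that the annihilator of $\sigma_n$ in this ring is exactly $T\cdot\mathbb{Z}_m$, one obtains $\bar b_{k,l}=c_{k,l}T$ for a unique $c_{k,l}\in\mathbb{Z}_m$. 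The $n$-th column constraint $\sum_l\sigma_l b_{k,l}\in H_m$ then becomes $T\cdot\sum_{l\neq n}(x_l-1)c_{k,l}=0$ in $\bar R$. Viewing $\bar R = \mathbb{Z}_m[\mathbb{Z}_m^{n-1}][x_n]/(x_n^m-1)$ as a free $\mathbb{Z}_m[\mathbb{Z}_m^{n-1}]$-module with basis $1,x_n,\ldots,x_n^{m-1}$, multiplication by $T$ sends $u\mapsto u+ux_n+\ldots+ux_n^{m-1}$, which is visibly injective on the submodule $\mathbb{Z}_m[\mathbb{Z}_m^{n-1}]$; hence $\sum_{l\neq n}(x_l-1)c_{k,l}=0$ in $\mathbb{Z}_m[\mathbb{Z}_m^{n-1}]$. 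Finally, the elements $\{x_l-1:l\neq n\}$ are $\mathbb{Z}_m$-linearly independent in $\mathbb{Z}_m[\mathbb{Z}_m^{n-1}]$ (their $x_l$-coordinates in the standard group-algebra basis are distinct), so every $c_{k,l}=0$ and $b_{k,l}\in J$ as desired.

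The main obstacle lies entirely in the $\subseteq$ direction. The naive attempt---restricting via the projection $R_n\to S$ sending $x_j\mapsto 1$ for $j\neq n$---only yields $\sigma_n b_{k,l}\in J$, which is strictly weaker than $b_{k,l}\in J$ because $\sigma_n$ is a zero-divisor modulo $J$ (with annihilator $T\mathbb{Z}_m$). The extra information from the $n$-th column condition $A\vec\sigma=\vec 0$ is exactly what is needed to rule out the spurious $T$-contribution and collapse the annihilator.
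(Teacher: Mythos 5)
Your proof is correct, and both inclusions are handled soundly: the easy direction matches the paper's, and your hard direction is a valid argument. The route you take for $\subseteq$ is, however, genuinely different from the paper's. You first use the block entries $\sigma_n b_{k,l}\in H_m$ to pin $\bar b_{k,l}$ down to the annihilator of $\sigma_n$ in $\mathbb{Z}_m[x_n]/(x_n^m-1)$, namely $T\mathbb{Z}_m$ with $T=1+x_n+\cdots+x_n^{m-1}$, and then use the last-column condition $\sum_{l}\sigma_l b_{k,l}\in H_m$ together with injectivity of multiplication by $T$ and $\mathbb{Z}_m$-linear independence of $\{x_l-1\}_{l\neq n}$ to kill the remaining coefficients. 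The paper instead uses only the last-column condition: for each $l\neq n$ it applies the projection $x_j\mapsto 1$ for $j\neq l,n$, which isolates $\sigma_l b_{k,l}\in H_m$ in the two-variable ring $\mathbb{Z}[x_l^{\pm1},x_n^{\pm1}]$, and then concludes $b_{k,l}\in H_m$ because $x_l$ is a fresh variable over $S_n$ modulo $H_m$ (so $\sigma_l$ is not a zero-divisor on the image of $S_n$). The paper's projection trick is shorter and avoids computing any annihilator; your version is more explicit about exactly where the reverse inclusion could fail (the spurious $T$-multiples), which is a useful insight, and your closing remark correctly diagnoses why the naive one-variable projection $R_n\to S$ is insufficient.
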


\begin{proof}
By the identification 
\[
IG_{m}=\left\{ I_{n}+A\in GL_{n}(R_{n},H_{m})\,|\,A\vec{\sigma}=\vec{0}\right\} 
\]
and by applying the formula of Equation (\ref{eq:terms of others})
to the $i$-th column of elements in $IGL_{n-1,i}$, it is easy to
see that the elements of $IGL_{n-1,i}$ which correspond to the elements
of $GL_{n-1}(S_{i},\sigma_{i}J_{i,m})$ are clearly in $\textrm{Im}\rho_{i}\cap IG_{m}$.
For the opposite inclusion, without loss of generality assume that
$i=n$, and let $I_{n}+A\in\textrm{Im}\rho_{n}\cap IG_{m}$. Then
$I_{n}+A$ has the form
\[
\left(\begin{array}{cc}
I_{n-1}+\sigma_{n}B & -\sum_{l=1}^{n-1}\sigma_{l}\vec{b}_{l}\\
0 & 1
\end{array}\right)\in IGL_{n-1,n}
\]
where the entries of $B$ satisfy $b_{k,l}\in S_{n}$ and $\sum_{j=1}^{n-1}\sigma_{j}b_{k,j}\in H_{m}$.
Notice now that for every $l\neq n$, by projecting $\sigma_{j}\mapsto0$
for $j\neq l,n$, we see that actually $\sigma_{l}b_{k,l}\in H_{m}$.
From here it is easy to see that we necessarily have $b_{k,l}\in H_{m}$.
I.e. $b_{k,l}\in H_{m}\cap S_{n}=(x_{n}^{m}-1)S_{n}+mS_{n}=J_{n,m}$,
and the claim follows.
\end{proof}
\begin{prop}
\label{prop:contain}For every $1\leq i\leq n$ and $m\in\mathbb{N}$
one has
\[
\rho_{i}(IG_{m^{2}})\subseteq\textrm{Im}(\rho_{i})\cap IG_{m}\subseteq\rho_{i}(IG_{m}).
\]
\end{prop}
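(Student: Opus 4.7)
For the second inclusion $\mathrm{Im}(\rho_i) \cap IG_m \subseteq \rho_i(IG_m)$, the argument is essentially formal. Any $y \in \mathrm{Im}(\rho_i) \cap IG_m$ lies in the embedded copy of $GL_{n-1}(S_i, \sigma_i S_i)$ inside $IA(\Phi)$, and by the splitting in Proposition \ref{prop:semi-direct} the map $\rho_i$ acts as the identity on this embedded copy. Hence $\rho_i(y) = y$, so $y = \rho_i(y) \in \rho_i(IG_m)$. This direction needs no real work.

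The first inclusion is the substantive one. Without loss of generality set $i = n$, and take $g = I_n + A \in IG_{m^2}$. The explicit description of $\rho_n$ from the proof of Proposition \ref{prop:semi-direct} says that $\rho_n(g)$ is obtained by applying the ring map $\pi \colon R_n \to S_n$, $x_j \mapsto 1$ for $j \ne n$, entrywise to $A$ and then deleting the $n$-th row and column. For $1 \le k,l \le n-1$, Proposition \ref{prop:augmentation} gives $a_{k,l} \in \sum_{i \ne l} \sigma_i R_n$, and since $\pi$ kills every $\sigma_j$ with $j \ne n$, this forces $\pi(a_{k,l}) \in \sigma_n S_n$. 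Also, $a_{k,l} \in H_{m^2}$ implies $\pi(a_{k,l}) \in \pi(H_{m^2}) = J_{n, m^2}$. By Proposition \ref{prop:intersection}, $\mathrm{Im}(\rho_n) \cap IG_m$ coincides with $GL_{n-1}(S_n, \sigma_n J_{n,m})$, so the whole inclusion reduces to the purely ring-theoretic statement
\[
\sigma_n S_n \cap J_{n, m^2} \subseteq \sigma_n J_{n, m}.
\]

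To settle this key inclusion, I plan to work in the finite quotient $T_k := S_n / J_{n, k} \cong \mathbb{Z}_k[\mathbb{Z}_k]$. Writing $f = \sigma_n h$ with $f \in J_{n, m^2}$, we have $\sigma_n \bar h = 0$ in $T_{m^2}$. Expanding $\bar h = \sum_{k=0}^{m^2-1} c_k x_n^k$ with $c_k \in \mathbb{Z}_{m^2}$ and using $x_n^{m^2} = 1$, the equation $(x_n - 1)\bar h = 0$ forces $c_{k-1} = c_k$ cyclically, so all coefficients equal a common value $c \in \mathbb{Z}_{m^2}$ and $\bar h = c \sum_{k=0}^{m^2-1} x_n^k$. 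Reducing further to $T_m$, I group the $m^2$ monomials into $m$ blocks of $m$ consecutive powers; applying $x_n^m = 1$ converts each block to $1 + x_n + \cdots + x_n^{m-1}$, so the total becomes $m \cdot (1 + x_n + \cdots + x_n^{m-1}) = 0$ in $T_m$ (the coefficient $m$ vanishes since $T_m$ is defined over $\mathbb{Z}_m$). Therefore $\bar h = 0$ in $T_m$, i.e.\ $h \in J_{n, m}$, and $f = \sigma_n h \in \sigma_n J_{n, m}$, as needed. This coefficient-matching followed by block-regrouping is really the only non-routine step; the main obstacle is recognizing that the combinatorial behavior of $\rho_n$ combined with Propositions \ref{prop:augmentation} and \ref{prop:intersection} reduces the claim to exactly this manageable ring computation.
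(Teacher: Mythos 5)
Your proof is correct, and its skeleton is the same as the paper's: the second inclusion is the formal retraction argument, and the first is reduced, via Proposition \ref{prop:augmentation} (which puts the image entries in $\sigma_{i}S_{i}$) and Proposition \ref{prop:intersection} (which identifies $\textrm{Im}(\rho_{i})\cap IG_{m}$ with $GL_{n-1}(S_{i},\sigma_{i}J_{i,m})$), to the ring-theoretic inclusion $\sigma_{i}S_{i}\cap J_{i,m^{2}}\subseteq\sigma_{i}J_{i,m}$. The only genuine divergence is how that inclusion is verified. The paper notes that $\sigma_{i}S_{i}\cap J_{i,m^{2}}=(x_{i}^{m^{2}}-1)S_{i}+m^{2}\sigma_{i}S_{i}$ and then uses the one-line factorization
\[
x_{i}^{m^{2}}-1=\sigma_{i}\Bigl(\sum_{r=0}^{m-1}x_{i}^{r}\Bigr)\Bigl(\sum_{r=0}^{m-1}x_{i}^{mr}\Bigr)\in\sigma_{i}J_{i,m},
\]
the point being that $\sum_{r=0}^{m-1}x_{i}^{mr}\equiv m$ modulo $(x_{i}^{m}-1)S_{i}$, hence lies in $J_{i,m}$. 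You instead compute the full annihilator of $x_{i}-1$ in $\mathbb{Z}_{m^{2}}[\mathbb{Z}_{m^{2}}]$ (it is generated by the norm element $\sum_{k=0}^{m^{2}-1}x_{i}^{k}$) and then observe that the norm element maps to $m\cdot(1+x_{i}+\cdots+x_{i}^{m-1})=0$ in $\mathbb{Z}_{m}[\mathbb{Z}_{m}]$. Both verifications are elementary and correct; the paper's identity is shorter and is reused elsewhere (e.g.\ in Proposition \ref{prop:injective}), while your annihilator computation gives slightly more information, namely a complete description of which $h$ can satisfy $\sigma_{i}h\in J_{i,m^{2}}$, rather than only the inclusion needed here.
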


\begin{proof}
As every element in $\textrm{Im}\rho_{i}$ is mapped to itself via
$\rho_{i}$ we clearly have $\textrm{Im}\rho_{i}\cap IG_{m}=\rho_{i}(\textrm{Im}\rho_{i}\cap IG_{m})\subseteq\rho_{i}(IG_{m})$.
On the other hand, if $I_{n}+A\in IG_{m^{2}}$ then viewing $\textrm{Im}\rho_{i}\cong GL_{n-1}(S_{i},\sigma_{i}S_{i})$
for $S_{i}=\mathbb{Z}[x_{i}^{\pm1}]$, the entries of $\rho_{i}(I_{n}+A)=I_{n-1}+B$
belong to $(x_{i}^{m^{2}}-1)S_{i}+m^{2}\sigma_{i}S_{i}$. Observe
now that we have: $\sum_{r=0}^{m-1}x_{i}^{mr}\subseteq(x_{i}^{m}-1)S_{i}+mS_{i}=J_{i,m}$.
Hence
\begin{equation}
x_{i}^{m^{2}}-1=\sigma_{i}\sum_{r=1}^{m^{2}-1}x_{i}^{r}=\sigma_{i}\sum_{r=0}^{m-1}x_{i}^{r}\sum_{r=0}^{m-1}x_{i}^{mr}\in\sigma_{i}J_{i,m}\label{eq:identity}
\end{equation}
So by proposition \ref{prop:intersection} $\rho_{i}(I_{n}+A)\in\textrm{Im}\rho_{i}\cap IG_{m}$
as required.
\end{proof}
\begin{prop}
\label{prop:equals}For every $m\in\mathbb{N}$ and $1\leq i\leq n$
one has
\[
\rho_{i}\left(IA_{m}\right)=\textrm{Im}(\rho_{i})\cap IA_{m}
\]
where $IA_{m}=\cap\left\{ N\vartriangleleft IA\left(\Phi\right)\,|\,[IA\left(\Phi\right):N]\,|\,m\right\} $.
\end{prop}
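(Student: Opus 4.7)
The plan is to exploit the semidirect product decomposition given in Proposition \ref{prop:semi-direct}: the inclusion $\iota\colon \textrm{Im}(\rho_i) \hookrightarrow IA(\Phi)$ satisfies $\rho_i\circ\iota = \textrm{id}$, so the composition $\psi = \iota\circ\rho_i$ is an idempotent endomorphism of $IA(\Phi)$ whose image (as a subgroup of $IA(\Phi)$) is exactly $\textrm{Im}(\rho_i)$. Once $\rho_i$ is recast as an endomorphism of $IA(\Phi)$, both inclusions in the desired equality become transparent.

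The key general fact I would establish first is that every endomorphism $\psi$ of a group $G$ sends the characteristic subgroup $G_m = \bigcap\{N\vartriangleleft G \mid [G:N]\mid m\}$ into itself. This is elementary: if $N\vartriangleleft G$ with $[G:N]\mid m$, then $\psi^{-1}(N)\vartriangleleft G$, and the well-defined injection $G/\psi^{-1}(N)\hookrightarrow G/N$ given by $g\mapsto \psi(g)N$ shows that $[G:\psi^{-1}(N)]\mid m$. Hence $G_m\subseteq\psi^{-1}(N)$, i.e.\ $\psi(G_m)\subseteq N$, and intersecting over all such $N$ yields $\psi(G_m)\subseteq G_m$.

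Applying this with $G = IA(\Phi)$ and $\psi = \iota\circ\rho_i$ gives $\iota(\rho_i(IA_m))\subseteq IA_m$; since $\iota$ is just the inclusion of $\textrm{Im}(\rho_i)$ into $IA(\Phi)$, this reads $\rho_i(IA_m)\subseteq \textrm{Im}(\rho_i)\cap IA_m$. For the reverse inclusion, any $g\in \textrm{Im}(\rho_i)\cap IA_m$ satisfies $\rho_i(g) = g$ by the retraction identity $\rho_i\circ\iota = \textrm{id}$, so $g = \rho_i(g)\in \rho_i(IA_m)$. Combining the two inclusions finishes the proof.

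There is no real obstacle here; the only thing to be careful about is the identification of $\textrm{Im}(\rho_i)$ with its image under $\iota$ inside $IA(\Phi)$, so that "$\textrm{Im}(\rho_i)\cap IA_m$" makes literal sense. Once that identification is in place, the argument is a single application of the endomorphism-stability of $IA_m$ together with the retraction structure from Proposition \ref{prop:semi-direct}.
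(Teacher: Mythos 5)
Your proposal is correct and is essentially the paper's own argument: the reverse inclusion is the same one-line use of the retraction identity, and your "endomorphisms preserve $G_m$" lemma applied to $\psi=\iota\circ\rho_i$ is exactly the paper's computation that $\rho_i^{-1}(N\cap\textrm{Im}\,\rho_i)$ is normal of index dividing $m$, just packaged as a general statement. No gaps.
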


\begin{proof}
As every element in $\textrm{Im}\rho_{i}$ is mapped to itself via
$\rho_{i}$ we clearly have $\textrm{Im}\rho_{i}\cap IA_{m}=\rho_{i}\left(\textrm{Im}\rho_{i}\cap IA_{m}\right)\subseteq\rho_{i}\left(IA_{m}\right)$.
For the opposite, assume that $\alpha\in IA_{m}$, and denote $\rho_{i}\left(\alpha\right)=\beta\in\textrm{Im}\rho_{i}$.
We want to show that $\beta\in IA_{m}$. So let $N\vartriangleleft IA(\Phi)$
such that $[IA(\Phi):N]|m$. Then obviously $[\textrm{Im}\rho_{i}:(N\cap\textrm{Im}\rho_{i})]|m$.
Thus, as $\rho_{i}$ is surjective $[IA(\Phi):\rho_{i}^{-1}(N\cap\textrm{Im}\rho_{i})]|m$
so $\alpha\in\rho_{i}^{-1}(N\cap\textrm{Im}\rho_{i})$ and hence $\beta=\rho_{i}(\alpha)\in N\cap\textrm{Im}\rho_{i}\leq N$.
As this is valid for every such $N$, we have: $\beta\in IA_{m}$,
as required.
\end{proof}
We close this section with the following definition:
\begin{defn}
\label{def:tag}For every $1\leq i\leq n$, denote
\[
IGL'_{n-1,i}=\left\{ I_{n}+A\in IA\left(\Phi\right)\,|\,\textrm{The\,\,}i\textrm{-th\,\, row\,\, of\,\,}A\textrm{\,\, is\,\,}0\right\} .
\]
\end{defn}

Obviously, $IGL_{n-1,i}\leq IGL'{}_{n-1,i}$, and by the same injectivity
argument as in the proof of Proposition \ref{prop:iso}, one can deduce
that:
\begin{prop}
\label{prop:tag}The subgroup $IGL'_{n-1,i}\leq IA\left(\Phi\right)$
is canonically embedded in $GL_{n-1}\left(R_{n}\right)$, by the map:
$I_{n}+A\mapsto I_{n-1}+A_{i,i}$.
\end{prop}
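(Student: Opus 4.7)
The map in question, call it $\varphi\colon IGL'_{n-1,i}\to GL_{n-1}(R_n)$, $I_n+A\mapsto I_{n-1}+A_{i,i}$, simply forgets the $i$-th row and column. The plan is to verify three things, in order: (a) the image really lies in $GL_{n-1}(R_n)$; (b) $\varphi$ is multiplicative (so that $IGL'_{n-1,i}$ is a subgroup and $\varphi$ a homomorphism); and (c) $\varphi$ is injective. None of these should present a real obstacle---the proposition is a mild strengthening of Proposition \ref{prop:iso}, and the whole argument is a short verification that exploits the hypothesis that the $i$-th row of $A$ vanishes.

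For (a), I would cofactor-expand $\det(I_n+A)$ along the $i$-th row. By the definition of $IGL'_{n-1,i}$ this row equals the standard basis vector $e_i$, so the expansion collapses to $\det(I_n+A)=\det(I_{n-1}+A_{i,i})$. The left-hand side is a unit of $R_n$---in fact of the form $\prod_r x_r^{s_r}$ by Proposition \ref{lem:det}---and therefore so is the right-hand side, placing $I_{n-1}+A_{i,i}$ in $GL_{n-1}(R_n)$.

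For (b), write $(I_n+A)(I_n+B)=I_n+C$ with $C=A+B+AB$. The $i$-th row of $AB$ is $(i\text{-th row of }A)\cdot B=0$, so the $i$-th row of $C$ vanishes and the product indeed lies in $IGL'_{n-1,i}$. For $k,l\neq i$ one has $C_{k,l}=A_{k,l}+B_{k,l}+\sum_{m=1}^{n}A_{k,m}B_{m,l}$, and the only summand that could spoil the identity $C_{i,i}=(I_{n-1}+A_{i,i})(I_{n-1}+B_{i,i})-I_{n-1}$ is the $m=i$ term $A_{k,i}B_{i,l}$, which vanishes because the $i$-th row of $B$ is zero. This is the one place where the defining hypothesis of $IGL'_{n-1,i}$ does nontrivial work.

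For (c), I would repeat the injectivity argument from the proof of Proposition \ref{prop:iso}. Suppose $\varphi(I_n+A)=I_{n-1}$, so $a_{k,l}=0$ for all $k,l\neq i$, while the $i$-th row of $A$ is zero by hypothesis. Applying the relation $A\vec{\sigma}=\vec{0}$ to the $k$-th row (with $k\neq i$) then reads $a_{k,i}\sigma_i=0$, and since $R_n$ is a domain this forces $a_{k,i}=0$. Hence $A=0$, proving injectivity.
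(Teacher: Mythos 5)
Your proposal is correct and its core step—injectivity via the relation $A\vec{\sigma}=\vec{0}$ together with the fact that $R_{n}$ is a domain—is exactly the argument the paper invokes (it simply refers back to the injectivity argument of Proposition \ref{prop:iso}). Your additional verifications that the image lands in $GL_{n-1}\left(R_{n}\right)$ and that the map is multiplicative are routine checks the paper leaves implicit, and they are carried out correctly.
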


\begin{rem}
Note that in general $IGL_{n-1,i}\lneqq IGL'{}_{n-1,i}$. For example,
$I_{4}+\sigma_{3}E_{1,2}-\sigma_{2}E_{1,3}\in IGL'{}_{3,4}\setminus IGL_{3,4}$.
\end{rem}

\section{\label{sec: not f.g.}The subgroups $C_{i}$ }

In this section we define the subgroups $C_{i}\leq C\left(IA\left(\Phi_{n}\right),\Phi_{n}\right)$,
and we show that for each $i$ we can view $C\left(IA\left(\Phi_{n}\right),\Phi_{n}\right)$
as a semi-direct product of $C_{i}$ with another subgroup. We also
show that when $n\geq4$
\[
C_{i}\cong\ker(\widehat{SL_{n-1}(\mathbb{Z}[x^{\pm1}])}\to SL_{n-1}(\widehat{\mathbb{Z}[x^{\pm1}]}))
\]
and use it to show that $C\left(IA\left(\Phi_{n}\right),\Phi_{n}\right)$
is not finitely generated. We recall the notations:
\begin{itemize}
\item $\Phi=\Phi_{n}$.
\item $\Psi_{m}=\Phi/M_{m}$, where $M_{m}=\left(\Phi'\Phi^{m}\right)'\left(\Phi'\Phi^{m}\right)^{m}$.
\item $IG_{m}=G(M_{m})=\ker(IA\left(\Phi\right)\to Aut(\Psi_{m})).$
\item $IA_{m}=\cap\left\{ N\vartriangleleft IA\left(\Phi\right)\,|\,[IA\left(\Phi\right):N]\,|\,m\right\} $.
\end{itemize}
It is proven in \cite{key-6} that $\hat{\Phi}=\underleftarrow{\lim}\Psi_{m}$.
So, as for every $m\in\mathbb{N}$ the group $\ker(\Phi\to\Psi_{m})$
is characteristic in $\Phi$, we can write explicitly
\begin{eqnarray*}
C\left(IA\left(\Phi\right),\Phi\right) & = & \ker(\widehat{IA\left(\Phi\right)}\to Aut(\hat{\Phi}))\\
 & = & \ker(\widehat{IA\left(\Phi\right)}\to\underleftarrow{\lim}Aut(\Psi_{m}))\\
 & = & \ker(\widehat{IA\left(\Phi\right)}\to\underleftarrow{\lim}(IA\left(\Phi\right)/IG_{m})).
\end{eqnarray*}

Now, as for every $n\geq4$ we know that $IA\left(\Phi\right)$ is
finitely generated (see \cite{key-24}), as explained in Remark \ref{rem:action},
we have $\widehat{IA\left(\Phi\right)}=\underleftarrow{\lim}(IA\left(\Phi\right)/IA_{m})$.
Hence
\begin{eqnarray*}
C(IA\left(\Phi\right),\Phi) & = & \ker(\underleftarrow{\lim}(IA\left(\Phi\right)/IA_{m})\to\underleftarrow{\lim}(IA\left(\Phi\right)/IG_{m}))\\
 & = & \ker(\underleftarrow{\lim}(IA\left(\Phi\right)/IA_{m})\to\underleftarrow{\lim}(IA\left(\Phi\right)/IG_{m}\cdot IA_{m}))\\
 & = & \underleftarrow{\lim}(IA_{m}\cdot IG_{m}/IA_{m}).
\end{eqnarray*}
Similarly, we can write $C(IA\left(\Phi\right),\Phi)=\underleftarrow{\lim}(IA_{m}\cdot IG_{m^{2}}/IA_{m})$.

Remember now that for every $1\leq i\leq n$ the composition map
\[
GL_{n-1}(\mathbb{Z}[x_{i}^{\pm1}],\sigma_{i}\mathbb{Z}[x_{i}^{\pm1}])\hookrightarrow IA\left(\Phi\right)\overset{\rho_{i}}{\twoheadrightarrow}GL_{n-1}(\mathbb{Z}[x_{i}^{\pm1}],\sigma_{i}\mathbb{Z}[x_{i}^{\pm1}])
\]
is the identity on $GL_{n-1}\left(\mathbb{Z}[x_{i}^{\pm1}],\sigma_{i}\mathbb{Z}[x_{i}^{\pm1}]\right)$.
Hence, the induced composition map of the profinite completions

\[
\widehat{GL_{n-1}(\mathbb{Z}[x_{i}^{\pm1}],\sigma_{i}\mathbb{Z}[x_{i}^{\pm1}])}\overset{\hat{\varrho}}{\to}\widehat{IA\left(\Phi\right)}\overset{\hat{\rho}_{i}}{\twoheadrightarrow}\widehat{GL_{n-1}(\mathbb{Z}[x_{i}^{\pm1}],\sigma_{i}\mathbb{Z}[x_{i}^{\pm1}])}
\]
is the identity on $\widehat{GL_{n-1}(\mathbb{Z}[x_{i}^{\pm1}],\sigma_{i}\mathbb{Z}[x_{i}^{\pm1}])}$.
In particular, the map $\hat{\varrho}$ is injective, so we can write
\[
\widehat{GL_{n-1}(\mathbb{Z}[x_{i}^{\pm1}],\sigma_{i}\mathbb{Z}[x_{i}^{\pm1}])}\hookrightarrow\widehat{IA\left(\Phi\right)}\overset{\hat{\rho}_{i}}{\twoheadrightarrow}\widehat{GL_{n-1}(\mathbb{Z}[x_{i}^{\pm1}],\sigma_{i}\mathbb{Z}[x_{i}^{\pm1}]).}
\]
This enables us to write: $IA\left(\Phi\right)=\ker\rho_{i}\rtimes\textrm{Im}\rho_{i}$
and $\widehat{IA\left(\Phi\right)}=\ker\hat{\rho}_{i}\rtimes\textrm{Im}\hat{\rho}_{i}$. 
\begin{defn}
We define
\[
C_{i}=C\left(IA\left(\Phi\right),\Phi\right)\cap\textrm{Im}\hat{\rho}_{i}=\ker(\textrm{Im}\hat{\rho}_{i}\to Aut(\hat{\Phi})).
\]
\end{defn}

\begin{prop}
\label{prop:ker-contain}If $1\leq i\neq j\leq n$, then $C_{i}\subseteq\ker\hat{\rho}_{j}$.
In particular, for every $i\neq j$ we have: $C_{i}\cap C_{j}=\left\{ e\right\} $.
\end{prop}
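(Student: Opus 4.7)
The strategy is to pick $\xi\in C_i$, approximate it by a sequence $\alpha_k$ in $G_i := GL_{n-1}(S_i,\sigma_i S_i)$ (embedded into $IA(\Phi)$ through the copy in $IGL_{n-1,i}$), and show that $\rho_j(\alpha_k)\to e$ in the profinite topology of $G_j := GL_{n-1}(S_j,\sigma_j S_j)$. Using the identification $C(IA(\Phi),\Phi)=\underleftarrow{\lim}(IA_m\,IG_{m^2}/IA_m)$ recalled at the start of $\varoint$\ref{sec: not f.g.}, I can arrange that $\alpha_k\in IA_m\cdot IG_{m^2}$ for $k$ sufficiently large. Writing $\alpha_k=\beta_k\gamma_k$ with $\beta_k\in IA_m$, $\gamma_k\in IG_{m^2}$ and applying the retraction $\rho_i$ (which is the identity on $G_i$) yields the factorization $\alpha_k=\rho_i(\beta_k)\rho_i(\gamma_k)$ inside $G_i$, with $\rho_i(\beta_k)\in G_i\cap IA_m$ by Proposition~\ref{prop:equals} and $\rho_i(\gamma_k)\in\rho_i(IG_{m^2})\subseteq G_i\cap IG_m=GL_{n-1}(S_i,\sigma_i J_{i,m})$ by Propositions~\ref{prop:intersection}~and~\ref{prop:contain}.

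Next I would analyze $\rho_j$ applied to elements of $G_i$. Following the recipe of Proposition~\ref{prop:semi-direct} (project $x_k\mapsto 1$ for every $k\neq j$, then delete the $j$-th row and column), a direct computation shows that $\rho_j|_{G_i}$ takes values in the abelian subgroup $T\leq G_j$ generated by the $n-2$ commuting elementary matrices $\tau_p:=I_{n-1}+\sigma_j E_{p,i'}$, where $i'$ denotes the index of $i$ after the deletion and $p$ ranges over $\{1,\dots,n-1\}\setminus\{i'\}$. More precisely, for $\alpha=I_{n-1}+\sigma_i C\in G_i$ the element $\rho_j(\alpha)$ depends only on the integer entries $C_{p,j}|_{x_i=1}$. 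Since $J_{i,m}|_{x_i=1}=m\mathbb{Z}$ (because $(x_i^m-1)|_{x_i=1}=0$), one obtains $\rho_j(\rho_i(\gamma_k))\in mT$, while clearly $\rho_j(\rho_i(\beta_k))\in T\cap(G_j\cap IA_m)$.

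The decisive point is that $mT\subseteq G_j\cap IA_m$ already: from $E_{p,i'}^2=0$ one has $\tau_p^{mc}=I_{n-1}+mc\,\sigma_j E_{p,i'}$, so every element of $mT$ is a product of $m$-th powers of elements of $G_j\subseteq IA(\Phi)$, and any $m$-th power in $IA(\Phi)$ automatically lies in $IA_m$ by Lagrange applied to a finite quotient of order dividing $m$. Combining the two factors gives $\rho_j(\alpha_k)\in G_j\cap IA_m$ for $k$ large. Because $G_j$ is a split retract of $IA(\Phi)$, the family $\{G_j\cap IA_m\}_m$ is a neighborhood basis of the identity in $\widehat{G_j}$, so $\hat\rho_j(\xi)=\lim_k\rho_j(\alpha_k)=e$, proving $C_i\subseteq\ker\hat\rho_j$. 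The second assertion $C_i\cap C_j=\{e\}$ is then immediate from $C_j\subseteq\textrm{Im}\,\hat\rho_j$ and the semidirect decomposition $\widehat{IA(\Phi)}=\ker\hat\rho_j\rtimes\textrm{Im}\,\hat\rho_j$. The main conceptual step is isolating the hidden subgroup $T$ and observing that $mT$ consists of genuine $m$-th powers; the remainder is careful tracking through the Magnus embedding.
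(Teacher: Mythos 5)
Your argument is correct and is essentially the paper's own proof in disguise: the refactoring $\alpha_k=\rho_i(\beta_k)\rho_i(\gamma_k)$ via the retraction $\rho_i$ is exactly the inclusion $(IA_m\cdot\textrm{Im}\rho_i)\cap(IA_m\cdot IG_{m^2})\subseteq IA_m\cdot(\textrm{Im}\rho_i\cap IG_m)$ proved there, and your observation that $\rho_j(G_i\cap IG_m)$ lands in $mT$, a set of $m$-th powers in $\textrm{Im}\rho_j$, is the paper's computation $\rho_j(\textrm{Im}\rho_i\cap IG_m)=\rho_j(\left\langle I_n+\sigma_iE_{k,j}-\sigma_jE_{k,i}\,|\,k\neq i,j\right\rangle^m)\subseteq\rho_j(IA_m)$ viewed from inside $\textrm{Im}\rho_j$ rather than upstairs in $IA(\Phi)$. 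The remaining differences (sequences converging in $\widehat{IA(\Phi)}$ versus inverse limits of quotients) are cosmetic.
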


\begin{proof}
By the explicit description $\widehat{IA\left(\Phi\right)}=\underleftarrow{\lim}(IA\left(\Phi\right)/IA_{m})$,
one can write
\begin{eqnarray*}
C_{i} & = & \ker(\textrm{Im}\hat{\rho}_{i}\to Aut(\hat{\Phi}))\\
 & = & \ker(\underleftarrow{\lim}(IA_{m}\cdot\textrm{Im}\rho_{i}/IA_{m})\to\underleftarrow{\lim}(IA\left(\Phi\right)/IG_{m}))\\
 & = & \ker(\underleftarrow{\lim}(IA_{m}\cdot\textrm{Im}\rho_{i}/IA_{m})\to\underleftarrow{\lim}(IA\left(\Phi\right)/IG_{m}\cdot IA_{m}))\\
 & = & \underleftarrow{\lim}((IA_{m}\cdot\textrm{Im}\rho_{i})\cap(IA_{m}\cdot IG_{m}))/IA_{m}
\end{eqnarray*}
and similarly $C_{i}=\underleftarrow{\lim}((IA_{m}\cdot\textrm{Im}\rho_{i})\cap(IA_{m}\cdot IG_{m^{2}}))/IA_{m}$.
We claim now that
\begin{align*}
 & (IA_{m}\cdot\textrm{Im}\rho_{i})\cap(IA_{m}\cdot IG_{m^{2}})\\
 & \subseteq IA_{m}\cdot(\textrm{Im}\rho_{i}\cap IG_{m})\\
 & \subseteq(IA_{m}\cdot\textrm{Im}\rho_{i})\cap(IA_{m}\cdot IG_{m}).
\end{align*}

The second inclusion is obvious. For the first one, we have to show
that if $ar=bs$ such that $a,b\in IA_{m}$, $r\in\textrm{Im}\rho_{i}$
and $s\in IG_{m^{2}}$, then there exist $c\in IA_{m}$ and $t\in\textrm{Im}\rho_{i}\cap IG_{m}$
such that $ar=bs=ct$. Indeed, write: $\textrm{Im}\rho_{i}\ni r=a^{-1}bs$.
Then: $r=\rho_{i}\left(r\right)=\rho_{i}(a^{-1}b)\rho_{i}\left(s\right)$,
and by Propositions \ref{prop:contain} and \ref{prop:equals}
\begin{eqnarray*}
\rho_{i}(a^{-1}b) & \in & \rho_{i}(IA_{m})=\textrm{Im}\rho_{i}\cap IA_{m}\\
\rho_{i}\left(s\right) & \in & \rho_{i}(IG_{m^{2}})\subseteq\textrm{Im}\rho_{i}\cap IG_{m}.
\end{eqnarray*}
Therefore, by defining $c=a\cdot\rho_{i}(a^{-1}b)$, and $t=\rho_{i}\left(s\right)$
we get the required inclusion. Thus, for $j\neq i$ we have
\begin{align*}
C_{i} & =\underleftarrow{\lim}(IA_{m}\cdot(\textrm{Im}\rho_{i}\cap IG_{m})/IA_{m})\\
 & \overset{\hat{\rho}_{j}}{\twoheadrightarrow}\underleftarrow{\lim}\rho_{j}(IA_{m})\cdot\rho_{j}(\textrm{Im}\rho_{i}\cap IG_{m})/\rho_{j}(IA_{m}).
\end{align*}
Using the definition of $\rho_{j}$ it is not difficult to show that
\begin{eqnarray*}
\rho_{j}(\textrm{Im}\rho_{i}\cap IG_{m}) & = & \left\langle I_{n}+m(\sigma_{i}E_{k,j}-\sigma_{j}E_{k,i})\,|\,k\neq i,j\right\rangle \\
 & = & \rho_{j}(\left\langle I_{n}+m(\sigma_{i}E_{k,j}-\sigma_{j}E_{k,i})\,|\,k\neq i,j\right\rangle )\\
 & = & \rho_{j}(\left\langle I_{n}+\sigma_{i}E_{k,j}-\sigma_{j}E_{k,i}\,|\,k\neq i,j\right\rangle ^{m})\subseteq\rho_{j}(IA_{m}).
\end{eqnarray*}
Hence, $C_{i}\subseteq\ker\hat{\rho}_{j}$, as required.
\end{proof}
We can now prove the following proposition:
\begin{prop}
\label{cor:C semi-direct}For every $1\leq i\leq n$ we have
\[
C_{i}\hookrightarrow C(IA\left(\Phi\right),\Phi)\overset{\hat{\rho}_{i}}{\twoheadrightarrow}C_{i}.
\]
In particular: $C\left(IA\left(\Phi\right),\Phi\right)=(\ker\hat{\rho}_{i}\cap C\left(IA\left(\Phi\right),\Phi\right))\rtimes C_{i}$.
\end{prop}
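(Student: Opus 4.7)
The plan is to show that $\hat\rho_i$ restricts to a retraction $C(IA(\Phi),\Phi)\twoheadrightarrow C_i$ whose section is the tautological inclusion $C_i\hookrightarrow C(IA(\Phi),\Phi)$; once this is in hand, the claimed semidirect decomposition falls out of the splitting. The inclusion $C_i\hookrightarrow C(IA(\Phi),\Phi)$ is immediate from the definition $C_i=C(IA(\Phi),\Phi)\cap\textrm{Im}\hat\rho_i$, and since the composition $\textrm{Im}\hat\rho_i\hookrightarrow\widehat{IA(\Phi)}\overset{\hat\rho_i}{\twoheadrightarrow}\textrm{Im}\hat\rho_i$ is the identity (as recorded just before the statement), $\hat\rho_i$ is automatically the identity on $C_i$. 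So the whole content of the proposition reduces to the single inclusion $\hat\rho_i(C(IA(\Phi),\Phi))\subseteq C_i$.

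\textbf{Key step.} To establish this inclusion I would work with the description $C(IA(\Phi),\Phi)=\underleftarrow{\lim}(IA_{m}\cdot IG_{m^{2}})/IA_{m}$ set up at the start of the section. Given $\alpha\in C(IA(\Phi),\Phi)$, for each $m$ I would write a representative as $x_m=a_m r_m$ with $a_m\in IA_m$ and $r_m\in IG_{m^2}$, so that $\hat\rho_i(\alpha)$ is represented by $\rho_i(x_m)=\rho_i(a_m)\,\rho_i(r_m)$. The two ingredients needed are already in hand: by Proposition \ref{prop:equals}, $\rho_i(a_m)\in\rho_i(IA_m)=\textrm{Im}\rho_i\cap IA_m\subseteq IA_m$, and by Proposition \ref{prop:contain}, $\rho_i(r_m)\in\rho_i(IG_{m^2})\subseteq\textrm{Im}\rho_i\cap IG_m$. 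Hence $\rho_i(x_m)\in IA_m\cdot(\textrm{Im}\rho_i\cap IG_m)$, which shows
\[
\hat\rho_i(\alpha)\in\underleftarrow{\lim}\bigl(IA_m\cdot(\textrm{Im}\rho_i\cap IG_m)\bigr)/IA_m.
\]
The sandwich of inclusions that appears inside the proof of Proposition \ref{prop:ker-contain} identifies this last inverse limit with $C(IA(\Phi),\Phi)\cap\textrm{Im}\hat\rho_i=C_i$, completing the key step.

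\textbf{Conclusion.} With both directions established, $\hat\rho_i$ restricts to a retraction $C(IA(\Phi),\Phi)\twoheadrightarrow C_i$ whose kernel is $\ker\hat\rho_i\cap C(IA(\Phi),\Phi)$. Normality of $\ker\hat\rho_i$ in $\widehat{IA(\Phi)}$ immediately transfers to normality of the intersection inside $C(IA(\Phi),\Phi)$, and the split short exact sequence then yields $C(IA(\Phi),\Phi)=(\ker\hat\rho_i\cap C(IA(\Phi),\Phi))\rtimes C_i$. The only genuine obstacle is the key inclusion, but this is resolved by tracking representatives through $\rho_i$ and invoking Propositions \ref{prop:contain} and \ref{prop:equals}; no deeper tools from algebraic K-theory or the main lemma are needed at this stage.
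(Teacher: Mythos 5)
Your proposal is correct and follows essentially the same route as the paper: both arguments reduce everything to the inverse-limit descriptions, use Propositions \ref{prop:contain} and \ref{prop:equals} to push representatives $a_m r_m$ through $\rho_i$, and identify the resulting limit with $C_i$ via the sandwich of inclusions from the proof of Proposition \ref{prop:ker-contain}. No gaps.
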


\begin{proof}
In the proof of Proposition \ref{prop:ker-contain} we saw that 
\[
C_{i}=\underleftarrow{\lim}(IA_{m}\cdot(\textrm{Im}\rho_{i}\cap IG_{m})/IA_{m}).
\]
Similarly $C_{i}=\underleftarrow{\lim}(IA_{m}\cdot(\textrm{Im}\rho_{i}\cap IG_{m^{2}})/IA_{m})$.
We remind that by Propositions \ref{prop:contain} and \ref{prop:equals}
we have 
\begin{align*}
\rho_{i}(IG_{m^{2}}) & \subseteq\textrm{Im}\rho_{i}\cap IG_{m}\subseteq\rho_{i}(IG_{m})\\
\rho_{i}(IA_{m}) & =\textrm{Im}\rho_{i}\cap IA_{m}.
\end{align*}
Therefore, we have 
\begin{eqnarray*}
C_{i} & = & \underleftarrow{\lim}IA_{m}\cdot(\textrm{Im}\rho_{i}\cap IG_{m})/IA_{m}=\underleftarrow{\lim}IA_{m}\cdot(\textrm{Im}\rho_{i}\cap IG_{m^{2}})/IA_{m}\\
 & \hookrightarrow & \underleftarrow{\lim}IA_{m}\cdot IG_{m}/IA_{m}=\underleftarrow{\lim}IA_{m}\cdot IG_{m^{2}}/IA_{m}=C(IA\left(\Phi\right),\Phi)\\
 & \overset{\hat{\rho}_{i}}{\twoheadrightarrow} & \underleftarrow{\lim}\rho_{i}(IA_{m})\cdot\rho_{i}(IG_{m})/\rho_{i}(IA_{m})=\underleftarrow{\lim}\rho_{i}(IA_{m})\cdot\rho_{i}(IG_{m^{2}})/\rho_{i}(IA_{m})\\
 & = & \underleftarrow{\lim}(\textrm{Im}\rho_{i}\cap IA_{m})\cdot(\textrm{Im}\rho_{i}\cap IG_{m})/(\textrm{Im}\rho_{i}\cap IA_{m})\\
 & = & \underleftarrow{\lim}IA_{m}\cdot(\textrm{Im}\rho_{i}\cap IG_{m})/IA_{m}=C_{i}.
\end{eqnarray*}
The latter equality follows from the inclusion $\textrm{Im}\rho_{i}\cap IG_{m}\subseteq\textrm{Im}\rho_{i}$. 
\end{proof}

\subsection*{Computing $C_{i}$}

We turn now to the computation of $C_{i}$. We are going to show that
$C_{i}$ are canonically isomorphic to
\[
\ker(\widehat{SL_{n-1}(\mathbb{Z}[x^{\pm1}])}\to SL_{n-1}(\widehat{\mathbb{Z}[x^{\pm1}]}))
\]
and going to use it in order to show that $C\left(IA\left(\Phi\right),\Phi\right)$
is not finitely generated. So fix $n\geq4$, $1\leq i_{0}\leq n$,
and denote:
\begin{itemize}
\item $x=x_{i_{0}}$.
\item $\sigma=\sigma_{i_{0}}=x_{i_{0}}-1$.
\item $IGL_{n-1}=IGL_{n-1,i_{0}}$.
\item $IE_{n-1}(H)=IE_{n-1,i_{0}}(H)$.
\item $S=\mathbb{Z}[x^{\pm1}]=\mathbb{Z}[x_{i_{0}}^{\pm1}]$. 
\item $J_{m}=\left(x^{m}-1\right)S+mS$ for $m\in\mathbb{N}$.
\item $\rho=\rho_{i_{0}}:IA\left(\Phi\right)\twoheadrightarrow GL_{n-1}\left(S,\sigma S\right)$.
\item $\hat{\rho}=\hat{\rho}_{i_{0}}:\widehat{IA\left(\Phi\right)}\twoheadrightarrow\widehat{GL_{n-1}\left(S,\sigma S\right)}$.
\end{itemize}
Now, write (the last equality is by Proposition \ref{prop:intersection})
\begin{eqnarray*}
C_{i_{0}} & = & \ker(\textrm{Im}\hat{\rho}\to Aut(\hat{\Phi}))\\
 & = & \ker(\widehat{GL_{n-1}\left(S,\sigma S\right)}\to Aut(\hat{\Phi}))\\
 & = & \ker(\widehat{GL_{n-1}\left(S,\sigma S\right)}\to\underleftarrow{\lim}\left(IA\left(\Phi\right)/IG_{m}\right))\\
 & = & \ker(\widehat{GL_{n-1}\left(S,\sigma S\right)}\to\underleftarrow{\lim}\left(GL_{n-1}\left(S,\sigma S\right)\cdot IG_{m}/IG_{m}\right))\\
 & = & \ker(\widehat{GL_{n-1}\left(S,\sigma S\right)}\to\underleftarrow{\lim}GL_{n-1}\left(S,\sigma S\right)/\left(GL_{n-1}\left(S,\sigma S\right)\cap IG_{m}\right))\\
 & = & \ker(\widehat{GL_{n-1}\left(S,\sigma S\right)}\to\underleftarrow{\lim}GL_{n-1}\left(S,\sigma S\right)/GL_{n-1}\left(S,\sigma J_{m}\right)).
\end{eqnarray*}
Now, by the same computation as in Proposition \ref{prop:contain}
one can show that for every $m\in\mathbb{N}$ we have $(J_{m^{2}}\cap\sigma S)\subseteq\sigma J_{m}\subseteq(J_{m}\cap\sigma S)$,
so the latter is equal to
\begin{align*}
 & \ker(\widehat{GL_{n-1}\left(S,\sigma S\right)}\to\underleftarrow{\lim}GL_{n-1}\left(S,\sigma S\right)/(GL_{n-1}\left(S,\sigma S\right)\cap GL_{n-1}(S,J_{m})))\\
 & =\ker(\widehat{GL_{n-1}\left(S,\sigma S\right)}\to\underleftarrow{\lim}(GL_{n-1}\left(S,\sigma S\right)\cdot GL_{n-1}(S,J_{m}))/GL_{n-1}(S,J_{m}))\\
 & =\ker(\widehat{GL_{n-1}\left(S,\sigma S\right)}\to\underleftarrow{\lim}GL_{n-1}\left(S\right)/GL_{n-1}(S,J_{m})\\
 & =\ker(\widehat{GL_{n-1}\left(S,\sigma S\right)}\to\underleftarrow{\lim}GL_{n-1}(S/J_{m})).
\end{align*}

Now, if $\bar{S}$ is a finite quotient of $S$, then as $x$ is invertible
in $S$, its image $\bar{x}\in\bar{S}$ is invertible in $\bar{S}$.
Thus, there exists $r\in\mathbb{N}$ such that $\bar{x}^{r}=1_{\bar{S}}$.
In addition, there exists $t\in\mathbb{N}$ such that $\underset{t}{\underbrace{1_{\bar{S}}+\ldots+1_{\bar{S}}}}=0_{\bar{S}}$.
Therefore, for $m=r\cdot t$ the map $S\to\bar{S}$ factorizes through
$\mathbb{Z}_{m}[\mathbb{Z}_{m}]\cong S/J_{m}$. Thus, we have $\hat{S}=\underleftarrow{\lim}(S/J_{m})$,
which implies that: $GL_{n-1}(\hat{S})=\underleftarrow{\lim}GL_{n-1}(S/J_{m})$.
Therefore

\[
C_{i_{0}}=\ker(\widehat{GL_{n-1}\left(S,\sigma S\right)}\to GL_{n-1}(\hat{S})).
\]
Now, the short exact sequence
\[
1\to GL_{n-1}\left(S,\sigma S\right)\to GL_{n-1}\left(S\right)\to GL_{n-1}\left(\mathbb{Z}\right)\to1
\]
gives rise to the exact sequence (see \cite{key-5}, Lemma 2.1)
\[
\widehat{GL_{n-1}\left(S,\sigma S\right)}\to\widehat{GL_{n-1}\left(S\right)}\to\widehat{GL_{n-1}\left(\mathbb{Z}\right)}\to1
\]
which gives rise to the commutative diagram
\[
\begin{array}{cccccccc}
\widehat{GL_{n-1}\left(S,\sigma S\right)} & \to & \widehat{GL_{n-1}\left(S\right)} & \to & \widehat{GL_{n-1}\left(\mathbb{Z}\right)} & \to & 1\\
 & \searrow & \downarrow &  & \downarrow\\
 &  & GL_{n-1}(\hat{S}) & \to & GL_{n-1}(\mathbb{\hat{Z}}) & \to & 1 & .
\end{array}
\]
Assuming $n\geq4$ and using the affirmative answer to the classical
congruence subgroup problem (\cite{key-22}, \cite{key-23}), the
map: $\widehat{GL_{n-1}\left(\mathbb{Z}\right)}\to GL_{n-1}(\mathbb{\hat{Z}})$
is injective. Thus, by diagram chasing we obtain that the kernel $\ker(\widehat{GL_{n-1}\left(S,\sigma S\right)}\to GL_{n-1}(\hat{S}))$
is mapped onto $\ker(\widehat{GL_{n-1}\left(S\right)}\to GL_{n-1}(\hat{S}))$.
In order to proceed from here we need the following lemma:
\begin{lem}
\label{lem:pf-comp}Let $d\geq3$ and denote: $D_{m}=\left\{ I_{d}+\left(x^{k\cdot m}-1\right)E_{1,1}\,|\,k\in\mathbb{Z}\right\} $
for $m\in\mathbb{N}$. Then
\begin{eqnarray*}
\widehat{GL_{d}\left(S\right)} & = & \underleftarrow{\lim}\left(GL_{d}\left(S\right)/\left(D_{m}E_{d}\left(S,J_{m}\right)\right)\right)\\
\widehat{SL_{d}\left(S\right)} & = & \underleftarrow{\lim}\left(SL_{d}\left(S\right)/E_{d}\left(S,J_{m}\right)\right).
\end{eqnarray*}
\end{lem}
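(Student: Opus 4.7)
Both equalities amount to showing that $\{E_d(S,J_m)\}_m$ (resp.\ $\{D_m E_d(S,J_m)\}_m$) is a cofinal family of normal subgroups of finite index in $SL_d(S)$ (resp.\ $GL_d(S)$). Once cofinality and the finite-index property are established, the inverse limit over these subgroups agrees with the profinite completion.

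\emph{Normality and finite index.} For $d\geq 3$, $E_d(S,J_m)$ is normal in $GL_d(S)$ by the normality result cited in Section~\ref{sec:K-theory}. Since $S/J_m\cong\mathbb{Z}_m[\mathbb{Z}_m]$ is finite, the congruence subgroup $SL_d(S,J_m)$ has finite index in $SL_d(S)$; combined with Suslin's theorem $E_d(S)=SL_d(S)$ for $d\geq 3$ over $S=\mathbb{Z}[x^{\pm1}]$, Corollary~\ref{cor:important-cor} gives that $SK_1(S,J_m;d)=SL_d(S,J_m)/E_d(S,J_m)$ is finite. Hence $E_d(S,J_m)$ has finite index in $SL_d(S)$. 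For the $GL_d$ version, the determinant identifies $GL_d(S)/SL_d(S)$ with $S^*\cong\langle -1\rangle\times\langle x\rangle$, and the image of $D_m$ is $\langle x^m\rangle$, of finite index in $S^*$; combined with the previous step, this yields $[GL_d(S):D_m E_d(S,J_m)]<\infty$. Normality of $D_m E_d(S,J_m)$ in $GL_d(S)$ follows since $D_m\subseteq GL_d(S,J_m)$ and, by Part~2 of Corollary~\ref{cor:important-cor}, $SK_1(S,J_m;d)$ is central in $GL_d(S)/E_d(S,J_m)$, so any $GL_d(S)$-conjugate of an element of $D_m$ differs from it by an element of $E_d(S,J_m)$.

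\emph{Cofinality.} Let $N\trianglelefteq SL_d(S)$ be of finite index. I would invoke a congruence-subgroup-type property for $SL_d(\mathbb{Z}[x^{\pm1}])$ with $d\geq 3$, obtained via the Suslin-style techniques used in \cite{key-24-1} for $SL_d(\mathbb{Z}[x])$, combined with the Bass--Heller--Swan decomposition of $K_1$ for Laurent polynomials: such a result asserts that $N\supseteq SL_d(S,I)$ for some ideal $I\subseteq S$ of finite index. Since $\hat S=\underleftarrow{\lim}(S/J_m)$ (as recalled immediately before the lemma), any finite-index ideal $I$ contains some $J_m$, so $N\supseteq SL_d(S,J_m)\supseteq E_d(S,J_m)$. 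The $GL_d$ case is handled in the same way, with the additional observation that the subgroups $\langle x^m\rangle\leq\langle x\rangle$ are cofinal in the profinite topology of $\langle x\rangle$, which is exactly the contribution of the $D_m$-factor modulo $SL_d(S)$.

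The hard part will be the cofinality step, whose essential input is the CSP-type theorem for $SL_d(\mathbb{Z}[x^{\pm1}])$ with $d\geq 3$. Once this K-theoretic input is in place, the rest is a routine combination of Suslin's stability theorem, the finiteness of $SK_1$ from the Steinberg-symbol computation in Corollary~\ref{cor:important-cor}, and the profinite description $\hat S=\underleftarrow{\lim}(S/J_m)$.
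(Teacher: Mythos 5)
Your overall strategy -- show that the subgroups $E_d(S,J_m)$, resp.\ $D_mE_d(S,J_m)$, form a cofinal family of finite-index normal subgroups -- is exactly the paper's, and your finite-index argument (Suslin's $E_d(S)=SL_d(S)$ plus finiteness of $SK_1(S,J_m;d)$ from Corollary \ref{cor:important-cor}, plus the determinant) matches the paper's. But there are two genuine gaps.

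First, your normality argument for $D_mE_d(S,J_m)$ does not go through as stated. Part 2 of Corollary \ref{cor:important-cor} asserts centrality of $SK_1(S,J_m;d)=SL_d(S,J_m)/E_d(S,J_m)$ in $GL_d(S)/E_d(S,J_m)$, i.e.\ it only covers classes of matrices of determinant $1$. The elements $I_d+(x^{km}-1)E_{1,1}$ of $D_m$ have determinant $x^{km}\neq 1$, so they do not represent classes in $SK_1(S,J_m;d)$ and the cited centrality says nothing about their conjugates. The paper instead checks normality by a direct computation: using the identity $gheg^{-1}=h(h^{-1}ghg^{-1})(geg^{-1})$ it reduces to showing that the commutators of elements of $D_m$ with a generating set of $GL_d(S)$ (diagonal units and elementary matrices) lie in $E_d(S,J_m)$, which is an explicit two-line calculation, e.g.\ $[I_d+(x^{km}-1)E_{1,1},\,I_d+rE_{1,j}]=I_d+r(x^{km}-1)E_{1,j}$.

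Second, and more seriously, the key input you invoke for cofinality is false. You assert a ``CSP-type theorem'' saying that every finite-index normal $N\vartriangleleft SL_d(S)$ contains a full congruence subgroup $SL_d(S,I)$ for some finite-index ideal $I$. If that were true, the congruence kernel $\ker(\widehat{SL_d(S)}\to SL_d(\hat S))$ would be trivial -- contradicting the main point of this very section, namely that this kernel is not even finitely generated (and contradicting Theorem 4.1 of \cite{key-24-1}). The correct, strictly weaker, statement -- which is what the paper uses and what Section 1 of \cite{key-24-1} provides via bounded elementary generation -- is that every finite-index normal subgroup of $SL_d(S)$ (resp.\ $GL_d(S)$) contains $E_d(S,J)$ for some finite-index ideal $J$ (resp.\ contains $D_m$ and $E_d(S,J)$). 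The gap between $SL_d(S,J)$ and $E_d(S,J)$ is exactly the group $SK_1(S,J;d)$ whose non-vanishing drives the whole paper, so conflating the two is not a harmless slip. With the corrected input, your concluding step ($\hat S=\underleftarrow{\lim}(S/J_m)$ forces $J\supseteq J_m$ for some $m$) is fine.
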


\begin{proof}
We will prove the first part and the second is similar but easier.
We first claim that $D_{m}E_{d}\left(S,J_{m}\right)$ is a finite
index normal subgroup of $GL_{d}\left(S\right)$. Indeed, by a well-known
result of Suslin \cite{key-33}, $SL_{d}\left(S\right)=E_{d}\left(S\right)$.
Thus, by Corollary \ref{cor:important-cor}, $SK_{1}(S,J_{m};d)=SL_{d}\left(S,J_{m}\right)/E_{d}\left(S,J_{m}\right)$
is finite. As the subgroup $SL_{d}\left(S,J_{m}\right)$ is of finite
index in $SL_{d}\left(S\right)$, so is $E_{d}\left(S,J_{m}\right)$.
Now, it is not difficult to see that the group of invertible elements
of $S$ is equal to $S^{*}=\left\{ \pm x^{k}\,|\,k\in\mathbb{Z}\right\} $
(see \cite{key-38}, chapter 8). So as $\left\{ x^{k\cdot m}\,|\,k\in\mathbb{Z}\right\} $
is of finite index in $S^{*}$, the subgroup $D_{m}SL_{d}\left(S\right)$
is of finite index in $GL_{d}\left(S\right)$. We deduce that also
$D_{m}E_{d}\left(S,J_{m}\right)$ is of finite index in $GL_{d}\left(S\right)$.
It remains to show that $D_{m}E_{d}\left(S,J_{m}\right)$ is normal
in $GL_{d}\left(S\right)$. 

We already stated previously (see $\varoint$\ref{sec:K-theory})
that $E_{d}\left(S,J_{m}\right)$ is normal in $GL_{d}\left(S\right)$.
Thus, noticing the group identity
\[
gheg^{-1}=h(h^{-1}ghg^{-1})(geg^{-1})
\]
it is enough to show that the commutators of the elements of $D_{m}$
with any set of generators of $GL_{d}\left(S\right)$, are in $E_{d}(S,J_{m})$.
By the aforementioned result of Suslin and as $S^{*}=\left\{ \pm x^{r}\,|\,r\in\mathbb{Z}\right\} $,
the group $GL_{d}\left(S\right)$ is generated by the elements of
the forms
\begin{eqnarray*}
1. & I_{d}+\left(\pm x-1\right)E_{1,1}\\
2. & I_{d}+rE_{i,j} & r\in S,\,2\leq i\neq j\leq d\\
3. & I_{d}+rE_{1,j} & r\in S,\,2\leq j\leq d\\
4. & I_{d}+rE_{i,1} & r\in S,\,2\leq i\leq d.
\end{eqnarray*}
Now, obviously, the elements of $D_{m}$ commute with the elements
of the forms 1 and 2. In addition, for the elements of the forms 3
and 4, one can easily compute that
\begin{eqnarray*}
\left[I_{d}+\left(x^{k\cdot m}-1\right)E_{1,1},I_{d}+rE_{1,j}\right] & = & I_{d}+r\left(x^{k\cdot m}-1\right)E_{1,j}\in E_{d}\left(S,J_{m}\right)\\
\left[I_{d}+\left(x^{k\cdot m}-1\right)E_{1,1},I_{d}+rE_{i,1}\right] & = & I_{d}+r\left(x^{-k\cdot m}-1\right)E_{i,1}\in E_{d}\left(S,J_{m}\right)
\end{eqnarray*}
for every $2\leq i,j\leq d$, as required.

Now, clearly, every finite index normal subgroup of $GL_{d}\left(S\right)$
contains $D_{m}$ for some $m\in\mathbb{N}$. In addition, it is not
hard to show that when $d\geq3$, every finite index normal subgroup
$N\vartriangleleft GL_{d}\left(S\right)$ contains $E_{d}(S,J)$ for
some finite index ideal $J\vartriangleleft S$ (see \cite{key-24-1},
Section 1). Thus, as we saw previously that every finite index ideal
$J\vartriangleleft S_{n}$ contains $J_{m}$ for some $m$, we obtain
that $\widehat{GL_{d}\left(S\right)}=\underleftarrow{\lim}\left(GL_{d}\left(S\right)/\left(D_{m}E_{d}\left(S,J_{m}\right)\right)\right)$,
as required.
\end{proof}
In order to prove the following proposition, we are going to use Lemma
\ref{lem:main}, that its proof is left to the last section of the
paper.
\begin{prop}
\label{prop:injective}Let $n\geq4$. Then, the map $\widehat{GL_{n-1}\left(S,\sigma S\right)}\to\widehat{GL_{n-1}\left(S\right)}$
is injective. Hence, the surjective map
\[
C_{i_{0}}=\ker(\widehat{GL_{n-1}\left(S,\sigma S\right)}\to GL_{n-1}(\hat{S}))\twoheadrightarrow\ker(\widehat{GL_{n-1}\left(S\right)}\to GL_{n-1}(\hat{S}))
\]
is an isomorphism.
\end{prop}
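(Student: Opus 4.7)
The injectivity of $\widehat{GL_{n-1}(S,\sigma S)} \to \widehat{GL_{n-1}(S)}$ is equivalent to the profinite topology on $A := GL_{n-1}(S,\sigma S)$ coinciding with the topology induced from the profinite topology of $B := GL_{n-1}(S)$; equivalently, every finite-index normal $N \vartriangleleft A$ must contain $M\cap A$ for some finite-index normal $M \vartriangleleft B$. Once injectivity is in hand, the displayed surjection $C_{i_0} \twoheadrightarrow \ker(\widehat{GL_{n-1}(S)} \to GL_{n-1}(\hat S))$, which was already obtained by the diagram chase via the classical CSP for $GL_{n-1}(\mathbb{Z})$ (valid because $n-1\geq 3$), factors through the now-injective $\widehat A \hookrightarrow \widehat B$ and is therefore itself injective, giving the stated isomorphism.

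By Lemma \ref{lem:pf-comp}, the subgroups $D_k E_{n-1}(S,J_k)$, $k\in\mathbb{N}$, form a cofinal family of finite-index normal subgroups of $B$. Since $x^{km}-1 \in \sigma S$ for every $k$, one has $D_m \subseteq A$, and a short manipulation gives
\[
D_m E_{n-1}(S, J_{m^2}) \cap A \;=\; D_m\bigl(E_{n-1}(S, J_{m^2}) \cap A\bigr).
\]
Given any finite-index normal $N \vartriangleleft A$ of index $m$ (so that $\langle A^m \rangle \subseteq N$), it therefore suffices to prove
\[
D_m \bigl(E_{n-1}(S, J_{m^2}) \cap A\bigr) \;\subseteq\; \langle A^m \rangle,
\]
since then $M := D_{m^2}E_{n-1}(S, J_{m^2}) \vartriangleleft B$ (normal and of finite index by Lemma \ref{lem:pf-comp}) satisfies $M\cap A \subseteq D_m(E_{n-1}(S,J_{m^2}) \cap A) \subseteq \langle A^m \rangle \subseteq N$.

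The containment $D_m \subseteq \langle A^m \rangle$ is a direct check: $I_{n-1}+(x-1)E_{1,1}\in A$ has $m$-th power $I_{n-1}+(x^m-1)E_{1,1}$, and analogously for $x^{-1}$, so in fact $D_m \subseteq A^m$. The containment $E_{n-1}(S, J_{m^2}) \cap A \subseteq \langle A^m \rangle$ is where Lemma \ref{lem:main} intervenes. Embedding $S=\mathbb{Z}[x_{i_0}^{\pm1}]$ into $R_n$, one has $J_{m^2}\subseteq H_{n,m^2}$, so any $g\in E_{n-1}(S, J_{m^2}) \cap A$ sits inside $GL_{n-1}(R_n,\sigma_{i_0}R_n)\cap E_{n-1}(R_n,H_{n,m^2})$. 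Under the identification $GL_{n-1}(R_n,\sigma_{i_0}R_n) \cong IGL_{n-1,i_0} \leq IA(\Phi)$, Lemma \ref{lem:main} places $g$ in $\langle IA(\Phi)^m\rangle$, and applying $\rho=\rho_{i_0}$ — which restricts to the identity on $\textrm{Im}(\rho)=A$ — yields $g=\rho(g) \in \rho(\langle IA(\Phi)^m\rangle) = \langle A^m\rangle$. The essential difficulty is therefore concentrated in Lemma \ref{lem:main} itself, whose proof is postponed to the later sections of the paper; modulo it, the proposition is a topological exercise combined with the $K$-theoretic inputs of Section \ref{sec:K-theory}.
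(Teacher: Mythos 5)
Your proposal is correct and follows essentially the same route as the paper: it invokes Lemma \ref{lem:pf-comp} to get the cofinal family $D_{k}E_{n-1}(S,J_{k})$ of finite-index normal subgroups of $GL_{n-1}(S)$, observes $D_{m}\subseteq \langle GL_{n-1}(S,\sigma S)^{m}\rangle$, and pushes Lemma \ref{lem:main} through $\rho_{i_{0}}$ to get $GL_{n-1}(S,\sigma S)\cap E_{n-1}(S,J_{m^{2}})\subseteq\langle GL_{n-1}(S,\sigma S)^{m}\rangle$. The paper packages the conclusion as a cycle of surjections composing to the identity, while you phrase it as the equivalent cofinality criterion for the two profinite topologies on $GL_{n-1}(S,\sigma S)$; the mathematical content is the same.
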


\begin{proof}
We showed in the previous lemma that 
\[
\widehat{GL_{n-1}\left(S\right)}=\underleftarrow{\lim}GL_{n-1}\left(S\right)/(D_{m}E_{n-1}(S,J_{m}))
\]
where: $D_{m}=\left\{ I_{n-1}+\left(x^{k\cdot m}-1\right)E_{1,1}\,|\,k\in\mathbb{Z}\right\} $
and $J_{m}=\left(x^{m}-1\right)S+mS$. Hence, the image of $\widehat{GL_{n-1}\left(S,\sigma S\right)}$
in $\widehat{GL_{n-1}\left(S\right)}$ is
\begin{align*}
 & \underleftarrow{\lim}(GL_{n-1}\left(S,\sigma S\right)\cdot D_{m}E_{n-1}(S,J_{m}))/(D_{m}E_{n-1}(S,J_{m}))\\
 & =\underleftarrow{\lim}GL_{n-1}\left(S,\sigma S\right)/(GL_{n-1}\left(S,\sigma S\right)\cap D_{m}E_{n-1}(S,J_{m})).
\end{align*}
Using that $D_{m}\subseteq GL_{n-1}\left(S,\sigma S\right)$, one
can see that the latter equals to
\[
\underleftarrow{\lim}GL_{n-1}\left(S,\sigma S\right)/(D_{m}(GL_{n-1}\left(S,\sigma S\right)\cap E_{n-1}(S,J_{m}))).
\]
Recall now the following notations:
\begin{itemize}
\item $R_{n}=\mathbb{Z}[x_{1}^{\pm1},\ldots,x_{n}^{\pm1}]$.
\item $H_{m}=\sum_{i=1}^{n}(x_{i}^{m}-1)R_{n}+mR_{n}\vartriangleleft R_{n}$.
\item $IE_{n-1}(H_{m})=IGL_{n-1}\cap E{}_{n-1}(R_{n},H_{m})$ under the
identification of $IGL_{n-1}$ with $GL_{n-1}(R_{n},\sigma R_{n})$.
\end{itemize}
Then, following the definition of the map $\rho:IA\left(\Phi\right)\twoheadrightarrow GL_{n-1}\left(S,\sigma S\right)$
we have 
\begin{eqnarray*}
\left\langle IA\left(\Phi\right)^{m}\right\rangle  & \overset{\rho}{\twoheadrightarrow} & \left\langle GL_{n-1}\left(S,\sigma S\right)^{m}\right\rangle \\
IE_{n-1}(H_{m}) & \overset{\rho}{\twoheadrightarrow} & GL_{n-1}\left(S,\sigma S\right)\cap E_{n-1}(S,J_{m}).
\end{eqnarray*}
So as by the main Lemma (Lemma \ref{lem:main}) we have $IE_{n-1}(H_{m^{2}})\subseteq\left\langle IA\left(\Phi\right)^{m}\right\rangle $,
we have also
\[
GL_{n-1}\left(S,\sigma S\right)\cap E_{n-1}(S,J_{m^{2}})\subseteq\left\langle GL_{n-1}\left(S,\sigma S\right)^{m}\right\rangle .
\]
As obviously $D_{m^{2}}\subseteq\left\langle GL_{n-1}\left(S,\sigma S\right)^{m}\right\rangle $,
we deduce the following natural surjective maps
\begin{eqnarray*}
 &  & \underleftarrow{\lim}GL_{n-1}\left(S,\sigma S\right)/(D_{m}(GL_{n-1}\left(S,\sigma S\right)\cap E_{n-1}(S,J_{m})))\\
 &  & =\underleftarrow{\lim}GL_{n-1}\left(S,\sigma S\right)/(D_{m^{2}}(GL_{n-1}\left(S,\sigma S\right)\cap E_{n-1}(S,J_{m^{2}})))\\
 &  & \twoheadrightarrow\underleftarrow{\lim}GL_{n-1}\left(S,\sigma S\right)/\left\langle GL_{n-1}\left(S,\sigma S\right)^{m}\right\rangle \\
 &  & \twoheadrightarrow\widehat{GL_{n-1}\left(S,\sigma S\right)}\\
 &  & \twoheadrightarrow\underleftarrow{\lim}GL_{n-1}\left(S,\sigma S\right)/(D_{m}(GL_{n-1}\left(S,\sigma S\right)\cap E_{n-1}(S,J_{m})))
\end{eqnarray*}
such that the composition gives the identity map. Hence, these maps
are also injective, and in particular, the map
\[
\widehat{GL_{n-1}\left(S,\sigma S\right)}\twoheadrightarrow\underleftarrow{\lim}GL_{n-1}\left(S,\sigma S\right)/(D_{m}(GL_{n-1}\left(S,\sigma S\right)\cap E_{n-1}(S,J_{m})))
\]
is injective, as required.
\end{proof}
\begin{prop}
\label{prop:S-G}Let $d\geq3$. Then, the natural embedding $SL_{d}\left(S\right)\leq GL_{d}\left(S\right)$
induces a natural isomorphism
\[
\ker(\widehat{GL_{d}\left(S\right)}\to GL_{d}(\hat{S}))\cong\ker(\widehat{SL_{d}\left(S\right)}\to SL_{d}(\hat{S})).
\]
\end{prop}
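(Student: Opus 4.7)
The plan is to run a standard diagram chase on the commutative ladder
\[
\begin{array}{ccccccccc}
1 & \to & \widehat{SL_{d}(S)} & \xrightarrow{\iota} & \widehat{GL_{d}(S)} & \xrightarrow{\widehat{\det}} & \widehat{S^{*}} & \to & 1\\
 &  & \downarrow &  & \downarrow &  & \downarrow &  & \\
1 & \to & SL_{d}(\hat{S}) & \to & GL_{d}(\hat{S}) & \xrightarrow{\det} & \hat{S}^{*} &  &
\end{array}
\]
arising from profinite completion of the determinant short exact sequence $1 \to SL_{d}(S) \to GL_{d}(S) \xrightarrow{\det} S^{*} \to 1$. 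Right-exactness of profinite completion (Lemma~2.1 in \cite{key-5}) makes the top row exact on the right and in the middle. Once we verify (i) injectivity of $\widehat{S^{*}} \to \hat{S}^{*}$ and (ii) injectivity of $\iota$, the chase is immediate: given $x \in \ker(\widehat{GL_{d}(S)} \to GL_{d}(\hat{S}))$, commutativity of the rightmost square forces the image of $\widehat{\det}(x)$ in $\hat{S}^{*}$ to be trivial, so (i) yields $\widehat{\det}(x) = 1$; then $x = \iota(y)$ with $y$ unique by (ii), and $y$ lies in $\ker(\widehat{SL_{d}(S)} \to SL_{d}(\hat{S}))$ because $SL_{d}(\hat{S}) \hookrightarrow GL_{d}(\hat{S})$ is injective. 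The converse direction is immediate from commutativity.

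For (i), I note that $S^{*} = \{\pm x^{k} : k \in \mathbb{Z}\} \cong \mathbb{Z}/2 \times \mathbb{Z}$, so $\widehat{S^{*}} \cong \mathbb{Z}/2 \times \hat{\mathbb{Z}}$. A kernel element $(\epsilon, \alpha)$ projects to $\epsilon \cdot x^{\alpha \bmod m} = 1$ in $(S/J_{m})^{*} = (\mathbb{Z}_{m}[\mathbb{Z}_{m}])^{*}$; for $m \geq 3$ no such monomial can equal $1$ in the group ring unless $\epsilon = 1$ and $\alpha \equiv 0 \pmod m$, and letting $m$ vary forces $\alpha = 0$ in $\hat{\mathbb{Z}}$ and $\epsilon = 1$.

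For (ii) I will apply Lemma~\ref{lem:pf-comp} to realize $\widehat{SL_{d}(S)} = \varprojlim_{m} SL_{d}(S)/E_{d}(S, J_{m})$ and $\widehat{GL_{d}(S)} = \varprojlim_{m} GL_{d}(S)/D_{m} E_{d}(S, J_{m})$; the map $\iota$ is then the inverse limit of the natural maps $SL_{d}(S)/E_{d}(S, J_{m}) \to GL_{d}(S)/D_{m} E_{d}(S, J_{m})$. At level $m$, any element of the kernel lifts to an element $de \in D_{m} E_{d}(S, J_{m}) \cap SL_{d}(S)$ with $d = I_{d} + (x^{km}-1)E_{1,1}$ and $e \in E_{d}(S, J_{m})$; taking determinants yields $x^{km} = \det(de) = 1$ in the domain $S$, so $k = 0$, $d = I_{d}$, and $de \in E_{d}(S, J_{m})$. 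Hence each level-$m$ map is injective and injectivity passes to the inverse limit. Step (ii) is the main obstacle of the argument, as it is the only place where one must genuinely invoke the cofinality input of Lemma~\ref{lem:pf-comp}, which ultimately rests on Suslin's equality $SL_{d}(S) = E_{d}(S)$; claim (i) and the closing chase are essentially formal.
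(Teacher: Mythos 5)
Your argument is correct, and it reaches the same conclusion by a somewhat different road. The paper computes both kernels directly: by Lemma \ref{lem:pf-comp} the left-hand kernel is $\underleftarrow{\lim}\,GL_{d}\left(S,J_{m}\right)/D_{m}E_{d}\left(S,J_{m}\right)$, and the determinant argument (units of $S$ are $\pm x^{k}$; reduction modulo $J_{m}$ kills the sign for $m>2$ and forces the exponent to be a multiple of $m$) yields the level-$m$ decomposition $GL_{d}\left(S,J_{m}\right)=D_{m}SL_{d}\left(S,J_{m}\right)$ with $D_{m}\cap SL_{d}\left(S,J_{m}\right)=\left\{ I_{d}\right\}$, whence the inverse limit collapses to $\underleftarrow{\lim}\,SL_{d}\left(S,J_{m}\right)/E_{d}\left(S,J_{m}\right)$. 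You instead run a diagram chase on the completed determinant sequence, which obliges you to isolate two injectivity statements, $\widehat{S^{*}}\to\hat{S}^{*}$ and $\widehat{SL_{d}\left(S\right)}\to\widehat{GL_{d}\left(S\right)}$; both of your verifications are sound, and your computation that $SL_{d}\left(S\right)\cap D_{m}E_{d}\left(S,J_{m}\right)=E_{d}\left(S,J_{m}\right)$ is precisely the determinant calculation the paper uses implicitly when passing from $D_{m}SL_{d}\left(S,J_{m}\right)/D_{m}E_{d}\left(S,J_{m}\right)$ to $SL_{d}\left(S,J_{m}\right)/E_{d}\left(S,J_{m}\right)$. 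What your route buys is a clean separation of the formal part (right-exactness of completion plus the chase) from the two concrete inputs; what the paper's route buys is brevity, since the single identity $GL_{d}\left(S,J_{m}\right)=D_{m}SL_{d}\left(S,J_{m}\right)$ simultaneously delivers the surjectivity you extract from exactness of the top row and the information carried by the injectivity of $\widehat{S^{*}}\to\hat{S}^{*}$. In both treatments the only genuinely non-formal ingredient is Lemma \ref{lem:pf-comp}, i.e. ultimately Suslin's equality $SL_{d}\left(S\right)=E_{d}\left(S\right)$, exactly as you observe.
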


\begin{proof}
By Lemma \ref{lem:pf-comp} we have
\begin{eqnarray*}
 &  & \ker(\widehat{GL_{d}\left(S\right)}\to GL_{d}(\hat{S})=\underleftarrow{\lim}GL_{d}\left(S/J_{m}\right))\\
 &  & =\ker(\underleftarrow{\lim}GL_{d}\left(S\right)/D_{m}E_{d}\left(S,J_{m}\right)\to\underleftarrow{\lim}GL_{d}\left(S\right)/GL_{d}\left(S,J_{m}\right))\\
 &  & =\underleftarrow{\lim}GL_{d}\left(S,J_{m}\right)/D_{m}E_{d}\left(S,J_{m}\right)
\end{eqnarray*}
where $D_{m}=\left\{ I_{n-1}+\left(x^{k\cdot m}-1\right)E_{1,1}\,|\,k\in\mathbb{Z}\right\} $.
We claim now that when $m>2$ then $GL_{d}\left(S,J_{m}\right)=D_{m}SL_{d}\left(S,J_{m}\right)$.
Indeed, for every $A\in GL_{d}\left(S,J_{m}\right)$ we have $\det\left(A\right)=\pm x^{k}$
for some $k\in\mathbb{Z}$. However, as under the map $S\to\mathbb{Z}_{m}[\mathbb{Z}_{m}]$
we have $A\mapsto I_{d}$, the map $S\to\mathbb{Z}_{m}[\mathbb{Z}_{m}]$
also implies $\det\left(A\right)\mapsto1$. Hence $\det\left(A\right)=\pm x^{k\cdot m}$
for some $k\in\mathbb{Z}$, and when $m>2$ we even get $\det\left(A\right)=x^{k\cdot m}$
for some $k\in\mathbb{Z}$. It follows that $GL_{d}\left(S,J_{m}\right)=D_{m}SL_{d}\left(S,J_{m}\right)$.
Therefore, since $D_{m}\cap SL_{d}\left(S,J_{m}\right)=\left\{ I_{d}\right\} $,
we deduce that
\begin{eqnarray*}
\ker(\widehat{GL_{d}\left(S\right)}\to GL_{d}(\hat{S})) & = & \underleftarrow{\lim}D_{m}SL_{d}\left(S,J_{m}\right)/D_{m}E_{d}\left(S,J_{m}\right)\\
 & = & \underleftarrow{\lim}SL_{d}\left(S,J_{m}\right)/E_{d}\left(S,J_{m}\right)\\
 & = & \underleftarrow{\lim}\ker(\widehat{SL_{d}\left(S\right)}\to SL_{d}(\hat{S})).
\end{eqnarray*}
\end{proof}
The immediate corollary from Propositions \ref{prop:injective} and
\ref{prop:S-G} is:
\begin{cor}
For every $n\geq4$, we have $C_{i_{0}}\cong\ker(\widehat{SL_{n-1}\left(S\right)}\to SL_{n-1}(\hat{S}))$.
\end{cor}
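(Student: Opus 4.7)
The plan is simply to chain the two isomorphisms already proved: Proposition \ref{prop:injective} handles the passage from $C_{i_0}$ to a $GL$-kernel, and Proposition \ref{prop:S-G} handles the passage from the $GL$-kernel to the $SL$-kernel. The hypothesis $n \geq 4$ gives $d := n-1 \geq 3$, which is exactly the rank condition required by both propositions. So no separate argument is needed beyond verifying that the two identifications compose along the same underlying matrix groups.

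More concretely, Proposition \ref{prop:injective} furnishes the isomorphism
\[
C_{i_0} \;\cong\; \ker(\widehat{GL_{n-1}(S)} \to GL_{n-1}(\hat{S})),
\]
induced by the natural map coming from the inclusion $GL_{n-1}(S,\sigma S) \hookrightarrow GL_{n-1}(S)$. Then Proposition \ref{prop:S-G} applied with $d = n-1$ provides the isomorphism
\[
\ker(\widehat{GL_{n-1}(S)} \to GL_{n-1}(\hat{S})) \;\cong\; \ker(\widehat{SL_{n-1}(S)} \to SL_{n-1}(\hat{S})),
\]
induced by the inclusion $SL_{n-1}(S) \hookrightarrow GL_{n-1}(S)$. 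Composing these two canonical isomorphisms yields the claim.

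There is effectively no obstacle inside the corollary itself; all the substantive content sits in the two preceding propositions. Proposition \ref{prop:injective} leans on the main lemma, Lemma \ref{lem:main}, together with the description of $\widehat{GL_{n-1}(S)}$ as an inverse limit modulo $D_m E_{n-1}(S, J_m)$ supplied by Lemma \ref{lem:pf-comp}. Proposition \ref{prop:S-G} relies on the decomposition $GL_d(S, J_m) = D_m \cdot SL_d(S, J_m)$, which becomes available once $m > 2$ forces the determinant of any matrix in $GL_d(S, J_m)$ to lie in $\{x^{km} : k \in \mathbb{Z}\}$, combined with the triviality of $D_m \cap SL_d(S, J_m)$. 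Once these ingredients are in hand the corollary is, as advertised, immediate.
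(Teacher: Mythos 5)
Your proposal is correct and matches the paper exactly: the paper states this corollary as the immediate consequence of Propositions \ref{prop:injective} and \ref{prop:S-G}, composed in precisely the way you describe. Nothing further is needed.
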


We close the section by showing that $\ker(\widehat{SL_{n-1}\left(S\right)}\to SL_{n-1}(\hat{S}))$
is not finitely generated, using the techniques in \cite{key-24-1}.
It is known that the group ring $S=\mathbb{Z}[x^{\pm1}]=\mathbb{Z}\left[\mathbb{Z}\right]$
is Noetherian (see \cite{key-30}, \cite{key-31}). In addition, it
is known that the Krull dimension of $\mathbb{Z}$ is: $\dim\left(\mathbb{Z}\right)=1$
and thus $\dim\left(S\right)=\dim\left(\mathbb{\mathbb{Z}\left[\mathbb{Z}\right]}\right)=2$
(see \cite{key-25}). Therefore, by Proposition 1.6 in \cite{key-33},
as $n-1\geq3$, for every $J\vartriangleleft S$, the canonical map
\[
SK_{1}\left(S,J;n-1\right)\to SK_{1}\left(S,J\right):=\underset{d\in\mathbb{N}}{\underrightarrow{\lim}}SK_{1}\left(S,J;d\right)
\]
is surjective. Hence, the canonical map (when $J\vartriangleleft S$
ranges over all finite index ideals of $S$)
\begin{eqnarray*}
\ker(\widehat{SL_{n-1}\left(S\right)}\to SL_{n-1}(\hat{S})) & = & \underleftarrow{\lim}\left(SL_{n-1}\left(S,J\right)/E_{n-1}\left(S,J\right)\right)\\
 & = & \underleftarrow{\lim}SK_{1}\left(S,J;n-1\right)\\
 & \to & \underleftarrow{\lim}SK_{1}\left(S,J\right)
\end{eqnarray*}
is surjective, so it is enough to show that $\underleftarrow{\lim}SK_{1}\left(S,J\right)$
is not finitely generated. By a result of Bass (see \cite{key-31-1},
chapter 5, Corollary 9.3), for every $J\vartriangleleft K\vartriangleleft S$
of finite index in $S$, the map: $SK_{1}\left(S,J\right)\to SK_{1}\left(S,K\right)$
is surjective. Hence, it is enough to show that for every $l\in\mathbb{N}$
there exists a finite index ideal $J\vartriangleleft S$ such that
$SK_{1}\left(S,J\right)$ is generated by at least $l$ elements.
Now, as $SK_{1}\left(S\right)=1$ \cite{key-33}, we obtain the following
exact sequence for every $J\vartriangleleft S$ (see Theorem 6.2 in
\cite{key-32}) 
\[
K_{2}\left(S\right)\to K_{2}\left(S/J\right)\to SK_{1}\left(S,J\right)\to SK_{1}\left(S\right)=1.
\]
In addition, by a classical result of Quillen (\cite{key-39}, \cite{key-31-2}
Theorem 5.3.30), we have
\[
K_{2}\left(S\right)=K_{2}\left(\mathbb{Z}[x^{\pm1}]\right)=K_{2}\left(\mathbb{Z}\right)\oplus K_{1}\left(\mathbb{Z}\right)
\]
so by the classical facts $K_{2}\left(\mathbb{Z}\right)=K_{1}\left(\mathbb{Z}\right)=\left\{ \pm1\right\} $
(see \cite{key-32} chapters 3 and 10) we deduce that $K_{2}\left(S\right)$
is of order $4$. Hence, it is enough to prove that for every $l\in\mathbb{N}$
there exists a finite index ideal $J\vartriangleleft S$ such that
$K_{2}\left(S/J\right)$ is generated by at least $l$ elements. Following
\cite{key-24-1}, we state the following proposition (which holds
by the proof of Theorem 2.8 in \cite{key-35}):
\begin{prop}
Let $p$ be a prime, $l\in\mathbb{N}$ and denote by $P\vartriangleleft\mathbb{Z}\left[y\right]$
the ideal which is generated by $p^{2}$ and $y^{p^{l}}$. Then, for
$\bar{S}=\mathbb{Z}\left[y\right]/P$, the group $K_{2}\left(\bar{S}\right)$
is an elementary abelian $p$-group of rank $\geq l$.
\end{prop}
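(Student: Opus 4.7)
The plan is to use Theorem \ref{thm:dennis-stein-1} to reduce the computation to an analysis of Steinberg symbols, and then to exhibit $l$ explicit symbols in $K_{2}(\bar{S})$ of order $p$ that are $\mathbb{F}_{p}$-linearly independent, following the argument in the proof of Theorem 2.8 of \cite{key-35}.

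First, note that $\bar{S}=\mathbb{Z}[y]/(p^{2},y^{p^{l}})$ is a finite commutative ring with unique maximal ideal $\mathfrak{m}=(p,y)$, so in particular it is semi-local. Hence by Theorem \ref{thm:dennis-stein-1}, for any $d\geq 3$ the group $K_{2}(\bar{S};d)$ is generated by the Steinberg symbols $\{u,v\}$ with $u,v\in\bar{S}^{*}$, and these symbols are central. Since $p^{2}=0$ in $\bar{S}$, the binomial expansion yields $(1+pa)^{p}=1$ for every $a\in\bar{S}$, and also $(1+pa)(1+pb)=1+p(a+b)$. Consequently, the map $a\mapsto 1+pa$ is a group isomorphism between the additive group $\bar{S}/p\bar{S}\cong\mathbb{F}_{p}[y]/(y^{p^{l}})$ and the multiplicative subgroup $1+p\bar{S}$ of $\bar{S}^{*}$, and every Steinberg symbol of the form $\{1+pa,v\}$ has order dividing $p$. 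By the bilinearity of Steinberg symbols (Equation (\ref{eq:DS})), this gives an $\mathbb{F}_{p}$-bilinear pairing whose image lies in the $p$-torsion subgroup of $K_{2}(\bar{S})$, so once we exhibit $l$ independent classes in this image we are done.

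It remains to produce $l$ such symbols which are $\mathbb{F}_{p}$-linearly independent. Following Dennis--Stein, I would work with the family
\[
\eta_{k}\;=\;\{\,1+py^{\,p^{k-1}},\ 1+y\,\}\qquad(k=1,\ldots,l),
\]
or a closely related variant, and establish their independence by constructing an auxiliary $\mathbb{F}_{p}$-valued functional on $K_{2}(\bar{S})$ under which the $\eta_{k}$ map to manifestly independent elements. The natural target is a quotient of Kähler differentials $\Omega^{1}_{\bar{S}/\mathbb{Z}}$ by the subspace of exact forms and forms involving $dp$; in this quotient the classes of $y^{\,p^{k-1}}\,dy$ for $k=1,\ldots,l$ remain $\mathbb{F}_{p}$-independent (the exponents $p^{k-1}$ are the ones that survive the relation $d(y^{p^{k}})=p^{k}y^{p^{k}-1}dy$ modulo $p$), and one computes that $\eta_{k}$ is sent to a nonzero scalar multiple of the $k$-th basis class.

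The main obstacle is precisely this last step: constructing the evaluation homomorphism on $K_{2}(\bar{S})$ so that it is well defined on the Steinberg relations (in particular on $\{u,1-u\}=1$) and verifying that the $\eta_{k}$ are separated by it. This is exactly what is carried out in the proof of Theorem 2.8 in \cite{key-35}, and that argument applies verbatim to the present ring $\bar{S}$, yielding $\mathrm{rank}_{\mathbb{F}_{p}}K_{2}(\bar{S})\geq l$.
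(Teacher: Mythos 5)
Your proposal is correct in substance and follows essentially the same route as the paper: the paper states this proposition without proof, attributing it entirely to the proof of Theorem 2.8 in \cite{key-35}, which is exactly where you defer the decisive step (the construction of the separating functional and the $\mathbb{F}_{p}$-independence of the $l$ symbols). Your added scaffolding — semi-locality of $\bar{S}$, reduction to Steinberg symbols via Theorem \ref{thm:dennis-stein-1}, and the observation that $p^{2}=0$ forces $(1+pa)^{p}=1$ and hence $p$-torsion of the relevant symbols — is accurate and consistent with that source.
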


Observe now that for every $l\geq0$
\[
\left(y+1\right)^{p^{l+1}}=(y^{p^{l}}+1+p\cdot a\left(y\right))^{p}=1\,\,\,\,\textrm{mod}\,\,P
\]
so $y+1$ is invertible in $\bar{S}$. Therefore we have a well defined
sujective homomorphism $S\to\bar{S}$ which is defined by sending
$x\mapsto y+1$. In particular, $J=\ker\left(S\to\bar{S}\right)$
is a finite index ideal of $S$ which satisfies the above requirements.
This shows that indeed $C_{i_{0}}=\ker(\widehat{SL_{n-1}\left(S\right)}\to SL_{n-1}(\hat{S}))$
is not finitely generated, and by the description in Proposition \ref{cor:C semi-direct}
it follows that $C\left(IA\left(\Phi\right),\Phi\right)$ is not finitely
generated either. 

\section{\label{sec:Finishing}The centrality of $C_{i}$}

In this section we will prove that for every $n\geq4$, the copies
$C_{i}$ lie in the center of $\widehat{IA\left(\Phi\right)}$. Along
the section we will assume that $n\geq4$ is constant, and will show
it for $i=n$. Symmetrically, it will be valid for every $i$. We
recall:
\begin{itemize}
\item $R_{n}=\mathbb{Z}[x_{1}^{\pm1},\ldots,x_{n}^{\pm1}]$.
\item $H_{m}=\sum_{i=1}^{n}\left(x_{i}^{m}-1\right)R_{n}+mR_{n}$.
\item $IG_{m}=\left\{ I_{n}+A\in GL_{n}\left(R_{n},H_{m}\right)\,|\,A\vec{\sigma}=\vec{0}\right\} $.
\item $IA_{m}=\cap\left\{ N\vartriangleleft IA\left(\Phi\right)\,|\,[IA\left(\Phi\right):N]\,|\,m\right\} $.
\item $S=S_{n}=\mathbb{Z}[x_{n}^{\pm1}]$.
\item $\textrm{Im}\rho\cap IG_{m}=\textrm{Im}\rho_{n}\cap IG_{m}\simeq GL_{n-1}\left(S,\sigma_{n}H_{m}\cap S)\right)$
(see Proposition \ref{prop:intersection}).
\end{itemize}
We saw in Section \ref{sec: not f.g.} that we can write
\begin{eqnarray*}
C_{n} & = & \underleftarrow{\lim}\left(IA_{m}\cdot\left(\textrm{Im}\rho\cap IG_{m}\right)/IA_{m}\right)\\
 & = & \underleftarrow{\lim}\left(IA_{m}\cdot\left(\textrm{Im}\rho\cap IG_{m^{4}}\right)/IA_{m}\right)\\
 & \leq & \underleftarrow{\lim}\left(IA\left(\Phi\right)/IA_{m}\right)=\widehat{IA\left(\Phi\right)}.
\end{eqnarray*}
Hence, if we want to show that $C_{n}$ lies in the center of $\widehat{IA\left(\Phi\right)}$,
it suffices to show that for every $m\in\mathbb{N},$ the group $IA_{m}\cdot\left(\textrm{Im}\rho\cap IG_{m^{4}}\right)/IA_{m}$
lies in the center of $IA\left(\Phi\right)/IA_{m}$. 

We first claim that under the isomorphism $\textrm{Im}\rho\cap IG_{m^{4}}\simeq GL_{n-1}\left(S,\sigma_{n}H_{m^{4}}\cap S)\right)$
one has
\begin{equation}
IA_{m}\cdot\left(\textrm{Im}\rho\cap IG_{m^{4}}\right)/IA_{m}\subseteq IA_{m}\cdot SL_{n-1}\left(S,\sigma_{n}H_{m^{2}}\cap S)\right)/IA_{m}.\label{eq:IG=00003DSL}
\end{equation}
Indeed, if $\alpha\in\textrm{Im}\rho\cap IG_{m^{4}}$ then $\det(\alpha)\in1+\sigma_{n}H_{m^{4}}\cap S\subseteq1+H_{m^{4}}\cap S$.
Combining it with Proposition \ref{lem:det}, $\det(\alpha)$ has
the form $\det(\alpha)=x_{n}^{m^{4}t}$ for some $t\in\mathbb{Z}$.
Hence
\[
\det((I_{n}+\sigma_{n}E_{1,1}-\sigma_{1}E_{1,n})^{-m^{4}t}\cdot\alpha)=1.
\]
Now, as we have (see the computation in the proof of Proposition \ref{prop:contain})
\begin{align*}
x_{n}^{m^{4}t} & =1+(x_{n}^{m^{4}t}-1)=1+\sigma_{n}\sum_{i=1}^{m^{4}-1}(x_{n}^{t})^{i}\\
 & =1+\sigma_{n}((x_{n}^{m^{2}t}-1)S+m^{2}S)\in1+\sigma_{n}H_{m^{2}}\cap S
\end{align*}
we obtain that
\begin{align*}
(I_{n}+\sigma_{n}E_{1,1}-\sigma_{1}E_{1,n})^{m^{4}t} & \in\left\langle IA(\Phi)^{m}\right\rangle \cap GL_{n-1}\left(S,\sigma_{n}H_{m^{2}}\cap S)\right)\\
 & \subseteq IA_{m}\cap GL_{n-1}\left(S,\sigma_{n}H_{m^{2}}\cap S)\right).
\end{align*}
Therefore, writing $\alpha=(I_{n}+\sigma_{n}E_{1,1}-\sigma_{1}E_{1,n})^{m^{4}t}\cdot((I_{n}+\sigma_{n}E_{1,1}-\sigma_{1}E_{1,n})^{-m^{4}t}\cdot\alpha)$
we deduce that 
\[
\textrm{Im}\rho\cap IG_{m^{4}}\subseteq IA_{m}\cdot SL_{n-1}\left(S,\sigma_{n}H_{m^{2}}\cap S)\right)
\]
and we get Inclusion (\ref{eq:IG=00003DSL}). It follows that if we
want to show that $C_{n}$ lies in the center of $\widehat{IA\left(\Phi\right)}$
it suffices to show that $IA_{m}\cdot SL_{n-1}\left(S,\sigma_{n}H_{m^{2}}\cap S)\right)/IA_{m}$
lies in the center of $IA\left(\Phi\right)/IA_{m}$. However, we are
going to show even more. We will show that:
\begin{prop}
For every $m\in\mathbb{N}$, the group
\[
IA_{m}\cdot ISL_{n-1,n}\left(\sigma_{n}H_{m^{2}}\right)/IA_{m}
\]
lies in the center of $IA\left(\Phi\right)/IA_{m}$. 
\end{prop}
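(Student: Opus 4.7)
The plan is to show that for every $\alpha \in ISL_{n-1,n}(\sigma_n H_{m^2})$ and every $\beta \in IA(\Phi)$, the commutator $[\alpha,\beta]$ lies in $IA_m$; this yields the claimed centrality of the cosets modulo $IA_m$.

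The two main ingredients are the main lemma (Lemma \ref{lem:main}), which gives $IE_{n-1,n}(H_{m^2}) \subseteq \langle IA(\Phi)^m\rangle \subseteq IA_m$ and allows us to absorb this subgroup at will, and Corollary \ref{cor:important-cor}, applied with $R = R_n$, $H = H_{m^2}$, and $d = n-1$. Suslin's theorem gives $E_{n-1}(R_n) = SL_{n-1}(R_n)$ for $n-1 \geq 3$ over the Laurent polynomial ring $R_n$, so the corollary applies and provides that $SK_1(R_n, H_{m^2}; n-1)$ is finite, central in $GL_{n-1}(R_n)/E_{n-1}(R_n, H_{m^2})$, and that every class has a representative of the form $\mathrm{diag}(A_2, I_{n-3})$ with $A_2 \in SL_2(R_n, H_{m^2})$.

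Under the identification $IGL_{n-1,n} \cong GL_{n-1}(R_n, \sigma_n R_n)$, the element $\alpha$ corresponds to a matrix in $SL_{n-1}(R_n, \sigma_n H_{m^2}) \subseteq SL_{n-1}(R_n, H_{m^2})$. By the centrality assertion in the corollary, one has $[\alpha, \gamma] \in E_{n-1}(R_n, H_{m^2})$ for every $\gamma \in GL_{n-1}(R_n)$. To deduce $[\alpha, \beta] \in IA_m$ for $\beta \in IA(\Phi)$, the plan is to select a convenient generating set for $IA(\Phi)$ (such as the Bachmuth generators compatible with the Magnus embedding), write $\beta$ as a word in these generators, and for each generator $\beta_j$ compute $[\alpha, \beta_j]$ as an explicit $n \times n$ matrix using the block form
\[
\alpha = \begin{pmatrix} I_{n-1} + \sigma_n B & -B\vec\sigma' \\ 0 & 1 \end{pmatrix}, \qquad A = I_{n-1} + \sigma_n B \in SL_{n-1}(R_n, \sigma_n H_{m^2}).
\]
The resulting commutator should decompose into an elementary factor absorbed by $IE_{n-1,n}(H_{m^2}) \subseteq IA_m$ via the main lemma, together with a factor whose $(n-1)\times(n-1)$ block is a commutator $[A, \gamma_j]$ in $GL_{n-1}(R_n)$, handled by the centrality of $SK_1$.

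The principal obstacle is bridging the $K$-theoretic centrality in the $(n-1) \times (n-1)$ matrix picture with centrality in $IA(\Phi) \leq GL_n(R_n)$ constrained by the Magnus condition $\alpha\vec\sigma = \vec\sigma$. Generators of $IA(\Phi)$ that perturb the $n$-th column or the bottom row of a Magnus matrix do not descend directly to $GL_{n-1}(R_n)$, so the commutators must be parsed carefully, tracking how conjugation interacts with both the $(n-1) \times (n-1)$ block and the column $-B\vec\sigma'$. The $2 \times 2$ block representative afforded by part~3 of Corollary \ref{cor:important-cor} is expected to streamline this case analysis: after modifying $\alpha$ by an element of $IE_{n-1,n}(H_{m^2}) \subseteq IA_m$, one may assume $A = \mathrm{diag}(A_2, I_{n-3})$, which limits the coordinates on which $\alpha$ acts nontrivially and reduces the number of distinct commutation patterns to verify.
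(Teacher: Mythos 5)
Your overall strategy coincides with the paper's: reduce to the Bachmuth--Mochizuki generators $E_{r,s,t}=I_{n}+\sigma_{t}E_{r,s}-\sigma_{s}E_{r,t}$, use Corollary \ref{cor:important-cor} (with Suslin's theorem $E_{n-1}(R_{n})=SL_{n-1}(R_{n})$) to place commutators in $E_{n-1}(R_{n},H_{m^{2}})$, and absorb the result via Lemma \ref{lem:main}. For the generators with $1\leq r\leq n-1$ this works exactly as you describe: $E_{r,s,t}$ lies in $IGL'_{n-1,n}\hookrightarrow GL_{n-1}(R_{n})$, centrality of $SK_{1}(R_{n},H_{m^{2}};n-1)$ gives $\left[E_{r,s,t},\lambda\right]\in E_{n-1}(R_{n},H_{m^{2}})$, normality of $GL_{n-1}(R_{n},\sigma_{n}R_{n})$ gives the second containment, and the intersection $IE_{n-1,n}(H_{m^{2}})$ lies in $IA_{m}$ by the main lemma. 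Note also that replacing $\alpha$ by $\alpha\delta$ with $\delta\in IA_{m}$ is harmless since $IA_{m}$ is normal, so your freedom to modify the representative is legitimate.

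The gap is in the case $r=n$, which you flag as ``the principal obstacle'' but do not actually resolve. Here $E_{n,s,t}$ perturbs the $n$-th row, so it does not lie in $IGL'_{n-1,n}$ and has no image in $GL_{n-1}(R_{n})$ under the $n$-th-coordinate embedding; consequently $\left[\lambda,E_{n,s,t}\right]$ does not decompose into ``an elementary factor plus a commutator $[A,\gamma_{j}]$ in the $(n-1)\times(n-1)$ block,'' and a direct matrix computation does not land in anything the $n$-th-coordinate $SK_{1}$ statement controls. The paper's resolution --- and the actual role of the $2\times2$ representative of Part 3 of Corollary \ref{cor:important-cor} --- is to choose, for the given $t\neq n$, a representative $\lambda$ of $\bar{\lambda}$ modulo $IA_{m}$ whose $t$-th row is the standard vector $\vec{e}_{t}$ (this is Lemma \ref{prop:rep}: one writes $I_{n-1}+\sigma_{n}B=AD$ with $A$ a $2\times2$-block matrix supported on rows $l,k\neq t$ and $D\in IE_{n-1,n}(H_{m^{2}})$, and discards $D$ using Lemma \ref{lem:main}). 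Then both $\lambda$ and $E_{n,s,t}$ lie in $IGL'_{n-1,t}$, and one switches to the embedding $IGL'_{n-1,t}\hookrightarrow GL_{n-1}(R_{n})$ obtained by erasing the $t$-th row and column. With respect to that embedding $\lambda\in SL_{n-1}(R_{n},H_{m^{2}})$ (no longer in $GL_{n-1}(R_{n},\sigma_{n}H_{m^{2}})$, but that suffices) and $E_{n,s,t}\in GL_{n-1}(R_{n},\sigma_{t}R_{n})$, so the same centrality-plus-normality argument puts $\left[E_{n,s,t},\lambda\right]$ into $IE_{n-1,t}(H_{m^{2}})$, which the main lemma --- applied for the index $t$ rather than $n$ --- places in $\left\langle IA(\Phi)^{m}\right\rangle \subseteq IA_{m}$. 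Without this change of coordinate your argument does not close.
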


Let $F$ be the free group on $f_{1},\ldots,f_{n}$. It is a classical
result by Magnus (\cite{key-1}, Chapter 3, Theorem N4) that $IA\left(F\right)$
is generated by the automorphisms of the form
\[
\alpha_{r,s,t}=\begin{cases}
f_{r}\mapsto\left[f_{t},f_{s}\right]f_{r}\\
f_{u}\mapsto f_{u} & u\neq r
\end{cases}
\]
where $\left[f_{t},f_{s}\right]=f_{t}f_{s}f_{t}^{-1}f_{s}^{-1}$ and
$1\leq r,s\neq t\leq n$ (notice that we may have $r=s$). In their
paper \cite{key-24}, Bachmuth and Mochizuki show that when $n\geq4$,
the group $IA\left(\Phi\right)$ is generated by the images of these
generators under the natural map $Aut\left(F\right)\to Aut\left(\Phi\right)$.
I.e. $IA\left(\Phi\right)$ is generated by the elements of the form
\[
E_{r,s,t}=I_{n}+\sigma_{t}E_{r,s}-\sigma_{s}E_{r,t}\,\,\,\,1\leq r,s\neq t\leq n.
\]
Therefore, for showing the centrality of $C_{n}$, it is enough to
show that given:
\begin{itemize}
\item an element: $\bar{\lambda}\in IA_{m}\cdot ISL_{n-1,n}\left(\sigma_{n}H_{m^{2}}\right)/IA_{m}$,
\item and one of generators: $E_{r,s,t}=I_{n}+\sigma_{t}E_{r,s}-\sigma_{s}E_{r,t}$
for $1\leq r,s\neq t\leq n$,
\end{itemize}
there exists $\lambda\in ISL_{n-1,n}\left(\sigma_{n}H_{m^{2}}\right)$,
a representative of $\bar{\lambda}$, such that $\left[E_{r,s,t},\lambda\right]\in IA_{m}$.
So assume that we have an element: $\bar{\lambda}\in IA_{m}\cdot ISL_{n-1,n}\left(\sigma_{n}H_{m^{2}}\right)/IA_{m}$.
Then, a representative for $\bar{\lambda}$ has the form
\[
\lambda=\left(\begin{array}{cc}
I_{n-1}+\sigma_{n}B & -\sum_{i=1}^{n-1}\sigma_{i}\vec{b}_{i}\\
0 & 1
\end{array}\right)\in ISL_{n-1,n}\left(\sigma_{n}H_{m^{2}}\right)
\]
for some $\left(n-1\right)\times\left(n-1\right)$ matrix $B$ which
its entries $b_{i,j}$ admit $b_{i,j}\in H_{m^{2}}$ and its column
vectors denoted by $\vec{b}_{i}$. 
\begin{lem}
\label{prop:rep}Let $\bar{\lambda}\in IA_{m}\cdot ISL_{n-1,n}\left(\sigma_{n}H_{m^{2}}\right)/IA_{m}$.
Then, for every $1\leq l<k\leq n-1$, $\bar{\lambda}$ has a representative
in $ISL_{n-1,n}\left(\sigma_{n}H_{m^{2}}\right)$, of the following
form (the following notation means that the matrix is similar to the
identity matrix, except the entries in the $l$-th and $k$-th rows)
\begin{equation}
\underset{\,\,\,\,\,\,\,\,\,\,\,\,\,\,\,\begin{array}{c}
\uparrow\\
l\textrm{-th\,\,\,\,\ column}
\end{array}\,\,\,\,\,\,\,\,\,\,\begin{array}{c}
\uparrow\\
k\textrm{-th\,\,\,\,\ column}
\end{array}\,\,\,\,\,\,\,\,\,\,\,\,\,\,\,\,\,\begin{array}{c}
\uparrow\\
n\textrm{-th\,\,\,\,\ column}
\end{array}}{\left(\begin{array}{cccccc}
I_{l-1} & 0 & 0 & 0 & 0 & 0\\
0 & 1+\sigma_{n}a & 0 & \sigma_{n}b & 0 & -\sigma_{l}a-\sigma_{k}b\\
0 & 0 & I_{k-l-1} & 0 & 0 & 0\\
0 & \sigma_{n}c & 0 & 1+\sigma_{n}d & 0 & -\sigma_{l}c-\sigma_{k}d\\
0 & 0 & 0 & 0 & I_{n-k-1} & 0\\
0 & 0 & 0 & 0 & 0 & 1
\end{array}\right)}\begin{array}{c}
\\
\leftarrow l\textrm{-th\,\,\,\,\ row}\\
\\
\leftarrow k\textrm{-th\,\,\,\,\ row}\\
\\
\\
\end{array}\label{eq:form  1}
\end{equation}
for some $a,b,c,d\in H_{m^{2}}$.
\end{lem}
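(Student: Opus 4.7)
The plan is to translate the problem to $(n-1)\times(n-1)$ matrix language via the isomorphism of Proposition \ref{prop:iso}, and then combine the structural $K$-theoretic result of Corollary \ref{cor:important-cor}(3) with the main Lemma \ref{lem:main} to reduce an arbitrary representative of $\bar\lambda$ to the prescribed block form. Under the isomorphism $IGL_{n-1,n}\cong GL_{n-1}(R_n,\sigma_n R_n)$, the starting representative $\lambda\in ISL_{n-1,n}(\sigma_n H_{m^2})$ corresponds to $M=I_{n-1}+\sigma_n B$ with the entries of $B$ in $H_{m^2}$, so $M\in SL_{n-1}(R_n,\sigma_n H_{m^2})\subseteq SL_{n-1}(R_n,H_{m^2})$. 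Matrices of the form (\ref{eq:form  1}) correspond, under the same isomorphism, to block embeddings $\tilde A_{(l,k)}$ at positions $(l,k)$ of a matrix $A\in SL_2(R_n,\sigma_n H_{m^2})$, the $n$-th column of the full $n\times n$ element being forced by $\tilde A_{(l,k)}\vec\sigma=\vec\sigma$ to exhibit the entries $-\sigma_l a-\sigma_k b$ and $-\sigma_l c-\sigma_k d$ in rows $l$ and $k$ as in the statement.

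By Lemma \ref{lem:main} applied with $i=n$, the subgroup $IE_{n-1,n}(H_{m^2})=GL_{n-1}(R_n,\sigma_n R_n)\cap E_{n-1}(R_n,H_{m^2})$ sits inside $\langle IA(\Phi)^m\rangle\subseteq IA_m$, so multiplying $M$ by any element of this subgroup preserves the class $\bar\lambda$. Since $R_n/H_{m^2}\cong\mathbb{Z}_{m^2}[\mathbb{Z}_{m^2}^n]$ is finite (hence semi-local) and $SL_{n-1}(R_n)=E_{n-1}(R_n)$ by Suslin's theorem for $n-1\geq 3$, Corollary \ref{cor:important-cor}(3) applies; using Steinberg symbols indexed at $(l,k)$ (permitted because these symbols are independent of the index pair in dimension $\geq 3$), one obtains a first approximation $M=\tilde A_{0,(l,k)}\cdot e_0$ with $A_0\in SL_2(R_n,H_{m^2})$ and $e_0\in E_{n-1}(R_n,H_{m^2})$.

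What remains is to refine $A_0$ so that $A_0\in SL_2(R_n,\sigma_n H_{m^2})$ and $e_0\in IE_{n-1,n}(H_{m^2})$. Reducing $M=\tilde A_{0,(l,k)}\cdot e_0$ modulo $\sigma_n R_n$ yields $\tilde{\bar A}_{0,(l,k)}=\bar e_0^{-1}\in E_{n-1}(R_n/\sigma_n R_n,\bar H_{m^2})$; lifting a factorization of $\bar e_0^{-1}$ into elementary generators term-by-term back to $E_{n-1}(R_n,H_{m^2})$, and then rewriting the decomposition using Whitehead-style identities to absorb the off-$(l,k)$ elementary factors into $e_0$ while preserving the $2\times 2$-block structure of the first factor, produces the refined decomposition $M=\tilde A_{(l,k)}\cdot e$ with $A\in SL_2(R_n,\sigma_n H_{m^2})$ and $e\in IE_{n-1,n}(H_{m^2})\subseteq IA_m$. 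Inverting the isomorphism of Proposition \ref{prop:iso} yields the desired representative $\lambda'$ of $\bar\lambda$ in the form (\ref{eq:form  1}), the $n$-th column being automatically reconstructed from the $IA$-condition. The main obstacle is this last refinement: a naive lift of $\bar e_0^{-1}$ typically introduces entries outside the $(l,k)$-block, so preserving that block structure while gaining the extra factor of $\sigma_n$ in the $2\times 2$ block requires careful Gauss-type row/column manipulation of the $I+\sigma_n(\cdot)$ factor inside the allowed elementary group.
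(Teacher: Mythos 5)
Your skeleton is the right one (translate via Proposition \ref{prop:iso}, invoke Corollary \ref{cor:important-cor}(3) to peel off a $2\times2$ block, and use Lemma \ref{lem:main} to discard the elementary factor modulo $IA_m$), but the step you yourself flag as "the main obstacle" is exactly where the content of the proof lies, and as proposed it does not go through. There are two problems. First, you apply Corollary \ref{cor:important-cor}(3) to the ideal $H_{m^2}$ and then try to correct the factors afterwards; but even in the best case your correction can only place the $2\times2$ block in $SL_{2}\left(R_{n},H_{m^{2}}\cap\sigma_{n}R_{n}\right)$, and $H_{m^{2}}\cap\sigma_{n}R_{n}$ is strictly larger than $\sigma_{n}H_{m^{2}}$ (for example $x_{n}^{m^{2}}-1=\sigma_{n}\sum_{j=0}^{m^{2}-1}x_{n}^{j}$ lies in the intersection but $\sum_{j=0}^{m^{2}-1}x_{n}^{j}\notin H_{m^{2}}$). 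So the representative you produce need not lie in $ISL_{n-1,n}\left(\sigma_{n}H_{m^{2}}\right)$ as the lemma demands. Second, your mechanism for the correction --- lift a factorization of $\bar{e}_{0}^{-1}$ into elementary generators and "absorb" the off-block factors by Whitehead-style identities --- is not an argument: a generic lift of $\bar{e}_{0}^{-1}$ destroys the $2\times2$ block structure, and you give no procedure that restores it.

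The paper resolves both issues at once with one device you are missing: it introduces the auxiliary finite-index ideal
\[
H'_{m^{2}}=\sum_{r=1}^{n-1}(x_{r}^{m^{2}}-1)R_{n}+\sigma_{n}(x_{n}^{m^{2}}-1)R_{n}+m^{2}R_{n},
\]
which satisfies $\sigma_{n}H_{m^{2}}\subseteq H'_{m^{2}}\subseteq H_{m^{2}}$, has finite index because $H_{m^{4}}\subseteq H'_{m^{2}}$, and --- crucially --- satisfies $H'_{m^{2}}\cap\sigma_{n}R_{n}=\sigma_{n}H_{m^{2}}$. Corollary \ref{cor:important-cor}(3) is applied to $H'_{m^{2}}$, giving $I_{n-1}+\sigma_{n}B=AD$ with $A$ a $2\times2$ block in $SL_{2}(R_{n},H'_{m^{2}})$ and $D\in E_{n-1}(R_{n},H'_{m^{2}})$. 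Then, instead of lifting anything, one applies the ring specialization $x_{n}\mapsto1$ to get $\bar{A}\bar{D}=I_{n-1}$ and rewrites $AD=(A\bar{A}^{-1})(\bar{D}^{-1}D)$: the first factor is automatically a $2\times2$ block (being a product of block matrices) and lands in $SL_{2}(R_{n},H'_{m^{2}})\cap GL_{2}(R_{n},\sigma_{n}R_{n})=SL_{2}(R_{n},\sigma_{n}H_{m^{2}})$ precisely because of the intersection property of $H'_{m^{2}}$, while the second lands in $E_{n-1}(R_{n},H_{m^{2}})\cap GL_{n-1}(R_{n},\sigma_{n}R_{n})=IE_{n-1,n}(H_{m^{2}})\subseteq IA_{m}$ by Lemma \ref{lem:main}. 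Without the ideal $H'_{m^{2}}$ and this simultaneous two-sided correction, your argument cannot be completed as written.
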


\begin{proof}
We will demonstrate the proof in the case $l=1$, $k=2$, and symmetrically,
the arguments hold for arbitrary $1\leq l<k\leq n-1$. Consider an
arbitrary representative of $\bar{\lambda}$
\[
\lambda=\left(\begin{array}{cc}
I_{n-1}+\sigma_{n}B & -\sum_{i=1}^{n-1}\sigma_{i}\vec{b}_{i}\\
0 & 1
\end{array}\right)\in ISL_{n-1,n}\left(\sigma_{n}H_{m^{2}}\right).
\]
Then $I_{n-1}+\sigma_{n}B\in SL_{n-1}\left(R_{n},\sigma_{n}H_{m^{2}}\right)$.
Consider now the ideal
\[
R_{n}\vartriangleright H'_{m^{2}}=\sum_{r=1}^{n-1}(x_{r}^{m^{2}}-1)R_{n}+\sigma_{n}(x_{n}^{m^{2}}-1)R_{n}+m^{2}R_{n}.
\]
Observe that $\sigma_{n}H_{m^{2}}\vartriangleleft H'_{m^{2}}\vartriangleleft H{}_{m^{2}}\vartriangleleft R_{n}$
and that $H'_{m^{2}}\cap\sigma_{n}R_{n}=\sigma_{n}H_{m^{2}}$. In
addition, by similar computations as in the proof of proposition \ref{prop:contain},
for every $x\in R_{n}$ we have $x^{m^{4}}-1\in\left(x-1\right)(x^{m^{2}}-1)R_{n}+\left(x-1\right)m^{2}R_{n}$,
and thus $H_{m^{4}}\subseteq H'_{m^{2}}$, so $H'_{m^{2}}$ is of
finite index in $R_{n}$. 

Now, $I_{n-1}+\sigma_{n}B\in SL_{n-1}\left(R_{n},\sigma_{n}H_{m^{2}}\right)\subseteq SL_{n-1}\left(R_{n},H'_{m^{2}}\right)$.
Thus, by the third part of Corollary \ref{cor:important-cor}, as
$H'_{m^{2}}\vartriangleleft R_{n}$ is an ideal of finite index, $n-1\geq3$
and $E_{n-1}\left(R_{n}\right)=SL_{n-1}\left(R_{n}\right)$ \cite{key-33},
one can write the matrix $I_{n-1}+\sigma_{n}B$ as
\[
I_{n-1}+\sigma_{n}B=AD\,\,\,\,\textrm{when}\,\,\,\,A=\left(\begin{array}{cc}
A' & 0\\
0 & I_{n-3}
\end{array}\right)
\]
for some $A'\in SL_{2}\left(R_{n},H'_{m^{2}}\right)$ and $D\in E{}_{n-1}\left(R_{n},H'_{m^{2}}\right)$.
Now, consider the images of $D$ and $A$ under the projection $\sigma_{n}\to0$,
which we denote by $\bar{D}$ and $\bar{A},$ respectively. Observe
that obviously, $\bar{D}\in E{}_{n-1}\left(R_{n},H'_{m^{2}}\right)$.
In addition, observe that
\[
AD\in GL_{n-1}\left(R_{n},\sigma_{n}R_{n}\right)\,\,\Rightarrow\,\,\bar{A}\bar{D}=I_{n-1}.
\]
Thus, we have $I_{n-1}+\sigma_{n}B=A\bar{A}^{-1}\bar{D}^{-1}D$. Therefore,
by replacing $D$ by $\bar{D}^{-1}D$ and $A$ by $A\bar{A}^{-1}$
we can assume that
\[
I_{n-1}+\sigma_{n}B=AD\,\,\,\,\textrm{for}\,\,\,\,A=\left(\begin{array}{cc}
A' & 0\\
0 & I_{n-3}
\end{array}\right)
\]
where: $A'\in SL_{2}\left(R_{n},H'_{m^{2}}\right)\cap GL_{2}\left(R_{n},\sigma_{n}R_{n}\right)=SL_{2}\left(R_{n},\sigma_{n}H_{m^{2}}\right)$,
and 
\begin{eqnarray*}
D & \in & E_{n-1}\left(R_{n},H'_{m^{2}}\right)\cap GL_{n-1}\left(R_{n},\sigma_{n}R_{n}\right)\\
 & \subseteq & E_{n-1}\left(R_{n},H_{m^{2}}\right)\cap GL_{n-1}\left(R_{n},\sigma_{n}R_{n}\right):=IE_{n-1,n}\left(H_{m^{2}}\right).
\end{eqnarray*}

Now, as we prove in the main lemma (Lemma \ref{lem:main}) that $IE_{n-1,n}\left(H_{m^{2}}\right)\subseteq\left\langle IA\left(\Phi\right)^{m}\right\rangle \subseteq IA_{m}$,
this argument shows that $\lambda$ can be replaced by a representative
of the form (\ref{eq:form  1}). 
\end{proof}
We now return to our initial mission. Let $\bar{\lambda}\in IA_{m}\cdot ISL_{n-1,n}\left(\sigma_{n}H_{m^{2}}\right)/IA_{m}$,
and let $E_{r,s,t}=I_{n}+\sigma_{t}E_{r,s}-\sigma_{s}E_{r,t}$ for
$1\leq r,s\neq t\leq n$ be one of the above generators for $IA\left(\Phi\right)$.
We want to show that there exists $\lambda\in ISL_{n-1,n}\left(\sigma_{n}H_{m^{2}}\right)$,
a representative of $\bar{\lambda}$, such that $\left[E_{r,s,t},\lambda\right]\in IA_{m}$.
We separate the treatment to two cases. We note that Lemma \ref{prop:rep}
is needed only for the second case, which is a bit more delicate.

The \textbf{\uline{first case}} is: $1\leq r\leq n-1$. In this
case one can take an arbitrary representative $\lambda\in ISL_{n-1,n}\left(\sigma_{n}H_{m^{2}}\right)\cong SL{}_{n-1}\left(R_{n},\sigma_{n}H_{m^{2}}\right)$.
Considering the embedding of $IGL'_{n-1,n}$ in $GL_{n-1}\left(R_{n}\right)$,
we have: $E_{r,s,t}\in IGL'_{n-1,n}\subseteq GL{}_{n-1}\left(R_{n}\right)$
(see Definition \ref{def:tag} and Proposition \ref{prop:tag}). Thus,
as by Corollary \ref{cor:important-cor} 
\[
SK_{1}\left(R_{n},H_{m^{2}};n-1\right)=SL{}_{n-1}\left(R_{n},H_{m^{2}}\right)/E_{n-1}\left(R_{n},H_{m^{2}}\right)
\]
is central in $GL_{n-1}\left(R_{n}\right)/E_{n-1}\left(R_{n},H_{m^{2}}\right)$,
we have
\[
\left[E_{r,s,t},\lambda\right]\in\left[GL_{n-1}\left(R_{n}\right),SL{}_{n-1}\left(R_{n},\sigma_{n}H_{m^{2}}\right)\right]\subseteq E_{n-1}\left(R_{n},H_{m^{2}}\right).
\]
In addition, as $SL_{n-1}\left(R_{n},\sigma_{n}H_{m^{2}}\right)\leq GL{}_{n-1}\left(R_{n},\sigma_{n}R_{n}\right)$
and $GL_{n-1}\left(R_{n},\sigma_{n}R_{n}\right)$ is normal in $GL_{n-1}\left(R_{n}\right)$,
we have
\[
\left[E_{r,s,t},\lambda\right]\in\left[GL_{n-1}\left(R_{n}\right),GL{}_{n-1}\left(R_{n},\sigma_{n}R_{n}\right)\right]\subseteq GL{}_{n-1}\left(R_{n},\sigma_{n}R_{n}\right).
\]
Thus, we obtain from Lemma \ref{lem:main} that
\begin{eqnarray*}
\left[E_{r,s,t},\lambda\right] & \in & E_{n-1}\left(R_{n},H_{m^{2}}\right)\cap GL{}_{n-1}\left(R_{n},\sigma_{n}R_{n}\right)\\
 & = & IE_{n-1,n}\left(H_{m^{2}}\right)\subseteq\left\langle IA\left(\Phi\right)^{m}\right\rangle \subseteq IA_{m}.
\end{eqnarray*}

The \textbf{\uline{second case}} is: $r=n$. This case is a bit
more complicated than the previous one, as $E_{r,s,t}$ is not in
$IGL'_{n-1,n}$. Here, by Lemma \ref{prop:rep} one can choose $\lambda\in ISL_{n-1,n}\left(\sigma_{n}H_{m^{2}}\right)$
whose $t$-th row equals to the standard vector $\vec{e}_{t}$. As
$t\neq r=n$, we obtain thus that both $\lambda,E_{r,s,t}\in IGL'_{n-1,t}$.
Considering the embedding $IGL'_{n-1,t}\hookrightarrow GL_{n-1}\left(R_{n}\right)$,
we have $E_{r,s,t}\in GL_{n-1}\left(R_{n},\sigma_{t}R_{n}\right)$.
In addition, remember that $\lambda$ has the form
\[
\lambda=\left(\begin{array}{cc}
I_{n-1}+\sigma_{n}B & -\sum_{i=1}^{n-1}\sigma_{i}\vec{b}_{i}\\
0 & 1
\end{array}\right)
\]
for $I_{n-1}+\sigma_{n}B\in SL_{n-1}\left(R_{n},\sigma_{n}H_{m^{2}}\right)$,
so that the entries of $\vec{b}_{i}$ are in $H_{m^{2}}$. It follows
that regarding the embedding $IGL'_{n-1,t}\hookrightarrow GL_{n-1}\left(R_{n}\right)$
we have $\lambda\in SL_{n-1}\left(R_{n},H_{m^{2}}\right)$.
\begin{rem}
Note that when considering $\lambda\in IGL'_{n-1,n}\hookrightarrow GL_{n-1}\left(R_{n}\right)$,
i.e. when considering $\lambda\in GL_{n-1}\left(R_{n}\right)$ through
the embedding of $IGL'_{n-1,n}$ in $GL_{n-1}\left(R_{n}\right)$,
we have $\lambda\in GL_{n-1}\left(R_{n},\sigma_{n}H_{m^{2}}\right)\leq GL_{n-1}\left(R_{n}\right)$.
However, when we consider $\lambda\in IGL'_{n-1,t}\hookrightarrow GL_{n-1}\left(R_{n}\right)$
we do not necessarily have: $\lambda\in GL_{n-1}\left(R_{n},\sigma_{n}H_{m^{2}}\right)$,
but we still have $\lambda\in GL_{n-1}\left(R_{n},H_{m^{2}}\right)$. 
\end{rem}

Thus, by similar arguments as in the first case
\begin{eqnarray*}
\left[E_{r,s,t},\lambda\right] & \in & \left[GL_{n-1}\left(R_{n},\sigma_{t}R_{n}\right),SL_{n-1}\left(R_{n},H_{m^{2}}\right)\right]\\
 & \subseteq & E_{n-1}\left(R_{n},H_{m^{2}}\right)\cap GL{}_{n-1}\left(R_{n},\sigma_{t}R_{n}\right)\\
 & = & IE_{n-1,t}\left(H_{m^{2}}\right)\subseteq\left\langle IA\left(\Phi\right)^{m}\right\rangle \subseteq IA_{m}.
\end{eqnarray*}

This finishes the argument which shows that $C_{i}$ are central in
$\widehat{IA\left(\Phi\right)}$.
\begin{rem}
\label{rem:IA_m-IA^m}One can follow and see that completely similar
arguments gives that the group
\[
\left\langle IA\left(\Phi\right)^{m}\right\rangle \cdot ISL_{n-1,n}\left(\sigma_{n}H_{m^{2}}\right)/\left\langle IA\left(\Phi\right)^{m}\right\rangle 
\]
lies in the center of $IA\left(\Phi\right)/\left\langle IA\left(\Phi\right)^{m}\right\rangle $.
The reason is that the only property of $IA_{m}$ that we used here
was that $\left\langle IA\left(\Phi\right)^{m}\right\rangle \subseteq IA_{m}$.
This claim is used in \cite{key-6-2} to prove Theorem \ref{thm:central}.
We note that in this paper we were careful not to use the subgroups
$\left\langle IA\left(\Phi\right)^{m}\right\rangle $ directly as
we still didn't show that they are of finite index in $IA\left(\Phi\right)$,
and therefore we cannot write $\widehat{IA\left(\Phi\right)}=\underleftarrow{\lim}\left(IA\left(\Phi\right)/\left\langle IA\left(\Phi\right)^{m}\right\rangle \right)$.
However, on the way of proving Theorem \ref{thm:central}, we do show
that $\left\langle IA\left(\Phi\right)^{m}\right\rangle $ are of
finite index in $IA\left(\Phi\right)$ (provided $n\geq4$).
\end{rem}

\section{\label{sec:elementary}Some elementary elements of $\left\langle IA\left(\Phi_{n}\right)^{m}\right\rangle $}

In this section we introduce some elements in $\left\langle IA\left(\Phi_{n}\right)^{m}\right\rangle $,
which are needed for the proof of Lemma \ref{lem:main}. In \cite{key-6-2}
we introduce a list of elements in $\left\langle IA\left(\Phi_{n}\right)^{m}\right\rangle $
that contains the list below (see Propositions 4.1 and 4.2 therein).
However, we will not need here the whole list of \cite{key-6-2},
and also do not need all the notations that are used in \cite{key-6-2}.
Hence, for the convenience of the reader we include here only the
list that is needed for the proof of Lemma \ref{lem:main}, and repeat
the arguments that are related to this shorter list.
\begin{prop}
\label{prop:type 1.1}Let $n\geq4$, $1\leq u\leq n$ and $m\in\mathbb{N}$.
Denote by $\vec{e}_{i}$ the $i$-th row standard vector. Then, the
elements of $IA\left(\Phi_{n}\right)$ of the following form, lie
in $\left\langle IA\left(\Phi_{n}\right)^{m}\right\rangle $: 
\begin{equation}
\left(\begin{array}{ccccccc}
 & I_{u-1} &  & 0 &  & 0\\
a_{u,1} & \cdots & a_{u,u-1} & 1 & a_{u,u+1} & \cdots & a_{u,n}\\
 & 0 &  & 0 &  & I_{n-u}
\end{array}\right)\leftarrow u\textrm{-th\,\,\,\,\ row}\label{eq:notation}
\end{equation}
when $\left(a_{u,1},\ldots,a_{u,u-1},0,a_{u,u+1},\ldots,a_{u,n}\right)$
is a linear combination of the vectors
\begin{eqnarray*}
 & 1. & \left\{ m\left(\sigma_{i}\vec{e}_{j}-\sigma_{j}\vec{e}_{i}\right)\,|\,i,j\neq u,\,i\neq j\right\} \\
 & 2. & \left\{ (x_{k}^{m}-1)\left(\sigma_{i}\vec{e}_{j}-\sigma_{j}\vec{e}_{i}\right)\,|\,i,j,k\neq u,\,i\neq j\right\} \\
 & 3. & \left\{ \sigma_{u}(x_{u}^{m}-1)\left(\sigma_{i}\vec{e}_{j}-\sigma_{j}\vec{e}_{i}\right)\,|\,i,j\neq u,\,i\neq j\right\} 
\end{eqnarray*}
with coefficients in $R_{n}$. Notation (\ref{eq:notation}) means
that the matrix is similar to the identity matrix, except the entries
in the $u$-th row.
\end{prop}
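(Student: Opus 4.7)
The plan is to realize each of the three listed base vectors as the $u$-th row off-diagonal of an explicit element of $\langle IA(\Phi_n)^m\rangle$, and then upgrade to arbitrary $R_n$-coefficients via conjugation. Two structural facts will be used throughout: first, $\langle IA(\Phi_n)^m\rangle$ is normal in $IA(\Phi_n)$, so it is closed under conjugation by and commutators with arbitrary elements of $IA(\Phi_n)$; second, conjugation by the Bachmuth--Mochizuki generator $\kappa_{u,l} := E_{u,u,l}$ (which, as an automorphism of $\Phi_n$, sends $f_u \mapsto f_l f_u f_l^{-1}$ and fixes the others) multiplies the row-$u$ off-diagonal of matrices of form (\ref{eq:notation}) by $x_l^{-1}$, via the $R_n$-module structure on the abelian subgroup $K_u \leq IA(\Phi_n)$ of elements modifying only the $u$-th row. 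Iterating these conjugations realizes any $R_n$-monomial coefficient, and products of such elements (added in $K_u$) give all $R_n$-linear combinations.

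For Type 1, write $A = \sigma_i E_{u,j} - \sigma_j E_{u,i}$, so $E_{u,j,i} = I_n + A$. Since $j, i \neq u$, every matrix product $E_{u,s}E_{u,t}$ with $s \in \{j,i\}$ vanishes, so $A^2 = 0$ and consequently $E_{u,j,i}^m = I_n + mA$; this element lies in $\langle IA(\Phi_n)^m\rangle$ by definition, and its $u$-th row off-diagonal is exactly $m(\sigma_i \vec{e}_j - \sigma_j \vec{e}_i)$.

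For Type 2 (with $i,j,k$ all distinct from $u$), observe that $\kappa_{u,k}^m \in \langle IA(\Phi_n)^m\rangle$. The commutator $[\kappa_{u,k}^{-m}, E_{u,j,i}]$ is therefore also in $\langle IA(\Phi_n)^m\rangle$. A direct Magnus-embedding computation---using that conjugation by $x_k^m \in \Phi_n$ acts on the element $[x_i,x_j] \in \Phi'_n$ by multiplying its Fox-derivative coefficients by $x_k^m$ in the $R_n$-module structure on $\Phi'_n$, and that both $\kappa_{u,k}$ and $E_{u,j,i}$ modify only the $u$-th row of the automorphism matrix---shows this commutator equals $I_n + (x_k^m - 1)(\sigma_i E_{u,j} - \sigma_j E_{u,i})$, realizing the Type 2 base vector.

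For Type 3, use the inner automorphism $\iota_{x_u} = \prod_{l \neq u} E_{l,l,u} \in IA(\Phi_n)$, so that $\iota_{x_u^m} := \iota_{x_u}^m \in \langle IA(\Phi_n)^m\rangle$. The commutator $\alpha := [\iota_{x_u^m}, E_{u,j,i}]$ lies in $\langle IA(\Phi_n)^m\rangle$; a Magnus calculation shows that $\alpha$'s matrix modifies \textit{every} row $l$ by $\sigma_l Q_u(\sigma_i \vec{e}_j - \sigma_j \vec{e}_i)$, where $Q_u := 1 + x_u + \cdots + x_u^{m-1}$ (so that $\sigma_u Q_u = x_u^m - 1$, and in particular row $u$ is modified by $(x_u^m - 1)(\sigma_i \vec{e}_j - \sigma_j \vec{e}_i)$). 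To produce the extra factor of $\sigma_u$ required by Type 3 and simultaneously isolate row $u$, compare $\alpha$ with its conjugate $\beta := \iota_{x_u}\,\alpha\,\iota_{x_u}^{-1} \in \langle IA(\Phi_n)^m\rangle$: a further Magnus computation shows that $\beta$ modifies row $l$ by $x_u\,\sigma_l Q_u (\sigma_i \vec{e}_j - \sigma_j \vec{e}_i)$, so that $\beta \alpha^{-1}$ modifies row $l$ by $\sigma_u \sigma_l Q_u (\sigma_i \vec{e}_j - \sigma_j \vec{e}_i) = \sigma_l (x_u^m - 1)(\sigma_i \vec{e}_j - \sigma_j \vec{e}_i)$ on row $u$ and by the $R_n$-multiple $\sigma_l (x_u^m - 1)(\sigma_i \vec{e}_j - \sigma_j \vec{e}_i)$ on rows $l \neq u$. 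On row $u$ this is exactly $\sigma_u(x_u^m-1)(\sigma_i \vec{e}_j - \sigma_j \vec{e}_i)$, the Type 3 base vector. For each $l \neq u$, the row-$l$ correction $\sigma_l(x_u^m - 1)(\sigma_i \vec{e}_j - \sigma_j \vec{e}_i)$ is precisely Type 2 applied symmetrically to row $l$ (with $k = u$, which is permissible since $u \neq l,i,j$, and with $R_n$-coefficient $\sigma_l$); that type was established in the previous paragraph---with $u$ playing no distinguished role---so the cancelling row-$l$ element lies in $\langle IA(\Phi_n)^m\rangle$. Multiplying $\beta\alpha^{-1}$ by the product of these inverse cancellation elements leaves an element of $\langle IA(\Phi_n)^m\rangle$ with only row $u$ modified, by the Type 3 base vector.

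The main technical obstacle is the Magnus bookkeeping in Type 3: one must track how conjugation by $\iota_{x_u}$ acts on the every-row modifications of $\alpha$ and verify that the residue on rows $l \neq u$ is exactly of the shape produced by Type 2 for those rows, so that the cancellation closes up inside $\langle IA(\Phi_n)^m\rangle$. All other steps are routine manipulations of the explicit matrix formulas and the $R_n$-module structure on $K_u$.
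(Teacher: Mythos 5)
Your Type 1 and Type 2 steps are essentially sound and close to the paper's (the paper gets arbitrary coefficients $f\in R_{n}$ more directly, by taking the $m$-th power of $I_{n}+f(\sigma_{i}\vec{e}_{j}-\sigma_{j}\vec{e}_{i})$ and by commutating that element with $E_{u,u,k}^{m}$, rather than by conjugating afterwards to build up monomials; but your route through the $R_{n}$-module structure on $K_{u}$ and normality of $\left\langle IA(\Phi_{n})^{m}\right\rangle $ works). The problem is in Type 3. Your element $\beta\alpha^{-1}=\iota_{[f_{i},f_{j}]^{-(x_{u}^{m}-1)}}$ modifies \emph{every} row $l$ by $\sigma_{l}(x_{u}^{m}-1)(\sigma_{i}\vec{e}_{j}-\sigma_{j}\vec{e}_{i})$, and you claim each residue with $l\neq u$ is "precisely Type 2 applied to row $l$ with $k=u$." For that you verified $k=u\neq l$, but Type 2 for row $l$ also requires $i\neq l$ and $j\neq l$, and this fails for the two rows $l=i$ and $l=j$: there the vector $\sigma_{i}\vec{e}_{j}-\sigma_{j}\vec{e}_{i}$ has a nonzero $l$-th coordinate, so the residue is not of the form covered by the proposition for row $l$ (the statement explicitly forces the $l$-th coordinate to vanish), and nothing you have established puts it in $\left\langle IA(\Phi_{n})^{m}\right\rangle $. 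A secondary consequence of the same issue is that the factorization of $\beta\alpha^{-1}$ into commuting single-row pieces breaks down for rows $i$ and $j$, whose diagonal entries get perturbed, so even the bookkeeping of the cancellation is not as stated. This is a genuine gap, not just a presentational one.

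The paper's proof of Type 3 avoids residues entirely, and this is exactly where $n\geq4$ enters: choose a fourth index $l_{0}\notin\{u,i,j\}$, plant the already-established Type 2 element $I_{n}+(x_{u}^{m}-1)f(\sigma_{j}\vec{e}_{i}-\sigma_{i}\vec{e}_{j})$ in row $l_{0}$ (legitimate since $i,j,u\neq l_{0}$), and take its commutator with the single generator $E_{u,u,l_{0}}=I_{n}+\sigma_{l_{0}}E_{u,u}-\sigma_{u}E_{u,l_{0}}$. By normality of $\left\langle IA(\Phi_{n})^{m}\right\rangle $ the commutator lies in $\left\langle IA(\Phi_{n})^{m}\right\rangle $, and a direct computation shows it is exactly $I_{n}+\sigma_{u}(x_{u}^{m}-1)f(\sigma_{i}\vec{e}_{j}-\sigma_{j}\vec{e}_{i})$ supported on row $u$ alone. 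If you want to keep your inner-automorphism approach, you would need a separate argument for the row-$i$ and row-$j$ residues (elements of the form $f_{i}\mapsto[f_{i},f_{j}]^{(x_{u}^{m}-1)\sigma_{i}}f_{i}$), which do not reduce to the cases you have proved; the cleaner fix is to adopt the paper's transfer-from-row-$l_{0}$ trick.
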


\begin{proof}
Without loss of generality, we assume that $u=1$. Observe now that
for every $a_{i},b_{i}\in R_{n}$ for $2\leq i\leq n$ one has
\[
\left(\begin{array}{cccc}
1 & a_{2} & \cdots & a_{n}\\
0 &  & I_{n-1}
\end{array}\right)\left(\begin{array}{cccc}
1 & b_{2} & \cdots & b_{n}\\
0 &  & I_{n-1}
\end{array}\right)=\left(\begin{array}{cccc}
1 & a_{2}+b_{2} & \cdots & a_{n}+b_{n}\\
0 &  & I_{n-1}
\end{array}\right).
\]
Hence, it is enough to prove that the elements of the following forms
belong to $\left\langle IA\left(\Phi_{n}\right)^{m}\right\rangle $
(when we write $a\vec{e}_{i}$ we mean that the entry of the $i$-th
column in the first row is $a$):
\begin{eqnarray*}
1. & \left(\begin{array}{cc}
1 & mf\left(\sigma_{i}\vec{e}_{j}-\sigma_{j}\vec{e}_{i}\right)\\
0 & I_{n-1}
\end{array}\right) & i,j\neq1,\,i\neq j,\,f\in R_{n}\\
2. & \left(\begin{array}{cc}
1 & (x_{k}^{m}-1)f\left(\sigma_{i}\vec{e}_{j}-\sigma_{j}\vec{e}_{i}\right)\\
0 & I_{n-1}
\end{array}\right) & i,j,k\neq1,\,i\neq j,\,f\in R_{n}\\
3. & \left(\begin{array}{cc}
1 & \sigma_{1}(x_{1}^{m}-1)f\left(\sigma_{i}\vec{e}_{j}-\sigma_{j}\vec{e}_{i}\right)\\
0 & I_{n-1}
\end{array}\right) & i,j\neq1,\,i\neq j,\,f\in R_{n}.
\end{eqnarray*}

We start with the elements of Form 1. Here we have
\[
\left(\begin{array}{cc}
1 & mf\left(\sigma_{i}\vec{e}_{j}-\sigma_{j}\vec{e}_{i}\right)\\
0 & I_{n-1}
\end{array}\right)=\left(\begin{array}{cc}
1 & f\left(\sigma_{i}\vec{e}_{j}-\sigma_{j}\vec{e}_{i}\right)\\
0 & I_{n-1}
\end{array}\right)^{m}\in\left\langle IA\left(\Phi_{n}\right)^{m}\right\rangle .
\]

We pass to the elements of Form 2. In this case we have
\begin{align*}
\left\langle IA\left(\Phi_{n}\right)^{m}\right\rangle \ni & \left[\left(\begin{array}{cc}
1 & f\left(\sigma_{i}\vec{e}_{j}-\sigma_{j}\vec{e}_{i}\right)\\
0 & I_{n-1}
\end{array}\right)^{-1},\left(\begin{array}{cc}
x_{k} & -\sigma_{1}\vec{e}_{k}\\
0 & I_{n-1}
\end{array}\right)^{m}\right]\\
= & \left(\begin{array}{cc}
1 & (x_{k}^{m}-1)f\left(\sigma_{i}\vec{e}_{j}-\sigma_{j}\vec{e}_{i}\right)\\
0 & I_{n-1}
\end{array}\right).
\end{align*}

We finish with the elements of Form 3. The computation here is more
complicated than in the previous cases, so we will demonstrate it
for the special case: $n=4$, $i=2$, $j=3$. It is clear that symmetrically,
with similar arguments, the same holds in general when $n\geq4$ for
every $i,j\neq1,\,i\neq j$. By similar arguments as in the previous
case we get
\[
\left\langle IA\left(\Phi_{4}\right)^{m}\right\rangle \ni\left(\begin{array}{cccc}
1 & 0 & 0 & 0\\
0 & 1 & 0 & 0\\
0 & 0 & 1 & 0\\
0 & \sigma_{3}(x_{1}^{m}-1)f & -\sigma_{2}(x_{1}^{m}-1)f & 1
\end{array}\right).
\]
Therefore, we also have
\begin{align*}
\left\langle IA\left(\Phi_{4}\right)^{m}\right\rangle \ni & \left[\left(\begin{array}{cccc}
x_{4} & 0 & 0 & -\sigma_{1}\\
0 & 1 & 0 & 0\\
0 & 0 & 1 & 0\\
0 & 0 & 0 & 1
\end{array}\right),\left(\begin{array}{cccc}
1 & 0 & 0 & 0\\
0 & 1 & 0 & 0\\
0 & 0 & 1 & 0\\
0 & \sigma_{3}(x_{1}^{m}-1)f & -\sigma_{2}(x_{1}^{m}-1)f & 1
\end{array}\right)\right]\\
= & \left(\begin{array}{cccc}
1 & -\sigma_{3}\sigma_{1}(x_{1}^{m}-1)f & \sigma_{2}\sigma_{1}(x_{1}^{m}-1)f & 0\\
0 & 1 & 0 & 0\\
0 & 0 & 1 & 0\\
0 & 0 & 0 & 1
\end{array}\right).
\end{align*}
\end{proof}

\section{\label{sec:Second}A main lemma}

We recall and present some new notations that will be used in this
section:
\begin{itemize}
\item $IA^{m}=\left\langle IA\left(\Phi\right)^{m}\right\rangle $, where
$\Phi=\Phi_{n}$.
\item $R_{n}=\mathbb{Z}[\mathbb{Z}^{n}]=\mathbb{Z}[x_{1}^{\pm1},\ldots,x_{n}^{\pm1}]$
where $x_{1},\ldots,x_{n}$ are the generators of $\mathbb{Z}^{n}$.
\item $\sigma_{r}=x_{r}-1$ for $1\leq r\leq n$. 
\item $U_{r,m}=(x_{r}^{m}-1)R_{n}$ for $1\leq r\leq n$ and $m\in\mathbb{N}$. 
\item $O_{m}=mR_{n}$.
\item $H_{m}=\sum_{r=1}^{n}(x_{r}^{m}-1)R_{n}+mR_{n}=\sum_{r=1}^{n}U_{r,m}+O_{m}$.
\item $IE_{n-1,i}\left(H\right)=IGL_{n-1,i}\cap E{}_{n-1}\left(R_{n},H\right)\leq ISL_{n-1,i}\left(H\right)$
for $H\vartriangleleft R_{n}$ under the identification of $IGL_{n-1,i}\leq IA(\Phi)$
with $GL_{n-1}\left(R_{n},\sigma_{i}R_{n}\right)$ (see Proposition
\ref{prop:iso} and Definition \ref{def:IS-IE}).
\end{itemize}
In this section we prove the main lemma which asserts that:
\begin{lem}
(Main lemma) \label{lem:main}For every $n\geq4$, $m\in\mathbb{N}$
and $1\leq i\leq n$ we have
\[
IE_{n-1,i}(H_{m^{2}})\subseteq IA^{m}.
\]
\end{lem}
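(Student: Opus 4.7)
My plan is to work with $i = n$ by symmetry. Under the Magnus identification $IGL_{n-1,n} \cong GL_{n-1}(R_n, \sigma_n R_n)$, an element $\lambda \in IE_{n-1,n}(H_{m^2})$ corresponds to $I_{n-1} + \sigma_n B \in E_{n-1}(R_n, H_{m^2})$, and is of the shape $\lambda = \left(\begin{smallmatrix} I_{n-1} + \sigma_n B & -\sum_l \sigma_l \vec{b}_l \\ 0 & 1 \end{smallmatrix}\right)$, so the row $k \leq n-1$ of $\lambda - I_n$ equals $\sum_l b_{k,l}(\sigma_n \vec{e}_l - \sigma_l \vec{e}_n)$. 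The overall strategy is: (1) reduce to a single off-diagonal elementary generator $I_{n-1} + \sigma_n b E_{k,l}$ with $k \neq l$ in $\{1,\ldots,n-1\}$ and $\sigma_n b \in H_{m^2}$; (2) show via a purely ring-theoretic argument that $b$ lies in a specific ideal; (3) apply Proposition~\ref{prop:type 1.1}.

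First I would carry out the ring computation, which is self-contained. The claim is that if $k \in \{1,\ldots,n-1\}$ and $\sigma_n b \in H_{m^2}$, then $b$ lies in
$$I_k := mR_n + \sum_{r \neq k}(x_r^m - 1)R_n + \sigma_k(x_k^m - 1)R_n.$$
Two facts: first, $H_{m^2} \subseteq I_k$, using $x_r^{m^2} - 1 = m(x_r^m - 1) + (x_r^m - 1)^2 p_r$, with the quadratic summand in $I_k$ directly for $r \neq k$, and via $(x_k^m - 1)^2 = \sigma_k(x_k^m - 1)(x_k^{m-1} + \ldots + 1)$ when $r = k$. Second, the annihilator of $\sigma_n$ in $R_n/H_{m^2} = \mathbb{Z}_{m^2}[\mathbb{Z}_{m^2}^n]$ is principal, generated by $\tau := 1 + x_n + \ldots + x_n^{m^2 - 1}$; since $k \neq n$ forces $x_n^m \equiv 1 \pmod{I_k}$, one computes $\tau \equiv m(1 + x_n + \ldots + x_n^{m-1}) \equiv 0 \pmod{I_k}$. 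Combined, $b \in H_{m^2} + \tau R_n \subseteq I_k$. Given this, for an off-diagonal elementary generator $I_{n-1} + \sigma_n b E_{k,l}$ the corresponding $\lambda$ is single-row (row $k$) with $k$-th row vector $b(\sigma_n \vec{e}_l - \sigma_l \vec{e}_n)$; since $l \neq k$, the $k$-th entry of this vector is $0$, matching Proposition~\ref{prop:type 1.1}'s constraint with $u = k$, and since $b \in I_k$ the vector is an $R_n$-linear combination of the three families there, taken with $(i,j) = (n,l)$ both distinct from $k$. Hence such $\lambda$ lies in $IA^m$.

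The main obstacle is Step~1: showing that $E_{n-1}(R_n, H_{m^2}) \cap GL_{n-1}(R_n, \sigma_n R_n)$ is generated, as a subgroup (not merely a normal subgroup), by the off-diagonal elementary matrices $I + h E_{k,l}$ with $h \in \sigma_n R_n \cap H_{m^2}$. Exploiting $n - 1 \geq 3$ and the Steinberg commutator identities $[I + rE_{i,j}, I + sE_{j,k}] = I + rs E_{i,k}$, one can express every element of $E_{n-1}(R_n, H_{m^2})$ as a product of off-diagonal elementary factors with $H_{m^2}$-entries; the further constraint of being $\equiv I \pmod{\sigma_n}$ then needs to be propagated through the product by repeatedly applying commutator identities to redistribute the per-factor "$\sigma_n$-defects" into combined factors whose entries genuinely lie in $\sigma_n R_n \cap H_{m^2}$. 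This commutator rearrangement, which crucially uses $n \geq 4$, is where the technical work of the proof concentrates.
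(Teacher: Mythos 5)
Your Steps 2 and 3 are correct and nicely packaged: the computation that $\sigma_n b\in H_{m^2}$ forces $b\in I_k=mR_n+\sum_{r\neq k}(x_r^m-1)R_n+\sigma_k(x_k^m-1)R_n$ (via $H_{m^2}\subseteq I_k$ and the fact that the annihilator of $\bar\sigma_n$ in $\mathbb{Z}_{m^2}[\mathbb{Z}_{m^2}^n]$ is generated by $1+x_n+\cdots+x_n^{m^2-1}$, which dies mod $I_k$) is sound, and it does place the single-row automorphisms coming from $I_{n-1}+\sigma_n bE_{k,l}$ inside the span of the three families of Proposition \ref{prop:type 1.1}. But Step 1 is a genuine gap, and it is precisely where the entire difficulty of the lemma lives. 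You assert that $E_{n-1}(R_n,H_{m^2})$ is a product of off-diagonal elementary matrices with entries in $H_{m^2}$; this is false for a general pair $(R,H)$ --- the relative elementary group $E_d(R,H)$ is by definition the \emph{normal} closure in $E_d(R)$ of the unnormalized elementary subgroup, and Suslin's generation result (Proposition \ref{lem:suslin-bachmuth} of the paper) only gives generators of the form $(I-fE_{i,j})(I+hE_{j,i})(I+fE_{i,j})$ with $f\in R$ arbitrary, which are not themselves products of $H$-elementary matrices. The further claim, that intersecting with $GL_{n-1}(R_n,\sigma_n R_n)$ yields a group generated by elementary matrices with entries in $\sigma_nR_n\cap H_{m^2}$, is stronger still, and the sentence about ``redistributing $\sigma_n$-defects by commutator identities'' is not an argument.

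For comparison, the paper does not attempt any such clean generation statement. It first enlarges $H_{m^2}$ to the ideal $T_m=\sum_r\sigma_r^2U_{r,m}+\sum_r\sigma_rO_m+O_m^2$, then proves a decomposition lemma (Lemma \ref{lem:decomposition}) expressing every element of $GL_{n-1}(R_n,\sigma_nR_n)\cap E_{n-1}(R_n,T_m)$ as a product of four types of elements: conjugates $A^{-1}(I+hE_{i,j})A$ by \emph{arbitrary} $A\in GL_{n-1}(R_n)$ with $h$ in certain sub-ideals, and conjugated commutators $A^{-1}[(I+hE_{i,j}),(I+fE_{j,i})]A$ with $f\in\sigma_nR_n$ and $h$ in ideals of $R_{n-1}$ that are \emph{not} contained in $\sigma_nR_n$. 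Each of the four types then requires its own argument (Propositions \ref{prop:form1}--\ref{prop:form4}), built on the explicit $3\times3$ identities of Proposition \ref{prop:computations} and Corollary \ref{cor:computations} in addition to Proposition \ref{prop:type 1.1}. That several of the unavoidable building blocks are conjugates and commutators rather than plain elementary matrices is strong evidence that the reduction you are hoping for in Step 1 cannot be carried out as stated; to complete your proof you would need to either prove your generation claim for this specific pair of ideals or replace it with a decomposition of the paper's type.
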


For simplifying the proof and the notations, we will prove the lemma
for the special case $i=n$, and symmetrically, all the arguments
are valid for every $1\leq i\leq n$. 

In addition, using the identification $IGL_{n-1,n}\cong GL_{n-1}\left(R_{n},\sigma_{n}R_{n}\right)$,
we will identify $IGL_{n-1,n}$ with $GL_{n-1}\left(R_{n},\sigma_{n}R_{n}\right)$,
and the group $IE_{n-1,n}\left(H_{m}\right)$ with $GL_{n-1}\left(R_{n},\sigma_{n}R_{n}\right)\cap E{}_{n-1}\left(R_{n},H_{m}\right)$.
So the goal of this section is proving that 
\[
GL_{n-1}\left(R_{n},\sigma_{n}R_{n}\right)\cap E_{n-1}\left(R_{n},H_{m^{2}}\right)\subseteq IA^{m}.
\]

Throughout the proof we use also elements of $IGL'_{n-1,n}$ (see
Definition \ref{def:tag}). We remind that 
\[
IE_{n-1,n}\left(H_{m}\right)\leq IGL_{n-1,n}\leq IGL'{}_{n-1,n}\hookrightarrow GL{}_{n-1}\left(R_{n}\right)
\]
(Proposition \ref{prop:tag}), so all the elememts that are being
used throughout the section are naturally embedded in $GL_{n-1}\left(R_{n}\right)$.
Using this embedding we will do all the computations in $GL_{n-1}\left(R_{n}\right)$,
and make the notations simpler by omitting the $n$-th row and column
from each matrix.

We note that many ideas in the proof of Lemma \ref{lem:main} below
are based on ideas of the proof of the ``Main Lemma'' in \cite{key-24}
(See Section 4 therein). However, our arguments do not rely directly
on the arguments in \cite{key-24}, so on the whole we cannot make
a formal reference to \cite{key-24} throughout the proof of Lemma
\ref{lem:main}.

\subsection{Decomposing the proof }

In this subsection we will start the proof of Lemma \ref{lem:main}.
At the end of the subsection, there will be a few tasks left, which
will be accomplished in the forthcoming subsections. We start with
the following definition:
\begin{defn}
For every $m\in\mathbb{N}$, define the following ideal of $R_{n}$:
\[
T_{m}=\sum_{r=1}^{n}\sigma_{r}^{2}U_{r,m}+\sum_{r=1}^{n}\sigma_{r}O_{m}+O_{m}^{2}.
\]
\end{defn}

Observe that as for every $x\in R_{n}$ we have $\sum_{j=0}^{m-1}x^{j}\in(x-1)R_{n}+mR_{n}$,
one has
\begin{eqnarray*}
x^{m^{2}}-1 & = & (x-1)\sum_{j=0}^{m^{2}-1}x^{j}=(x-1)\sum_{j=0}^{m-1}x^{j}\sum_{j=0}^{m-1}x^{jm}\\
 & \in & (x-1)\left((x-1)R_{n}+mR_{n}\right)\left((x^{m}-1)R_{n}+mR_{n}\right)\\
 & \subseteq & (x-1)^{2}(x^{m}-1)R_{n}+(x-1)^{2}mR_{n}+(x-1)m^{2}R_{n}.
\end{eqnarray*}
It follows that $H_{m^{2}}\subseteq T_{m}$. Hence, it is enough to
prove that
\[
GL_{n-1}\left(R_{n},\sigma_{n}R_{n}\right)\cap E_{n-1}\left(R_{n},T_{m}\right)\subseteq IA^{m}.
\]
Equivalently, it is enough to prove that the group
\[
\left(GL_{n-1}\left(R_{n},\sigma_{n}R_{n}\right)\cap E_{n-1}\left(R_{n},T_{m}\right)\right)\cdot IA^{m}/IA^{m}
\]
is trivial. We continue with the following proposition, which is actually
a proposition of Suslin (Corollary 1.4 in \cite{key-33}) with some
elaborations of \cite{key-24} (see the remark that follows Proposition
3.5 in \cite{key-24} and the beginning of the proof of the ``Main
Lemma'' in Section 4 therein).
\begin{prop}
\label{lem:suslin-bachmuth}Let $R$ be a commutative ring, $d\geq3$,
and $H\vartriangleleft R$ ideal. Then, $E_{d}\left(R,H\right)$ is
generated by the matrices of the form
\begin{equation}
\left(I_{d}-fE_{i,j}\right)\left(I_{d}+hE_{j,i}\right)\left(I_{d}+fE_{i,j}\right)\label{eq:form}
\end{equation}
for $h\in H$, $f\in R$ and $1\leq i\neq j\leq d$. 
\end{prop}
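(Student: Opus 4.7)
Let $N\leq E_d(R)$ denote the subgroup generated by the matrices of the form \eqref{eq:form}. The inclusion $N\subseteq E_d(R,H)$ is immediate: each generator can be rewritten as $(I_d+fE_{i,j})^{-1}(I_d+hE_{j,i})(I_d+fE_{i,j})$, i.e.\ as the conjugate of the element $I_d+hE_{j,i}\in E_d(R,H)$ by an elementary matrix of $E_d(R)$, and $E_d(R,H)$ is normal in $E_d(R)$ for $d\geq3$ (Corollary~1.4 of \cite{key-33}).

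For the reverse inclusion, setting $f=0$ in \eqref{eq:form} shows that every elementary matrix $I_d+hE_{j,i}$ with $h\in H$ already lies in $N$. Since $E_d(R,H)$ is, by definition, the normal closure in $E_d(R)$ of exactly these matrices, it is enough to prove that $N\trianglelefteq E_d(R)$. Because $E_d(R)$ is generated by the elementary matrices, and because closure of $N$ under conjugation by a single elementary propagates to arbitrary elements of $E_d(R)$ by an easy induction on word length (conjugate a product of $N$-generators factor by factor), the whole task reduces to verifying that
\[
(I_d+rE_{k,l})\,g\,(I_d+rE_{k,l})^{-1}\in N
\]
for every $r\in R$, every $1\leq k\neq l\leq d$, and every generator $g=(I_d-fE_{i,j})(I_d+hE_{j,i})(I_d+fE_{i,j})$ of $N$.

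This verification is a case analysis on how $\{k,l\}$ overlaps $\{i,j\}$. In the disjoint case the elementary factors commute and the conjugation is trivial. In the case $(k,l)=(j,i)$, one absorbs the conjugating factor $e=I_d+rE_{k,l}$ into the inner elementary $I_d+fE_{i,j}$ appearing in $g$, and the result is again visibly of the form \eqref{eq:form}. The genuinely interesting subcases are those where exactly one of $k,l$ lies in $\{i,j\}$; here one expands $ege^{-1}$ and rewrites the resulting cross terms by repeated use of the Steinberg-type identity
\[
[I_d+aE_{\alpha,\beta},\,I_d+bE_{\beta,\gamma}]=I_d+ab\,E_{\alpha,\gamma}
\]
for pairwise distinct $\alpha,\beta,\gamma$, whose very applicability requires a third index and therefore the standing hypothesis $d\geq 3$. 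This identity lets one recognize each piece of $ege^{-1}$ as either an elementary matrix $I_d+h'E_{\alpha,\beta}$ with $h'\in H$ (hence in $N$ by the $f=0$ specialization) or as a matrix of the exact form \eqref{eq:form}.

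The principal obstacle is the combinatorial bookkeeping in these mixed cases: one must track the auxiliary elementary matrices introduced while applying the commutator identity and confirm that, after regrouping, the resulting word in $E_d(R)$ is a product of matrices of shape \eqref{eq:form}. This is precisely the arrangement carried out by Suslin in \cite{key-33} and subsequently elaborated in \cite{key-24}, and the plan is to follow that template verbatim to close the case analysis; once done, $N\trianglelefteq E_d(R)$, hence $E_d(R,H)\subseteq N$, and combining with the first paragraph yields equality.
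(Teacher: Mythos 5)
Your reduction is logically sound: since $E_{d}\left(R,H\right)$ is by definition the normal closure in $E_{d}\left(R\right)$ of the matrices $I_{d}+hE_{j,i}$ with $h\in H$, and these lie in your subgroup $N$ (take $f=0$), the proposition is equivalent to $N\vartriangleleft E_{d}\left(R\right)$, and it suffices to check conjugation of the generators by single elementary matrices. Moreover the easy overlaps you list do work: in the disjoint case everything commutes; when $(k,l)=(i,j)$ the conjugator merges with the \emph{outer} factors and gives $\left(I_{d}-(f-r)E_{i,j}\right)\left(I_{d}+hE_{j,i}\right)\left(I_{d}+(f-r)E_{i,j}\right)$; and in the cases where exactly one of $k,l$ lies in $\left\{ i,j\right\} $ a short computation of the kind you indicate really does exhibit $ege^{-1}$ as a product of a generator of the form (\ref{eq:form}) and elementary matrices with entries in $H$.

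The genuine gap is the case you dismissed as trivial, $(k,l)=(j,i)$. There $e=I_{d}+rE_{j,i}$ commutes with the \emph{middle} factor $I_{d}+hE_{j,i}$ and not with the outer ones, and "absorbing $e$ into $I_{d}+fE_{i,j}$" is not possible: those elementaries sit at transpose positions and do not combine (you appear to have swapped this case with $(k,l)=(i,j)$). What one actually gets is
\[
ege^{-1}=Y^{-1}\left(I_{d}+hE_{j,i}\right)Y,\qquad Y=\left(I_{d}+fE_{i,j}\right)\left(I_{d}-rE_{j,i}\right),
\]
a conjugate of an $H$-elementary by a product of two elementary matrices inside the same $2\times2$ block; for instance $e\left(I_{d}+fE_{i,j}\right)e^{-1}=I_{d}+fE_{i,j}-frE_{i,i}+rfE_{j,j}-r^{2}fE_{j,i}$, which is neither elementary nor of the form (\ref{eq:form}). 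Handling precisely such conjugates is the substantive content of the proposition, and your closing appeal to "follow Suslin and Bachmuth--Mochizuki verbatim" does not supply it: what \cite{key-33} and \cite{key-24} actually provide is a different route, namely Suslin's generation of $E_{d}\left(R,H\right)$ by the unimodular-row elements $I_{d}+h\vec{u}^{t}\left(u_{j}\vec{e}_{i}-u_{i}\vec{e}_{j}\right)$ together with the Bachmuth--Mochizuki factorization of these into elements $I_{d}+h\left(f_{1}\vec{e}_{i}+f_{2}\vec{e}_{j}\right)^{t}\left(f_{2}\vec{e}_{i}-f_{1}\vec{e}_{j}\right)$, which are then decomposed into products of matrices (\ref{eq:form}) by an explicit identity using a third row and column (this is where $d\geq3$ enters, and it is the route the paper takes). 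Until you either carry out an explicit decomposition of $Y^{-1}\left(I_{d}+hE_{j,i}\right)Y$ into generators, or switch to the unimodular-row argument, the proof is incomplete at its crucial step.
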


\begin{proof}
In the proof of Corollary 1.4 in \cite{key-33}, Suslin shows that
whenever $d\geq3$, $E_{d}\left(R,H\right)$ is generated by the elements
of the form
\[
I_{d}+h\vec{u}^{t}(u_{j}\vec{e}_{i}-u_{i}\vec{e}_{j})
\]
where $h\in H$, $i\neq j$, and $\vec{u}=(u_{1},u_{2},...,u_{d})\in R^{n}$
such that $\vec{u}\cdot\vec{v}^{t}=1$ for some $\vec{v}\in R^{n}$.
In the remark which follows Proposition 3.5 in \cite{key-24}, Bachmuth
and Mochizuki observe that 
\begin{align*}
I_{d}+h\vec{u}^{t}(u_{j}\vec{e}_{i}-u_{i}\vec{e}_{j})= & (I_{d}+h(u_{i}\vec{e}_{i}+u_{j}\vec{e}_{j})^{t}(u_{j}\vec{e}_{i}-u_{i}\vec{e}_{j}))\\
 & \cdot\prod_{l\neq i,j}(I_{d}+h(u_{l}\vec{e}_{l})^{t}u_{j}\vec{e}_{i})\cdot\prod_{l\neq i,j}(I_{d}-h(u_{l}\vec{e}_{l})^{t}u_{i}\vec{e}_{j}).
\end{align*}
Hence, by observing that all the factors in the above expression are
all of the form
\begin{equation}
I_{d}+h\left(f_{1}\vec{e}_{i}+f_{2}\vec{e}_{j}\right)^{t}\left(f_{2}\vec{e}_{i}-f_{1}\vec{e}_{j}\right)\label{eq: form Aux}
\end{equation}
for some $f_{1},f_{2}\in R$, $h\in H$ and $1\leq i\neq j\leq d$,
it is enough to show that the matrices of the form (\ref{eq: form Aux})
are generated by the matrices of the form (\ref{eq:form}). We will
show it for the case $i,j,d=1,2,3$ and it will be clear that the
general argument is similar. So we have the matrix
\[
I_{d}+h\left(f_{1}\vec{e}_{1}+f_{2}\vec{e}_{2}\right)^{t}\left(f_{2}\vec{e}_{1}-f_{1}\vec{e}_{2}\right)=\left(\begin{array}{ccc}
1+hf_{1}f_{2} & -hf_{1}^{2} & 0\\
hf_{2}^{2} & 1-hf_{1}f_{2} & 0\\
0 & 0 & 1
\end{array}\right)
\]
for some $f_{1},f_{2}\in R$ and $h\in H$, which is equal to
\[
=\left(\begin{array}{ccc}
1 & 0 & -hf_{1}\\
0 & 1 & -hf_{2}\\
f_{2} & -f_{1} & 1
\end{array}\right)\left(\begin{array}{ccc}
1 & 0 & hf_{1}\\
0 & 1 & hf_{2}\\
-f_{2} & f_{1} & 1
\end{array}\right)
\]
\[
=\left(\begin{array}{ccc}
1 & 0 & 0\\
0 & 1 & 0\\
f_{2} & -f_{1} & 1
\end{array}\right)\left(\begin{array}{ccc}
1 & 0 & -hf_{1}\\
0 & 1 & -hf_{2}\\
0 & 0 & 1
\end{array}\right)\left(\begin{array}{ccc}
1 & 0 & 0\\
0 & 1 & 0\\
-f_{2} & f_{1} & 1
\end{array}\right)\left(\begin{array}{ccc}
1 & 0 & hf_{1}\\
0 & 1 & hf_{2}\\
0 & 0 & 1
\end{array}\right).
\]
As the matrix 
\[
\left(\begin{array}{ccc}
1 & 0 & hf_{1}\\
0 & 1 & hf_{2}\\
0 & 0 & 1
\end{array}\right)=\left(\begin{array}{ccc}
1 & 0 & hf_{1}\\
0 & 1 & 0\\
0 & 0 & 1
\end{array}\right)\left(\begin{array}{ccc}
1 & 0 & 0\\
0 & 1 & hf_{2}\\
0 & 0 & 1
\end{array}\right)
\]
is generated by the matrices of the form (\ref{eq:form}), it remains
to show that 
\[
\left(\begin{array}{ccc}
1 & 0 & 0\\
0 & 1 & 0\\
f_{2} & -f_{1} & 1
\end{array}\right)\left(\begin{array}{ccc}
1 & 0 & -hf_{1}\\
0 & 1 & -hf_{2}\\
0 & 0 & 1
\end{array}\right)\left(\begin{array}{ccc}
1 & 0 & 0\\
0 & 1 & 0\\
-f_{2} & f_{1} & 1
\end{array}\right)
\]
\begin{eqnarray*}
 & = & \left(\begin{array}{ccc}
1 & 0 & 0\\
0 & 1 & 0\\
f_{2} & -f_{1} & 1
\end{array}\right)\left(\begin{array}{ccc}
1 & 0 & -hf_{1}\\
0 & 1 & 0\\
0 & 0 & 1
\end{array}\right)\left(\begin{array}{ccc}
1 & 0 & 0\\
0 & 1 & 0\\
-f_{2} & f_{1} & 1
\end{array}\right)\\
 &  & \cdot\left(\begin{array}{ccc}
1 & 0 & 0\\
0 & 1 & 0\\
f_{2} & -f_{1} & 1
\end{array}\right)\left(\begin{array}{ccc}
1 & 0 & 0\\
0 & 1 & -hf_{2}\\
0 & 0 & 1
\end{array}\right)\left(\begin{array}{ccc}
1 & 0 & 0\\
0 & 1 & 0\\
-f_{2} & f_{1} & 1
\end{array}\right)
\end{eqnarray*}
is generated by the matrices of the form (\ref{eq:form}). Now
\[
\left(\begin{array}{ccc}
1 & 0 & 0\\
0 & 1 & 0\\
f_{2} & -f_{1} & 1
\end{array}\right)\left(\begin{array}{ccc}
1 & 0 & -hf_{1}\\
0 & 1 & 0\\
0 & 0 & 1
\end{array}\right)\left(\begin{array}{ccc}
1 & 0 & 0\\
0 & 1 & 0\\
-f_{2} & f_{1} & 1
\end{array}\right)
\]
\begin{eqnarray*}
 & = & \left(\begin{array}{ccc}
1 & 0 & 0\\
0 & 1 & 0\\
0 & -hf_{1}^{2}f_{2} & 1
\end{array}\right)\left(\begin{array}{ccc}
1 & -hf_{1}^{2} & 0\\
0 & 1 & 0\\
0 & 0 & 1
\end{array}\right)\\
 &  & \cdot\left[\left(\begin{array}{ccc}
1 & 0 & 0\\
0 & 1 & 0\\
f_{2} & 0 & 1
\end{array}\right)\left(\begin{array}{ccc}
1 & 0 & -hf_{1}\\
0 & 1 & 0\\
0 & 0 & 1
\end{array}\right)\left(\begin{array}{ccc}
1 & 0 & 0\\
0 & 1 & 0\\
-f_{2} & 0 & 1
\end{array}\right)\right]
\end{eqnarray*}
is generated by the matrices of the form (\ref{eq:form}), and by
similar computation
\[
\left(\begin{array}{ccc}
1 & 0 & 0\\
0 & 1 & 0\\
f_{2} & -f_{1} & 1
\end{array}\right)\left(\begin{array}{ccc}
1 & 0 & 0\\
0 & 1 & -hf_{2}\\
0 & 0 & 1
\end{array}\right)\left(\begin{array}{ccc}
1 & 0 & 0\\
0 & 1 & 0\\
-f_{2} & f_{1} & 1
\end{array}\right)
\]
is generated by these matrices as well.
\end{proof}
We proceed with the following lemma. Some of the ideas in its proof
are based on the proof of Proposition 3.5 in \cite{key-24}.
\begin{lem}
\label{lem:decomposition}Let $n\geq4$. Recall $U_{r,m}=(x_{r}^{m}-1)R_{n}$,
$O_{m}=mR_{n}$, and denote the corresponding ideals of $R_{n-1}=\mathbb{Z}[x_{1}^{\pm1},\ldots,x_{n-1}^{\pm1}]\subseteq R_{n}$
by
\[
\bar{O}_{m}=mR_{n-1}\subseteq O_{m},\,\,\,\,\bar{U}_{r,m}=(x_{r}^{m}-1)R_{n-1}\subseteq U_{r,m}\,\,\,\,\textrm{for}\,\,\,\,1\leq r\leq n-1.
\]
Then, every element of $GL_{n-1}\left(R_{n},\sigma_{n}R_{n}\right)\cap E_{n-1}\left(R_{n},T_{m}\right)$
can be decomposed as a product of elements of the following four forms:\\

\begin{tabular}{|l||l||>{\raggedright}p{3.8cm}|}
\hline 
\noalign{\vskip\doublerulesep}
1. & $A^{-1}\left(I_{n-1}+hE_{i,j}\right)A$ & $h\in\sigma_{n}O_{m}$\tabularnewline[\doublerulesep]
\hline 
\hline 
\noalign{\vskip\doublerulesep}
\multirow{1}{*}[-0.25cm]{2.} & \multirow{1}{*}[-0.25cm]{$A^{-1}\left(I_{n-1}+hE_{i,j}\right)A$} & $h\in\sigma_{n}^{2}U_{n,m},\,\sigma_{n}\sigma_{r}^{2}U_{r,m}$ for
$1\leq r\leq n-1$\tabularnewline[\doublerulesep]
\hline 
\hline 
\noalign{\vskip\doublerulesep}
3. & $A^{-1}\left[\left(I_{n-1}+hE_{i,j}\right),\left(I_{n-1}+fE_{j,i}\right)\right]A$ & $h\in\bar{O}_{m}^{2}$, $f\in\sigma_{n}R_{n}$\tabularnewline[\doublerulesep]
\hline 
\hline 
\noalign{\vskip\doublerulesep}
\multirow{1}{*}[-0.25cm]{4.} & \multirow{1}{*}[-0.25cm]{$A^{-1}\left[\left(I_{n-1}+hE_{i,j}\right),\left(I_{n-1}+fE_{j,i}\right)\right]A$} & $h\in\sigma_{r}^{2}\bar{U}_{r,m},\sigma_{r}\bar{O}_{m}$ for $1\leq r\leq n-1$,
$f\in\sigma_{n}R_{n}$\tabularnewline[\doublerulesep]
\hline 
\end{tabular}\\
\\
where $A\in GL_{n-1}(R_{n})$ and $i\neq j$. 
\end{lem}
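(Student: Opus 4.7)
I would begin by applying Proposition~\ref{lem:suslin-bachmuth} to write $\alpha$ as a product of Suslin--Bachmuth generators $g=A^{-1}(I+hE_{ji})A$ with $h\in T_m$, $A=I+fE_{ij}$, $f\in R_n$, aiming to express each such generator as a product of elements of the four forms. The structural key is the additive decomposition $R_n=R_{n-1}\oplus\sigma_nR_n$, under which the ideal $T_m$ splits as $T_m=(T_m\cap\sigma_nR_n)+(T_m\cap R_{n-1})$. A direct computation shows $T_m\cap\sigma_nR_n\subseteq\sigma_nO_m+\sigma_n^2U_{n,m}+\sum_{r=1}^{n-1}\sigma_n\sigma_r^2U_{r,m}$, exactly the ideals admissible in forms 1 and 2, and $T_m\cap R_{n-1}\subseteq\bar O_m^2+\sum_{r=1}^{n-1}(\sigma_r^2\bar U_{r,m}+\sigma_r\bar O_m)$, exactly the ideals admissible for the $h$-slot of forms 3 and 4. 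Splitting $h=h^++h^-$ and using $E_{ji}^2=0$ gives $(I+hE_{ji})=(I+h^+E_{ji})(I+h^-E_{ji})$, so each generator breaks into a ``$+$'' piece and a ``$-$'' piece.

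The ``$+$'' pieces $A^{-1}(I+h^+E_{ji})A$ with $h^+\in T_m\cap\sigma_nR_n$, after a further additive decomposition of $h^+$ into the sub-ideals $\sigma_nO_m$, $\sigma_n^2U_{n,m}$, $\sigma_n\sigma_r^2U_{r,m}$, yield directly elements of forms 1 or 2.

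The ``$-$'' pieces $A^{-1}(I+h^-E_{ji})A$ with $h^-\in T_m\cap R_{n-1}$ require conversion into commutator form. Since $n-1\ge 3$, pick an auxiliary index $k\notin\{i,j\}$ and use the matrix identity $[I+aE_{jk},I+bE_{ki}]=I+abE_{ji}$ together with the rewriting $I+E_{ki}=(I+x_nE_{ki})(I-\sigma_nE_{ki})$ (valid because $x_n-\sigma_n=1$ and $E_{ki}^2=0$). Applying the commutator identity $[X,YZ]=[X,Y]\cdot Y[X,Z]Y^{-1}$ and tracking the conjugation by $A$ via $A^{-1}[X,Y]A=[A^{-1}XA,A^{-1}YA]$, one expresses $A^{-1}(I+h^-E_{ji})A$ as a product of a commutator $A'{}^{-1}[I+h^-E_{jk},I-\sigma_nE_{ki}]A'$ for a suitable $A'\in GL_{n-1}(R_n)$ (landing in form 3 when $h^-\in\bar O_m^2$ and in form 4 when $h^-\in\sigma_r^2\bar U_{r,m}$ or $\sigma_r\bar O_m$) and a ``correction'' factor whose entries lie in $\sigma_n\cdot h^-R_n$, which is contained in the ideal of form 1 or 2 and hence handled as in the previous step.

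The main difficulty is the detailed bookkeeping needed to verify that every correction term produced by the Steinberg-style rewriting really does lie in one of the ideals admissible by forms 1--4; here the quadratic factors $\sigma_r^2$ and $O_m^2$ in the precise definition of $T_m$ are used essentially. The hypothesis $n\ge 4$, giving $n-1\ge 3$, is what makes the auxiliary index $k$ available and is indispensable for the commutator trick.
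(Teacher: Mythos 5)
There is a genuine gap, and it sits at the heart of your handling of the ``$-$'' pieces. First, each Suslin--Bachmuth generator $(I-fE_{i,j})(I+h^{-}E_{j,i})(I+fE_{i,j})$ with $h^{-}\in T_{m}\cap R_{n-1}$, $h^{-}\neq0$, does \emph{not} lie in $GL_{n-1}(R_{n},\sigma_{n}R_{n})$, whereas every element of the four forms does (Remark \ref{rem:Normal}). Hence no generator-by-generator decomposition into the four forms alone can exist; only the total product $B$ lies in $GL_{n-1}(R_{n},\sigma_{n}R_{n})$. The paper's proof opens with precisely the reduction your argument is missing: it suffices to decompose each generator into form-elements \emph{together with auxiliary factors from} $GL_{n-1}(R_{n-1})$, because after collecting conjugates one finds that the product of the auxiliary factors lies in $GL_{n-1}(R_{n},\sigma_{n}R_{n})\cap GL_{n-1}(R_{n-1})=\{I_{n-1}\}$ and so cancels. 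Without this step your ``correction factors in $\sigma_{n}h^{-}R_{n}$'' cannot possibly account for the discrepancy.

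Second, the commutator you propose, $[I+h^{-}E_{j,k},\,I-\sigma_{n}E_{k,i}]$ with $i,j,k$ distinct, is not of the shape required by Forms 3 and 4, which demand elementary matrices at \emph{transposed} positions $(i,j)$ and $(j,i)$; a commutator at transposed positions is a genuinely non-elementary $2\times2$-block matrix, while yours collapses via $E_{j,k}E_{k,i}=E_{j,i}$ to the single elementary matrix $I-\sigma_{n}h^{-}E_{j,i}$, whose entry already lies in the ideals of Forms 1--2. Combined with the companion factor $I+x_{n}h^{-}E_{j,i}$ coming from your splitting of $I+E_{k,i}$, the construction returns $I+h^{-}E_{j,i}$ modulo Forms 1--2, i.e.\ it is circular and never absorbs the $R_{n-1}$-component of $h$. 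In the paper, Forms 3 and 4 arise instead from the conjugating parameter: one writes $f=\sigma_{n}f_{1}+f_{2}$ with $f_{2}\in R_{n-1}$, discards $(I\pm f_{2}E_{i,j})\in GL_{n-1}(R_{n-1})$ as auxiliary factors, and rewrites
\[
(I_{n-1}-\sigma_{n}f_{1}E_{i,j})(I_{n-1}+kE_{j,i})(I_{n-1}+\sigma_{n}f_{1}E_{i,j})=(I_{n-1}+kE_{j,i})\left[(I_{n-1}-kE_{j,i}),(I_{n-1}-\sigma_{n}f_{1}E_{i,j})\right],
\]
where the prefix lies in $GL_{n-1}(R_{n-1})$ and the commutator is exactly of Form 3 or 4 with $f=-\sigma_{n}f_{1}\in\sigma_{n}R_{n}$. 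You would need to replace your commutator trick by this mechanism (or something equivalent) and add the cancellation argument for the $GL_{n-1}(R_{n-1})$ factors.
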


\begin{rem}
\label{rem:Normal}Notice that as $GL_{n-1}\left(R_{n},\sigma_{n}R_{n}\right)$
is normal in $GL_{n-1}\left(R_{n}\right)$, every element of the above
forms is an element of $GL_{n-1}\left(R_{n},\sigma_{n}R_{n}\right)\cong IGL_{n-1,n}\leq IA\left(\Phi\right)$.
\end{rem}

\begin{proof}
(of Lemma \ref{lem:decomposition}) Let $B\in GL_{n-1}\left(R_{n},\sigma_{n}R_{n}\right)\cap E_{n-1}\left(R_{n},T_{m}\right)$.
We first claim that for proving the lemma, it is enough to show that
$B$ can be decomposed as a product of the elements in the lemma (Lemma
\ref{lem:decomposition}), and arbitrary elements in $GL_{n-1}\left(R_{n-1}\right)$.
Indeed, assume that we can write $B=A_{1}D_{1}\cdots A_{n}D_{n}$
for some $D_{i}$ of the forms in the lemma and $A_{i}\in GL_{n-1}\left(R_{n-1}\right)$
(notice that $A_{1}$ or $D_{n}$ might be equal to $I_{n-1}$). Observe
now that we can therefore write
\[
B=A_{1}D_{1}A_{1}^{-1}\cdot\ldots\cdot\left(A_{1}\cdot\ldots\cdot A_{n}\right)D_{n}\left(A_{1}\cdot\ldots\cdot A_{n}\right)^{-1}\left(A_{1}\cdot\ldots\cdot A_{n}\right)
\]
and by definition, the conjugations of the $D_{i}$-s can also be
considered as of the forms in the lemma. On the other hand, we have
\[
\left(A_{1}\cdot\ldots\cdot A_{n}\right)D_{n}^{-1}\left(A_{1}\cdot\ldots\cdot A_{n}\right)^{-1}\cdot\ldots\cdot A_{1}D_{1}^{-1}A_{1}^{-1}B=A_{1}\cdot\ldots\cdot A_{n}
\]
and as the matrices of the forms in the lemma are all in $GL_{n-1}\left(R_{n},\sigma_{n}R_{n}\right)$
(by Remark \ref{rem:Normal}) we deduce that
\[
A_{1}\cdot\ldots\cdot A_{n}\in GL_{n-1}\left(R_{n},\sigma_{n}R_{n}\right)\cap GL_{n-1}\left(R_{n-1}\right)=\left\{ I_{n-1}\right\} 
\]
i.e. $A_{1}\cdot\ldots\cdot A_{n}=I_{n-1}$. Hence
\[
B=A_{1}D_{1}A_{1}^{-1}\cdot\ldots\cdot\left(A_{1}\cdot\ldots\cdot A_{n}\right)D_{n}\left(A_{1}\cdot\ldots\cdot A_{n}\right)^{-1}
\]
i.e. $B$ is a product of matrices of the forms in the lemma, as required.

So let $B\in GL_{n-1}\left(R_{n},\sigma_{n}R_{n}\right)\cap E_{n-1}\left(R_{n},T_{m}\right)$.
According to Proposition \ref{lem:suslin-bachmuth}, as $B\in E_{n-1}\left(R_{n},T_{m}\right)$
and $n-1\geq3$, we can write $B$ as a product of elements of the
form
\[
\left(I_{n-1}-fE_{i,j}\right)\left(I_{n-1}+hE_{j,i}\right)\left(I_{n-1}+fE_{i,j}\right)
\]
for some $f\in R_{n}$, $h\in T_{m}=\sum_{r=1}^{n}\sigma_{r}^{2}U_{r,m}+\sum_{r=1}^{n}\sigma_{r}O_{m}+O_{m}^{2}$
and $1\leq i\neq j\leq n-1$. We will show now that every element
of the above form can be written as a product of the elements of the
forms in the lemma and elements of $GL_{n-1}\left(R_{n-1}\right)$. 

So let $h\in T$ and $f\in R_{n}$. Observe first that by division
by $\sigma_{n}$ (with residue) one has
\begin{eqnarray*}
T_{m} & = & \sum_{r=1}^{n}\sigma_{r}^{2}U_{r,m}+\sum_{r=1}^{n}\sigma_{r}O_{m}+O_{m}^{2}\\
 & \subseteq & \sigma_{n}(\sum_{r=1}^{n-1}\sigma_{r}^{2}U_{r,m}+\sigma_{n}U_{n,m}+O_{m})+\sum_{r=1}^{n-1}\sigma_{r}^{2}\bar{U}_{r,m}+\sum_{r=1}^{n-1}\sigma_{r}\bar{O}_{m}+\bar{O}_{m}^{2}.
\end{eqnarray*}
Hence, we can decompose $h=\sigma_{n}h_{1}+h_{2}$ for some: $h_{1}\in\sum_{r=1}^{n-1}\sigma_{r}^{2}U_{r,m}+\sigma_{n}U_{n,m}+O_{m}$
and $h_{2}\in\sum_{r=1}^{n-1}\sigma_{r}^{2}\bar{U}_{r,m}+\sum_{r=1}^{n-1}\sigma_{r}\bar{O}_{m}+\bar{O}_{m}^{2}$.
Therefore, we can write
\begin{align*}
 & \left(I_{n-1}-fE_{i,j}\right)\left(I_{n-1}+hE_{j,i}\right)\left(I_{n-1}+fE_{i,j}\right)\\
 & =\left(I_{n-1}-fE_{i,j}\right)\left(I_{n-1}+\sigma_{n}h_{1}E_{j,i}\right)\left(I_{n-1}+fE_{i,j}\right)\\
 & \,\,\,\,\,\,\cdot\left(I_{n-1}-fE_{i,j}\right)\left(I_{n-1}+h_{2}E_{j,i}\right)\left(I_{n-1}+fE_{i,j}\right).
\end{align*}
Thus, as the matrix $\left(I_{n-1}-fE_{i,j}\right)\left(I_{n-1}+\sigma_{n}h_{1}E_{j,i}\right)\left(I_{n-1}+fE_{i,j}\right)$
is clearly a product of elements of Forms 1 and 2 in the lemma, it
is enough to deal with the matrix
\[
\left(I_{n-1}-fE_{i,j}\right)\left(I_{n-1}+h_{2}E_{j,i}\right)\left(I_{n-1}+fE_{i,j}\right)
\]
when $h_{2}\in\sum_{r=1}^{n-1}\sigma_{r}^{2}\bar{U}_{r,m}+\sum_{r=1}^{n-1}\sigma_{r}\bar{O}_{m}+\bar{O}_{m}^{2}$.
Let us now write: $f=\sigma_{n}f_{1}+f_{2}$ for some $f_{1}\in R_{n}$
and $f_{2}\in R_{n-1}$, and write
\begin{align*}
 & \left(I_{n-1}-fE_{i,j}\right)\left(I_{n-1}+h_{2}E_{j,i}\right)\left(I_{n-1}+fE_{i,j}\right)\\
 & =\left(I_{n-1}-f_{2}E_{i,j}\right)\left(I_{n-1}-\sigma_{n}f_{1}E_{i,j}\right)\\
 & \,\,\,\,\,\,\cdot\left(I_{n-1}+h_{2}E_{j,i}\right)\left(I_{n-1}+\sigma_{n}f_{1}E_{i,j}\right)\left(I_{n-1}+f_{2}E_{i,j}\right).
\end{align*}
Now, as $\left(I_{n-1}\pm f_{2}E_{i,j}\right)\in GL_{n-1}\left(R_{n-1}\right)$,
it is enough to deal with the element
\[
\left(I_{n-1}-\sigma_{n}f_{1}E_{i,j}\right)\left(I_{n-1}+h_{2}E_{j,i}\right)\left(I_{n-1}+\sigma_{n}f_{1}E_{i,j}\right)
\]
which can be written as a product of elements of the form
\begin{eqnarray*}
 &  & \left(I_{n-1}-\sigma_{n}f_{1}E_{i,j}\right)\left(I_{n-1}+kE_{j,i}\right)\left(I_{n-1}+\sigma_{n}f_{1}E_{i,j}\right)\\
 &  & k\in\bar{O}_{m}^{2},\,\,\sigma_{r}^{2}\bar{U}_{r,m},\,\,\sigma_{r}\bar{O}_{m},\,\,\,\,\textrm{for}\,\,\,\,1\leq r\leq n-1.
\end{eqnarray*}
Finally, as for every such $k$ one can write
\begin{eqnarray*}
 &  & \left(I_{n-1}-\sigma_{n}f_{1}E_{i,j}\right)\left(I_{n-1}+kE_{j,i}\right)\left(I_{n-1}+\sigma_{n}f_{1}E_{i,j}\right)\\
 &  & =\left(I_{n-1}+kE_{j,i}\right)\left[\left(I_{n-1}-kE_{j,i}\right),\left(I_{n-1}-\sigma_{n}f_{1}E_{i,j}\right)\right]
\end{eqnarray*}
and $\left(I_{n-1}+kE_{j,i}\right)\in GL_{n-1}\left(R_{n-1}\right)$,
we actually finished.
\end{proof}
\begin{cor}
For proving Lemma \ref{lem:main}, it is enough to show that every
element of the forms in Lemma \ref{lem:decomposition} is in $IA^{m}$.
\end{cor}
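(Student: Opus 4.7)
The corollary is essentially bookkeeping. Stringing together the reductions already carried out above gives
\[
IE_{n-1,n}(H_{m^2}) \;\cong\; GL_{n-1}(R_n,\sigma_n R_n)\cap E_{n-1}(R_n,H_{m^2}) \;\subseteq\; GL_{n-1}(R_n,\sigma_n R_n)\cap E_{n-1}(R_n,T_m),
\]
the inclusion using $H_{m^2}\subseteq T_m$ (from the telescoping identity at the start of the subsection). Lemma~\ref{lem:decomposition} expresses every element of the right-hand side as a product of elements of the four listed forms, and since $IA^m=\langle IA(\Phi_n)^m\rangle$ is a subgroup of $IA(\Phi_n)$, it is closed under multiplication. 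Hence, once each of the four forms is shown to lie in $IA^m$, the whole intersection does as well, and the main lemma follows; this is precisely the content of the corollary.

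The substantive task, carried out in the remaining subsections, is therefore the form-by-form verification that the four listed types of elements lie in $IA^m$. The plan is to reduce each form to the single-row elements handled by Proposition~\ref{prop:type 1.1}. For the unconjugated versions of Forms 1 and 2, the embedding $IGL_{n-1,n}\hookrightarrow IA(\Phi)$ sends $I_{n-1}+\sigma_n g\,E_{i,j}$ to the element of $IA(\Phi)$ whose only non-standard row is row $i$, equal to $\vec{e}_i+g(\sigma_n\vec{e}_j-\sigma_j\vec{e}_n)$. Matching each of the ideal memberships $\sigma_n O_m$, $\sigma_n^2 U_{n,m}$, $\sigma_n\sigma_r^2 U_{r,m}$ against the Type 1, 2 and 3 combinations of Proposition~\ref{prop:type 1.1} then completes the unconjugated case (for instance $\sigma_n\sigma_i^2(x_i^m-1)g$ is a Type 3 combination with $u=i$, whereas for $r\neq i$ the same expression is Type 2 with $k=r$).

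The main obstacle is handling the conjugation by $A\in GL_{n-1}(R_n)$. One writes $A^{-1}(I_{n-1}+hE_{i,j})A=I_{n-1}+h\,\vec{c}\,\vec{r}^{\,t}$ as a rank-one update, where $\vec{c}$ is the $i$-th column of $A^{-1}$ and $\vec{r}$ is the $j$-th row of $A$. The plan is to decompose this rank-one matrix, modulo elements already known to lie in $IA^m$, as an ordered product of single-row matrices $I_{n-1}+hc_k\vec{e}_k\vec{r}^{\,t}$, and then check, row-by-row after embedding into $IGL_{n-1,n}$, that each has its unique non-standard row in the shape allowed by Proposition~\ref{prop:type 1.1}. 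For Forms 3 and 4 the commutator $[(I_{n-1}+hE_{i,j}),(I_{n-1}+fE_{j,i})]$ (with $f\in\sigma_n R_n$, so the second factor already lies in $IGL_{n-1,n}$) is first expanded; the resulting product is then handled by the same strategy, the slightly enlarged ideal memberships $\bar{O}_m^2$, $\sigma_r\bar{O}_m$, $\sigma_r^2\bar{U}_{r,m}$ compensating for the extra cross-terms produced by the commutator. The delicate bookkeeping, matching each of the six distinct ideal classes appearing in Lemma~\ref{lem:decomposition} to the correct Type of Proposition~\ref{prop:type 1.1}, is where the bulk of the work will lie.
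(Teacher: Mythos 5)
Your first paragraph is a correct proof of the corollary and is exactly the paper's (implicit) reasoning: the inclusion $H_{m^{2}}\subseteq T_{m}$, the decomposition of Lemma \ref{lem:decomposition}, and the closure of the subgroup $IA^{m}$ under products. The remaining two paragraphs sketch a plan for the form-by-form verification that belongs to the later subsections rather than to this corollary (and the paper in fact proceeds differently there, reducing conjugation by arbitrary $A\in GL_{n-1}(R_{n})$ via Proposition \ref{lem:suslin-bachmuth} and the identities of Corollary \ref{cor:computations} rather than via a rank-one-update decomposition), but this does not affect the correctness of the corollary itself.
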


We start here by dealing with the elements of Form $1$:
\begin{prop}
\label{prop:form1}Recall $O_{m}=mR_{n}$. The elements of the following
form are in $IA^{m}$:
\[
A^{-1}\left(I_{n-1}+hE_{i,j}\right)A,\,\,\,\,\textrm{for}\,\,\,\,A\in GL_{n-1}\left(R_{n}\right),\,\,\,\,h\in\sigma_{n}O_{m}\,\,\,\,\textrm{and}\,\,\,\,i\neq j.
\]
\end{prop}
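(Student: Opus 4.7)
The plan is to exploit the fact that, since $i\neq j$, the nilpotent matrix $\sigma_n f E_{i,j}$ satisfies $(\sigma_n f E_{i,j})^2=0$, so the elementary matrix $I_{n-1}+\sigma_n f E_{i,j}$ has a clean $m$-th power. This will let me realize $A^{-1}(I_{n-1}+hE_{i,j})A$ literally as an $m$-th power inside $IA(\Phi_n)$.

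Concretely, I would write $h=\sigma_n m f$ with $f\in R_n$, which is possible because $h\in\sigma_n O_m=\sigma_n m R_n$. Since $i\neq j$, we have $E_{i,j}^2=0$ and therefore
\[
(I_{n-1}+\sigma_n f E_{i,j})^m=I_{n-1}+m\sigma_n fE_{i,j}=I_{n-1}+hE_{i,j}
\]
in $GL_{n-1}(R_n)$. Conjugating by $A$ commutes with taking powers, so
\[
A^{-1}(I_{n-1}+hE_{i,j})A=\bigl(A^{-1}(I_{n-1}+\sigma_n fE_{i,j})A\bigr)^m.
\]

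Now set $Z=A^{-1}(I_{n-1}+\sigma_n fE_{i,j})A$. The element $I_{n-1}+\sigma_n fE_{i,j}$ lies in $GL_{n-1}(R_n,\sigma_n R_n)$, and this is a normal subgroup of $GL_{n-1}(R_n)$, so $Z\in GL_{n-1}(R_n,\sigma_n R_n)$. Via the identification $GL_{n-1}(R_n,\sigma_n R_n)\cong IGL_{n-1,n}\leq IA(\Phi_n)$ given by Proposition \ref{prop:iso}, the element $Z$ corresponds to a genuine element of $IA(\Phi_n)$; since that identification is a group isomorphism, its $m$-th power inside $IA(\Phi_n)$ equals the $m$-th power computed in $GL_{n-1}(R_n,\sigma_n R_n)$, which is $A^{-1}(I_{n-1}+hE_{i,j})A$.

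Therefore $A^{-1}(I_{n-1}+hE_{i,j})A\in IA(\Phi_n)^m\subseteq\langle IA(\Phi_n)^m\rangle=IA^m$, as required. There is no real obstacle here: the only subtlety is to be careful that ``conjugation followed by $m$-th power'' takes place in a subgroup already identified with a subgroup of $IA(\Phi_n)$, which is handled by the normality of $GL_{n-1}(R_n,\sigma_n R_n)$ in $GL_{n-1}(R_n)$ together with the isomorphism of Proposition \ref{prop:iso}.
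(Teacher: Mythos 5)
Your proposal is correct and is essentially identical to the paper's proof: both write $h=\sigma_n m h'$, observe that $A^{-1}(I_{n-1}+\sigma_n h'E_{i,j})A$ lies in $GL_{n-1}(R_n,\sigma_n R_n)\leq IA(\Phi)$ by normality, and realize the given element as its $m$-th power. Your extra remarks (that $E_{i,j}^2=0$ justifies the power computation, and that the identification of Proposition \ref{prop:iso} is a group isomorphism) only make explicit what the paper leaves implicit.
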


\begin{proof}
In this case we can write $h=\sigma_{n}mh'$ for some $h'\in R_{n}$.
So as
\[
A^{-1}\left(I_{n-1}+\sigma_{n}h'E_{i,j}\right)A\in GL_{n-1}\left(R_{n},\sigma_{n}R_{n}\right)\leq IA\left(\Phi\right)
\]
we obtain that
\begin{eqnarray*}
A^{-1}\left(I_{n-1}+hE_{i,j}\right)A & = & A^{-1}\left(I_{n-1}+\sigma_{n}mh'E_{i,j}\right)A=\\
 & = & \left(A^{-1}\left(I_{n-1}+\sigma_{n}h'E_{i,j}\right)A\right)^{m}\in IA^{m}
\end{eqnarray*}
as required.
\end{proof}
We will devote the remaining sections to deal with the elements of
the other three forms. In these cases the proof will be more difficult,
and we will need the help of the following computations.

\subsection{Some auxiliary computations}
\begin{prop}
\label{prop:computations}For every $f,g\in R_{n}$ we have the following
equalities:
\[
\left(\begin{array}{ccc}
1-fg & -fg & 0\\
fg & 1+fg & 0\\
0 & 0 & 1
\end{array}\right)
\]
\begin{eqnarray}
 & = & \left(\begin{array}{ccc}
1 & 0 & 0\\
fg & 1 & 0\\
fg^{2} & 0 & 1
\end{array}\right)\left(\begin{array}{ccc}
1 & 0 & 0\\
0 & 1+fg & -f\\
0 & fg^{2} & 1-fg
\end{array}\right)\label{eq:comp1}\\
 &  & \cdot\left(\begin{array}{ccc}
1 & -fg & 0\\
0 & 1 & 0\\
0 & -fg^{2} & 1
\end{array}\right)\left(\begin{array}{ccc}
1-fg & 0 & f\\
0 & 1 & 0\\
-fg^{2} & 0 & 1+fg
\end{array}\right)\left(\begin{array}{ccc}
1 & 0 & -f\\
0 & 1 & f\\
0 & 0 & 1
\end{array}\right)\nonumber 
\end{eqnarray}
\begin{eqnarray}
 & = & \left(\begin{array}{ccc}
1 & 0 & 0\\
fg & 1 & 0\\
-fg^{2} & 0 & 1
\end{array}\right)\left(\begin{array}{ccc}
1 & 0 & 0\\
0 & 1+fg & f\\
0 & -fg^{2} & 1-fg
\end{array}\right)\label{eq:comp2}\\
 &  & \cdot\left(\begin{array}{ccc}
1 & -fg & 0\\
0 & 1 & 0\\
0 & fg^{2} & 1
\end{array}\right)\left(\begin{array}{ccc}
1-fg & 0 & -f\\
0 & 1 & 0\\
fg^{2} & 0 & 1+fg
\end{array}\right)\left(\begin{array}{ccc}
1 & 0 & f\\
0 & 1 & -f\\
0 & 0 & 1
\end{array}\right)\nonumber 
\end{eqnarray}
\begin{eqnarray}
 & = & \left(\begin{array}{ccc}
1 & 0 & 0\\
0 & 1 & 0\\
f & f & 1
\end{array}\right)\left(\begin{array}{ccc}
1-fg & 0 & fg^{2}\\
0 & 1 & 0\\
-f & 0 & 1+fg
\end{array}\right)\left(\begin{array}{ccc}
1 & 0 & 0\\
fg & 1 & -fg^{2}\\
0 & 0 & 1
\end{array}\right)\nonumber \\
 &  & \cdot\left(\begin{array}{ccc}
1 & 0 & 0\\
0 & 1+fg & fg^{2}\\
0 & -f & 1-fg
\end{array}\right)\left(\begin{array}{ccc}
1 & -fg & -fg^{2}\\
0 & 1 & 0\\
0 & 0 & 1
\end{array}\right)\label{eq:comp3}
\end{eqnarray}
\begin{eqnarray}
 & = & \left(\begin{array}{ccc}
1 & 0 & 0\\
0 & 1 & 0\\
-f & -f & 1
\end{array}\right)\left(\begin{array}{ccc}
1-fg & 0 & -fg^{2}\\
0 & 1 & 0\\
f & 0 & 1+fg
\end{array}\right)\left(\begin{array}{ccc}
1 & 0 & 0\\
fg & 1 & fg^{2}\\
0 & 0 & 1
\end{array}\right)\nonumber \\
 &  & \cdot\left(\begin{array}{ccc}
1 & 0 & 0\\
0 & 1+fg & -fg^{2}\\
0 & f & 1-fg
\end{array}\right)\left(\begin{array}{ccc}
1 & -fg & fg^{2}\\
0 & 1 & 0\\
0 & 0 & 1
\end{array}\right)\label{eq:comp4}
\end{eqnarray}
\end{prop}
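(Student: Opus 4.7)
The four identities share a common left-hand side, namely $\operatorname{diag}(M,1) \in SL_{3}(R_{n})$ with $M = I_{2} + fg(\vec{e}_{2}-\vec{e}_{1})(\vec{e}_{1}+\vec{e}_{2})^{T}$, a $2\times 2$ transvection. Each right-hand side expresses this matrix as a product of five matrices in $SL_{3}(R_{n})$, where every factor differs from $I_{3}$ only in one row and one column (or a $2\times 2$ block whose determinant is easily checked to be $1$; e.g.\ the block $\left(\begin{smallmatrix}1+fg & -f\\ fg^{2} & 1-fg\end{smallmatrix}\right)$ appearing in the second factor of (\ref{eq:comp1}) has determinant $1-f^{2}g^{2}+f^{2}g^{2}=1$). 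Since the statement is a purely formal identity of matrices over the commutative ring $R_{n}$, my plan is to verify each identity by direct matrix multiplication.

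I would begin with (\ref{eq:comp1}) and multiply the five factors right-to-left. The sparsity of the factors is crucial: the nonzero off-diagonal entries of adjacent factors occupy largely disjoint row--column positions, so the intermediate products remain manageable. After propagating the multiplication through all five factors, every monomial of total degree $\geq 3$ in $f$ and $g$ will cancel in pairs (for instance, $f\cdot fg^{2}$ against $-f\cdot fg^{2}$), leaving exactly $\operatorname{diag}(M,1)$. The identity (\ref{eq:comp2}) then follows from (\ref{eq:comp1}) at no extra cost: conjugating every matrix in (\ref{eq:comp1}) by $\operatorname{diag}(1,1,-1)$ fixes the left-hand side (whose third row and column are trivial) and, on each factor, merely flips the signs of the entries in the third row and the third column, reproducing the factorization in (\ref{eq:comp2}) verbatim.

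Identities (\ref{eq:comp3}) and (\ref{eq:comp4}) have a structurally different factorization, in which the ``active'' row and column patterns among the five factors are interchanged with respect to those of (\ref{eq:comp1})--(\ref{eq:comp2}), so one of them must be verified by an independent direct calculation of the same type as above; the other then follows by the same $\operatorname{diag}(1,1,-1)$-conjugation trick that relates (\ref{eq:comp1}) to (\ref{eq:comp2}). The main ``obstacle'' is purely notational bookkeeping --- keeping track of several polynomial entries through five successive matrix multiplications --- and there is no conceptual subtlety; the only ring-theoretic ingredient used is commutativity of $R_{n}$.
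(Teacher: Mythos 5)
Your plan is sound and would yield a complete proof, but it is organized differently from the paper's. The paper does not verify the products by expansion; it \emph{derives} each factorization, starting from the observation that the left-hand side equals
\[
\left(\begin{array}{ccc}
1 & 0 & f\\
0 & 1 & -f\\
g & g & 1
\end{array}\right)\left(\begin{array}{ccc}
1 & 0 & -f\\
0 & 1 & f\\
-g & -g & 1
\end{array}\right),
\]
and then splitting each factor into a bottom-row part and a column part, conjugating, and regrouping; Equation (\ref{eq:comp3}) is obtained from the analogous starting factorization with the roles of $f$ and $g$ in the third row/column interchanged. Your alternative --- direct right-to-left multiplication of the five factors --- is an equally valid proof of what is, after all, a polynomial identity over a commutative ring, though it hides where the factorizations come from (which matters later, since Corollary \ref{cor:computations} needs each individual factor to be recognizable as an element of $IA(\Phi_n)$ of a controlled shape; but that is the corollary's burden, not this proposition's). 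Your symmetry reductions are correct: conjugation by $\mathrm{diag}(1,1,-1)$ fixes the left-hand side and flips exactly the $(1,3),(2,3),(3,1),(3,2)$ entries of each factor, which carries the factors of (\ref{eq:comp1}) to those of (\ref{eq:comp2}) and those of (\ref{eq:comp3}) to those of (\ref{eq:comp4}); this has the same effect as the paper's substitution $(f,g)\mapsto(-f,-g)$, since the entries that flip are precisely the odd-degree monomials $\pm f$ and $\pm fg^{2}$. So you need two genuine expansions ((\ref{eq:comp1}) and one of (\ref{eq:comp3})/(\ref{eq:comp4})) plus the symmetry, exactly as you say.
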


\begin{proof}
We use square brackets to help the reader follow the steps of the
computation. Here is the computation for Equation ($\ref{eq:comp1}$):
\[
\left(\begin{array}{ccc}
1-fg & -fg & 0\\
fg & 1+fg & 0\\
0 & 0 & 1
\end{array}\right)=\left(\begin{array}{ccc}
1 & 0 & f\\
0 & 1 & -f\\
g & g & 1
\end{array}\right)\left(\begin{array}{ccc}
1 & 0 & -f\\
0 & 1 & f\\
-g & -g & 1
\end{array}\right)
\]
\[
=\left(\begin{array}{ccc}
1 & 0 & 0\\
0 & 1 & 0\\
g & g & 1
\end{array}\right)\left(\begin{array}{ccc}
1 & 0 & f\\
0 & 1 & -f\\
0 & 0 & 1
\end{array}\right)\left(\begin{array}{ccc}
1 & 0 & 0\\
0 & 1 & 0\\
-g & -g & 1
\end{array}\right)\left(\begin{array}{ccc}
1 & 0 & -f\\
0 & 1 & f\\
0 & 0 & 1
\end{array}\right)
\]
\begin{eqnarray*}
 & = & \left[\left(\begin{array}{ccc}
1 & 0 & 0\\
0 & 1 & 0\\
g & g & 1
\end{array}\right)\left(\begin{array}{ccc}
1 & 0 & 0\\
0 & 1 & -f\\
0 & 0 & 1
\end{array}\right)\left(\begin{array}{ccc}
1 & 0 & 0\\
0 & 1 & 0\\
-g & -g & 1
\end{array}\right)\right]\\
 &  & \cdot\left[\left(\begin{array}{ccc}
1 & 0 & 0\\
0 & 1 & 0\\
g & g & 1
\end{array}\right)\left(\begin{array}{ccc}
1 & 0 & f\\
0 & 1 & 0\\
0 & 0 & 1
\end{array}\right)\left(\begin{array}{ccc}
1 & 0 & 0\\
0 & 1 & 0\\
-g & -g & 1
\end{array}\right)\right]\left(\begin{array}{ccc}
1 & 0 & -f\\
0 & 1 & f\\
0 & 0 & 1
\end{array}\right)
\end{eqnarray*}
\[
=\left(\begin{array}{ccc}
1 & 0 & 0\\
fg & 1+fg & -f\\
fg^{2} & fg^{2} & 1-fg
\end{array}\right)\left(\begin{array}{ccc}
1-fg & -fg & f\\
0 & 1 & 0\\
-fg^{2} & -fg^{2} & 1+fg
\end{array}\right)\left(\begin{array}{ccc}
1 & 0 & -f\\
0 & 1 & f\\
0 & 0 & 1
\end{array}\right)
\]
\begin{eqnarray*}
 & = & \left(\begin{array}{ccc}
1 & 0 & 0\\
fg & 1 & 0\\
fg^{2} & 0 & 1
\end{array}\right)\left(\begin{array}{ccc}
1 & 0 & 0\\
0 & 1+fg & -f\\
0 & fg^{2} & 1-fg
\end{array}\right)\\
 &  & \cdot\left(\begin{array}{ccc}
1 & -fg & 0\\
0 & 1 & 0\\
0 & -fg^{2} & 1
\end{array}\right)\left(\begin{array}{ccc}
1-fg & 0 & f\\
0 & 1 & 0\\
-fg^{2} & 0 & 1+fg
\end{array}\right)\left(\begin{array}{ccc}
1 & 0 & -f\\
0 & 1 & f\\
0 & 0 & 1
\end{array}\right).
\end{eqnarray*}
Equation (\ref{eq:comp2}) is obtained similarly by changing the signs
of $f$ and $g$ simultaneously. Here is the computation for Equation
(\ref{eq:comp3}):
\[
\left(\begin{array}{ccc}
1-fg & -fg & 0\\
fg & 1+fg & 0\\
0 & 0 & 1
\end{array}\right)=\left(\begin{array}{ccc}
1 & 0 & g\\
0 & 1 & -g\\
f & f & 1
\end{array}\right)\left(\begin{array}{ccc}
1 & 0 & -g\\
0 & 1 & g\\
-f & -f & 1
\end{array}\right)
\]
\[
=\left(\begin{array}{ccc}
1 & 0 & 0\\
0 & 1 & 0\\
f & f & 1
\end{array}\right)\left(\begin{array}{ccc}
1 & 0 & g\\
0 & 1 & -g\\
0 & 0 & 1
\end{array}\right)\left(\begin{array}{ccc}
1 & 0 & 0\\
0 & 1 & 0\\
-f & -f & 1
\end{array}\right)\left(\begin{array}{ccc}
1 & 0 & -g\\
0 & 1 & g\\
0 & 0 & 1
\end{array}\right)
\]
\begin{eqnarray*}
 & = & \left(\begin{array}{ccc}
1 & 0 & 0\\
0 & 1 & 0\\
f & f & 1
\end{array}\right)\left[\left(\begin{array}{ccc}
1 & 0 & g\\
0 & 1 & -g\\
0 & 0 & 1
\end{array}\right)\left(\begin{array}{ccc}
1 & 0 & 0\\
0 & 1 & 0\\
-f & 0 & 1
\end{array}\right)\left(\begin{array}{ccc}
1 & 0 & -g\\
0 & 1 & g\\
0 & 0 & 1
\end{array}\right)\right]\\
 &  & \cdot\left[\left(\begin{array}{ccc}
1 & 0 & g\\
0 & 1 & -g\\
0 & 0 & 1
\end{array}\right)\left(\begin{array}{ccc}
1 & 0 & 0\\
0 & 1 & 0\\
0 & -f & 1
\end{array}\right)\left(\begin{array}{ccc}
1 & 0 & -g\\
0 & 1 & g\\
0 & 0 & 1
\end{array}\right)\right]
\end{eqnarray*}
\[
=\left(\begin{array}{ccc}
1 & 0 & 0\\
0 & 1 & 0\\
f & f & 1
\end{array}\right)\left(\begin{array}{ccc}
1-fg & 0 & fg^{2}\\
fg & 1 & -fg^{2}\\
-f & 0 & 1+fg
\end{array}\right)\left(\begin{array}{ccc}
1 & -fg & -fg^{2}\\
0 & 1+fg & fg^{2}\\
0 & -f & 1-fg
\end{array}\right)
\]
\begin{eqnarray*}
 & = & \left(\begin{array}{ccc}
1 & 0 & 0\\
0 & 1 & 0\\
f & f & 1
\end{array}\right)\left(\begin{array}{ccc}
1-fg & 0 & fg^{2}\\
0 & 1 & 0\\
-f & 0 & 1+fg
\end{array}\right)\left(\begin{array}{ccc}
1 & 0 & 0\\
fg & 1 & -fg^{2}\\
0 & 0 & 1
\end{array}\right)\\
 &  & \cdot\left(\begin{array}{ccc}
1 & 0 & 0\\
0 & 1+fg & fg^{2}\\
0 & -f & 1-fg
\end{array}\right)\left(\begin{array}{ccc}
1 & -fg & -fg^{2}\\
0 & 1 & 0\\
0 & 0 & 1
\end{array}\right)
\end{eqnarray*}
and Equation (\ref{eq:comp4}) is obtained similarly by changing the
signs of $f$ and $g$ simultaneously. 
\end{proof}
In the following corollary, a $3\times3$ matrix $B\in GL_{3}\left(R_{n}\right)$
denotes the block matrix 
\[
\left(\begin{array}{cc}
B & 0\\
0 & I_{n-4}
\end{array}\right)\in GL_{n-1}\left(R_{n}\right).
\]
\begin{cor}
\label{cor:computations}Let $n\geq4$, $f\in\sigma_{n}(\sum_{r=1}^{n-1}\sigma_{r}U_{r,m}+U_{n,m}+O_{m})$
and $g\in R_{n}$. Then, mod $IA^{m}$ we have the following equalities
(the indices are intended to help us later to recognize forms of matrices:
form $7$, form $12$ etc.):
\begin{equation}
\left(\begin{array}{ccc}
1-fg & -fg & 0\\
fg & 1+fg & 0\\
0 & 0 & 1
\end{array}\right)_{13}\label{eq:cor-comp1}
\end{equation}
\[
\equiv\left(\begin{array}{ccc}
1 & 0 & 0\\
0 & 1+fg & -f\\
0 & fg^{2} & 1-fg
\end{array}\right)_{1}\left(\begin{array}{ccc}
1-fg & 0 & f\\
0 & 1 & 0\\
-fg^{2} & 0 & 1+fg
\end{array}\right)_{2}
\]
\[
\equiv\left(\begin{array}{ccc}
1 & 0 & 0\\
0 & 1+fg & f\\
0 & -fg^{2} & 1-fg
\end{array}\right)_{3}\left(\begin{array}{ccc}
1-fg & 0 & -f\\
0 & 1 & 0\\
fg^{2} & 0 & 1+fg
\end{array}\right)_{4}
\]
\[
\equiv\left(\begin{array}{ccc}
1-fg & 0 & fg^{2}\\
0 & 1 & 0\\
-f & 0 & 1+fg
\end{array}\right)_{5}\left(\begin{array}{ccc}
1 & 0 & 0\\
0 & 1+fg & fg^{2}\\
0 & -f & 1-fg
\end{array}\right)_{6}
\]
\[
\equiv\left(\begin{array}{ccc}
1-fg & 0 & -fg^{2}\\
0 & 1 & 0\\
f & 0 & 1+fg
\end{array}\right)_{7}\left(\begin{array}{ccc}
1 & 0 & 0\\
0 & 1+fg & -fg^{2}\\
0 & f & 1-fg
\end{array}\right)_{8}
\]
and
\begin{equation}
\left(\begin{array}{ccc}
1-fg & fg & 0\\
-fg & 1+fg & 0\\
0 & 0 & 1
\end{array}\right)_{14}\label{eq:cor-comp2}
\end{equation}
\[
\equiv\left(\begin{array}{ccc}
1-fg & 0 & -fg^{2}\\
0 & 1 & 0\\
f & 0 & 1+fg
\end{array}\right)_{7}\left(\begin{array}{ccc}
1 & 0 & 0\\
0 & 1+fg & fg^{2}\\
0 & -f & 1-fg
\end{array}\right)_{6}.
\]

Moreover (the inverse of a matrix is denoted by the same index - one
can observe that the inverse of each matrix in these equations is
obtained by changing the sign of $f$)
\begin{equation}
\left(\begin{array}{ccc}
1-fg & 0 & -fg\\
0 & 1 & 0\\
fg & 0 & 1+fg
\end{array}\right)_{15}\label{eq:cor-comp3}
\end{equation}
\[
\equiv\left(\begin{array}{ccc}
1 & 0 & 0\\
0 & 1-fg & fg^{2}\\
0 & -f & 1+fg
\end{array}\right)_{8}\left(\begin{array}{ccc}
1-fg & f & 0\\
-fg^{2} & 1+fg & 1\\
0 & 0 & 0
\end{array}\right)_{9}
\]
\[
\equiv\left(\begin{array}{ccc}
1 & 0 & 0\\
0 & 1-fg & -fg^{2}\\
0 & f & 1+fg
\end{array}\right)_{6}\left(\begin{array}{ccc}
1-fg & -f & 0\\
fg^{2} & 1+fg & 0\\
0 & 0 & 1
\end{array}\right)_{10}
\]
\[
\equiv\left(\begin{array}{ccc}
1-fg & fg^{2} & 0\\
-f & 1+fg & 0\\
0 & 0 & 1
\end{array}\right)_{11}\left(\begin{array}{ccc}
1 & 0 & 0\\
0 & 1-fg & -f\\
0 & fg^{2} & 1+fg
\end{array}\right)_{3}
\]
\[
\equiv\left(\begin{array}{ccc}
1-fg & -fg^{2} & 0\\
f & 1+fg & 0\\
0 & 0 & 1
\end{array}\right)_{12}\left(\begin{array}{ccc}
1 & 0 & 0\\
0 & 1-fg & f\\
0 & -fg^{2} & 1+fg
\end{array}\right)_{1}
\]
and
\begin{equation}
\left(\begin{array}{ccc}
1-fg & 0 & fg\\
0 & 1 & 0\\
-fg & 0 & 1+fg
\end{array}\right)_{16}\label{eq:cor-comp4}
\end{equation}
\[
\equiv\left(\begin{array}{ccc}
1-fg & -fg^{2} & 0\\
f & 1+fg & 0\\
0 & 0 & 1
\end{array}\right)_{12}\left(\begin{array}{ccc}
1 & 0 & 0\\
0 & 1-fg & -f\\
0 & fg^{2} & 1+fg
\end{array}\right)_{3}
\]
and
\begin{equation}
\left(\begin{array}{ccc}
1 & 0 & 0\\
0 & 1-fg & -fg\\
0 & fg & 1+fg
\end{array}\right)_{17}\label{eq:cor-comp5}
\end{equation}
\[
\equiv\left(\begin{array}{ccc}
1-fg & 0 & fg^{2}\\
0 & 1 & 0\\
-f & 0 & 1+fg
\end{array}\right)_{5}\left(\begin{array}{ccc}
1+fg & -fg^{2} & 0\\
f & 1-fg & 1\\
0 & 0 & 0
\end{array}\right)_{11}
\]
and
\begin{equation}
\left(\begin{array}{ccc}
1 & 0 & 0\\
0 & 1-fg & fg\\
0 & -fg & 1+fg
\end{array}\right)_{18}\label{eq:cor-comp6}
\end{equation}
\[
\equiv\left(\begin{array}{ccc}
1+fg & f & 0\\
-fg^{2} & 1-fg & 0\\
0 & 0 & 1
\end{array}\right)_{10}\left(\begin{array}{ccc}
1-fg & 0 & -f\\
0 & 1 & 0\\
fg^{2} & 0 & 1+fg
\end{array}\right)_{4}
\]
\end{cor}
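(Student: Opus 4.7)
The plan is to deduce each equivalence modulo $IA^{m}$ from an exact matrix identity in $GL_{n-1}(R_{n})$. For the four equivalences concerning $M_{13}$ in (\ref{eq:cor-comp1}), these exact identities are supplied verbatim by Equations (\ref{eq:comp1})--(\ref{eq:comp4}) of Proposition \ref{prop:computations}; for the remaining matrices $M_{14},\ldots,M_{18}$ analogous five-factor identities can be produced by the same ``insert $(I-aE)(I+aE)=I$ and rearrange'' technique that underlies Proposition \ref{prop:computations}, with an appropriate permutation of the three block indices $\{1,2,3\}$ to place the modified entries in the correct positions. In every such identity the five factors split into two ``indexed'' matrices $M_{a},M_{b}$ (those appearing on the right of the equivalence) and three ``auxiliary'' matrices, each of which is a short product of elementary matrices of the shape $I_{n-1}+hE_{i,j}$ with $h\in\{\pm f,\pm fg,\pm fg^{2}\}$ and $1\le i\ne j\le 3$. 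The whole corollary therefore reduces to a single sub-lemma: every such elementary factor lies in $IA^{m}$.

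To prove the sub-lemma, fix $u=i\le n-1$ and $j\le n-1$ with $j\ne u$, and write $f=\sigma_{n}f'$ where $f'\in J:=\sum_{r=1}^{n-1}\sigma_{r}U_{r,m}+U_{n,m}+O_{m}$. Since $J$ is an $R_{n}$-ideal and each $h$ above equals $\pm f$ times a polynomial in $g$, one has $h=\sigma_{n}h'$ with $h'\in J$. The elementary matrix $I_{n-1}+hE_{i,j}$ has a unique preimage in $IGL'_{n-1,n}\subseteq IA(\Phi_{n})$, obtained by appending the $n$-th column forced by $A\vec{\sigma}=\vec{\sigma}$; explicitly,
\[
I_{n}+h'\bigl(\sigma_{n}E_{i,j}-\sigma_{j}E_{i,n}\bigr).
\]
This matrix is non-trivial only in row $u$, with off-diagonal part $h'(\sigma_{n}\vec{e}_{j}-\sigma_{j}\vec{e}_{n})$. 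Expanding $h'$ along the generators of $J$, each summand matches one of the three templates of Proposition \ref{prop:type 1.1}, taking $n$ in the role of the proposition's first index and $j$ in the role of the second: a summand from $\sigma_{u}(x_{u}^{m}-1)R_{n}$ is of type $3$; a summand from $\sigma_{r}(x_{r}^{m}-1)R_{n}$ with $r\ne u$ is of type $2$ (with the proposition's $k$-variable set to $r$); the summand from $(x_{n}^{m}-1)R_{n}$ is of type $2$ (with $k$-variable $n$); and the summand from $mR_{n}$ is of type $1$. All required index constraints hold automatically, since $u\le n-1$ forces $n\ne u$. Proposition \ref{prop:type 1.1} then places the preimage in $IA^{m}$.

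With the sub-lemma in hand, each of the three auxiliary factors in every five-fold identity lies in $IGL'_{n-1,n}$ --- products of preimages are preimages of products, by the injectivity of the embedding in Proposition \ref{prop:tag} --- and hence in $IA^{m}$. Cancelling them modulo $IA^{m}$ leaves precisely $M_{a}M_{b}$, delivering all of the claimed equivalences (\ref{eq:cor-comp1})--(\ref{eq:cor-comp6}). The principal (though entirely routine) obstacle is the bookkeeping required to produce the five-factor analogues of Proposition \ref{prop:computations} for $M_{14},\ldots,M_{18}$, and to honour the convention recorded in the corollary that a matrix and its inverse share the same label via the substitution $f\mapsto -f$. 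No algebraic input beyond Propositions \ref{prop:computations}, \ref{prop:type 1.1}, and \ref{prop:tag} is needed.
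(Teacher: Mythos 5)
Your proposal is correct and follows the paper's own route: the exact five-factor identities of Proposition \ref{prop:computations} (extended to the remaining matrices $M_{14},\ldots,M_{18}$ by transposition and by permuting the three block indices), with the auxiliary elementary factors cancelled modulo $IA^{m}$ via Proposition \ref{prop:type 1.1}. Your explicit sub-lemma --- lifting each elementary factor to $I_{n}+h'(\sigma_{n}E_{i,j}-\sigma_{j}E_{i,n})$ and matching the summands of $h'\in\sum_{r=1}^{n-1}\sigma_{r}U_{r,m}+U_{n,m}+O_{m}$ to the three templates of that proposition --- is precisely the step the paper leaves implicit in the phrase ``combined with Proposition \ref{prop:type 1.1}''.
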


\begin{rem}
We remark that as $f\in\sigma_{n}R_{n}$, then every matrix which
takes part in the above epualities is indeed in $GL_{n-1}\left(R_{n},\sigma_{n}R_{n}\right)\cong IGL_{n-1,n}\leq IA\left(\Phi\right)$. 
\end{rem}

\begin{proof}
As $f\in\sigma_{n}(\sum_{r=1}^{n-1}\sigma_{r}U_{r,m}+U_{n,m}+O_{m})$,
Equation (\ref{eq:cor-comp1}) is obtained by applying Proposition
\ref{prop:computations} combined with Proposition \ref{prop:type 1.1}.
Equation (\ref{eq:cor-comp2}) is obtained similarly by transposing
all the computations which led to the first part of Equation (\ref{eq:cor-comp1}).
Similarly, by switching the roles of the second row and column with
the third row and column, one obtains Equations (\ref{eq:cor-comp3})
and (\ref{eq:cor-comp4}). By switching one more time the roles of
the first row and column with the second row and column, we obtain
Equations (\ref{eq:cor-comp5}) and (\ref{eq:cor-comp6}) as well.
\end{proof}

\subsection{Elements of Form $2$}
\begin{prop}
\label{prop:form2}Recall $U_{r,m}=(x_{r}^{m}-1)R_{n}$. The elements
of the following form, belong to $IA^{m}$:
\[
A^{-1}\left(I_{n-1}+hE_{i,j}\right)A
\]
where $A\in GL_{n-1}(R_{n})$, $h\in\sigma_{n}\sigma_{r}^{2}U_{r,m},\sigma_{n}^{2}U_{n,m}$
for $1\leq r\leq n-1$ and $i\neq j$.
\end{prop}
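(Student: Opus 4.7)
The plan is to exploit the fact that the ideals $\sigma_n\sigma_r^2 U_{r,m}$ and $\sigma_n^2 U_{n,m}$ fit precisely into the factorization hypothesis of Corollary \ref{cor:computations}. Given $h$ in one of these ideals, I will write $h = fg$ with $f \in \sigma_n\bigl(\sum_{r=1}^{n-1}\sigma_r U_{r,m} + U_{n,m} + O_m\bigr)$ and a suitable choice of $g$: in the case $h \in \sigma_n\sigma_r^2 U_{r,m}$ take $f \in \sigma_n\sigma_r U_{r,m}$ and $g = \sigma_r$; in the case $h \in \sigma_n^2 U_{n,m}$ take $f \in \sigma_n U_{n,m}$ and $g = \sigma_n$. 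With this factorization in hand, the $2\times2$ block matrix $M_{13}$ (or its analogue in the relevant indices) of Equation (\ref{eq:cor-comp1}) becomes exactly the kind of matrix that governs $I_{n-1}+hE_{i,j}$ up to correction terms involving $f$ and $fg^2$.

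The next step is to combine the four equivalent decompositions of $M_{13}$ (and also of $M_{14}$, $M_{15}$, $M_{16}$, $M_{17}$, $M_{18}$) given by Corollary \ref{cor:computations}. Each individual factor $M_k$ appearing on the right-hand side of these identities is itself a $2\times2$ block matrix carrying $f$ in one off-diagonal entry and $fg^2$ in the opposite off-diagonal entry, and these matrices can be handled inductively: reapplying Corollary \ref{cor:computations} with $fg^2$ in place of $fg$ (noting that $g^2$ is still admissible as the second factor) and iterating the substitution allows us to push all the ``bad'' entries into positions whose associated rows match the forms of Proposition \ref{prop:type 1.1}, so they lie in $IA^m$. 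In effect the identities in Corollary \ref{cor:computations} serve as a rewriting system modulo $IA^m$, whose termination is guaranteed by the $\sigma_n$ factor sitting inside every power of $f$.

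To handle the conjugation by an arbitrary $A \in GL_{n-1}(R_n)$, I will use the commutator identity $[I_{n-1}+fE_{i,k}, I_{n-1}+gE_{k,j}] = I_{n-1}+fgE_{i,j}$, valid for distinct $i,k,j$ (possible since $n-1 \ge 3$). Conjugating by $A$ gives
\[
A^{-1}\bigl(I_{n-1}+hE_{i,j}\bigr)A = \bigl[A^{-1}(I_{n-1}+fE_{i,k})A,\; A^{-1}(I_{n-1}+gE_{k,j})A\bigr],
\]
and since $f \in \sigma_n R_n$ the first factor still lies in the normal subgroup $GL_{n-1}(R_n,\sigma_n R_n) \cong IGL_{n-1,n} \le IA(\Phi)$. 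The second factor is an arbitrary element of $GL_{n-1}(R_n)$, but the commutator itself lands back in $GL_{n-1}(R_n,\sigma_n R_n)$. The idea is then to run the Corollary \ref{cor:computations} rewriting \emph{inside this conjugated commutator}, observing that each rewriting step replaces one block matrix by a product of block matrices, all of which remain in the $\sigma_n$-congruence subgroup and all of which can themselves be realized as conjugates of Proposition \ref{prop:type 1.1} matrices by $A$ — which keeps us inside $IA^m$ because Proposition \ref{prop:type 1.1} produces actual $m$-th powers (or commutators with inner automorphisms) that are preserved by arbitrary conjugation in $IA(\Phi)$, and any outer conjugation picked up from $A$ is absorbed back into an element of $GL_{n-1}(R_n,\sigma_n R_n)$ that is a product of similar $m$-th powers.

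The main obstacle is precisely this last point: Proposition \ref{prop:type 1.1} only produces elements in $IA^m$ for matrices with a prescribed \emph{row} structure, whereas after conjugation by $A$ this row structure is scrambled. To circumvent this I expect to use that the matrices of Corollary \ref{cor:computations} appearing in the rewriting are themselves conjugates in $E_{n-1}(R_n)$ of the Proposition \ref{prop:type 1.1} generators, and that the corresponding conjugations by elementary matrices can be implemented inside $IA(\Phi)$ by a sequence of commutators with the Bachmuth--Mochizuki generators $E_{r,s,t}$. Thus the conjugation by $A$ can be ``simulated'' inside $IA(\Phi)$ up to elements already known to be in $IA^m$, closing the argument.
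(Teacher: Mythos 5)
There is a genuine gap, and it sits exactly where you locate it yourself: the conjugation by an arbitrary $A\in GL_{n-1}(R_{n})$. The paper's proof does not try to ``simulate'' this conjugation at all. Instead it first notes that $E_{n-1}(\sigma_{n}^{2}U_{n,m})$ and $E_{n-1}(\sigma_{n}\sigma_{r}^{2}U_{r,m})$ are normal in $GL_{n-1}(R_{n})$, so every element $A^{-1}(I_{n-1}+hE_{i,j})A$ already lies in one of these groups; it then invokes Proposition \ref{lem:suslin-bachmuth} (Suslin's generation result) to conclude that these groups are generated by elements of the very special form $(I_{n-1}-fE_{j,i})(I_{n-1}+hE_{i,j})(I_{n-1}+fE_{j,i})$, i.e.\ conjugates by a \emph{single elementary matrix in the transposed position}. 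The entire remaining work (Lemma \ref{lem:sum} giving additivity in $f$, Proposition \ref{prop:stage} treating $f\in\mathbb{Z}$ via the explicit identities of Corollary \ref{cor:computations}, then the case $f\in\sigma_{s}R_{n}$, then general $f$ by decomposition) is organized around this one-elementary-conjugator normal form. Your proposal never performs this reduction, so you are left having to control conjugation by a completely general $A$, which is where your argument becomes non-rigorous (``I expect to use\dots'', ``can be simulated\dots'').

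Your proposed workaround also contains a concrete error: you claim that conjugates of the Proposition \ref{prop:type 1.1} elements by $A$ stay in $IA^{m}$ because those elements are $m$-th powers ``preserved by arbitrary conjugation.'' But $IA^{m}=\langle IA(\Phi)^{m}\rangle$ is only normal in $IA(\Phi)$, and a general $A\in GL_{n-1}(R_{n})$ does not lie in (the image of) $IA(\Phi)$ --- only $GL_{n-1}(R_{n},\sigma_{n}R_{n})\cong IGL_{n-1,n}$ does. Moreover, the elements of Forms~2 and~3 in Proposition \ref{prop:type 1.1} are built from commutators with matrices such as $I_{n}+\sigma_{1}E_{k,k}-\sigma_{1}E_{k,n}$-type factors whose images in $GL_{n-1}(R_{n})$ are \emph{not} in $GL_{n-1}(R_{n},\sigma_{n}R_{n})$; after conjugation by $A$ these factors leave $IA(\Phi)$ entirely, so the resulting expression is not visibly a product of $m$-th powers of elements of $IA(\Phi)$. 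This is precisely the difficulty that forces the paper's detour through Suslin's proposition and the laborious case analysis in $f$; without that reduction your rewriting scheme does not close up.
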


Notice that for every $n\geq4$, the groups $E_{n-1}(\sigma_{n}^{2}U_{n,m})$
and $E_{n-1}(\sigma_{n}\sigma_{r}^{2}U_{r,m})$ for $1\leq r\leq n-1$
are normal in $GL_{n-1}(R_{n})$, and thus, all the above elements
are in $E_{n-1}(\sigma_{n}^{2}U_{n,m})$ and $E_{n-1}(\sigma_{n}\sigma_{r}^{2}U_{r,m})$.
Hence, for proving Proposition \ref{prop:form2}, it is enough to
show that for every $1\leq r\leq n-1$ we have, $E_{n-1}(\sigma_{n}^{2}U_{n,m}),E_{n-1}(\sigma_{n}\sigma_{r}^{2}U_{r,m})\subseteq IA^{m}$.
Therefore, by Proposition \ref{lem:suslin-bachmuth}, for proving
Proposition \ref{prop:form2}, it is enough to show that the elements
of the following form are in $IA^{m}$:
\[
\left(I_{n-1}-fE_{j,i}\right)\left(I_{n-1}+hE_{i,j}\right)\left(I_{n-1}+fE_{j,i}\right)
\]
when $h\in\sigma_{n}\sigma_{r}^{2}U_{r,m},\sigma_{n}^{2}U_{n,m}$
for $1\leq r\leq n-1$, $f\in R_{n}$ and $i\neq j$. We will prove
it in a few stages, starting with the following lemma.
\begin{lem}
\label{lem:sum}Let $h\in\sigma_{n}\sigma_{r}U_{r,m},\sigma_{n}U_{n,m}$
for $1\leq r\leq n-1$ and $f_{1},f_{2}\in R_{n}$. Assume that the
elements of the forms
\begin{eqnarray*}
 & \left(I_{n-1}\pm f_{1}E_{j,i}\right)\left(I_{n-1}+hE_{i,j}\right)\left(I_{n-1}\mp f_{1}E_{j,i}\right)\\
 & \left(I_{n-1}\pm f_{2}E_{j,i}\right)\left(I_{n-1}+hE_{i,j}\right)\left(I_{n-1}\mp f_{2}E_{j,i}\right)
\end{eqnarray*}
for every $1\leq i\neq j\leq n-1$, belong to $IA^{m}$. Then, the
elements of the form
\[
\left(I_{n-1}\pm\left(f_{1}+f_{2}\right)E_{j,i}\right)\left(I_{n-1}+hE_{i,j}\right)\left(I_{n-1}\mp\left(f_{1}+f_{2}\right)E_{j,i}\right)
\]
for $1\leq i\neq j\leq n-1$, also belong to $IA^{m}$.
\end{lem}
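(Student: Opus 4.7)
The key algebraic observation is that the elementary matrices $X_f := I_{n-1} + fE_{j,i}$ commute among themselves and satisfy $X_{f_1+f_2} = X_{f_1}X_{f_2}$. Writing $T := I_{n-1} + hE_{i,j}$ and $M_f := X_f T X_{-f} = (I_{n-1}\pm fE_{j,i})(I_{n-1}+hE_{i,j})(I_{n-1}\mp fE_{j,i})$, we obtain
\[
M_{f_1+f_2} = X_{f_1}\,M_{f_2}\,X_{-f_1}, \qquad M_{f_1+f_2}\,M_{f_1}^{-1} = X_{f_1}\bigl(M_{f_2}T^{-1}\bigr)X_{-f_1},
\]
using $M_{f_1}^{-1} = X_{f_1}T^{-1}X_{-f_1}$. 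So the lemma reduces to showing that the conjugate $X_{f_1}KX_{-f_1}$ lies in $IA^m$, where $K := M_{f_2}T^{-1}$.

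First I would check that $T \in IA^m$. Under the embedding $IGL_{n-1,n}\hookrightarrow IA(\Phi_n)$, the matrix $T$ corresponds to an element whose only non-identity row is the $i$-th one, given by $(h/\sigma_n)(\sigma_n\vec{e}_j - \sigma_j\vec{e}_n)$. Because $h$ has the prescribed form $\sigma_n U_{n,m}$ or $\sigma_n\sigma_r U_{r,m}$, the scalar $h/\sigma_n$ lies in $U_{n,m}$ or $\sigma_r U_{r,m}$ respectively, and this row vector is then an admissible combination from Proposition \ref{prop:type 1.1} (using $k=n$ in case 2, or combining cases 2 and 3; the constraint $i\neq n$ is automatic since $i\leq n-1$). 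Hence $T \in IA^m$, and combined with the hypothesis $M_{f_2}\in IA^m$ this gives $K = M_{f_2}T^{-1}\in IA^m$.

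The remaining task is to show $X_{f_1}KX_{-f_1}\in IA^m$ even though $X_{f_1}\notin IA(\Phi_n)$. Since $K\in GL_{n-1}(R_n,\sigma_n R_n)$, a normal subgroup of $GL_{n-1}(R_n)$, the commutator $[X_{f_1},K]$ lies in $IGL_{n-1,n}$, and $X_{f_1}KX_{-f_1} = [X_{f_1},K]\cdot K$. A direct matrix computation shows that $[X_{f_1},K]$ is supported in the $(i,j)$-block with entries in the ideal $\sigma_n\bigl(\sum_{r=1}^{n-1}\sigma_r U_{r,m} + U_{n,m} + O_m\bigr)$, which is precisely the hypothesis on $f$ in Corollary \ref{cor:computations}. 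Picking an auxiliary third index $k\notin\{i,j\}$ (available because $n-1\geq 3$), one decomposes this $2\times 2$-block matrix modulo $IA^m$ via the three-axis identities of Corollary \ref{cor:computations}, writing it as a product of ``one-row'' matrices each covered directly by Proposition \ref{prop:type 1.1}. Hence $[X_{f_1},K]\in IA^m$, and consequently $M_{f_1+f_2}\in IA^m$.

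The case of $M_{-(f_1+f_2)}$ follows by running the same argument with $f_1,f_2$ replaced by $-f_1,-f_2$, invoking the $\pm$ part of the hypothesis (which is precisely why the lemma assumes both signs for $f_1$ and $f_2$). The main technical obstacle is the last step: since the entries of $[X_{f_1},K]$ mix various powers of $h$ and $f_1,f_2$, fitting them into the menu of admissible forms from Corollary \ref{cor:computations} requires several factorization steps and a careful case analysis, splitting according to whether $h\in\sigma_n U_{n,m}$ or $h\in\sigma_n\sigma_r U_{r,m}$.
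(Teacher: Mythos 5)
Your reduction is algebraically correct as far as it goes: writing $X_f=I_{n-1}+fE_{j,i}$, $T=I_{n-1}+hE_{i,j}$, $M_f=X_fTX_{-f}$, the identities $M_{f_1+f_2}=X_{f_1}M_{f_2}X_{-f_1}$ and $M_{f_1+f_2}M_{f_1}^{-1}=X_{f_1}(M_{f_2}T^{-1})X_{-f_1}$ hold, and your verification that $T\in IA^m$ via Proposition \ref{prop:type 1.1} is exactly the observation the paper makes at the start of its proof. The gap is in the last step. Note first that showing $[X_{f_1},K]\in IA^m$ with $K=M_{f_2}T^{-1}$ is, given the hypotheses, \emph{equivalent} to the lemma (indeed $[X_{f_1},K]=M_{f_1+f_2}M_{f_1}^{-1}TM_{f_2}^{-1}$), so the reformulation by itself gains nothing; all the content lies in actually decomposing that commutator. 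Your proposed tool for this, Corollary \ref{cor:computations}, does not apply as stated: it is not a general decomposition theorem for $2\times2$-block matrices of $SL_2$ type with entries in the ideal $\sigma_n(\sum_r\sigma_rU_{r,m}+U_{n,m}+O_m)$, but a list of eighteen \emph{specific} two-parameter shapes (entries of the precise forms $fg$, $f$, $fg^2$ with a single $f$ in the ideal). The matrix $[X_{f_1},K]$, whose entries mix $h$, $f_1$, $f_2$ and their powers, is not of any of these shapes, and membership of its entries in the right ideal is necessary but nowhere near sufficient. Since a general element of the relative $SL_2$ of a $2\times2$ block need not be a product of such conjugated elementaries at all (this is exactly the rank-one obstruction the whole paper is organized around avoiding), the assertion that the commutator ``decomposes via the three-axis identities'' is precisely the claim that needs proof, and you concede as much in your closing sentence.

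For comparison, the paper does not compute this commutator. It takes the conjugate $(I_{n-1}-(f_1+f_2)E_{1,2})(I_{n-1}+hE_{2,1})(I_{n-1}+(f_1+f_2)E_{1,2})$ head-on and performs an explicit multi-step factorization through a third coordinate, at each stage splitting off factors that lie in $IA^m$ either by Proposition \ref{prop:type 1.1} or by the inductive hypotheses for $f_1$ and for $f_2$ separately (with various index pairs, which is why the hypothesis is assumed for all $1\le i\neq j\le n-1$). Only at the very end does it invoke a single identity from Corollary \ref{cor:computations} (Equation (\ref{eq:cor-comp6}), with $f,g$ replaced by $-h,f_2$) to dispose of one residual matrix of the exact shape $\left(\begin{smallmatrix}1+hf_2 & -hf_2\\ hf_2 & 1-hf_2\end{smallmatrix}\right)$. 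Until you exhibit an analogous explicit factorization of $[X_{f_1},K]$, the proof is incomplete at its only nontrivial point.
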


\begin{proof}
Observe first that by Proposition \ref{prop:type 1.1}, all the matrices
of the form $I_{n-1}+hE_{i,j}$ for $h\in\sigma_{n}\sigma_{r}U_{r,m},\sigma_{n}U_{n,m}$
belong to $IA^{m}$. We will use it in the following computations.
Without loss of generality, under the assumptions of the proposition,
we will show that for $i,j=2,1$ we have
\[
\left(I_{n-1}-\left(f_{1}+f_{2}\right)E_{1,2}\right)\left(I_{n-1}+hE_{2,1}\right)\left(I_{n-1}+\left(f_{1}+f_{2}\right)E_{1,2}\right)\in IA^{m}
\]
and the general argument is similar. In the following computation
we use the following notations:
\begin{itemize}
\item A matrix $\left(\begin{array}{cc}
B & 0\\
0 & I_{n-4}
\end{array}\right)\in GL_{n-1}(R_{n})$ is denoted by $B\in GL_{3}(R_{n})$. 
\item ``$=$'' denotes an equality between matrices in $GL_{n-1}(R_{n})$.
\item ``$\equiv$'' denotes an equality in $IA\left(\Phi\right)/IA^{m}$
.
\item We use square brackets to help the reader follow the steps of the
computation. 
\end{itemize}
So let's compute:

\[
\left(I_{n-1}-\left(f_{1}+f_{2}\right)E_{1,2}\right)\left(I_{n-1}+hE_{2,1}\right)\left(I_{n-1}+\left(f_{1}+f_{2}\right)E_{1,2}\right)
\]
\[
=\left(\begin{array}{ccc}
1-h\left(f_{1}+f_{2}\right) & -h\left(f_{1}+f_{2}\right)^{2} & 0\\
h & 1+h\left(f_{1}+f_{2}\right) & 0\\
0 & 0 & 1
\end{array}\right)
\]
\[
=\left(\begin{array}{ccc}
1 & 0 & -\left(f_{1}+f_{2}\right)\\
0 & 1 & 1\\
-h & -h\left(f_{1}+f_{2}\right) & 1
\end{array}\right)\left(\begin{array}{ccc}
1 & 0 & \left(f_{1}+f_{2}\right)\\
0 & 1 & -1\\
h & h\left(f_{1}+f_{2}\right) & 1
\end{array}\right)
\]
\begin{eqnarray*}
 & = & \left(\begin{array}{ccc}
1 & 0 & 0\\
0 & 1 & 0\\
-h & -h\left(f_{1}+f_{2}\right) & 1
\end{array}\right)\left(\begin{array}{ccc}
1 & 0 & -\left(f_{1}+f_{2}\right)\\
0 & 1 & 1\\
0 & 0 & 1
\end{array}\right)\\
 &  & \cdot\left(\begin{array}{ccc}
1 & 0 & 0\\
0 & 1 & 0\\
h & h\left(f_{1}+f_{2}\right) & 1
\end{array}\right)\left(\begin{array}{ccc}
1 & 0 & \left(f_{1}+f_{2}\right)\\
0 & 1 & -1\\
0 & 0 & 1
\end{array}\right)
\end{eqnarray*}
\begin{eqnarray*}
 & = & \left(\begin{array}{ccc}
1 & 0 & 0\\
0 & 1 & 0\\
-h & -h\left(f_{1}+f_{2}\right) & 1
\end{array}\right)\\
 &  & \cdot\left[\left(\begin{array}{ccc}
1 & 0 & -(f_{1}+f_{2})\\
0 & 1 & 1\\
0 & 0 & 1
\end{array}\right)\left(\begin{array}{ccc}
1 & 0 & 0\\
0 & 1 & 0\\
0 & hf_{2} & 1
\end{array}\right)\left(\begin{array}{ccc}
1 & 0 & f_{1}+f_{2}\\
0 & 1 & -1\\
0 & 0 & 1
\end{array}\right)\right]\\
 &  & \cdot\left[\left(\begin{array}{ccc}
1 & 0 & -f_{2}\\
0 & 1 & 0\\
0 & 0 & 1
\end{array}\right)\left(\begin{array}{ccc}
1 & 0 & 0\\
0 & 1 & 0\\
h & hf_{1} & 1
\end{array}\right)\left(\begin{array}{ccc}
1 & 0 & f_{2}\\
0 & 1 & 0\\
0 & 0 & 1
\end{array}\right)\right]\\
 &  & \cdot\left[\left(\begin{array}{ccc}
1 & 0 & -f_{2}\\
0 & 1 & 0\\
0 & 0 & 1
\end{array}\right)\left(\begin{array}{ccc}
1 & 0 & 0\\
0 & 1 & 0\\
-h & -hf_{1} & 1
\end{array}\right)\left(\begin{array}{ccc}
1 & 0 & -f_{1}\\
0 & 1 & 1\\
0 & 0 & 1
\end{array}\right)\right.\\
 &  & \cdot\left.\left(\begin{array}{ccc}
1 & 0 & 0\\
0 & 1 & 0\\
h & hf_{1} & 1
\end{array}\right)\left(\begin{array}{ccc}
1 & 0 & f_{1}\\
0 & 1 & -1\\
0 & 0 & 1
\end{array}\right)\left(\begin{array}{ccc}
1 & 0 & f_{2}\\
0 & 1 & 0\\
0 & 0 & 1
\end{array}\right)\right]
\end{eqnarray*}
\begin{align*}
= & \left(\begin{array}{ccc}
1 & 0 & 0\\
0 & 1 & 0\\
-h & -h\left(f_{1}+f_{2}\right) & 1
\end{array}\right)\\
 & \cdot\left(\begin{array}{ccc}
1 & 0 & 0\\
0 & 1+hf_{2} & -hf_{2}\\
0 & hf_{2} & 1-hf_{2}
\end{array}\right)\left(\begin{array}{ccc}
1 & -(f_{1}+f_{2})hf_{2} & (f_{1}+f_{2})hf_{2}\\
0 & 1 & 0\\
0 & 0 & 1
\end{array}\right)\\
 & \cdot\left(\begin{array}{ccc}
1 & -hf_{1}f_{2} & 0\\
0 & 1 & 0\\
0 & hf_{1} & 1
\end{array}\right)\left(\begin{array}{ccc}
1-hf_{2} & 0 & -hf_{2}^{2}\\
0 & 1 & 0\\
h & 0 & 1+hf_{2}
\end{array}\right)\\
 & \cdot\left(\begin{array}{ccc}
1 & 0 & -hf_{1}f_{2}\\
0 & 1 & hf_{2}\\
0 & 0 & 1
\end{array}\right)\left(\begin{array}{ccc}
1-hf_{1} & -hf_{1}^{2} & 0\\
h & 1+hf_{1} & 0\\
0 & 0 & 1
\end{array}\right).
\end{align*}

Notice now that by assumption, and by the remark at the beginning
of the proof, mod $IA^{m}$, the latter expression is congruent to
\[
\equiv\left(\begin{array}{ccc}
1 & 0 & 0\\
0 & 1+hf_{2} & -hf_{2}\\
0 & hf_{2} & 1-hf_{2}
\end{array}\right).
\]
Consider now Equation (\ref{eq:cor-comp6}) in Corollary \ref{cor:computations},
and switch the roles of $f,g$ by $-h,f_{2}$ respectively. Using
this identity we deduce that, mod $IA^{m}$, the latter expression
is congruent to 
\[
\equiv\left(\begin{array}{ccc}
1-hf_{2} & -h & 0\\
hf_{2}^{2} & 1+hf_{2} & 0\\
0 & 0 & 1
\end{array}\right)\left(\begin{array}{ccc}
1+hf_{2} & 0 & h\\
0 & 1 & 0\\
-hf_{2}^{2} & 0 & 1-hf_{2}
\end{array}\right)
\]
that is $\equiv I_{n-1}$ by assumption. This finishes the proof of
the lemma.
\end{proof}
We pass to the next stage:
\begin{prop}
\label{prop:stage}The elements of the following form belong to $IA^{m}$:
\[
\left(I_{n-1}-fE_{j,i}\right)\left(I_{n-1}+hE_{i,j}\right)\left(I_{n-1}+fE_{j,i}\right)
\]
where $h\in\sigma_{n}\sigma_{r}^{2}U_{r,m},\sigma_{n}^{2}U_{n,m}$
for $1\leq r\leq n-1$, \uline{\mbox{$f\in\mathbb{Z}$}} and $i\neq j$.
\end{prop}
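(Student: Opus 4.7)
The argument has three stages.

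First, by Lemma~\ref{lem:sum} (applied inductively; note that the hypothesis $h\in\sigma_{n}\sigma_{r}U_{r,m}$ or $h\in\sigma_{n}U_{n,m}$ of that lemma is weaker than the hypothesis of the present proposition), together with the fact that $\mathbb{Z}$ is generated additively by $\pm 1$ starting from $0$, I reduce the claim to the three base cases $f\in\{0,1,-1\}$.

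Second, for $f=0$ the target is simply $I_{n-1}+hE_{i,j}$. Its image in $IA(\Phi)$ under the embedding $IGL_{n-1,n}\cong GL_{n-1}(R_{n},\sigma_{n}R_{n})\hookrightarrow IA(\Phi)$ has all rows trivial except row $i$, which the constraint $A\vec{\sigma}=\vec{0}$ forces to equal a scalar multiple of $\sigma_{n}\vec{e}_{j}-\sigma_{j}\vec{e}_{n}$. For $h=\sigma_{n}^{2}(x_{n}^{m}-1)\beta$ the scalar is $\sigma_{n}(x_{n}^{m}-1)\beta$, matching Form~2 of Proposition~\ref{prop:type 1.1} with the prop's $k=i_{\mathrm{prop}}=n$ and $j_{\mathrm{prop}}=j$; for $h=\sigma_{n}\sigma_{r}^{2}(x_{r}^{m}-1)\beta$ the scalar is $\sigma_{r}^{2}(x_{r}^{m}-1)\beta$, matching Form~2 when $r\neq i$ (with $k_{\mathrm{prop}}=r$, $i_{\mathrm{prop}}=n$) and Form~3 when $r=i$ (with $u=r$, $i_{\mathrm{prop}}=n$). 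In either situation the element lies in $IA^{m}$.

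Third, for $f=\pm 1$ I apply Corollary~\ref{cor:computations} with $g=1$ and the corollary's free parameter $f$ set equal to our $h$ (which lies in $\sigma_{n}(\sum_{r=1}^{n-1}\sigma_{r}U_{r,m}+U_{n,m}+O_{m})$, so the corollary applies). Under these specializations each of the matrices $(1)$--$(12)$ in the corollary is itself of the target form in Proposition~\ref{prop:stage} for some pair $(i',j')$ and sign $\epsilon\in\{\pm 1\}$; writing $M^{(\epsilon)}_{i',j'}$ for the corresponding element, Equation~(\ref{eq:cor-comp1}) yields the four-way chain
\[
M^{(1)}_{2,1} \;\equiv\; M^{(-1)}_{3,2}M^{(-1)}_{1,3} \;\equiv\; M^{(1)}_{2,3}M^{(1)}_{3,1} \;\equiv\; M^{(-1)}_{1,3}M^{(1)}_{2,3} \;\equiv\; M^{(1)}_{3,1}M^{(-1)}_{3,2} \pmod{IA^{m}},
\]
and Equations~(\ref{eq:cor-comp2})--(\ref{eq:cor-comp6}) supply further decompositions for the remaining target matrices (obtained by cyclic symmetry among the indices $\{1,2,3\}$ and by the substitution $h\mapsto -h$, which is allowed because $-h$ lies in the same ideal as $h$). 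Combining these identities -- exploiting that each target admits \emph{four} distinct decompositions, which gives commutation-type equalities when the decompositions are compared pairwise -- across all twelve matrices $M^{(\pm 1)}_{i',j'}$ (for $i'\neq j'$ in $\{1,2,3\}$), the subgroup they generate inside $IA(\Phi)/IA^{m}$ collapses to the trivial subgroup, forcing each target matrix to lie in $IA^{m}$.

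\textbf{Main obstacle.} The principal technical hurdle is the combinatorial bookkeeping in Step~3. The corollary supplies an over-determined system of relations (each of the six ``commutator-shaped'' $3\times 3$ targets decomposes in four distinct ways modulo $IA^{m}$), and extracting the collapse of the twelve-element set to the identity requires careful matching of indices and signs across Equations~(\ref{eq:cor-comp1})--(\ref{eq:cor-comp6}). The computations of Proposition~\ref{prop:computations} and Corollary~\ref{cor:computations} appear tailored to produce exactly the degree of over-determination needed for this collapse.
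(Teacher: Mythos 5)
Your Steps 1 and 2 are fine and agree with the paper: Lemma \ref{lem:sum} reduces the claim to $f=\pm1$, and the $f=0$ case is exactly the observation (already used at the start of the proof of Lemma \ref{lem:sum}) that $I_{n-1}+hE_{i,j}$ corresponds, via the Magnus picture, to an element covered by Proposition \ref{prop:type 1.1}.

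Step 3, however, has a genuine gap. You specialize Corollary \ref{cor:computations} to $g=1$, observe correctly that all eighteen matrices then become instances of the target, and claim that the resulting over-determined system of congruences forces them all into $IA^{m}$. It does not: the relations have the shape $M_{13}\equiv M_{1}M_{2}\equiv M_{3}M_{4}\equiv M_{5}M_{6}\equiv M_{7}M_{8}$, $M_{15}\equiv M_{8}M_{9}\equiv M_{6}M_{10}\equiv M_{11}M_{3}\equiv M_{12}M_{1}$, etc., and such a system admits nontrivial solutions --- e.g.\ in any abelian group set $M_{1}=M_{3}=M_{5}=M_{7}=M_{9}=M_{10}=a$ and $M_{2}=M_{4}=M_{6}=M_{8}=M_{11}=M_{12}=a^{-1}$ with $a\neq1$; every relation (including the commutation relations coming from $M_{i}(h)^{-1}=M_{i}(-h)$) is satisfied, yet nothing is trivial. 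So no amount of bookkeeping extracts the conclusion from the relations alone; one needs an external base case placing some of the eighteen forms in $IA^{m}$ before the corollary can propagate membership to the rest. This is precisely what the paper's proof supplies: it writes $h=\sigma_{1}u$ with $u\in\sigma_{n}\sigma_{1}U_{1,m}$ and specializes the corollary with $f\to u$, $g\to\sigma_{1}$ (not $g=1$), so that six of the forms (Forms $1,3,6,7,8,12$) can be exhibited \emph{directly} as conjugates, by Magnus generators of $IA(\Phi)$ such as $I+\sigma_{1}E_{2,1}-\sigma_{2}E_{... }$ and $\left(\begin{smallmatrix}x_{2}&-\sigma_{1}\\0&1\end{smallmatrix}\right)$, of elementary matrices already known to lie in $IA^{m}$ by Proposition \ref{prop:type 1.1}; only then do Equations (\ref{eq:cor-comp1})--(\ref{eq:cor-comp6}) yield the remaining forms, including the targets $13$--$18$. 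That choice of $g=\sigma_{1}$, and the accompanying explicit conjugation identities, is the essential content your proposal is missing.
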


\begin{rem}
We note that some of the matrices that we use in the following computations
lie in $IGL'_{n-1,n}\hookrightarrow GL_{n-1}(R_{n})$ and not necessarily
in $IGL_{n-1,n}$ (see Definition \ref{def:tag} and Proposition \ref{prop:tag}).
\end{rem}

\begin{proof}
(of Proposition \ref{prop:stage}) According to Lemma \ref{lem:sum},
it is enough to prove the proposition for $f=\pm1$. Without loss
of generality, we will prove the proposition for $r=1$, i.e. $h\in\sigma_{n}\sigma_{1}^{2}U_{1,m}$,
and symmetrically, the same is valid for every $1\leq r\leq n-1$.
The case $h\in\sigma_{n}^{2}U_{n,m}$ will be considered separately.

So let $h\in\sigma_{n}\sigma_{1}^{2}U_{1,m}$ and write: $h=\sigma_{1}u$
for some $u\in\sigma_{n}\sigma_{1}U_{1,m}$. We will prove the proposition
for $i\neq j\in\left\{ 1,2,3\right\} $ - as one can see below, we
will do it simultaneously for all the options for $i\neq j\in\left\{ 1,2,3\right\} $.
The treatment in the other cases in which $i\neq j\in\left\{ 1,k,l\right\} $
such that $1<k\neq l\leq n-1$ is obtained symmetrically, so we get
that the proposition is valid for every $1\leq i\neq j\leq n-1$. 

As before, we denote a block matrix of the form 
\[
\left(\begin{array}{cc}
B & 0\\
0 & I_{n-4}
\end{array}\right)\in GL_{n-1}(R_{n})
\]
by $B\in GL_{3}(R_{n})$. In the following computations, the indices
of the matrices are intended to help the reader recognize the corresponding
matrix type in Corollary \ref{cor:computations}, as will be explained
below. We remind that the inverse of a matrix is denoted by the same
index, and one can observe that the inverse of each indexed matrix
is obtained by changing the sign of $u$. We also remind that $u\in\sigma_{n}\sigma_{1}U_{1,m}\subseteq\sigma_{n}R_{n}$.
Thus, by Proposition \ref{prop:type 1.1} we have
\begin{eqnarray*}
\left(\begin{array}{ccc}
1-\sigma_{1}u & -\sigma_{1}^{2}u & 0\\
u & 1+\sigma_{1}u & 0\\
0 & 0 & 1
\end{array}\right)_{12} & = & \left(\begin{array}{ccc}
x_{2} & -\sigma_{1} & 0\\
0 & 1 & 0\\
0 & 0 & 1
\end{array}\right)\left(\begin{array}{ccc}
1 & 0 & 0\\
ux_{2} & 1 & 0\\
0 & 0 & 1
\end{array}\right)\\
 &  & \cdot\left(\begin{array}{ccc}
x_{2}^{-1} & x_{2}^{-1}\sigma_{1} & 0\\
0 & 1 & 0\\
0 & 0 & 1
\end{array}\right)\in IA_{m}
\end{eqnarray*}
\begin{eqnarray*}
\left(\begin{array}{ccc}
1-\sigma_{1}u & 0 & -\sigma_{1}^{2}u\\
0 & 1 & 0\\
u & 0 & 1+\sigma_{1}u
\end{array}\right)_{7} & = & \left(\begin{array}{ccc}
x_{3} & 0 & -\sigma_{1}\\
0 & 1 & 0\\
0 & 0 & 1
\end{array}\right)\left(\begin{array}{ccc}
1 & 0 & 0\\
0 & 1 & 0\\
ux_{3} & 0 & 1
\end{array}\right)\\
 &  & \cdot\left(\begin{array}{ccc}
x_{3}^{-1} & 0 & x_{3}^{-1}\sigma_{1}\\
0 & 1 & 0\\
0 & 0 & 1
\end{array}\right)\in IA^{m}
\end{eqnarray*}
\begin{eqnarray*}
\left(\begin{array}{ccc}
1 & 0 & 0\\
0 & 1+\sigma_{1}u & u\\
0 & -\sigma_{1}^{2}u & 1-\sigma_{1}u
\end{array}\right)_{3} & = & \left(\begin{array}{ccc}
1 & 0 & 0\\
u\sigma_{2} & 1 & 0\\
-u\sigma_{1}\sigma_{2} & 0 & 1
\end{array}\right)\left(\begin{array}{ccc}
1 & 0 & 0\\
0 & 1 & 0\\
\sigma_{2} & -\sigma_{1} & 1
\end{array}\right)\\
 &  & \cdot\left(\begin{array}{ccc}
1 & 0 & 0\\
0 & 1 & u\\
0 & 0 & 1
\end{array}\right)\left(\begin{array}{ccc}
1 & 0 & 0\\
0 & 1 & 0\\
-\sigma_{2} & \sigma_{1} & 1
\end{array}\right)\in IA^{m}
\end{eqnarray*}
\begin{eqnarray*}
\left(\begin{array}{ccc}
1 & 0 & 0\\
0 & 1-\sigma_{1}u & -\sigma_{1}^{2}u\\
0 & u & 1+\sigma_{1}u
\end{array}\right)_{6} & = & \left(\begin{array}{ccc}
1 & 0 & 0\\
-u\sigma_{1}\sigma_{3} & 1 & 0\\
u\sigma_{3} & 0 & 1
\end{array}\right)\left(\begin{array}{ccc}
1 & 0 & 0\\
\sigma_{3} & 1 & -\sigma_{1}\\
0 & 0 & 1
\end{array}\right)\\
 &  & \cdot\left(\begin{array}{ccc}
1 & 0 & 0\\
0 & 1 & 0\\
0 & u & 1
\end{array}\right)\left(\begin{array}{ccc}
1 & 0 & 0\\
-\sigma_{3} & 1 & \sigma_{1}\\
0 & 0 & 1
\end{array}\right)\in IA^{m}.
\end{eqnarray*}
By switching the signs of $\sigma_{1},\sigma_{2}$ and $\sigma_{3}$
in the two latter computations we obtain also that
\[
\left(\begin{array}{ccc}
1 & 0 & 0\\
0 & 1-\sigma_{1}u & u\\
0 & -\sigma_{1}^{2}u & 1+\sigma_{1}u
\end{array}\right)_{1},\left(\begin{array}{ccc}
1 & 0 & 0\\
0 & 1+\sigma_{1}u & -\sigma_{1}^{2}u\\
0 & u & 1-\sigma_{1}u
\end{array}\right)_{8}\in IA^{m}.
\]
Consider now the identities which we got in Corollary \ref{cor:computations},
and switch the roles of $f,g$ in the corollary by $u,$$\sigma_{1}$,
respectively. Remember that $u\in\sigma_{n}\sigma_{1}U_{1,m}$. Hence,
as by the computations above matrices of Forms $7$ and $8$ belong
to $IA^{m}$, we obtain from the last part of Equation (\ref{eq:cor-comp1})
that also matrices of Form $13$ belong to $IA^{m}$. Thus, as we
showed that Forms $1,3,6$ also belong to $IA^{m}$, Equation (\ref{eq:cor-comp1})
shows that also Forms $2,4,5$ belong to $IA^{m}$. Similar arguments
show that Equations (\ref{eq:cor-comp1})-(\ref{eq:cor-comp6}) give
that all the $18$ forms belong to $IA^{m}$. In particular, the matrices
which correspond to Forms $13-18$ belong to $IA^{m}$, and these
matrices (and their inverses) are precisely the matrices of the form
(we remind that $h=\sigma_{1}u$)
\[
\left(I_{n-1}\pm E_{j,i}\right)\left(I_{n-1}+hE_{i,j}\right)\left(I_{n-1}\mp E_{j,i}\right),\,\,\,\,i\neq j\in\left\{ 1,2,3\right\} .
\]
Clearly, by similar arguments, the proposition holds for every $1\leq i\neq j\leq n-1$
and every $h\in\sigma_{n}\sigma_{r}^{2}U_{r,m}$ for $1\leq r\leq n-1$. 

The case $h\in\sigma_{n}^{2}U_{n,m}$ is a bit different, but easer.
In this case one can consider the same computations we built for $r=1$,
with the following fittings: Firstly, write $h\in\sigma_{n}^{2}U_{n,m}$
as $h=\sigma_{n}u$ for some $u\in\sigma_{n}U_{n,m}$. Secondly, change
$\sigma_{1}$ to $\sigma_{n}$, change $\sigma_{2},\sigma_{3}$ to
$0$ and change $x_{2},x_{3}$ to $1$ in the right side of the above
equations. It is easy to see that in this situation we obtain in the
left side of the equations the same matrices, just that instead of
$\sigma_{1}$ we will have $\sigma_{n}$. From here we continue exactly
the same.
\end{proof}
\begin{prop}
The elements of the following form belong to $IA^{m}$:
\[
\left(I_{n-1}-fE_{j,i}\right)\left(I_{n-1}+hE_{i,j}\right)\left(I_{n-1}+fE_{j,i}\right)
\]
where $h\in\sigma_{n}^{2}U_{n,m},\sigma_{n}\sigma_{r}^{2}U_{r,m}$
for $1\leq r\leq n-1$, \uline{\mbox{$f\in\sigma_{s}R_{n}$}} for
$1\leq s\leq n$ and $i\neq j$.
\end{prop}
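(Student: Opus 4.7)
My plan is to extend Proposition \ref{prop:stage} from $f\in\mathbb{Z}$ to $f\in\sigma_s R_n$ in two reductive steps followed by an explicit matrix computation.

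First, I invoke Lemma \ref{lem:sum}. The $h$-classes appearing here, namely $\sigma_n^{2}U_{n,m}$ and $\sigma_n\sigma_r^{2}U_{r,m}$ ($1\le r\le n-1$), are strictly contained in the hypotheses $\sigma_n U_{n,m}$ and $\sigma_n\sigma_r U_{r,m}$ of Lemma \ref{lem:sum}. Hence the lemma applies and I may assume that $f$ is a single abelian-group generator of $\sigma_s R_n$; that is, $f=\sigma_s u$ for a unit monomial $u=\pm x_1^{a_1}\cdots x_n^{a_n}\in R_n^{*}$.

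Second, for $f=\sigma_s u$ I use the intertwining identity furnished by the diagonal matrix $D=\mathrm{diag}(1,\ldots,1,u,1,\ldots,1)\in GL_{n-1}(R_n)$ with $u$ in the $j$-th slot. A direct computation gives $D(I+\sigma_s E_{j,i})D^{-1}=I+\sigma_s u E_{j,i}$ and $D^{-1}(I+hE_{i,j})D=I+hu E_{i,j}$, so that
\[
(I-\sigma_s u E_{j,i})(I+hE_{i,j})(I+\sigma_s u E_{j,i}) \;=\; D\bigl[(I-\sigma_s E_{j,i})(I+huE_{i,j})(I+\sigma_s E_{j,i})\bigr]D^{-1}.
\]
Because $u$ is a unit, $hu$ lies in precisely the same ideal as $h$, so proving the membership in $IA^{m}$ is reduced to the special case $f=\sigma_s$ (modulo the $D$-conjugation question, addressed below). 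For $f=\sigma_s$ the triple product expands as
\[
(I-\sigma_s E_{j,i})(I+hE_{i,j})(I+\sigma_s E_{j,i}) \;=\; I + hE_{i,j} + h\sigma_s(E_{i,i}-E_{j,j}) - h\sigma_s^{2}E_{j,i},
\]
and, mirroring the proof of Proposition \ref{prop:stage}, I will match this matrix against the identities of Corollary \ref{cor:computations} with $g_{\mathrm{cor}}=\sigma_s$ and $f_{\mathrm{cor}}$ chosen so that $f_{\mathrm{cor}}g_{\mathrm{cor}}=h$ (i.e.\ $f_{\mathrm{cor}}=h/\sigma_n$ or $h/(\sigma_n\sigma_r)$, depending on which ideal $h$ lies in). Each resulting building block of Forms 1--18 is then recognized as an element of $IA^{m}$ via Proposition \ref{prop:type 1.1}, exactly as in Proposition \ref{prop:stage}, because the extra $\sigma_s$-factor makes the relevant monomial coefficients land in the ideals $\sigma_u\sigma_r U_{r,m}$ and $\sigma_u U_{n,m}$ covered by that proposition.

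The principal obstacle is twofold. (a) The factorization $h=f_{\mathrm{cor}}\cdot g_{\mathrm{cor}}$ with $g_{\mathrm{cor}}=\sigma_s$ is not always available: when $h\in\sigma_n\sigma_r^{2}U_{r,m}$ and $s\notin\{n,r\}$, one must instead use the $\sigma_n$- or $\sigma_r$-factor of $h$ and absorb $\sigma_s$ into $g_{\mathrm{cor}}$ via a monomial rearrangement. This forces a case analysis over the three situations $s=n$, $s=r$, and $s\notin\{r,n\}$, with a separate choice of the identity in Corollary \ref{cor:computations} in each. (b) The reduction via $D$ in the second paragraph is not automatically legal, since $IA^{m}$ is not normal in $GL_{n-1}(R_n)$. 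To legitimize it, I intend to avoid invoking the $D$-conjugation abstractly and instead re-derive the $f=\sigma_s u$ identity from scratch in the same way as for $f=\sigma_s$: rerun the Corollary \ref{cor:computations} + Proposition \ref{prop:type 1.1} bookkeeping with $g_{\mathrm{cor}}=\sigma_s u$ rather than $\sigma_s$, at each step splitting the monomial $u$ against Lemma \ref{lem:sum} when a ``bare'' $\sigma_s$ is required by Proposition \ref{prop:type 1.1}. In this way $D$ only serves as a guide to which identities to write down, and all membership claims are justified intrinsically within $IA^{m}$.
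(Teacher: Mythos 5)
Your overall architecture is sound, and your instinct on the two obstacles is half right: the diagonal conjugation by $D$ is indeed illegitimate (conjugation by $D\in GL_{n-1}(R_{n})$ need not preserve $IA^{m}$), and the correct workaround is exactly the one you name in (b) --- rerun the identities of Corollary \ref{cor:computations} with the conjugating parameter in the role of $g$. That is what the paper does, taking $g_{\mathrm{cor}}=\sigma_{s}u$ for an \emph{arbitrary} $u\in R_{n}$ in one stroke; your preliminary reduction via Lemma \ref{lem:sum} to monomial units $u$ is therefore unnecessary (though harmless). The genuine gap is in your matching step. Your own expansion shows that the target matrix $\left(I_{n-1}-fE_{j,i}\right)\left(I_{n-1}+hE_{i,j}\right)\left(I_{n-1}+fE_{j,i}\right)$ has the \emph{bare} $h$ in its $(i,j)$-entry, $1\pm hf$ on the diagonal, and $-hf^{2}$ in the $(j,i)$-entry. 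Comparing with Forms $9$--$12$ of Corollary \ref{cor:computations}, whose corresponding entries are $f_{\mathrm{cor}}$, $1\pm f_{\mathrm{cor}}g_{\mathrm{cor}}$ and $-f_{\mathrm{cor}}g_{\mathrm{cor}}^{2}$, the assignment is forced: $f_{\mathrm{cor}}=h$ and $g_{\mathrm{cor}}=f=\sigma_{s}u$. Your prescription ``$g_{\mathrm{cor}}=\sigma_{s}$ and $f_{\mathrm{cor}}g_{\mathrm{cor}}=h$'' yields matrices whose off-diagonal entry is $h/g_{\mathrm{cor}}$ rather than $h$, so the identities you would write down never contain the target conjugate; the case analysis of your obstacle (a), and the division of $h$ by $\sigma_{n}$ or $\sigma_{n}\sigma_{r}$, are artifacts of this mis-assignment. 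With the correct assignment there is nothing to divide: $h\in\sigma_{n}\sigma_{r}^{2}U_{r,m}$ or $\sigma_{n}^{2}U_{n,m}$ already satisfies the hypothesis $f_{\mathrm{cor}}\in\sigma_{n}(\sum_{r}\sigma_{r}U_{r,m}+U_{n,m}+O_{m})$ of the corollary, and $g_{\mathrm{cor}}=\sigma_{s}u$ is unrestricted.

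A second, smaller gap is your claim that every building block of Forms $1$--$18$ is recognized in $IA^{m}$ via Proposition \ref{prop:type 1.1}. Forms $13$--$18$, whose entries involve only the product $f_{\mathrm{cor}}g_{\mathrm{cor}}=h\sigma_{s}u$, are precisely the conjugates of $I_{n-1}+h\sigma_{s}uE_{i,j}$ by $\pm E_{j,i}$, and they enter $IA^{m}$ only through Proposition \ref{prop:stage} (the $f\in\mathbb{Z}$ case), using that $h\sigma_{s}u$ still lies in $\sigma_{n}\sigma_{r}^{2}U_{r,m}$ resp.\ $\sigma_{n}^{2}U_{n,m}$. Only Forms $1,3,6,8$ are verified directly, by explicit factorizations into Proposition \ref{prop:type 1.1} generators; the remaining eight forms, among them the desired conjugates, are then extracted from the identities of Corollary \ref{cor:computations}. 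Once you repair the parameter matching and supply this bookkeeping, your plan coincides with the paper's proof.
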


\begin{proof}
We will prove it for $s=1$, $i\neq j\in\left\{ 1,2,3\right\} $,
and denote a block matrix of the form 
\[
\left(\begin{array}{cc}
B & 0\\
0 & I_{n-4}
\end{array}\right)\in GL_{n-1}(R_{n})
\]
 by $B\in GL_{3}(R_{n})$. We will use again the result of Corollary
\ref{cor:computations}, when we switch the roles of $f,g$ in the
corollary by $h,\sigma_{1}u$ respectively for some $u\in R_{n}$. 

As $h\in\sigma_{n}\sigma_{r}^{2}U_{r,m},\sigma_{n}^{2}U_{n,m}$, we
have also $\sigma_{1}uh\in\sigma_{n}\sigma_{r}^{2}U_{r,m},\sigma_{n}^{2}U_{n,m}$.
Hence, we obtain from the previous proposition, that the matrices
of Forms $13-18$ belong to $IA^{m}$. In addition
\begin{eqnarray*}
\left(\begin{array}{ccc}
1 & 0 & 0\\
0 & 1-u\sigma_{1}h & h\\
0 & -u^{2}\sigma_{1}^{2}h & 1+u\sigma_{1}h
\end{array}\right)_{1} & = & \left(\begin{array}{ccc}
1 & 0 & 0\\
-hu\sigma_{2} & 1 & 0\\
-hu^{2}\sigma_{1}\sigma_{2} & 0 & 1
\end{array}\right)\left(\begin{array}{ccc}
1 & 0 & 0\\
0 & 1 & 0\\
-u\sigma_{2} & u\sigma_{1} & 1
\end{array}\right)\\
 &  & \cdot\left(\begin{array}{ccc}
1 & 0 & 0\\
0 & 1 & h\\
0 & 0 & 1
\end{array}\right)\left(\begin{array}{ccc}
1 & 0 & 0\\
0 & 1 & 0\\
u\sigma_{2} & -u\sigma_{1} & 1
\end{array}\right)\in IA^{m}
\end{eqnarray*}
\begin{eqnarray*}
\left(\begin{array}{ccc}
1 & 0 & 0\\
0 & 1-u\sigma_{1}h & -u^{2}\sigma_{1}^{2}h\\
0 & h & 1+u\sigma_{1}h
\end{array}\right)_{6} & = & \left(\begin{array}{ccc}
1 & 0 & 0\\
-hu^{2}\sigma_{1}\sigma_{3} & 1 & 0\\
hu\sigma_{3} & 0 & 1
\end{array}\right)\left(\begin{array}{ccc}
1 & 0 & 0\\
u\sigma_{3} & 1 & -u\sigma_{1}\\
0 & 0 & 1
\end{array}\right)\\
 &  & \cdot\left(\begin{array}{ccc}
1 & 0 & 0\\
0 & 1 & 0\\
0 & h & 1
\end{array}\right)\left(\begin{array}{ccc}
1 & 0 & 0\\
-u\sigma_{3} & 1 & u\sigma_{1}\\
0 & 0 & 1
\end{array}\right)\in IA^{m}
\end{eqnarray*}
and by switching the signs of $u$ and $h$ simultaneously, we get
also Forms $3$ and $8$. So we easily conclude from Corollary \ref{cor:computations}
(Equations (\ref{eq:cor-comp1}) and (\ref{eq:cor-comp3})) that also
the matrices of the other eight forms are in $IA^{m}$. In particular,
the matrices of the form
\[
\left(I_{n-1}-\sigma_{1}uE_{j,i}\right)\left(I_{n-1}+hE_{i,j}\right)\left(I_{n-1}+\sigma_{1}uE_{j,i}\right),\,\,\,\,i\neq j\in\left\{ 1,2,3\right\} 
\]
belong to $IA^{m}$. The treatment for every $i\neq j$ and $1\leq s\leq n-1$
is similar, and the treatment in the case $s=n$ is obtained by replacing
$\sigma_{1}$ by $\sigma_{n}$ and $\sigma_{2},\sigma_{3}$ by $0$
in the above equations.
\end{proof}
\begin{cor}
As every $f\in R_{n}$ can be decomposed as $f=\sum_{s=1}^{n}\sigma_{s}f_{s}+f_{0}$
for some $f_{0}\in\mathbb{Z}$ and $f_{i}\in R_{n}$, we obtain from
Lemma \ref{lem:sum} and from the above two propositions that we actually
finised the proof of Proposition \ref{prop:form2}.
\end{cor}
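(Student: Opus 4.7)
The plan is to package the three preceding results (Lemma \ref{lem:sum}, Proposition \ref{prop:stage}, and the immediately preceding proposition) into a complete proof of Proposition \ref{prop:form2}. First, since $\sigma_{n}^{2}U_{n,m}$ and $\sigma_{n}\sigma_{r}^{2}U_{r,m}$ (for $1\le r\le n-1$) are $GL_{n-1}(R_{n})$-stable ideals, the subgroups $E_{n-1}(R_{n},\sigma_{n}^{2}U_{n,m})$ and $E_{n-1}(R_{n},\sigma_{n}\sigma_{r}^{2}U_{r,m})$ are normal in $GL_{n-1}(R_{n})$, so any conjugate $A^{-1}(I_{n-1}+hE_{i,j})A$ with $h$ in one of these ideals already lies in the corresponding elementary group. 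Invoking Proposition \ref{lem:suslin-bachmuth}, it suffices to show that every Suslin-type generator
\[
(I_{n-1}-fE_{j,i})(I_{n-1}+hE_{i,j})(I_{n-1}+fE_{j,i}),\qquad f\in R_{n},\ i\neq j,
\]
lies in $IA^{m}$, whence the whole normal subgroup does.

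Next, I would decompose the coefficient: any $f\in R_{n}$ admits the augmentation-type representation $f = f_{0} + \sum_{s=1}^{n}\sigma_{s}f_{s}$ with $f_{0}\in\mathbb{Z}$ and $f_{s}\in R_{n}$. The contribution of $f_{0}$ is handled by Proposition \ref{prop:stage} (the integer case), and for each $s$ the contribution of $\sigma_{s}f_{s}\in\sigma_{s}R_{n}$ is handled by the proposition immediately preceding the corollary. The gluing is performed by Lemma \ref{lem:sum}: the lemma asserts that whenever the Suslin generators attached to coefficients $f_{1}$ and $f_{2}$ (with both signs) belong to $IA^{m}$, so does the generator attached to $f_{1}+f_{2}$. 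A straightforward induction on the number of summands promotes this from pairs to finite sums, yielding the generator for $f = f_{0}+\sigma_{1}f_{1}+\dots+\sigma_{n}f_{n}$. Note that Lemma \ref{lem:sum}'s hypothesis on $h$ is satisfied here because $\sigma_{n}\sigma_{r}^{2}U_{r,m}\subseteq\sigma_{n}\sigma_{r}U_{r,m}$ and $\sigma_{n}^{2}U_{n,m}\subseteq\sigma_{n}U_{n,m}$, which are precisely the ideals the lemma addresses.

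There is no real obstacle at this step; the corollary is a bookkeeping assembly of the ingredients. The genuinely delicate work was done earlier, in the conjugation computations of Corollary \ref{cor:computations}, which upgraded the $f\in\mathbb{Z}$ case of Proposition \ref{prop:stage} to $f\in\sigma_{s}R_{n}$. With that upgrade in hand, the decomposition $f = f_{0}+\sum_{s}\sigma_{s}f_{s}$ exhausts all of $R_{n}$, Lemma \ref{lem:sum} pastes the individual Suslin generators together modulo $IA^{m}$, and normality together with Proposition \ref{lem:suslin-bachmuth} upgrades the statement from generators to arbitrary conjugates, completing the proof of Proposition \ref{prop:form2}.
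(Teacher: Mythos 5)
Your proposal is correct and follows essentially the same route as the paper: reduce via normality and Proposition \ref{lem:suslin-bachmuth} to the Suslin-type generators, split $f=f_{0}+\sum_{s}\sigma_{s}f_{s}$, handle $f_{0}$ by Proposition \ref{prop:stage} and each $\sigma_{s}f_{s}$ by the preceding proposition, and glue with Lemma \ref{lem:sum} by induction on the summands. Your observation that the hypothesis of Lemma \ref{lem:sum} is met because $\sigma_{n}\sigma_{r}^{2}U_{r,m}\subseteq\sigma_{n}\sigma_{r}U_{r,m}$ and $\sigma_{n}^{2}U_{n,m}\subseteq\sigma_{n}U_{n,m}$ is exactly the point the paper leaves implicit.
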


\subsection{Elements of Form $3$}
\begin{prop}
\label{prop:form3}Recall $\bar{O}_{m}=mR_{n-1}$ where $R_{n-1}=\mathbb{Z}[x_{1}^{\pm1},\ldots,x_{n-1}^{\pm1}]\subseteq R_{n}$.
Then, The elements of the following form, belong to $IA^{m}$:
\[
A^{-1}\left[\left(I_{n-1}+hE_{i,j}\right),\left(I_{n-1}+fE_{j,i}\right)\right]A
\]
where $A\in GL_{n-1}(R_{n})$, $f\in\sigma_{n}R_{n}$, $h\in\bar{O}_{m}^{2}$
and $i\neq j$.
\end{prop}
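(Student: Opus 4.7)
The plan is to first prove the stronger claim that the inner commutator
\[
C := [I_{n-1} + hE_{i,j},\ I_{n-1} + fE_{j,i}]
\]
already lies in $E_{n-1}(R_n, \sigma_n O_m)$. Granting this, the conjugate $A^{-1}CA$ also lies in $E_{n-1}(R_n, \sigma_n O_m)$, since this subgroup is normal in $GL_{n-1}(R_n)$ for $n-1 \geq 3$. It remains to observe that $E_{n-1}(R_n, \sigma_n O_m) \subseteq IA^m$: by Proposition \ref{lem:suslin-bachmuth} this subgroup is generated by matrices of the form $(I_{n-1} - fE_{i,j})(I_{n-1} + hE_{j,i})(I_{n-1} + fE_{i,j}) = B^{-1}(I_{n-1} + hE_{j,i})B$ with $h \in \sigma_n O_m$ and $B = I_{n-1} + fE_{i,j} \in GL_{n-1}(R_n)$, and each such generator belongs to $IA^m$ by Proposition \ref{prop:form1}.

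The core of the argument is therefore showing $C \in E_{n-1}(R_n, \sigma_n O_m)$. Since $n - 1 \geq 3$, there exists $k \in \{1, \ldots, n-1\} \setminus \{i, j\}$, and writing $h = m^2 h'$ with $h' \in R_{n-1}$, the Steinberg relation $[I + aE_{i,k}, I + bE_{k,j}] = I + ab E_{i,j}$ (for distinct $i, j, k$) yields the factorization
\[
I_{n-1} + hE_{i,j} = [\alpha, \beta], \qquad \alpha := I_{n-1} + mh' E_{i,k},\ \ \beta := I_{n-1} + m E_{k,j}.
\]
Setting $\gamma := I_{n-1} + fE_{j,i}$, we thus have $C = [[\alpha, \beta], \gamma]$.

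Expanding $[\alpha, \beta] = \alpha\beta\alpha^{-1}\beta^{-1}$ as a product of four factors and iterating the group identity $[xy, z] = x[y, z]x^{-1} \cdot [x, z]$, the commutator $C$ becomes a product of $\langle \alpha, \beta\rangle$-conjugates of the four basic commutators $[\alpha^{\pm 1}, \gamma]$ and $[\beta^{\pm 1}, \gamma]$. The Steinberg relations evaluate these directly:
\[
[\alpha, \gamma] = I_{n-1} - mfh'\, E_{j,k}, \qquad [\beta, \gamma] = I_{n-1} + mf\, E_{k,i},
\]
with analogous sign-flipped formulas for the inverses. Because $f \in \sigma_n R_n$, the entries $\pm mf$ and $\pm mfh'$ all lie in $m \sigma_n R_n = \sigma_n O_m$, so every basic commutator belongs to $E_{n-1}(R_n, \sigma_n O_m)$. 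Normality of this subgroup in $GL_{n-1}(R_n)$ then forces all the conjugates, hence $C$ itself, into $E_{n-1}(R_n, \sigma_n O_m)$.

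I anticipate no serious obstacle beyond careful bookkeeping in the nested commutator expansion; all algebraic ingredients (Steinberg relations, normality of $E_{n-1}(R_n, \sigma_n O_m)$ in $GL_{n-1}(R_n)$, and the previously established Propositions \ref{prop:form1} and \ref{lem:suslin-bachmuth}) are already in hand, and the entire argument reduces to the observation that the factor $m^2$ in $h$ pairs with the factor $\sigma_n$ in $f$ to place every relevant entry inside the already-handled ideal $\sigma_n O_m$.
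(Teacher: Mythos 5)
Your argument is correct, and it takes a genuinely different — and shorter — route than the paper. The paper fixes $i,j=2,1$, proves an ``additivity mod $IA^{m}$'' lemma (Lemma \ref{lem:sum-1}) by a long explicit matrix manipulation built on the auxiliary identities of Proposition \ref{prop:computations} together with Propositions \ref{prop:form1} and \ref{prop:type 1.1}, and then concludes by writing the element with $h=m^{2}h'$ as the $m$-th power, mod $IA^{m}$, of the corresponding element with $mh'$. You instead exploit the spare index $k\notin\{i,j\}$ (available because $n-1\geq3$) to factor $I_{n-1}+m^{2}h'E_{i,j}=[I_{n-1}+mh'E_{i,k},\,I_{n-1}+mE_{k,j}]$, so that the element in question becomes a triple commutator $[[\alpha,\beta],\gamma]$; the standard identity $[xy,z]=x[y,z]x^{-1}[x,z]$ then exhibits it as a product of $\langle\alpha,\beta\rangle$-conjugates of $[\alpha^{\pm1},\gamma]$ and $[\beta^{\pm1},\gamma]$, each of which is a single elementary matrix with entry in $m\sigma_{n}R_{n}=\sigma_{n}O_{m}$ (this is where the hypothesis $h\in m^{2}R_{n-1}$ is used: each of the two factors $\alpha,\beta$ absorbs one factor of $m$, and $\gamma$ contributes the $\sigma_{n}$). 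Normality of $E_{n-1}(R_{n},\sigma_{n}O_{m})$ in $GL_{n-1}(R_{n})$ (Suslin, $d\geq3$) and Proposition \ref{prop:form1} finish the job exactly as you say. Your conclusion is in fact slightly stronger than the paper's: you show the whole conjugated commutator lies in $E_{n-1}(R_{n},\sigma_{n}O_{m})$, not merely in $IA^{m}$. The only thing the paper's route buys that yours does not is the intermediate additivity statement of Lemma \ref{lem:sum-1}, but that lemma is not used elsewhere, so nothing is lost. One bookkeeping point worth making explicit in a write-up: all the factors in your decomposition lie in $GL_{n-1}(R_{n},\sigma_{n}R_{n})\cong IGL_{n-1,n}\leq IA(\Phi)$ even though the conjugators $\alpha$, $\beta$, $A$ need not, so the product identity transports correctly into $IA(\Phi)$ — this is the same convention the paper uses throughout Section \ref{sec:Second}.
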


We will prove the proposition in the case $i,j=2,1$, and the same
arguments are valid for arbitrary $i\neq j$. In this case one can
write: $h=m^{2}h'$ for some $h'\in R_{n-1}$, and thus, our element
is of the form
\[
A^{-1}\left(\begin{array}{ccc}
1-fm^{2}h' & f & 0\\
-f\left(m^{2}h'\right)^{2} & 1+fm^{2}h' & 0\\
0 & 0 & I_{n-3}
\end{array}\right)\left(\begin{array}{ccc}
1 & -f & 0\\
0 & 1 & 0\\
0 & 0 & I_{n-3}
\end{array}\right)A
\]
for some $A\in GL_{n-1}(R_{n})$, $f\in\sigma_{n}R_{n}$ and $h'\in R_{n-1}$.
The proposition will follow easily from the following lemma:
\begin{lem}
\label{lem:sum-1}Let $h_{1},h_{2}\in R_{n}$, $f\in\sigma_{n}R_{n}$
and denote a block matrix of the form $\left(\begin{array}{cc}
B & 0\\
0 & I_{n-4}
\end{array}\right)\in GL_{n-1}(R_{n})$ by $B\in GL_{3}(R_{n})$. Then
\[
A^{-1}\left(\begin{array}{ccc}
1-fm\left(h_{1}+h_{2}\right) & f & 0\\
-f\left(m\left(h_{1}+h_{2}\right)\right)^{2} & 1+fm\left(h_{1}+h_{2}\right) & 0\\
0 & 0 & 1
\end{array}\right)\left(\begin{array}{ccc}
1 & -f & 0\\
0 & 1 & 0\\
0 & 0 & 1
\end{array}\right)A
\]
\begin{eqnarray*}
 & \equiv & A^{-1}\left(\begin{array}{ccc}
1-fmh_{1} & f & 0\\
-f\left(mh_{1}\right)^{2} & 1+fmh_{1} & 0\\
0 & 0 & 1
\end{array}\right)\left(\begin{array}{ccc}
1 & -f & 0\\
0 & 1 & 0\\
0 & 0 & 1
\end{array}\right)\\
 &  & \cdot\left(\begin{array}{ccc}
1-fmh_{2} & f & 0\\
-f\left(mh_{2}\right)^{2} & 1+fmh_{2} & 0\\
0 & 0 & 1
\end{array}\right)\left(\begin{array}{ccc}
1 & -f & 0\\
0 & 1 & 0\\
0 & 0 & 1
\end{array}\right)A\,\,\,\,\textrm{mod}\,\,\,\,IA^{m}
\end{eqnarray*}
\end{lem}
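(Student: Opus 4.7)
The key observation is that, viewing the matrix in the statement as an element of $GL_{n-1}(R_n)$ via the usual block embedding, a direct $3\times 3$ computation identifies it with the commutator
\[
M(h)\;:=\;[\,I_{n-1}+mh\,E_{2,1},\;I_{n-1}+f\,E_{1,2}\,].
\]
Thus Lemma \ref{lem:sum-1} asserts that, after $A$-conjugation and modulo $IA^m$, we have $M(h_1+h_2)\equiv M(h_1)\,M(h_2)$.

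Setting $a:=I_{n-1}+mh_1E_{2,1}$ and $b:=I_{n-1}+mh_2E_{2,1}$, I observe $ab=I_{n-1}+m(h_1+h_2)E_{2,1}$ because $E_{2,1}^2=0$. Applying the commutator identity $[xy,z]=x[y,z]x^{-1}\cdot[x,z]$ with $z=I_{n-1}+fE_{1,2}$ then gives
\[
M(h_1+h_2)\;=\;a\,M(h_2)\,a^{-1}\cdot M(h_1)\;=\;[a,M(h_2)]\cdot[M(h_2),M(h_1)]\cdot M(h_1)M(h_2),
\]
so it suffices to show that the two error terms $E_1:=[a,M(h_2)]$ and $E_2:=[M(h_2),M(h_1)]$ lie in $IA^m$ after conjugation by $A$.

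Note that $a\in GL_{n-1}(R_n,O_m)$ and, since $M(h_i)$ is by construction the commutator of $I_{n-1}+mh_iE_{2,1}\in GL_{n-1}(R_n,O_m)$ with $I_{n-1}+fE_{1,2}\in GL_{n-1}(R_n,\sigma_nR_n)$, the standard bilinearity of commutators for $n-1\geq 3$ (the fact that $[GL_{n-1}(R_n,I),GL_{n-1}(R_n,J)]\subseteq E_{n-1}(R_n,IJ+JI)$) yields $M(h_i)\in E_{n-1}(R_n,\sigma_nO_m)$. Applying the same bilinearity once more gives
\[
E_1\in E_{n-1}(R_n,\sigma_n O_m^{\,2})\subseteq E_{n-1}(R_n,\sigma_nO_m),\qquad E_2\in E_{n-1}(R_n,\sigma_n^2O_m^{\,2})\subseteq E_{n-1}(R_n,\sigma_nO_m).
\]

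Finally, since $n-1\geq 3$, Suslin's theorem tells us $E_{n-1}(R_n,\sigma_nO_m)$ is normal in $GL_{n-1}(R_n)$, so $A$-conjugation preserves it; moreover, by Proposition \ref{prop:form1} every normal generator $B^{-1}(I_{n-1}+hE_{i,j})B$ (with $h\in\sigma_nO_m$ and $B\in GL_{n-1}(R_n)$) of this subgroup lies in $IA^m$, and hence $E_{n-1}(R_n,\sigma_nO_m)\subseteq IA^m$. Combining these gives $A^{-1}E_1A,\,A^{-1}E_2A\in IA^m$, which finishes the proof. The main obstacle is invoking bilinearity of commutators cleanly for the ideals at hand; if a black-box K-theoretic citation is undesirable, one can instead expand $E_1$ and $E_2$ directly as explicit products of $3\times 3$ elementary matrices and verify factor by factor that each lies in $IA^m$ by Propositions \ref{prop:form1} and \ref{prop:form2}, in the same style as Corollary \ref{cor:computations} and the computation in the proof of Lemma \ref{lem:sum}.
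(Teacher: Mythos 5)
Your structural idea is attractive and genuinely different from the paper's argument: you identify the matrix in the statement with the commutator $M(h)=[\,I_{n-1}+mhE_{2,1},\,I_{n-1}+fE_{1,2}\,]$ (this identification is correct), and the identity $[ab,z]=a[b,z]a^{-1}[a,z]$ with $ab=I_{n-1}+m(h_1+h_2)E_{2,1}$ cleanly reduces the lemma to showing that the two error terms $E_1=[a,M(h_2)]$ and $E_2=[M(h_2),M(h_1)]$ die in $IA^m$ after conjugation by $A$. The problem is the justification of that last step. The asserted ``bilinearity'' $[GL_{n-1}(R_n,I),GL_{n-1}(R_n,J)]\subseteq E_{n-1}(R_n,IJ+JI)$ is not a standard fact and is false in general: what one has for free is only $[GL_d(R,I),GL_d(R,J)]\subseteq GL_d(R,IJ)$, and whether a commutator such as $[x_{21}(a),x_{12}(b)]$ lies in $E_d(R,abR)$ is exactly the nontrivial obstruction measured by the ``elementary commutators'' in the relative commutator calculus. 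In the present context you can see that your cited fact proves far too much: it would give $M(h)\in E_{n-1}(R_n,\sigma_n O_m)\subseteq IA^m$ for \emph{every} $h\in R_n$, so both sides of Lemma \ref{lem:sum-1} would be trivial, Proposition \ref{prop:form3} would be a one-liner, and the restriction $h\in\bar{O}_{m}^{2}$ in Form 3 of Lemma \ref{lem:decomposition} (which is what forces the $m$-th-power extraction in the paper) would be pointless. The entire machinery of $\varoint$\ref{sec:Second} exists precisely because this containment is not available. Your fallback remark that one could ``expand $E_1$ and $E_2$ factor by factor'' is not a proof either: producing such an expansion with every factor individually in $IA^m$ is exactly the hard content that the paper's explicit computation supplies.

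The reduction itself is salvageable, however, and without needing any claim about $M(h_i)$ individually. In each of $E_1$ and $E_2$, one of the two arguments lies in the \emph{absolute} elementary group $E_{n-1}(R_n)$ ($a$ is a single elementary matrix, and $M(h_1)$ is a product of four elementary matrices), while the other argument, $M(h_2)$, lies in $GL_{n-1}(R_n,\sigma_n O_m)$ by inspection of its entries (this is where $f\in\sigma_nR_n$ enters). The standard commutator formula $[GL_d(R,K),E_d(R)]=E_d(R,K)$ for $d\geq3$ over a commutative ring (Vaserstein) then yields $E_1,E_2\in E_{n-1}(R_n,\sigma_n O_m)$, and this subgroup is normal in $GL_{n-1}(R_n)$ and contained in $IA^m$ since its generators are precisely the elements of Proposition \ref{prop:form1}. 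With that (correct, but external) citation in place of the false bilinearity, your proof works and is considerably shorter than the paper's, which instead carries out the splitting of $h_1+h_2$ by hand, absorbing each error term as an explicit $m$-th power or via Proposition \ref{prop:form1} and Corollary \ref{cor:computations}; the trade-off is that the paper's route stays entirely within the elementary identities it has already established, whereas yours imports a nontrivial K-theoretic theorem the paper deliberately avoids.
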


Now, if Lemma \ref{lem:sum-1} is proved, one can deduce that for
$f\in\sigma_{n}R_{n}$ and $h=m^{2}h'$, $h'\in R_{n}$, we have
\[
A^{-1}\left(\begin{array}{ccc}
1-fm^{2}h' & f & 0\\
-f\left(m^{2}h'\right)^{2} & 1+fm^{2}h' & 0\\
0 & 0 & I_{n-3}
\end{array}\right)\left(\begin{array}{ccc}
1 & -f & 0\\
0 & 1 & 0\\
0 & 0 & I_{n-3}
\end{array}\right)A
\]
\[
\equiv\left[A^{-1}\left(\begin{array}{ccc}
1-fmh' & f & 0\\
-f\left(mh'\right)^{2} & 1+fmh' & 0\\
0 & 0 & I_{n-3}
\end{array}\right)\left(\begin{array}{ccc}
1 & -f & 0\\
0 & 1 & 0\\
0 & 0 & I_{n-3}
\end{array}\right)A\right]^{m}\,\,\,\,\textrm{mod}\,\,\,\,IA^{m}
\]
and as the latter element is obviously belong to $IA^{m}$, Proposition
\ref{prop:form3} follows. So it is enough to prove Lemma \ref{lem:sum-1}.
\begin{proof}
(of Lemma \ref{lem:sum-1}) Throughout the computation we will use
the observation that as $GL_{n-1}(R_{n},\sigma_{n}R_{n})$ is normal
in $GL_{n-1}(R_{n})$, every conjugate of an element of $GL_{n-1}\left(R_{n},\sigma_{n}R_{n}\right)\leq IA\left(\Phi\right)$
by an element of $GL_{n-1}(R_{n})$, belongs to $GL_{n-1}\left(R_{n},\sigma_{n}R_{n}\right)\leq IA\left(\Phi\right)$
(as was mentioned in Remark \ref{rem:Normal}) - even though $GL_{n-1}(R_{n})\nleq IA\left(\Phi\right)$.
Throughout the computation, we will use the notations which we used
in the proof of Lemma \ref{lem:sum}:
\begin{itemize}
\item A matrix $\left(\begin{array}{cc}
B & 0\\
0 & I_{n-4}
\end{array}\right)\in GL_{n-1}(R_{n})$ is denoted by $B\in GL_{3}(R_{n})$. 
\item ``$=$'' denotes an equality between matrices in $GL_{n-1}(R_{n})$.
\item ``$\equiv$'' denotes an equality in $IA\left(\Phi\right)/IA^{m}$
.
\item We use square brackets to help the reader follow the steps of the
computation. Whenever square brackets are used, it is recommended
to concentrate in the expression inside them separately in order to
follow the transition to the next step.
\end{itemize}
So let's compute:

\[
A^{-1}\left(\begin{array}{ccc}
1-fm\left(h_{1}+h_{2}\right) & f & 0\\
-f\left(m\left(h_{1}+h_{2}\right)\right)^{2} & 1+fm\left(h_{1}+h_{2}\right) & 0\\
0 & 0 & 1
\end{array}\right)\left(\begin{array}{ccc}
1 & -f & 0\\
0 & 1 & 0\\
0 & 0 & 1
\end{array}\right)A
\]
\begin{eqnarray*}
 & = & A^{-1}\left(\begin{array}{ccc}
1 & 0 & -f\\
0 & 1 & -fm\left(h_{1}+h_{2}\right)\\
-m\left(h_{1}+h_{2}\right) & 1 & 1
\end{array}\right)\\
 &  & \cdot\left(\begin{array}{ccc}
1 & 0 & f\\
0 & 1 & fm\left(h_{1}+h_{2}\right)\\
m\left(h_{1}+h_{2}\right) & -1 & 1
\end{array}\right)\left(\begin{array}{ccc}
1 & -f & 0\\
0 & 1 & 0\\
0 & 0 & 1
\end{array}\right)A
\end{eqnarray*}
\begin{eqnarray*}
 & = & A^{-1}\left(\begin{array}{ccc}
1 & 0 & 0\\
0 & 1 & 0\\
-m\left(h_{1}+h_{2}\right) & 1 & 1
\end{array}\right)\left(\begin{array}{ccc}
1 & 0 & -f\\
0 & 1 & -fm\left(h_{1}+h_{2}\right)\\
0 & 0 & 1
\end{array}\right)\\
 &  & \cdot\left(\begin{array}{ccc}
1 & 0 & 0\\
0 & 1 & 0\\
m\left(h_{1}+h_{2}\right) & -1 & 1
\end{array}\right)\left(\begin{array}{ccc}
1 & 0 & f\\
0 & 1 & fm\left(h_{1}+h_{2}\right)\\
0 & 0 & 1
\end{array}\right)\left(\begin{array}{ccc}
1 & -f & 0\\
0 & 1 & 0\\
0 & 0 & 1
\end{array}\right)A
\end{eqnarray*}
\begin{eqnarray*}
 & = & A^{-1}\left(\begin{array}{ccc}
1 & 0 & 0\\
0 & 1 & 0\\
-mh_{2} & 0 & 1
\end{array}\right)\left[\left(\begin{array}{ccc}
1 & 0 & 0\\
0 & 1 & 0\\
-mh_{1} & 1 & 1
\end{array}\right)\left(\begin{array}{ccc}
1 & 0 & -f\\
0 & 1 & -fmh_{1}\\
0 & 0 & 1
\end{array}\right)\right.\\
 &  & \cdot\left.\left(\begin{array}{ccc}
1 & 0 & 0\\
0 & 1 & 0\\
mh_{1} & -1 & 1
\end{array}\right)\left(\begin{array}{ccc}
1 & 0 & f\\
0 & 1 & fmh_{1}\\
0 & 0 & 1
\end{array}\right)\right]\left(\begin{array}{ccc}
1 & 0 & -f\\
0 & 1 & -fmh_{1}\\
0 & 0 & 1
\end{array}\right)\left(\begin{array}{ccc}
1 & 0 & 0\\
0 & 1 & 0\\
mh_{2} & 0 & 1
\end{array}\right)A\\
 &  & \cdot\left[A^{-1}\left(\begin{array}{ccc}
1 & 0 & 0\\
0 & 1 & 0\\
-m\left(h_{1}+h_{2}\right) & 1 & 1
\end{array}\right)\left(\begin{array}{ccc}
1 & 0 & 0\\
0 & 1 & -fmh_{2}\\
0 & 0 & 1
\end{array}\right)\left(\begin{array}{ccc}
1 & 0 & 0\\
0 & 1 & 0\\
m\left(h_{1}+h_{2}\right) & -1 & 1
\end{array}\right)A\right]\\
 &  & \cdot\left[A^{-1}\left(\begin{array}{ccc}
1 & 0 & 0\\
0 & 1 & f\left(h_{1}+h_{2}\right)\\
0 & 0 & 1
\end{array}\right)A\right]^{m}A^{-1}\left(\begin{array}{ccc}
1 & -f & f\\
0 & 1 & 0\\
0 & 0 & 1
\end{array}\right)A
\end{eqnarray*}
\begin{eqnarray*}
 & \equiv & A^{-1}\left(\begin{array}{ccc}
1 & 0 & 0\\
0 & 1 & 0\\
-mh_{2} & 0 & 1
\end{array}\right)\left(\begin{array}{ccc}
1-fmh_{1} & f & 0\\
-f\left(mh_{1}\right)^{2} & 1+fmh_{1} & 0\\
0 & 0 & 1
\end{array}\right)\\
 &  & \cdot\left(\begin{array}{ccc}
1 & 0 & -f\\
0 & 1 & -fmh_{1}\\
0 & 0 & 1
\end{array}\right)\left(\begin{array}{ccc}
1 & 0 & 0\\
0 & 1 & 0\\
mh_{2} & 0 & 1
\end{array}\right)A\left[A^{-1}\left(\begin{array}{ccc}
1 & 0 & 0\\
0 & 1 & 0\\
-m\left(h_{1}+h_{2}\right) & 1 & 1
\end{array}\right)\right.\\
 &  & \cdot\left.\left(\begin{array}{ccc}
1 & 0 & 0\\
0 & 1 & -fh_{2}\\
0 & 0 & 1
\end{array}\right)\left(\begin{array}{ccc}
1 & 0 & 0\\
0 & 1 & 0\\
m\left(h_{1}+h_{2}\right) & -1 & 1
\end{array}\right)A\right]^{m}A^{-1}\left(\begin{array}{ccc}
1 & -f & f\\
0 & 1 & 0\\
0 & 0 & 1
\end{array}\right)A
\end{eqnarray*}
\begin{eqnarray*}
 & \equiv & A^{-1}\left[\left(\begin{array}{ccc}
1 & 0 & 0\\
0 & 1 & 0\\
-mh_{2} & 0 & 1
\end{array}\right)\left(\begin{array}{ccc}
1-fmh_{1} & f & 0\\
-f\left(mh_{1}\right)^{2} & 1+fmh_{1} & 0\\
0 & 0 & 1
\end{array}\right)\right.\\
 &  & \cdot\left.\left(\begin{array}{ccc}
1 & 0 & 0\\
0 & 1 & 0\\
mh_{2} & 0 & 1
\end{array}\right)\right]\left[\left(\begin{array}{ccc}
1 & 0 & 0\\
0 & 1 & 0\\
-mh_{2} & 0 & 1
\end{array}\right)\left(\begin{array}{ccc}
1 & 0 & -f\\
0 & 1 & 0\\
0 & 0 & 1
\end{array}\right)\right.\\
 &  & \cdot\left.\left(\begin{array}{ccc}
1 & 0 & 0\\
0 & 1 & 0\\
mh_{2} & 0 & 1
\end{array}\right)\right]\left[\left(\begin{array}{ccc}
1 & 0 & 0\\
0 & 1 & 0\\
-mh_{2} & 0 & 1
\end{array}\right)\left(\begin{array}{ccc}
1 & 0 & 0\\
0 & 1 & -fmh_{1}\\
0 & 0 & 1
\end{array}\right)\right.\\
 &  & \cdot\left.\left(\begin{array}{ccc}
1 & 0 & 0\\
0 & 1 & 0\\
mh_{2} & 0 & 1
\end{array}\right)\right]\left(\begin{array}{ccc}
1 & -f & f\\
0 & 1 & 0\\
0 & 0 & 1
\end{array}\right)A
\end{eqnarray*}
\begin{eqnarray*}
 & = & A^{-1}\left(\begin{array}{ccc}
1-fmh_{1} & f & 0\\
-f\left(mh_{1}\right)^{2} & 1+fmh_{1} & 0\\
0 & 0 & 1
\end{array}\right)A\cdot\left[A^{-1}\left(\begin{array}{ccc}
1 & 0 & 0\\
0 & 1 & 0\\
fmh_{1}h_{2} & -fh_{2} & 1
\end{array}\right)A\right]^{m}\\
 &  & \cdot A^{-1}\left(\begin{array}{ccc}
1 & 0 & 0\\
0 & 1 & 0\\
-mh_{2} & 0 & 1
\end{array}\right)\left(\begin{array}{ccc}
1 & 0 & -f\\
0 & 1 & 0\\
0 & 0 & 1
\end{array}\right)\left(\begin{array}{ccc}
1 & 0 & 0\\
0 & 1 & 0\\
mh_{2} & 0 & 1
\end{array}\right)A\\
 &  & \cdot\left[A^{-1}\left(\begin{array}{ccc}
1 & 0 & 0\\
0 & 1 & 0\\
-mh_{2} & 0 & 1
\end{array}\right)\left(\begin{array}{ccc}
1 & 0 & 0\\
0 & 1 & -fh_{1}\\
0 & 0 & 1
\end{array}\right)\left(\begin{array}{ccc}
1 & 0 & 0\\
0 & 1 & 0\\
mh_{2} & 0 & 1
\end{array}\right)A\right]^{m}\\
 &  & \cdot A^{-1}\left(\begin{array}{ccc}
1 & -f & f\\
0 & 1 & 0\\
0 & 0 & 1
\end{array}\right)A
\end{eqnarray*}
\begin{eqnarray*}
 & \equiv & A^{-1}\left(\begin{array}{ccc}
1-fmh_{1} & f & 0\\
-f\left(mh_{1}\right)^{2} & 1+fmh_{1} & 0\\
0 & 0 & 1
\end{array}\right)\left[\left(\begin{array}{ccc}
1 & 0 & 0\\
0 & 1 & 0\\
-mh_{2} & 0 & 1
\end{array}\right)\right.\\
 &  & \cdot\left.\left(\begin{array}{ccc}
1 & 0 & -f\\
0 & 1 & 0\\
0 & 0 & 1
\end{array}\right)\left(\begin{array}{ccc}
1 & 0 & 0\\
0 & 1 & 0\\
mh_{2} & 0 & 1
\end{array}\right)\right]\left(\begin{array}{ccc}
1 & -f & f\\
0 & 1 & 0\\
0 & 0 & 1
\end{array}\right)A
\end{eqnarray*}
\begin{eqnarray*}
 & = & A^{-1}\left(\begin{array}{ccc}
1-fmh_{1} & f & 0\\
-f\left(mh_{1}\right)^{2} & 1+fmh_{1} & 0\\
0 & 0 & 1
\end{array}\right)\left(\begin{array}{ccc}
1 & -f & 0\\
0 & 1 & 0\\
0 & 0 & 1
\end{array}\right)\\
 &  & \cdot\left[\left(\begin{array}{ccc}
1 & f & 0\\
0 & 1 & 0\\
0 & 0 & 1
\end{array}\right)\left(\begin{array}{ccc}
1-mfh_{2} & 0 & -f\\
0 & 1 & 0\\
f(mh_{2})^{2} & 0 & 1+mfh_{2}
\end{array}\right)\left(\begin{array}{ccc}
1 & -f & 0\\
0 & 1 & 0\\
0 & 0 & 1
\end{array}\right)\right]\\
 &  & \cdot\left(\begin{array}{ccc}
1 & 0 & f\\
0 & 1 & 0\\
0 & 0 & 1
\end{array}\right)A
\end{eqnarray*}
\begin{eqnarray*}
 & = & A^{-1}\left(\begin{array}{ccc}
1-fmh_{1} & f & 0\\
-f\left(mh_{1}\right)^{2} & 1+fmh_{1} & 0\\
0 & 0 & 1
\end{array}\right)\left(\begin{array}{ccc}
1 & -f & 0\\
0 & 1 & 0\\
0 & 0 & 1
\end{array}\right)A\\
 &  & \cdot\left[A^{-1}\left(\begin{array}{ccc}
1 & f^{2}h_{2} & 0\\
0 & 1 & 0\\
0 & -m(fh_{2})^{2} & 1
\end{array}\right)A\right]^{m}\\
 &  & \cdot A^{-1}\left(\begin{array}{ccc}
1-mfh_{2} & 0 & -f\\
0 & 1 & 0\\
f(mh_{2})^{2} & 0 & 1+mfh_{2}
\end{array}\right)\left(\begin{array}{ccc}
1 & 0 & f\\
0 & 1 & 0\\
0 & 0 & 1
\end{array}\right)A\\
\end{eqnarray*}
\begin{eqnarray*}
 & \equiv & A^{-1}\left(\begin{array}{ccc}
1-fmh_{1} & f & 0\\
-f\left(mh_{1}\right)^{2} & 1+fmh_{1} & 0\\
0 & 0 & 1
\end{array}\right)\left(\begin{array}{ccc}
1 & -f & 0\\
0 & 1 & 0\\
0 & 0 & 1
\end{array}\right)\\
 &  & \cdot\left(\begin{array}{ccc}
1-fmh_{2} & 0 & -f\\
0 & 1 & 0\\
f\left(mh_{2}\right)^{2} & 0 & 1+fmh_{2}
\end{array}\right)\left(\begin{array}{ccc}
1 & 0 & f\\
0 & 1 & 0\\
0 & 0 & 1
\end{array}\right)A.
\end{eqnarray*}
So it remains to show that
\begin{equation}
A^{-1}\left(\begin{array}{ccc}
1-fmh_{2} & 0 & -f\\
0 & 1 & 0\\
f\left(mh_{2}\right)^{2} & 0 & 1+fmh_{2}
\end{array}\right)\left(\begin{array}{ccc}
1 & 0 & f\\
0 & 1 & 0\\
0 & 0 & 1
\end{array}\right)A\label{eq:cong1}
\end{equation}
\[
\equiv A^{-1}\left(\begin{array}{ccc}
1-fmh_{2} & f & 0\\
-f\left(mh_{2}\right)^{2} & 1+fmh_{2} & 0\\
0 & 0 & 1
\end{array}\right)\left(\begin{array}{ccc}
1 & -f & 0\\
0 & 1 & 0\\
0 & 0 & 1
\end{array}\right)A.
\]
By a similar computation as for Equation (\ref{eq:comp4}), we have
(switch the roles of $f,g$ in the equation by $f,mh_{2}$ respectively,
and then switch the roles of the first row and column with the third
row and column)
\[
\left(\begin{array}{ccc}
1 & 0 & 0\\
0 & 1+fmh_{2} & fmh_{2}\\
0 & -fmh_{2} & 1-fmh_{2}
\end{array}\right)=\left(\begin{array}{ccc}
1 & -f & -f\\
0 & 1 & 0\\
0 & 0 & 1
\end{array}\right)
\]
\begin{eqnarray*}
 &  & \cdot\left(\begin{array}{ccc}
1+fmh_{2} & 0 & f\\
0 & 1 & 0\\
-f\left(mh_{2}\right)^{2} & 0 & 1-fmh_{2}
\end{array}\right)\left(\begin{array}{ccc}
1 & 0 & 0\\
f\left(mh_{2}\right)^{2} & 1 & fmh_{2}\\
0 & 0 & 1
\end{array}\right)\\
 &  & \cdot\left(\begin{array}{ccc}
1-fmh_{2} & f & 0\\
-f\left(mh_{2}\right)^{2} & 1+fmh_{2} & 0\\
0 & 0 & 1
\end{array}\right)\left(\begin{array}{ccc}
1 & 0 & 0\\
0 & 1 & 0\\
f\left(mh_{2}\right)^{2} & -fmh_{2} & 1
\end{array}\right).
\end{eqnarray*}

Therefore, using Proposition \ref{prop:form1}, and the observation
\[
A^{-1}\left(\begin{array}{ccc}
1 & 0 & 0\\
0 & 1+fmh_{2} & fmh_{2}\\
0 & -fmh_{2} & 1-fmh_{2}
\end{array}\right)A=\left[A^{-1}\left(\begin{array}{ccc}
1 & 0 & 0\\
0 & 1+fh_{2} & fh_{2}\\
0 & -fh_{2} & 1-fh_{2}
\end{array}\right)A\right]^{m}\in IA^{m}.
\]
we obtain that mod $IA^{m}$ we have
\begin{eqnarray}
 &  & A^{-1}\left(\begin{array}{ccc}
1+fmh_{2} & 0 & f\\
0 & 1 & 0\\
-f\left(mh_{2}\right)^{2} & 0 & 1-fmh_{2}
\end{array}\right)\label{eq:cong2}\\
 &  & \cdot\left(\begin{array}{ccc}
1-fmh_{2} & f & 0\\
-f\left(mh_{2}\right)^{2} & 1+fmh_{2} & 0\\
0 & 0 & 1
\end{array}\right)A\equiv A^{-1}\left(\begin{array}{ccc}
1 & f & f\\
0 & 1 & 0\\
0 & 0 & 1
\end{array}\right)A.\nonumber 
\end{eqnarray}
From here, we easily get Equation (\ref{eq:cong1}) by noticing that
the inverse of every matrix in Equation (\ref{eq:cong2}) is obtained
by replacing $f$ by $-f$. This finishes the proof of the lemma,
and hence, also the proof of Proposition \ref{prop:form3}.
\end{proof}

\subsection{Elements of Form $4$}
\begin{prop}
\label{prop:form4}Recall $\bar{O}_{m}=mR_{n-1}$ and $\bar{U}_{r,m}=(x_{r}^{m}-1)R_{n-1}$
for $1\leq r\leq n-1$, where $R_{n-1}=\mathbb{Z}[x_{1}^{\pm1},\ldots,x_{n-1}^{\pm1}]\subseteq R_{n}$.
Then, the elements of the following form, belong to $IA^{m}$:
\[
A^{-1}\left[(I_{n-1}+hE_{i,j}),(I_{n-1}+fE_{j,i})\right]A
\]
where $A\in GL_{n-1}(R_{n})$, $f\in\sigma_{n}R_{n}$, $h\in\sigma_{r}^{2}\bar{U}_{r,m},\sigma_{r}\bar{O}_{m}$
for $1\leq r\leq n-1$ and $i\neq j$.
\end{prop}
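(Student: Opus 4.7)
The plan is to follow the template of Proposition \ref{prop:form3} as closely as possible. I will prove the proposition in the case $i,j = 2,1$, and the general case will follow by symmetric arguments; I will split into the two subcases (a) $h \in \sigma_r \bar{O}_m$ and (b) $h \in \sigma_r^2 \bar{U}_{r,m}$. In each case the element to be handled is a $2\times 2$ block commutator
\[
A^{-1}\begin{pmatrix} 1-fh & f & 0 \\ -fh^2 & 1+fh & 0 \\ 0 & 0 & I_{n-3} \end{pmatrix}\begin{pmatrix} 1 & -f & 0 \\ 0 & 1 & 0 \\ 0 & 0 & I_{n-3} \end{pmatrix}A,
\]
so the goal is to show that each $2\times 2$ block matrix (after conjugation by $A$) lies in $IA^m$.

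For subcase (a), I write $h = m\sigma_r h'$ with $h' \in R_{n-1}$ and prove an analog of Lemma \ref{lem:sum-1}: for $h_1, h_2 \in R_{n-1}$ and $f \in \sigma_n R_n$, the commutator matrix with parameter $m\sigma_r(h_1+h_2)$ decomposes, modulo $IA^m$, as the product of the commutator matrices with parameters $m\sigma_r h_1$ and $m\sigma_r h_2$ separately. Iterating this with $h_1 = \cdots = h_m = h'$ then exhibits the original element as congruent, modulo $IA^m$, to the $m$-th power of the analogous commutator matrix with parameter $\sigma_r h'$ (an element of $IA(\Phi)$ since $f \in \sigma_n R_n$), hence an element of $IA^m$. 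The proof of the sum lemma is a word-for-word repetition of the long computation at the end of Lemma \ref{lem:sum-1}, with the key simplification that every auxiliary elementary matrix produced along the way has entries of the form $m\sigma_r \sigma_n(\cdots)$, $(m\sigma_r)^2 \sigma_n(\cdots)$, $\sigma_r\sigma_n h'(\cdots)$, etc., all of which are killed modulo $IA^m$ either by Proposition \ref{prop:type 1.1} directly or by Propositions \ref{prop:form1}--\ref{prop:form2}.

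For subcase (b), I write $h = \sigma_r^2(x_r^m - 1)h'$ and follow the same scheme. Here the analog of Lemma \ref{lem:sum-1} must split a parameter of the form $\sigma_r^2(x_r^m-1)(h_1+h_2)$; the reduction uses the factorization $x_r^m - 1 = \sigma_r\sum_{k=0}^{m-1} x_r^k$ to substitute the factor $(x_r^m-1)$ for the factor $m$ in the computation. Then, rather than iterating to produce an $m$-th power, the $(x_r^m-1)$ factor is consumed via Proposition \ref{prop:type 1.1}(2), which directly puts the resulting elementary matrices in $IA^m$. The auxiliary matrices arising in this computation again all have entries in $\sigma_n(\sum_{s=1}^{n-1}\sigma_s U_{s,m} + U_{n,m} + O_m)$, and are handled by Corollary \ref{cor:computations} and Proposition \ref{prop:type 1.1} exactly as before.

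The main obstacle is the technical bookkeeping in proving the two sum lemmas: the computation will be fully analogous to the multi-page calculation in Lemma \ref{lem:sum-1}, but one must verify carefully that each auxiliary factor introduced by the manipulations still has enough $\sigma_r$ or $\sigma_n$ or $m$ or $(x_r^m-1)$ divisibility to fall within the scope of Proposition \ref{prop:type 1.1}. The case distinction between (a) and (b) is essentially forced because the factor $m$ behaves multiplicatively differently from $(x_r^m-1)$ under the iteration that produces the $m$-th power; however, in both cases the presence of $\sigma_r$ (from $h$) together with $\sigma_n$ (from $f$) in every entry of the commutator is precisely what makes Proposition \ref{prop:type 1.1} applicable.
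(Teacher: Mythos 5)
Your reduction to the $2\times2$ block commutator and your identification of Propositions \ref{prop:form1}, \ref{prop:form2} and \ref{prop:type 1.1} as the relevant tools are on target, but the plan of transplanting the proof of Proposition \ref{prop:form3} does not close, for two concrete reasons. First, the arithmetic of your subcase (a) does not work out. The additivity mechanism of Lemma \ref{lem:sum-1} consumes one factor of $m$ internally: its auxiliary factors are either explicit $m$-th powers of conjugated $IA(\Phi)$-elements (e.g.\ the factor coming from the entry $fm(h_{1}+h_{2})$, which is rewritten as the $m$-th power of the matrix with entry $f(h_{1}+h_{2})$) or conjugated elementary matrices with entries in $\sigma_{n}O_{m}$. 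So what that lemma proves is that the commutator with parameter $m(g_{1}+\cdots+g_{k})$ is congruent to the product of the commutators with parameters $mg_{i}$. In Form $3$ one has $h=m^{2}h'=m\cdot(mh')$, so taking $g_{1}=\cdots=g_{m}=h'$ yields an $m$-th power of an $IA(\Phi)$-element. In your subcase (a) the parameter $m\sigma_{r}h'$ carries only one factor of $m$: taking $g_{i}=\sigma_{r}h'$ produces a statement about the parameter $m^{2}\sigma_{r}h'$ rather than $m\sigma_{r}h'$, while an additivity lemma at the level of the parameters $\sigma_{r}g_{i}$ (no $m$) would leave auxiliary factors such as $A^{-1}(I_{n-1}+f\sigma_{r}(h_{1}+h_{2})E_{2,3})A$, whose entry lies only in $\sigma_{n}\sigma_{r}R_{n}$ and is covered by none of Propositions \ref{prop:form1}, \ref{prop:form2} or \ref{prop:type 1.1}. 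The extra leverage has to come from the factor $\sigma_{r}$, and the way the paper extracts it is not through additivity but through the observation that $I_{n}+u\sigma_{r}E_{i,j}-u\sigma_{j}E_{i,r}$ (with $u\in\sigma_{r}\bar{U}_{r,m}$ or $\bar{O}_{m}$) is an honest $IA(\Phi)$-element lying in $IA^{m}$ by Proposition \ref{prop:type 1.1}; conjugating $I_{n-1}+fE_{j,i}$ by it and peeling off the unwanted column gives the unconjugated case (Proposition \ref{prop:induction- base}).

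Second, and more seriously, you have no mechanism for the conjugation by an arbitrary $A\in GL_{n-1}(R_{n})$. In Form $3$ the conjugator rides along for free because every auxiliary factor is of a conjugation-invariant type ($m$-th powers of conjugates of $IA(\Phi)$-elements, or the conjugated elementary matrices of Propositions \ref{prop:form1} and \ref{prop:form2}). In Form $4$ the engine is Proposition \ref{prop:type 1.1}, which produces specific elements of $IA^{m}$ and is \emph{not} stable under conjugation: for general $A$ the matrix $A^{-1}\bigl(I_{n}+u(\sigma_{r}E_{i,j}-\sigma_{j}E_{i,r})\bigr)A$ is no longer of the single-row form to which that proposition applies. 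Your closing claim that divisibility of all entries by $\sigma_{r}$ and $\sigma_{n}$ "is precisely what makes Proposition \ref{prop:type 1.1} applicable" is a misreading of that proposition: it requires the row to be an $R_{n}$-combination of the antisymmetric vectors $\sigma_{i}\vec{e}_{j}-\sigma_{j}\vec{e}_{i}$ with coefficients divisible by $m$, $x_{k}^{m}-1$ or $\sigma_{u}(x_{u}^{m}-1)$, and for instance $I_{n-1}+\sigma_{n}\sigma_{r}gE_{i,j}$ is in general not in $IA^{m}$. This is exactly why the paper spends most of the Form $4$ argument on an induction over the Suslin generators of $GL_{n-1}(R_{n})$, whose delicate case ($E=I_{n-1}+rE_{2,3}$ or $I_{n-1}+rE_{3,2}$) is resolved by the swap congruence of Corollary \ref{cor:equivalence}, itself extracted from the identities of Proposition \ref{prop:computations}. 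Without a substitute for that step your argument does not go through.
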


As before, throughout the subsection we denote a block matrix of the
form 
\[
\left(\begin{array}{cc}
B & 0\\
0 & I_{n-4}
\end{array}\right)\in GL_{n-1}(R_{n})
\]
by $B\in GL_{3}(R_{n})$. We start the proof of this proposition with
the following lemma.
\begin{lem}
Let $f,h\in R_{n}$ and $A\in GL_{n-1}(R_{n})$. Then
\[
A^{-1}\left(\begin{array}{ccc}
1-fh & -fh & 0\\
fh & 1+fh & 0\\
0 & 0 & 1
\end{array}\right)A
\]
\begin{eqnarray*}
 & = & A^{-1}\left(\begin{array}{ccc}
1 & 0 & 0\\
fh & 1 & 0\\
fh^{2} & 0 & 1
\end{array}\right)AA^{-1}\left(\begin{array}{ccc}
1 & 0 & 0\\
0 & 1+fh & -f\\
0 & fh^{2} & 1-fh
\end{array}\right)\left(\begin{array}{ccc}
1 & 0 & 0\\
0 & 1 & f\\
0 & 0 & 1
\end{array}\right)A\\
 &  & \cdot A^{-1}\left(\begin{array}{ccc}
1 & 0 & 0\\
0 & 1 & -f\\
0 & 0 & 1
\end{array}\right)AA^{-1}\left(\begin{array}{ccc}
1 & -fh & 0\\
0 & 1 & 0\\
0 & -fh^{2} & 1
\end{array}\right)AA^{-1}\left(\begin{array}{ccc}
1 & 0 & 0\\
0 & 1 & f\\
0 & 0 & 1
\end{array}\right)A\\
 &  & \cdot A^{-1}\left(\begin{array}{ccc}
1-fh & 0 & f\\
0 & 1 & 0\\
-fh^{2} & 0 & 1+fh
\end{array}\right)AA^{-1}\left(\begin{array}{ccc}
1 & 0 & 0\\
f^{2}h^{2} & 1 & -f^{2}h\\
0 & 0 & 1
\end{array}\right)AA^{-1}\left(\begin{array}{ccc}
1 & 0 & -f\\
0 & 1 & 0\\
0 & 0 & 1
\end{array}\right)A.
\end{eqnarray*}
\end{lem}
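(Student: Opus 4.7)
The plan is to prove this as a straightforward matrix identity in $GL_{n-1}(R_n)$. Unlike the lemmas in the preceding subsections, no congruences modulo $IA^m$ are involved; the lemma is an exact equality. The key observation is that the middle factor of each term on the right-hand side (i.e.\ what sits between the outer $A^{-1}$ and $A$) fits naturally into the framework of Equation~(\ref{eq:comp1}) of Proposition~\ref{prop:computations}, where the variable $g$ there plays the role of $h$ here.

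First I would use the trivial fact that conjugation by $A$ is a homomorphism: repeatedly applying $(A^{-1}XA)(A^{-1}YA) = A^{-1}XYA$ and telescoping the $AA^{-1}$ pairs collapses the whole right-hand side to a single expression $A^{-1} P A$, where $P$ is the product of the nine inner matrices. Inspection shows that two of those inner matrices are inverses of each other — namely the third factor $I_{n-1}+(-f)E_{2,3}$ and the fifth factor $I_{n-1}+fE_{2,3}$ — but they are not adjacent, so we cannot simply cancel them yet. Denote the inner matrices by $M_1, M_2, N, N^{-1}, M_4, N, M_6, M_7, N_8$ (with $N = I_{n-1}+fE_{2,3}$, $N_8 = I_{n-1}-fE_{1,3}$), so that $P = M_1 M_2 \cdot N \cdot N^{-1} \cdot M_4 \cdot N \cdot M_6 \cdot M_7 \cdot N_8$; the middle $N N^{-1}$ drops out, leaving $P = M_1 M_2 M_4 \cdot (N M_6 M_7) \cdot N_8$.

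Next I would establish two elementary matrix identities by direct $3\times 3$ multiplication. The first is $N M_6 M_7 = M_6 N$, which one can see by recognizing that $M_7$ is essentially the commutator $M_6^{-1} N^{-1} M_6 N$; the entries are set up for this cancellation. The second is that $N$ and $N_8$ commute, which is the standard fact that two elementary matrices $I + aE_{2,3}$ and $I - aE_{1,3}$ with no shared index on the ``wrong side'' commute. Applying these yields $P = M_1 M_2 M_4 M_6 \cdot N N_8 = M_1 M_2 M_4 M_6 \cdot (N_8 N)$.

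Finally I would match this against Equation~(\ref{eq:comp1}) of Proposition~\ref{prop:computations} after substituting $g \mapsto h$: that equation reads
\[
\begin{pmatrix} 1-fh & -fh & 0 \\ fh & 1+fh & 0 \\ 0 & 0 & 1\end{pmatrix} = M_1 \cdot M_2 \cdot M_4 \cdot M_6 \cdot \begin{pmatrix} 1 & 0 & -f \\ 0 & 1 & f \\ 0 & 0 & 1\end{pmatrix},
\]
and the last factor factors as $N_8 \cdot N$, so $P$ equals the left-hand matrix exactly. Conjugating by $A$ gives the statement of the lemma. The only real ``obstacle'' here is purely clerical — verifying $N M_6 M_7 = M_6 N$ and $N N_8 = N_8 N$ by explicit multiplication and correctly identifying the factored form of the last matrix of~(\ref{eq:comp1}); there is no conceptual difficulty.
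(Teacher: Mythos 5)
Your proposal is correct and follows essentially the same route as the paper: after telescoping the conjugations, the whole argument reduces to Equation (\ref{eq:comp1}) with $g=h$ together with the single identity $N^{-1}M_6N=M_6M_7$ (equivalently your $NM_6M_7=M_6N$), which is exactly the auxiliary identity the paper verifies. Your extra observations (the cancellation $NN^{-1}$, the commutation of $N$ with $N_8$, and the factorization of the final matrix of (\ref{eq:comp1}) as $N_8N$) are just the bookkeeping the paper leaves implicit, and they all check out.
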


\begin{proof}
The lemma follows from Proposition \ref{prop:computations}, Equation
(\ref{eq:comp1}), by substituting $g$ with $h$, combined with verifying
the identity 
\begin{align*}
 & \left(\begin{array}{ccc}
1 & 0 & 0\\
0 & 1 & -f\\
0 & 0 & 1
\end{array}\right)\left(\begin{array}{ccc}
1-fh & 0 & f\\
0 & 1 & 0\\
-fh^{2} & 0 & 1+fh
\end{array}\right)\left(\begin{array}{ccc}
1 & 0 & 0\\
0 & 1 & f\\
0 & 0 & 1
\end{array}\right)\\
 & =\left(\begin{array}{ccc}
1-fh & 0 & f\\
0 & 1 & 0\\
-fh^{2} & 0 & 1+fh
\end{array}\right)\left(\begin{array}{ccc}
1 & 0 & 0\\
f^{2}h^{2} & 1 & -f^{2}h\\
0 & 0 & 1
\end{array}\right).
\end{align*}
\end{proof}
Observe now that if we have $f\in\sigma_{n}R_{n}$ and $h\in\sigma_{r}^{2}\bar{U}_{r,m},\sigma_{r}\bar{O}_{m}$
for $1\leq r\leq n-1$, then by Propositions \ref{prop:form1} and
\ref{prop:form2}, we have
\[
A^{-1}\left(\begin{array}{ccc}
1-fh & -fh & 0\\
fh & 1+fh & 0\\
0 & 0 & 1
\end{array}\right)A
\]
\[
=A^{-1}\left(\begin{array}{ccc}
1 & 0 & 0\\
-1 & 1 & 0\\
0 & 0 & 1
\end{array}\right)\left(\begin{array}{ccc}
1 & -fh & 0\\
0 & 1 & 0\\
0 & 0 & 1
\end{array}\right)\left(\begin{array}{ccc}
1 & 0 & 0\\
1 & 1 & 0\\
0 & 0 & 1
\end{array}\right)A\in IA^{m}.
\]
Therfore, by Propositions \ref{prop:form1}, \ref{prop:form2} and
the previous lemma, for every $A\in GL_{n-1}(R_{n})$ we have the
following equality mod $IA^{m}$:
\[
A^{-1}\left(\begin{array}{ccc}
1 & 0 & 0\\
0 & 1+fh & -f\\
0 & fh^{2} & 1-fh
\end{array}\right)\left(\begin{array}{ccc}
1 & 0 & 0\\
0 & 1 & f\\
0 & 0 & 1
\end{array}\right)A
\]
\[
\equiv A^{-1}\left[\left(\begin{array}{ccc}
1-fh & 0 & f\\
0 & 1 & 0\\
-fh^{2} & 0 & 1+fh
\end{array}\right)\left(\begin{array}{ccc}
1 & 0 & -f\\
0 & 1 & 0\\
0 & 0 & 1
\end{array}\right)\right]^{-1}A.
\]
I.e. we have the following corollary (notice that we switched the
sign of $f$):
\begin{cor}
\label{cor:equivalence}For every $h\in\sigma_{r}^{2}\bar{U}_{r,m},\sigma_{r}\bar{O}_{m}$
for $1\leq r\leq n-1$, $f\in\sigma_{n}R_{n}$ and $A\in GL_{n-1}(R_{n}),$
the following elements are congruent mod $IA^{m}$
\[
A^{-1}\left[(I_{n-1}+hE_{3,2}),(I_{n-1}+fE_{2,3})\right]A\equiv A^{-1}\left[(I_{n-1}-fE_{1,3}),(I_{n-1}+hE_{3,1})\right]A.
\]
\end{cor}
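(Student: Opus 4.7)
The plan is to observe that the displayed congruence immediately preceding the corollary already contains all the content of the statement: only the identification of each side as the conjugate by $A$ of a commutator remains, together with the harmless substitution $f\mapsto -f$ flagged in the parenthetical remark.

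Concretely, I would perform two short $2\times 2$-block computations in $GL_{n-1}(R_{n})$. The first, in the $\{2,3\}$-block, yields the identification
\[
[I_{n-1}+hE_{3,2},\,I_{n-1}-fE_{2,3}] = \left(\begin{array}{ccc} 1 & 0 & 0 \\ 0 & 1+fh & -f \\ 0 & fh^{2} & 1-fh \end{array}\right)\left(\begin{array}{ccc} 1 & 0 & 0 \\ 0 & 1 & f \\ 0 & 0 & 1 \end{array}\right),
\]
which is exactly the matrix appearing on the left-hand side of the displayed congruence; this is obtained by the substitution $f\mapsto -f$ from the identification of $[I_{n-1}+hE_{2,1},\,I_{n-1}+fE_{1,2}]$ already carried out inside the proof of Proposition \ref{prop:form3}. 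The same template applied in the $\{1,3\}$-block gives
\[
[I_{n-1}+hE_{3,1},\,I_{n-1}+fE_{1,3}] = \left(\begin{array}{ccc} 1-fh & 0 & f \\ 0 & 1 & 0 \\ -fh^{2} & 0 & 1+fh \end{array}\right)\left(\begin{array}{ccc} 1 & 0 & -f \\ 0 & 1 & 0 \\ 0 & 0 & 1 \end{array}\right),
\]
so the inverse of this matrix, which appears on the right-hand side of the displayed congruence, equals $[I_{n-1}+fE_{1,3},\,I_{n-1}+hE_{3,1}]$.

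Substituting both identifications, the displayed congruence takes the form
\[
A^{-1}[I_{n-1}+hE_{3,2},\,I_{n-1}-fE_{2,3}]A \equiv A^{-1}[I_{n-1}+fE_{1,3},\,I_{n-1}+hE_{3,1}]A \pmod{IA^{m}},
\]
and applying the substitution $f\mapsto -f$ — which preserves the hypothesis $f\in\sigma_{n}R_{n}$ as well as the ideal memberships $fh,\,fh^{2},\,f^{2}h \in \sigma_{n}\sigma_{r}^{2}U_{r,m}\cup\sigma_{n}O_{m}$ that were invoked to derive the displayed congruence — turns this into precisely the statement of the corollary. The only mildly delicate point is the sign-bookkeeping in the two block computations identifying each product with a commutator, but these are routine $2\times 2$ matrix multiplications following the template of Proposition \ref{prop:form3}, so no genuine difficulty arises.
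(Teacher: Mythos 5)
Your proposal is correct and follows exactly the paper's route: the corollary is read off from the displayed congruence immediately preceding it by identifying each side as a commutator in the $\{2,3\}$- and $\{1,3\}$-blocks (using $[X,Y]^{-1}=[Y,X]$) and then substituting $f\mapsto-f$, which preserves all the hypotheses. The block computations you describe check out, so there is no gap.
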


We proceed with the following proposition:
\begin{prop}
\label{prop:induction- base}Let $h\in\sigma_{1}^{2}\bar{U}_{1,m},\sigma_{1}\bar{O}_{m}$
and $f\in\sigma_{n}R_{n}$. Then
\[
\left[(I_{n-1}+hE_{3,2}),(I_{n-1}+fE_{2,3})\right]\in IA^{m}.
\]
\end{prop}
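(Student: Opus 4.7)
The plan is to decompose the target commutator in $GL_{n-1}(R_n)$ as a product of two pieces, each of which corresponds—via the group isomorphism $GL_{n-1}(R_n, \sigma_n R_n) \cong IGL_{n-1,n}$ of Proposition~\ref{prop:iso}—to an element of $IA^m$. Writing $h = \sigma_1 \tilde h$, with $\tilde h = \sigma_1(x_1^m - 1) h_1$ in the first case ($h \in \sigma_1^2 \bar U_{1,m}$) and $\tilde h = m h_2$ in the second ($h \in \sigma_1 \bar O_m$), I would first factor
\[
I_{n-1} + h E_{3,2} \;=\; \alpha_0 \cdot g,
\]
where $\alpha_0 := I_{n-1} + \sigma_1 \tilde h E_{3,2} - \sigma_2 \tilde h E_{3,1}$ and $g := I_{n-1} + \sigma_2 \tilde h E_{3,1}$. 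The virtue of choosing $\alpha_0$ this way is that it is the $(n-1) \times (n-1)$ minor of an honest $IA$-automorphism $\alpha \in IGL'_{n-1,n}$: the third row of $\alpha - I_n$ equals $\tilde h(\sigma_1 \vec e_2 - \sigma_2 \vec e_1)$, which satisfies $A \vec \sigma = \vec 0$ since $\sigma_1 \sigma_2 \tilde h - \sigma_2 \sigma_1 \tilde h = 0$. Moreover, that row is an $R_n$-multiple of a permitted vector in Proposition~\ref{prop:type 1.1} (with $u = 3$): namely $(x_1^m - 1)(\sigma_1 \vec e_2 - \sigma_2 \vec e_1)$ in the first case, and $m(\sigma_1 \vec e_2 - \sigma_2 \vec e_1)$ in the second. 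Hence $\alpha \in IA^m$.

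Next, setting $b := I_{n-1} + f E_{2,3}$, I would apply the standard commutator identity $[xy, z] = x[y, z]x^{-1}[x, z]$ to rewrite the target as
\[
[I_{n-1} + h E_{3,2},\, b] \;=\; \alpha_0 \cdot [g, b] \cdot \alpha_0^{-1} \cdot [\alpha_0, b].
\]
A direct computation yields $[g, b] = I_{n-1} - f \sigma_2 \tilde h E_{2,1}$, which lies in $GL_{n-1}(R_n, \sigma_n R_n)$ since $f \in \sigma_n R_n$, and hence corresponds to some $\delta \in IGL_{n-1,n}$. Writing $f = \sigma_n f'$, the $IA$-constraint forces the $(2, n)$ entry of $\delta - I_n$ to equal $f' \sigma_1 \sigma_2 \tilde h$, so that row $2$ of $\delta - I_n$ reads $-f' \sigma_2 \tilde h (\sigma_n \vec e_1 - \sigma_1 \vec e_n)$. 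A second application of Proposition~\ref{prop:type 1.1}, this time with $u = 2$ and $(i, j) = (n, 1)$—again exploiting the $(x_1^m - 1)$ or $m$ factor coming from $\tilde h$—shows that $\delta \in IA^m$.

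To conclude, the crucial observation is that $\alpha$, $\delta$, and $\beta$ (the $IA$-extension of $b$) all lie in $IGL'_{n-1,n}$, so the $(n-1) \times (n-1)$ minor of any product in $IA(\Phi)$ equals the corresponding product of minors computed in $GL_{n-1}(R_n)$. Applied to our decomposition, this identifies $\alpha_0 [g, b] \alpha_0^{-1}$ with the minor of $\alpha \delta \alpha^{-1}$ and $[\alpha_0, b]$ with the minor of $[\alpha, \beta]$. By normality of $IA^m$ in $IA(\Phi)$ together with $\alpha, \delta \in IA^m$, both $\alpha \delta \alpha^{-1}$ and $[\alpha, \beta]$ lie in $IA^m$, and hence via Proposition~\ref{prop:iso} so does the target commutator.

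The main point to watch is that neither $I_{n-1} + hE_{3,2}$ nor $g$ is individually the minor of an element of $IA(\Phi)$, and $\alpha$ lives in $IGL'_{n-1,n} \setminus IGL_{n-1,n}$ rather than in $IGL_{n-1,n}$. This bookkeeping is benign once one works consistently with minors, but one must verify that the final product lands back in $IGL_{n-1,n}$—which it does, because both $[g, b]$ and $[\alpha_0, b]$ lie in $GL_{n-1}(R_n, \sigma_n R_n)$ thanks to the factor $f \in \sigma_n R_n$.
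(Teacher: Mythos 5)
Your proposal is correct and is essentially the paper's argument: both hinge on the auxiliary element $I_{n-1}+\tilde h(\sigma_{1}E_{3,2}-\sigma_{2}E_{3,1})$ supplied by Proposition \ref{prop:type 1.1}, normality of $IA^{m}$ in $IA(\Phi)$, and a second application of Proposition \ref{prop:type 1.1} to the correction term $I_{n-1}-f\sigma_{2}\tilde hE_{2,1}$ (the paper's version of this term is exactly your $\alpha_{0}[g,b]\alpha_{0}^{-1}$, expanded out). The only difference is cosmetic bookkeeping — you factor $I_{n-1}+hE_{3,2}=\alpha_{0}g$ and apply $[xy,z]=x[y,z]x^{-1}[x,z]$, whereas the paper starts from $[\alpha_{0},\beta]\in IA^{m}$ and peels off the same correction matrix directly.
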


\begin{proof}
Denote $h=\sigma_{1}u$ for some $u\in\sigma_{1}\bar{U}_{1,m},\bar{O}_{m}$.
By Proposition \ref{prop:type 1.1}, we have 
\[
\left(\begin{array}{ccc}
1 & 0 & 0\\
0 & 1 & 0\\
-\sigma_{2}u & \sigma_{1}u & 1
\end{array}\right)\in IA^{m}
\]
and hence
\[
IA^{m}\ni\left(\begin{array}{ccc}
1 & 0 & 0\\
0 & 1 & 0\\
-\sigma_{2}u & \sigma_{1}u & 1
\end{array}\right)\left(\begin{array}{ccc}
1 & 0 & 0\\
0 & 1 & f\\
0 & 0 & 1
\end{array}\right)\left(\begin{array}{ccc}
1 & 0 & 0\\
0 & 1 & 0\\
\sigma_{2}u & -\sigma_{1}u & 1
\end{array}\right)\left(\begin{array}{ccc}
1 & 0 & 0\\
0 & 1 & -f\\
0 & 0 & 1
\end{array}\right)
\]
\begin{eqnarray*}
 & = & \left(\begin{array}{ccc}
1 & 0 & 0\\
f\sigma_{2}u & 1 & 0\\
\sigma_{1}\sigma_{2}u^{2}f & 0 & 1
\end{array}\right)\\
 &  & \cdot\left(\begin{array}{ccc}
1 & 0 & 0\\
0 & 1 & 0\\
0 & \sigma_{1}u & 1
\end{array}\right)\left(\begin{array}{ccc}
1 & 0 & 0\\
0 & 1 & f\\
0 & 0 & 1
\end{array}\right)\left(\begin{array}{ccc}
1 & 0 & 0\\
0 & 1 & 0\\
0 & -\sigma_{1}u & 1
\end{array}\right)\left(\begin{array}{ccc}
1 & 0 & 0\\
0 & 1 & -f\\
0 & 0 & 1
\end{array}\right)
\end{eqnarray*}
As by Proposition \ref{prop:type 1.1} the first matrix in the right
hand side is also in $IA^{m}$, we obtain that
\[
\left[(I_{n-1}+hE_{3,2}),(I_{n-1}+fE_{2,3})\right]=\left[\left(\begin{array}{ccc}
1 & 0 & 0\\
0 & 1 & 0\\
0 & h & 1
\end{array}\right),\left(\begin{array}{ccc}
1 & 0 & 0\\
0 & 1 & f\\
0 & 0 & 1
\end{array}\right)\right]\in IA^{m}
\]
as required.
\end{proof}
We can now pass to the following proposition.
\begin{prop}
Let $h\in\sigma_{1}^{2}\bar{U}_{1,m},\sigma_{1}\bar{O}_{m}$, $f\in\sigma_{n}R_{n}$
and $A\in GL_{n-1}(R_{n})$. Then
\[
A^{-1}\left[(I_{n-1}+hE_{3,2}),(I_{n-1}+fE_{2,3})\right]A\in IA^{m}.
\]
\end{prop}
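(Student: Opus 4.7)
The plan is to show that $c := [(I_{n-1}+hE_{3,2}),(I_{n-1}+fE_{2,3})]$ lies in the relative elementary subgroup $E_{n-1}(R_n, hfR_n)$, which is normal in $GL_{n-1}(R_n)$, and then to decompose any $GL_{n-1}(R_n)$-conjugate of $c$ into pieces already covered by Propositions \ref{prop:form1} and \ref{prop:form2}.

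First I will observe that $I_{n-1}+hE_{3,2} \in E_{n-1}(R_n, hR_n)$ and $I_{n-1}+fE_{2,3} \in E_{n-1}(R_n, fR_n)$. By the standard relative-commutator identity for elementary subgroups of commutative rings (valid for $n-1 \ge 3$),
\[
c \in [E_{n-1}(R_n, hR_n),\ E_{n-1}(R_n, fR_n)] \subseteq E_{n-1}(R_n, hfR_n).
\]
Next I will check that $hfR_n$ is contained in one of the ideals treated in Propositions \ref{prop:form1}, \ref{prop:form2}. Since $f \in \sigma_n R_n$, the case $h \in \sigma_1^2 \bar{U}_{1,m}$ yields $hfR_n \subseteq \sigma_n\sigma_1^2 U_{1,m}$, while the case $h \in \sigma_1\bar{O}_m$ yields $hfR_n \subseteq \sigma_n O_m$.

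Because $n-1 \ge 3$, the subgroup $E_{n-1}(R_n, hfR_n)$ is normal in $GL_{n-1}(R_n)$ (Suslin's Corollary 1.4, recalled in Section \ref{sec:K-theory}), and hence $A^{-1}cA \in E_{n-1}(R_n, hfR_n)$ for every $A \in GL_{n-1}(R_n)$. By Proposition \ref{lem:suslin-bachmuth}, every element of $E_{n-1}(R_n, hfR_n)$ is a product of matrices of the form $(I_{n-1}+f'E_{i,j})^{-1}(I_{n-1}+h'E_{j,i})(I_{n-1}+f'E_{i,j})$ with $h' \in hfR_n$; these are precisely $GL_{n-1}(R_n)$-conjugates of $I_{n-1}+h'E_{j,i}$. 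By the containment from the second step, each such $h'$ lies in either $\sigma_n\sigma_1^2 U_{1,m}$ or $\sigma_n O_m$, so each such generator lies in $IA^m$ by Proposition \ref{prop:form2} (with $r = 1$) or Proposition \ref{prop:form1}. Therefore $A^{-1}cA \in IA^m$, as desired.

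The main obstacle will be establishing the relative commutator identity used in the first step. Our $c = [x_{3,2}(h), x_{2,3}(f)]$ is a \emph{same-index} Steinberg commutator, i.e.\ the nontrivial case not directly handled by the Steinberg relation $[x_{ij}(a), x_{jk}(b)] = x_{ik}(ab)$. The containment in $E_{n-1}(R_n, hfR_n)$ requires rewriting $c$ as a product of conjugates of elementary matrices with entries in $hfR_n$, introducing an auxiliary third index; this is exactly where the hypothesis $n-1 \ge 3$ enters in an essential way, and is the technical step most likely to require care.
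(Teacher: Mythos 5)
The final reduction in your argument is sound: \emph{if} one knew that $c=[(I_{n-1}+hE_{3,2}),(I_{n-1}+fE_{2,3})]$ lies in $E_{n-1}(R_{n},hfR_{n})$, then normality of that subgroup for $n-1\geq3$, Proposition \ref{lem:suslin-bachmuth}, and the containments $hfR_{n}\subseteq\sigma_{n}\sigma_{1}^{2}U_{1,m}$ (resp. $hfR_{n}\subseteq\sigma_{n}O_{m}$) would indeed finish the proof via Propositions \ref{prop:form2} and \ref{prop:form1}. The gap is the first step. The inclusion $[E_{d}(R,I),E_{d}(R,J)]\subseteq E_{d}(R,IJ)$ is not a standard identity and is false for general commutative rings: one has $E_{d}(R,IJ)\subseteq[E_{d}(R,I),E_{d}(R,J)]$, and the quotient is generated precisely by the classes of the opposite-index commutators $[I_{d}+aE_{i,j},I_{d}+bE_{j,i}]$ with $a\in I$, $b\in J$ --- that is, exactly by elements of the shape of your $c$. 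Concretely, $c$ is the identity outside the $\{2,3\}$-block, where it equals $\bigl(\begin{smallmatrix}1-fh & hf^{2}\\ -h^{2}f & 1+hf+(hf)^{2}\end{smallmatrix}\bigr)$; this visibly lies in $SL_{n-1}(R_{n},hfR_{n})$, but whether it lies in $E_{n-1}(R_{n},hfR_{n})$ is a relative $SK_{1}$-type question, and since $hfR_{n}$ has infinite index in $R_{n}$, nothing like Corollary \ref{cor:important-cor} is available to kill the obstruction. So the step you flag as ``most likely to require care'' is in fact the entire content of the proposition, and you have not supplied an argument for it.

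For comparison, the paper avoids this issue entirely. It first proves the case $A=I_{n-1}$ directly (Proposition \ref{prop:induction- base}), using Proposition \ref{prop:type 1.1}, i.e. genuinely $IA(\Phi)$-theoretic elements such as $I_{n}+u(\sigma_{1}\vec{e}_{2}-\sigma_{2}\vec{e}_{1})$ which exploit the extra Magnus column and have no counterpart inside $GL_{n-1}(R_{n})$ alone; it then runs an induction over the generators of $GL_{n-1}(R_{n})$, using Corollary \ref{cor:equivalence} to control conjugation by the elementary matrices in positions $(2,3)$ and $(3,2)$. To salvage your approach you would have to prove that this particular opposite-index commutator lies in $E_{n-1}(R_{n},hfR_{n})$, which is work of at least the same order as the computation the paper actually carries out.
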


\begin{proof}
We will prove the proposition by induction. By a result of Suslin
\cite{key-33}, as $n-1\geq3$, the group $SL_{n-1}(R_{n})$ is generated
by the elementary matrices of the form
\[
I_{n-1}+rE_{l,k}\,\,\,\,\textrm{for}\,\,\,\,r\in R_{n},\,\,\,\,\textrm{and}\,\,\,\,1\leq l\neq k\leq n-1.
\]
So as the invertible elements of $R_{n}$ are the elements of the
form $\pm\prod_{i=1}^{n}x_{i}^{s_{i}}$ for $s_{i}\in\mathbb{Z}$
(see \cite{key-38}, chapter 8), $GL_{n-1}(R_{n})$ is generated by
the elementary matrices and the matrices of the form: $I_{n-1}+(\pm x_{i}-1)E_{1,1}$
for $1\leq i\leq n$. Therefore, by the previous proposition it is
enough to show that if
\[
A^{-1}\left[\left(I_{n-1}+hE_{3,2}\right),\left(I_{n-1}+fE_{2,3}\right)\right]A\in IA^{m}
\]
and $E$ is one of the above generators, then mod $IA^{m}$ we have
\begin{equation}
A^{-1}E^{-1}\left[\left(I_{n-1}+hE_{3,2}\right),\left(I_{n-1}+fE_{2,3}\right)\right]EA\label{eq:property}
\end{equation}
\[
\equiv A^{-1}\left[\left(I_{n-1}+hE_{3,2}\right),\left(I_{n-1}+fE_{2,3}\right)\right]A.
\]

So if $E$ is of the form $I_{n-1}+(\pm x_{i}-1)E_{1,1}$, we obviously
have Property (\ref{eq:property}). If $E$ is an elementary matrix
of the form $I_{n-1}+rE_{l,k}$ such that $l,k\notin\left\{ 2,3\right\} $
then we also have Property (\ref{eq:property}) in an obvious way.
Consider now the case $l,k=2,3$. In this case, by Corollary \ref{cor:equivalence}
we have the following mod $IA^{m}$
\[
A^{-1}E^{-1}\left[\left(\begin{array}{ccc}
1 & 0 & 0\\
0 & 1 & 0\\
0 & h & 1
\end{array}\right),\left(\begin{array}{ccc}
1 & 0 & 0\\
0 & 1 & f\\
0 & 0 & 1
\end{array}\right)\right]EA
\]
\[
\equiv A^{-1}\left(\begin{array}{ccc}
1 & 0 & 0\\
0 & 1 & -r\\
0 & 0 & 1
\end{array}\right)\left[\left(\begin{array}{ccc}
1 & 0 & -f\\
0 & 1 & 0\\
0 & 0 & 1
\end{array}\right),\left(\begin{array}{ccc}
1 & 0 & 0\\
0 & 1 & 0\\
h & 0 & 1
\end{array}\right)\right]\left(\begin{array}{ccc}
1 & 0 & 0\\
0 & 1 & r\\
0 & 0 & 1
\end{array}\right)A
\]
\[
=A^{-1}\left(\begin{array}{ccc}
1 & 0 & 0\\
0 & 1 & -r\\
0 & 0 & 1
\end{array}\right)\left(\begin{array}{ccc}
1-hf+h^{2}f^{2} & 0 & -hf^{2}\\
0 & 1 & 0\\
-h^{2}f & 0 & 1+hf
\end{array}\right)\left(\begin{array}{ccc}
1 & 0 & 0\\
0 & 1 & r\\
0 & 0 & 1
\end{array}\right)A
\]
\[
=A^{-1}\left(\begin{array}{ccc}
1-hf+h^{2}f^{2} & 0 & -hf^{2}\\
0 & 1 & 0\\
-h^{2}f & 0 & 1+hf
\end{array}\right)AA^{-1}\left(\begin{array}{ccc}
1 & 0 & 0\\
rh^{2}f & 1 & -rhf\\
0 & 0 & 1
\end{array}\right)A
\]
\[
=A^{-1}\left[\left(\begin{array}{ccc}
1 & 0 & -f\\
0 & 1 & 0\\
0 & 0 & 1
\end{array}\right),\left(\begin{array}{ccc}
1 & 0 & 0\\
0 & 1 & 0\\
h & 0 & 1
\end{array}\right)\right]AA^{-1}\left(\begin{array}{ccc}
1 & 0 & 0\\
rh^{2}f & 1 & -rhf\\
0 & 0 & 1
\end{array}\right)A.
\]
So by applying Propositions \ref{prop:form1}, \ref{prop:form2} and
Corollary \ref{cor:equivalence} once again on the opposite way, we
obtain Property (\ref{eq:property}). The other cases for $l,k$ are
treated by similar arguments: if $l,k=3,2$ we do exactly the same,
and if $l$ or $k$ are different from $2$ and $3$, then the situation
is easier - we use similar arguments, but without passing to $\left[(I_{n-1}-fE_{1,3}),(I_{n-1}+hE_{3,1})\right]$
through Corollary \ref{cor:equivalence}.
\end{proof}
\begin{cor}
Let $h\in\sigma_{1}^{2}\bar{U}_{1,m},\sigma_{1}\bar{O}_{m}$, $f\in\sigma_{n}R$
and $A\in GL_{n-1}\left(R_{n}\right)$. Then, for every $i\neq j$
we have
\[
A^{-1}\left[(I_{n-1}+hE_{i,j}),(I_{n-1}+fE_{j,i})\right]A\in IA^{m}.
\]
\end{cor}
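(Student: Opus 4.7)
The plan is to reduce the general case $(i,j)$ to the already-established case $(i,j)=(3,2)$ by conjugating with a suitable permutation matrix. Since every permutation matrix $P\in GL_{n-1}(\mathbb{Z})$ lies in $GL_{n-1}(R_{n})$, such conjugations stay inside the ambient group where the previous proposition is stated.

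Concretely, for any $1\leq i\neq j\leq n-1$ I would pick a permutation $\pi$ of $\{1,\ldots,n-1\}$ with $\pi(3)=i$ and $\pi(2)=j$, and let $P\in GL_{n-1}(\mathbb{Z})$ be the corresponding permutation matrix. Then $P^{-1}E_{3,2}P=E_{i,j}$ and $P^{-1}E_{2,3}P=E_{j,i}$, so
\[
P^{-1}\bigl[(I_{n-1}+hE_{3,2}),(I_{n-1}+fE_{2,3})\bigr]P=\bigl[(I_{n-1}+hE_{i,j}),(I_{n-1}+fE_{j,i})\bigr].
\]
Hence, for any $A\in GL_{n-1}(R_{n})$,
\[
A^{-1}\bigl[(I_{n-1}+hE_{i,j}),(I_{n-1}+fE_{j,i})\bigr]A=(PA)^{-1}\bigl[(I_{n-1}+hE_{3,2}),(I_{n-1}+fE_{2,3})\bigr](PA).
\]
Since $PA\in GL_{n-1}(R_{n})$ and the hypotheses on $h$ and $f$ are preserved verbatim (they do not involve the indices $i,j$), the previous proposition applied with $PA$ in place of $A$ shows that the right-hand side lies in $IA^{m}$, yielding the corollary.

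There is essentially no obstacle: the only thing to check is that conjugation by a permutation matrix maps elementary matrices to elementary matrices with indices permuted accordingly, which is a standard fact. No new commutator identities, no use of Lemma \ref{lem:sum-1} or Corollary \ref{cor:equivalence}, and no additional K-theoretic input are required at this step.
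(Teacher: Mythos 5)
Your proposal is correct and is essentially identical to the paper's own proof: the paper likewise conjugates the $(3,2)$-case by a permutation matrix $P$ sending $2\mapsto j$, $3\mapsto i$ and absorbs $P$ into the conjugating element $A$. Nothing further is needed.
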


\begin{proof}
Denote a permutation matrix, which its action on $GL_{n-1}\left(R_{n}\right)$
by conjugation, moves $2\mapsto j$ and $3\mapsto i$, by $P$. Then,
by the previous proposition, we have
\[
A^{-1}\left[(I_{n-1}+hE_{i,j}),(I_{n-1}+fE_{j,i})\right]A
\]
\[
=A^{-1}P^{-1}\left[(I_{n-1}+hE_{3,2}),(I_{n-1}+fE_{2,3})\right]PA\in IA^{m}.
\]
\end{proof}
Now, as one can see that symmetrically, the above corollary is valid
for every $h\in\sigma_{r}^{2}\bar{U}_{r,m},\sigma_{r}\bar{O}_{m}$
for $1\leq r\leq n-1$, we actually finished the proof of Proposition
\ref{prop:form4}.

\section{Index of notations}

For the convenient of the reader, we gathered here some notations
that play role along the paper, and mention the section in the body
of the paper when they appear for the first time. 
\begin{itemize}
\item $F_{n}$ = the free group on $n$ elements, Section \ref{sec:some-properties}.
\item $\Phi=\Phi_{n}=F_{n}/F''_{n}$= the free metabelian group on $n$
elements, Section \ref{sec:some-properties}.
\item $\Psi_{m}=\Phi/M_{m}$, where $M_{m}=\left(\Phi'\Phi^{m}\right)'\left(\Phi'\Phi^{m}\right)^{m}$,
Section \ref{sec:some-properties}.
\item $IA(\Phi)=\ker\left(Aut\left(\Phi\right)\to Aut\left(\Phi/\Phi'\right)\right)$,
Section \ref{sec:some-properties}.
\item $IG_{m}=G(M_{m})=\ker(IA\left(\Phi\right)\to Aut(\Psi_{m}))$, Section
\ref{sec:some-properties}.
\item $IA^{m}=\left\langle IA\left(\Phi\right)^{m}\right\rangle $, Section
\ref{sec:Second}.
\item $IA_{m}=\cap\left\{ N\vartriangleleft IA\left(\Phi\right)\,|\,[IA\left(\Phi\right):N]\,|\,m\right\} $,
Section \ref{sec:some-properties}.
\item $R_{n}=\mathbb{Z}[x_{1}^{\pm1},\ldots,x_{n}^{\pm1}]$ where $x_{1},\ldots,x_{n}$
are free commutative variables, Section \ref{sec:some-properties}.
\item $R_{n-1}=\mathbb{Z}[x_{1}^{\pm1},\ldots,x_{n-1}^{\pm1}]$, Section
\ref{sec:Second}.
\item $\mathbb{Z}_{m}=\mathbb{Z}/m\mathbb{Z}$, Section \ref{sec:some-properties}.
\item $\sigma_{i}=x_{i}-1$ for $1\leq i\leq n$, Section \ref{sec:some-properties}.
\item $\vec{\sigma}$ = the column vector which has $\sigma_{i}$ in its
$i$-th entry, Section \ref{sec:some-properties}.
\item $\mathfrak{A}=\sum_{i=1}^{n}\sigma_{i}R_{n}\vartriangleleft R_{n}$
= the augmentation ideal of $R_{n}$, Section \ref{sec:some-properties}.
\item $O_{m}=mR_{n}\vartriangleleft R_{n}$, Section \ref{sec:Second}.
\item $\bar{O}_{m}=mR_{n-1}\vartriangleleft R_{n-1}$, Section \ref{sec:Second}.
\item $U_{r,m}=(x_{r}^{m}-1)R_{n}\vartriangleleft R_{n}$, for $1\leq r\leq n$,Section
\ref{sec:Second}.
\item $\bar{U}_{r,m}=(x_{r}^{m}-1)R_{n-1}\vartriangleleft R_{n-1}$, for
$1\leq r\leq n$, Section \ref{sec:Second}.
\item $H_{m}=\sum_{i=1}^{n}(x_{i}^{m}-1)R_{n}+mR_{n}\vartriangleleft R_{n}$,
Section \ref{sec:some-properties}.
\item $S=\mathbb{Z}[x^{\pm1}]$, Section \ref{sec: not f.g.}.
\item $J_{m}=\left(x^{m}-1\right)S+mS\vartriangleleft S$, Section \ref{sec: not f.g.}.
\item $E_{d}\left(R\right)=\left\langle I_{d}+rE_{i,j}\,|\,r\in R,\,1\leq i\neq j\leq d\right\rangle \leq SL_{d}\left(R\right)$,
where $R$ is a ring and $E_{i,j}$ is the matrix that has $1$ in
its $\left(i,j\right)$-th entry and $0$ elsewhere, Section \ref{sec:K-theory}.
\item $SL_{d}\left(R,H\right)=\ker(SL_{d}\left(R\right)\to SL_{d}\left(R/H\right))$,
where $R$ is a ring and $H\vartriangleleft R$, Section \ref{sec:K-theory}.
\item $GL_{d}\left(R,H\right)=\ker(GL_{d}\left(R\right)\to GL_{d}\left(R/H\right))$,
where $R$ is a ring and $H\vartriangleleft R$, Section \ref{sec:K-theory}.
\item $E_{d}\left(R,H\right)$ = the normal subgroup of $E_{d}\left(R\right)$,
generated as a normal subgroup by the matrices of the form $I_{d}+hE_{i,j}$
for $h\in H$, Section \ref{sec:K-theory}.
\item $IGL_{n-1,i}=\left\{ I_{n}+A\in IA\left(\Phi\right)\,|\,\begin{array}{c}
\textrm{The\,\,}i\textrm{-th\,\, row\,\, of\,\,}A\textrm{\,\, is\,\,0,}\\
I_{n-1}+A_{i,i}\in GL_{n-1}(R_{n},\sigma_{i}R_{n})
\end{array}\right\} $, \\
\\
for $1\leq i\leq n$, Section \ref{sec:some-properties}.
\item $ISL_{n-1,i}\left(H\right)=IGL_{n-1,i}\cap SL_{n-1}(R_{n},H)$, under
the identification of $IGL_{n-1,i}$ with $GL_{n-1}(R_{n},\sigma_{i}R_{n})$,
Section \ref{sec:some-properties}.
\item $IE_{n-1,i}\left(H\right)=IGL_{n-1,i}\cap E{}_{n-1}(R_{n},H)$, under
the identification of the group $IGL_{n-1,i}$ with $GL_{n-1}(R_{n},\sigma_{i}R_{n})$,
Section \ref{sec:some-properties}.
\item $IGL'_{n-1,i}=\left\{ I_{n}+A\in IA\left(\Phi\right)\,|\,\textrm{The\,\,}i\textrm{-th\,\, row\,\, of\,\,}A\textrm{\,\, is\,\,0}\right\} $,
\\
\\
for $1\leq i\leq n$, Section \ref{sec:some-properties}.
\end{itemize}

Department of Mathematics\\
University of California in San-Diego\\
San-Diego, California, 92093\\
\\
davidel-chai.ben-ezra@mail.huji.ac.il\\

\end{document}